\definecolor{darkgreen}{HTML}{228B22}
\newtheorem{thm}{Theorem}[section]
\newtheorem{cor}[thm]{Corollary}
\newtheorem{prop}[thm]{Proposition}
\newtheorem{lem}[thm]{Lemma}
\theoremstyle{definition}
\newtheorem{defn}[thm]{Definition}
\newtheorem*{defn*}{Definition}
\theoremstyle{remark}
\newtheorem{rmk}[thm]{Remark}
\newtheorem{exam}[thm]{Example}
\newcommand{\co}{\colon\thinspace}
\newcommand{\mb}[1]{\mathbb{#1}}
\newcommand{\mf}[1]{\mathfrak{#1}}
\newcommand{\xx}{\overline{\xi}}
\newcommand{\op}{{op}}
\newcommand{\too}{\xrightarrow}
\newcommand{\from}{\leftarrow}
\newcommand{\Pres}{\mathcal{P}}
\newcommand{\cC}{\mathcal{C}}
\newcommand{\dD}{\mathcal{D}}
\newcommand{\eE}{\mathcal{E}}
\newcommand{\xX}{\mathcal{X}}
\newcommand{\wt}[1]{\widetilde{#1}}
\newcommand{\loday}[2]{\mathcal{L}^{#1}(#2)}
\DeclareMathOperator{\Hom}{Hom}
\DeclareMathOperator{\Map}{Map}
\DeclareMathOperator{\Tor}{Tor}
\DeclareMathOperator{\Fun}{Fun}
\DeclareMathOperator{\Sp}{Sp}
\DeclareMathOperator{\Spaces}{\mathcal{S}}
\DeclareMathOperator{\sfin}{\mathcal{S}_\ast^{\text{fin}}}
\DeclareMathOperator{\sfinunpointed}{\mathcal{S}^{\text{fin}}}
\DeclareMathOperator{\Alg}{Alg}
\DeclareMathOperator{\CAlg}{CAlg}
\DeclareMathOperator{\LMod}{LMod}
\DeclareMathOperator{\Mod}{Mod}
\DeclareMathOperator{\Lift}{Lift}
\DeclareMathOperator{\id}{id}
\DeclareMathOperator{\Exc}{Exc}
\DeclareMathOperator*{\colim}{colim}
\DeclareMathOperator*{\hocolim}{hocolim}
\DeclareMathOperator*{\holim}{holim}
\DeclareMathOperator*{\fib}{fib}
\DeclareMathOperator*{\cofib}{cofib}
\DeclareMathOperator*{\fibprod}{\times}
\DeclareMathOperator{\THH}{THH}
\DeclareMathOperator{\MU}{MU}
\DeclareMathOperator{\BP}{BP}
\DeclareMathOperator*{\sma}{\wedge}
\DeclareMathOperator*{\tens}{\otimes}
\title{Skeleta and categories of algebras}
\author{Jonathan Beardsley, Tyler Lawson}
\begin{document}
\maketitle

\begin{abstract}
	We define a notion of a connectivity structure on an $\infty$-category, analogous to a $t$-structure but applicable in unstable contexts---such as spaces, or algebras over an operad. This allows us to generalize notions of n-skeleta, minimal skeleta, and cellular approximation from the category of spaces. For modules over an Eilenberg--Mac Lane spectrum, these are closely related to the notion of projective amplitude.
	
	We apply these to ring spectra, where they can be detected via the cotangent complex and higher Hochschild homology with coefficients. We show that the spectra $Y(n)$ of chromatic homotopy theory are minimal skeleta for $H\mathbb{F}_2$ in the category of associative ring spectra. Similarly, Ravenel's spectra $T(n)$ are shown to be minimal skeleta for $BP$ in the same way, which proves that these admit canonical associative algebra structures.
\end{abstract}

\tableofcontents

\section{Introduction}

Within homotopy theory, it is commonly understood (if not always made explicit) that the homology groups of a space $X$ are closely coupled with how $X$ can be built as a CW-complex. If $X$ is equivalent to a CW-complex, then $C_*(X)$ is quasi-isomorphic to a chain complex with one free generator for each cell in the CW-structure. For 1-connected spaces $X$ we can do better, because a converse holds: if $\wt{C}_*(X)$ is quasi-isomorphic to a complex that is levelwise free, then there exists a CW-complex equivalent to $X$ with one cell for each generator.

The engine that makes this technique possible is that we can understand the relation between connectivity and cell attachment. For maps of simply-connected spaces, homology detects connectivity. Further, suppose we have built the $n$-skeleton for a CW-approximation of $X$: an $n$-connected map $X^{(n)} \to X$. The set of possible ways to ``extend this map to an $(n+1)$-cell'' is governed by the relative homotopy group $\pi_{n+1}(X,X^{(n)})$; similarly, given a map of chain complexes $C \to D$, the set of possible ways to ``extend this map to an $(n+1)$-cell'' is governed by the relative homology group $H_{n+1}(D,C)$. The map
\[
  \pi_{n+1}(X,X^{(n)}) \to H_{n+1}(C_*X,C_* X^{(n)})
\]
is an isomorphism by the relative Hurewicz theorem, making it possible to lift the attaching map for each $(n+1)$-dimensional generator uniquely from the chain level to the space level.

With $1$-connected rational or $p$-complete spaces, the same argument works but our job is easier: there exists a rational or $p$-complete CW-structure on $X$ with one cell for each element in a basis of $H_*(X;\mb Q)$ or $H_*(X;\mb F_p)$ respectively. This argument also works for connective spectra, and this is a core component of understanding objects in \emph{stable} homotopy theory via their homology. This perspective also makes it clear that, because the Blakers--Massey excision theorem can be used to give an isomorphism
\[
  \pi_n(X,X^{(n)}) \to \pi_{n+1}(\Sigma X, \Sigma X^{(n)}),
\]
it is possible to lift (up to homotopy equivalence) any CW-structure from $\Sigma X$ to $X$.

This paper studies these types of cell attachments in other categories. Our main application is to demonstrate that certain spectra in chromatic homotopy theory have natural multiplicative structures: specifically, that they arise as ``skeleta'' in cellular constructions of known ring spectra. The desire to having an analogous theory of homology for commutative ring spectra, detecting cell attachment, was one motivation for the development of topological Andr\'e--Quillen homology by Kriz, Basterra, and Mandell \cite{kriz-towers, basterra-andrequillen, basterra-mandell-taqcohomology}. However, our first goal will be to understand exactly what it means to say ``$K$ is an $n$-skeleton of $X$'' in a rather broad generality.

To say that $K$ is an $n$-skeleton of $X$ in the terms above relies on the existence of a particular type of \emph{construction} of $X$ that produces $K$ at some stage. We hope to demonstrate that many of the useful homotopy-theoretic properties of a skeleton $K$ can be derived from the definition of connectivity of a map.

\begin{defn*}
  An object $K$ is \emph{$k$-skeletal} if it has the left lifting property with respect to $k$-connected maps: for any $k$-connected map $X \to Y$, every map $K \to Y$ lifts to a map $K \to X$.

  A map $f\co K \to X$ makes $K$ into a \emph{$k$-skeleton} of $X$ if $K$ is $k$-skeletal and $f$ is $k$-connected.
\end{defn*}

For example, a space is $k$-skeletal if and only if (in the homotopy category) it is a retract of a $k$-dimensional CW-complex (cf.~Example \ref{example: retracts of kdim CW}). However, this definition is cell-free, interacts well with adjunctions, and has well-behaved generalizations even if we adopt a flexible notion of what ``connectivity'' means. When we \emph{can} construct objects inductively with cells, the stages will naturally be skeleta. More, because the definition of skeletal objects is in terms of lifting properties, we can often detect $k$-skeletal objects by testing analogues of cohomology.

This will allow us to give criteria under which the construction of a skeleton can be lifted from one category to another. Our engine for lifting CW-structures in terms of relative homotopy and homology is a special property of an \emph{adjunction} between two categories. The general notion that we introduce is that of a \emph{subductive adjunction}: one where a $k$-connected map $X \to Y$ induces a $k$-Cartesian square
\[
  \xymatrix{
    X \ar[r] \ar[d] & Y \ar[d] \\
    RL(X) \ar[r] & RL(Y).
  }
\]
Subductive adjunctions capture the situation where attaching maps for cells can be lifted along the functor $L$. If, instead, $k$-connected maps are taken to $(k+1)$-Cartesian squares, we have a \emph{strongly subductive adjunction}, where attaching maps lift uniquely.\footnote{The term ``subductive'' is loosely borrowed from geology; we think of the adjunction as an interaction between two tectonic plates, with the left adjoint $L\co \cC \to \dD$ gradually pushing objects from $\cC$ under the surface. Even though this process can (and typically does) lose information from the category $\cC$, it is still gradual enough that (with enough effort) we can gradually dig up information about objects and maps to reassemble them.}

\subsection{Main applications}

The proofs of the nilpotence and periodicity theorems by Devinatz--Hopkins--Smith make heavy use of ring spectra $X(n)$, built as Thom spectra on the spaces $\Omega SU(n)$. These assemble into a directed system
\[
  \mb S = X(1) \to X(2) \to X(3) \to \dots \to \MU
\]
of ring spectra (specifically, $\mb E_2$ ring spectra) and on integral homology this realizes a filtration of $H_* \MU$ by polynomial subalgebras,
\[
  \mb Z \to \mb Z[x_1] \to \mb Z[x_1,x_2] \to \dots \to H_* \MU
\]
where $|x_i| = 2i$. When working $p$-locally, however, $\MU_{(p)}$ has a split summand $\BP$ that is more tractable for computational applications, and the filtration of $\MU$ by $X(n)$ can be replaced by a filtration
\[
  \mb S_{(p)} = T(0) \to T(1) \to T(2) \to \dots \to \BP.
\]
On homology this realizes a filtration of $H_* \BP$,
\[
  \mb Z_{(p)} \to \mb Z_{(p)}[t_1] \to \mb Z_{(p)}[t_1,t_2] \to \dots \to H_* \BP,
\]
where $|t_i| = 2p^i-2$. However, the spectrum $T(n)$ is usually constructed as a split summand of $X(p^n)_{(p)}$ and this splitting does not grant it structured multiplication.

We will show that this sequence can be realized in a \emph{canonical} way as a sequence of associative ring spectra. We know that $\BP$ exists and has an associative structure, and we can construct a cellular approximation by attaching cells inductively---but within the category of \emph{associative ring spectra}, rather than just within the category of spectra. We will show that the $T(n)$ arise quite naturally as skeleta of $\BP$ as an associative algebra, and that the necessary cells are detected by topological Hochschild homology with coefficients. (Cellular constructions, in various categories, of objects related to $\BP$ are by no means new: see, for example, work of Priddy \cite{priddy-cellularBP}, Hu--Kriz--May \cite{hu-kriz-may-cores}, or Baker \cite{baker-closeencounters}.) By similar methods, it is also possible to show that the $2$-primary spectra $Y(n)$ of Mahowald--Ravenel--Shick \cite{mahowald-ravenel-shick-telescope} arise as skeleta of the Eilenberg--Mac Lane spectrum for $\mb F_2$, although this can be shown more directly using their construction as Thom spectra.

\subsection{Outline}

In Section~\ref{sec:connectivity} we will introduce the notion of a \emph{connectivity structure} on $\cC$, generalizing the standard notion of ``$k$-connectedness'' for spaces and spectra, and prove several properties. In the case where $\cC$ is stable, the most common way to get a connectivity structure is to have a $t$-structure on $\cC$. Several of our categories of interest, such as categories of algebras, are not stable, but they still inherit connectivity structures from a forgetful functor to an underlying stable category.

Section~\ref{sec:skeleta} introduces \emph{skeleta} and \emph{skeletal objects} in $\cC$, defined in terms of lifting against sufficiently connected maps, and proves basic properties. We will also discuss the notion of \emph{cells} in $\cC$ and give a version of CW-approximation when there is a sufficient supply of cells. We will introduce \emph{minimal skeleta}, which (when they exist) are unique up to homotopy equivalence.

Section~\ref{sec:detectingskeleta} discusses the \emph{detection} of skeletal objects. The chief application is to show that tools like stabilization can be used to detect skeletality so long as $k$-connected maps admit a Postnikov-like decomposition.

Section~\ref{sec:lifting}, by contrast, discusses \emph{lifting} skeleta across an adjunction with left adjoint $L$. We introduce subductive adjunctions and how they allow analogues of lifting CW-structures: inductively, we can lift attaching maps of cells along $L$ from the target category to the source category. This allows us to obtain results lifting the construction of skeleta, and minimal skeleta, along $L$.

Section~\ref{sec:excision} discusses \emph{excision} theorems, and how their presence often makes stabilization into a subductive adjunction. This makes it possible to employ homology theories for ring spectra, such as those introduced by Basterra and Mandell \cite{basterra-mandell-iteratedthh}.

Finally, Section~\ref{sec:applications} discusses our applications. For the derived category of a ring, we find that skeletality is closely related to the notion of \emph{projective amplitude}, and for spaces we find that $k$-skeletality is equivalent to a constraint on the projective amplitude for chains on the universal cover.

We then work with algebra spectra, showing that cellular constructions of associative algebras are governed by topological Hochschild homology. For these, we need to make heavy use Ching--Harper's results on excision for categories of algebras over an operad.

\subsection{Notation and conventions}

Throughout this paper, we make the assumption that $\cC$ is an $\infty$-category in the sense of \cite{lurie-htt} that admits pullbacks. We write $h\cC$ for the associated homotopy category.

\subsection{Acknowledgements}

The authors would like to thank Jeremy Hahn for being a close part of questions and discussions that instigated this project, and 
Clark Barwick,
Tim Campion,
Ian Coley,
and
Denis Nardin
for help with material related to this paper. We also thank an anonymous referee for a number of helpful clarifications. 

This work is an outgrowth of conversations that took place while the authors were attending the workshop ``Derived algebraic geometry and chromatic homotopy theory'' during the \textit{Homotopy Harnessing Higher Structures} program at the Isaac Newton Institute (EPSRC grant numbers EP/K032208/1 and EP/R014604/1). The first author's travel to the workshop was funded by NSF grant DMS-1833295. The first author was also partially supported by NSF grant DMS-1745583 and Simons Foundation Collaboration Grant \#853272 while working on this paper. The second author was partially supported by NSF grant DMS-1610408.

\section{Connectivity structures}
\label{sec:connectivity}

\subsection{Definitions}

\begin{defn}
  \label{def:connectivity}
  Suppose that $\cC$ is an $\infty$-category with homotopy pullbacks. A \emph{connectivity structure} $T$ on $\cC$ consists of, for each $k \in \mb Z$, a collection of \emph{$k$-connected maps} in the homotopy category $h\cC$, subject to the following axioms.
  \begin{enumerate}
  \item A map that is $k$-connected is also $(k-1)$-connected.
  \item Equivalences are $k$-connected for all $k$.
  \item The homotopy pullback of a $k$-connected map is $k$-connected.
  \item Suppose we have composable maps $f\co X \to Y$ and $g\co Y \to Z$.
    \begin{enumerate}
    \item If $f$ and $g$ are $k$-connected, so is $gf$.
    \item If $f$ is $(k-1)$-connected and $gf$ is $k$-connected, then $g$ is $k$-connected.
    \item If $gf$ is $k$-connected and $g$ is $(k+1)$-connected, then $f$ is $k$-connected.
    \end{enumerate}
  \end{enumerate}
  When specifying the connectivity structure is necessary, we refer to a map as $k$-connected in $T$.

  A functor $F\co \cC \to \dD$ \emph{preserves connectivity} if, whenever $f$ is $k$-connected, the image $F(f)$ is $k$-connected.
\end{defn}

\begin{rmk}
  For convenience, we will refer to $f$ as $\infty$-connected if it is $k$-connected for all $k$. Similarly, we will refer to all maps as $(-\infty)$-connected. We define a connectivity structure to be \emph{left complete} if an $\infty$-connected map is an equivalence.\footnote{This is by analogy with $t$-structures: see \S\ref{sec:stableconnectivity}.}
\end{rmk}

The definition of a connectivity structure is invariant under translation.
\begin{defn}
  For $d \in \mb Z$, the connectivity structure $(T+d)$ obtained by \emph{shifting $T$ by $d$} is defined as follows. A map $f$ is $k$-connected in $T$ if and only if $f$ is $(k+d)$-connected in $(T+d)$.

  A functor $F\co \cC \to \dD$ \emph{adds (at least) $d$ to connectivity} if, whenever $f$ is $k$-connected, the image $F(f)$ is $(k+d)$-connected.\footnote{In other words, it is a connectivity-preserving functor $(\cC,T_\cC+d) \to (\dD, T_\dD)$.}
\end{defn}

\begin{defn}
  \label{def:intersectionstructure}
  If $\{T_i\}$ is a collection of connectivity structures, the \emph{intersection connectivity structure} $\cap T_i$ is defined as follows. A map $f$ is $k$-connected in $\cap T_i$ if and only if, for all $i$, $f$ is $k$-connected in $T_i$.
\end{defn}

\begin{defn}
  \label{def:liftedstructure}
  Suppose that $R\co \dD \to \cC$ is a functor that preserves homotopy pullbacks and that $\cC$ has a connectivity structure $T$. Then there is a connectivity structure $R^{-1}(T)$ on $\dD$, defined as follows. A map $f\co X \to Y$ in $\dD$ is $k$-connected in $R^{-1}(T)$ if and only if the image $Rf$ is $k$-connected in $T$. We refer to this as the connectivity structure \emph{lifted} from $\cC$, and say that the functor $R$ \emph{reflects connectivity}.
\end{defn}

This is possible whenever $R$ is a right adjoint, such as a forgetful functor. The lifted structure is always the maximal structure such that $R$ preserves connectivity.

\begin{defn}
  If $\cC$ has homotopy pushouts, a connectivity structure on $\cC$ is \emph{compatible with cobase change} if the homotopy pushout of any $k$-connected map is $k$-connected.

  Suppose that $\cC$ has $K$-indexed homotopy colimits. A connectivity structure on $\cC$ is \emph{compatible with $K$-indexed (homotopy) colimits} if, whenever a natural transformation $F \to G$ of $K$-indexed diagrams is $k$-connected objectwise, the map $\hocolim_K F \to \hocolim_K G$ is $k$-connected.
\end{defn}

\subsection{Examples}

\begin{exam}
  Every $\cC$ has available the \emph{minimal} connectivity structure $T_{min}$, where only equivalences are $k$-connected for $k \neq -\infty$. It also has the \emph{maximal} connectivity structure $T_{max}$, where all maps are $\infty$-connected.

  Every functor out of the minimal connectivity structure preserves connectivity; every functor into the maximal connectivity structure preserves connectivity.
\end{exam}

\begin{exam}
  \label{exam:spaces}
  A map $f\co X \to Y$ in the category $\Spaces$ of spaces is $k$-connected if (for all choices of basepoint) $\pi_j(f)$ is an isomorphism for all $j<k$, and $\pi_k(f)$ is a surjection.
\end{exam}

\begin{exam}\label{exam:spectratstructure}
  A morphism $f\co X\to Y$ of objects in the category $\Sp$ of spectra is called $k$-connected if $\pi_j(f)$ is an isomorphism for all $j<k$, and $\pi_j(f)$ is a surjection.

  More generally, if $\cC$ is stable and has a $t$-structure, we can give $\cC$ a connectivity structure by declaring that a map $f\co X \to Y$ is $k$-connected if its cofiber $\cofib(f)$ is $k$-connected: $\tau_{\leq k} \cofib(f)$ is trivial (cf.~Sections \ref{sec:stableconnectivity} and \ref{sec:stabilization}).
\end{exam}

\begin{rmk}
By \cite[1.2.1.6]{lurie-higheralgebra}, if $\cC$ is stable and the connectivity structure on $\cC$ is determined by a $t$-structure, then the connectivity structure is compatible with cobase change and all homotopy colimits that exist in $\cC$.
\end{rmk}

\begin{exam}
  If $\cC_i$ have connectivity structures, then the product $\prod_i \cC_i$ has a \emph{product connectivity structure}: a map $(X_i) \to (Y_i)$ is $k$-connected when each map $X_i \to Y_i$ is. This is the intersection of the connectivity structures lifted from each $\cC_i$.
\end{exam}

\begin{exam}
  If $\mf O$ is an (ordinary) operad, the category of $\mf O$-algebras in $\cC$ (if it has pullbacks) has a connectivity structure lifted from $\cC$: the forgetful functor preserves all homotopy limits that exist in $\cC$.
\end{exam}

\begin{exam}\label{exam:slicecoslice}
  If $\cC$ has a connectivity structure, then the slice and coslice categories $\cC_{/Y}$ and $\cC_{X/}$ have a connectivity structure lifted from $\cC$.
\end{exam}

\begin{exam}\label{exam:topoi}
  Suppose $\xX$ is an $\infty$-topos in the sense of \cite{lurie-htt}. Then $\xX$ admits a connectivity structure where a morphism is $k$-connected precisely when it is $(k+1)$-connective in the sense of \cite[6.5.1.10]{lurie-htt}. Specifically, a morphism $f\colon X\to Y$ in $\cC$ is $k$-connected if it is an effective epimorphism as in \cite[6.2.3.5]{lurie-htt} and if $\pi_i(f)\simeq\ast$ (for the toposic homotopy groups of \cite[6.5.1]{lurie-htt}) for all $0\leq i\leq k$. The first three axioms of Definition \ref{def:connectivity}, as well as part (a) of the fourth axiom, are satisfied as a result of \cite[Proposition 6.5.1.16]{lurie-htt}.
  
   For parts (b) and (c) of the fourth axiom, recall that by \cite[6.1.0.6]{lurie-htt} any $\infty$-topos $\xX$ is a left exact localization of a presheaf $\infty$-topos $\Pres(\cC)=Fun(\cC,\Spaces)$. If we say that $\xX\simeq L\Pres(\cC)$ for a localization functor $L$ then a morphism $f\in\xX$ is $k$-connected if and only if it is $k$-connected in $\Pres(\cC)$. One direction of this assertion follows from \cite[6.5.1.15]{lurie-htt}, i.e.~if $f\simeq L(f_0)$ is $k$-connected then so is $f_0$. For the other direction, notice that the left exact left adjoint $L$ preserves colimits and truncations (cf.~\cite[5.5.6.28]{lurie-htt}) so preserves $k$-connectedness (since $k$-connectedness in an $\infty$-topos can be determined by truncations \cite[6.5.1.12]{lurie-htt}). In the presheaf topos $\Pres(\cC)$, $k$-truncation, and therefore $k$-connectedness, is determined ``pointwise'' in $\Spaces$, the statements follow from their usual proofs in $\Spaces$. 
   
   Note that by \cite[3.6.6]{abfj--blakersmassey} the definition of a $k$-connected morphism in an $\infty$-topos $\xX$ described above agrees with the definition of a $k$-connected morphism given in Example \ref{exam:spaces} when $\xX\simeq\Spaces$.
\end{exam}  

\begin{rmk}
  There is a discrepancy between the definition of $k$-connected in an $\infty$-topos given in \cite{lurie-htt} and in our notion of a connectivity structure in that, in an $\infty$-topos, the concept of being $k$-connected only makes sense for $k\geq -2$. However we can extend the notion of $k$-connectedness in an $\infty$-topos described in Example \ref{exam:topoi} by saying that all morphisms are $k$-connected for any $k\leq -2$.
\end{rmk}

\subsection{Basic properties of connectivity}

\begin{prop}\label{prop:retract}
  Suppose that we have a retract diagram $A \to X \to A$. Then the map $A \to X$ is $k$-connected if and only if the map $X \to A$ is $(k+1)$-connected.
\end{prop}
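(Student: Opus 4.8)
The plan is to exploit the single structural fact that the two maps of a retract compose to an equivalence, and then read off each implication directly from the composition axioms of Definition~\ref{def:connectivity}. Write $i\co A \to X$ for the inclusion and $r\co X \to A$ for the retraction, so that the retract condition says $r \circ i \simeq \id_A$. By axiom (2) this composite, being an equivalence, is $m$-connected for every $m$; this is the one input that feeds both directions, and it is what converts a statement about the two individual maps into a pair of cancellation problems.

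For the forward direction, suppose $i$ is $k$-connected. I would apply part (b) of axiom (4) at the level $k+1$: instantiating that law with $k$ replaced by $k+1$ reads ``if $f$ is $k$-connected and $gf$ is $(k+1)$-connected, then $g$ is $(k+1)$-connected.'' Taking $f = i$ and $g = r$, the hypothesis on $f$ is exactly our assumption, and $gf = r\circ i$ is $(k+1)$-connected because it is an equivalence; hence $r$ is $(k+1)$-connected. For the reverse direction, suppose $r$ is $(k+1)$-connected. Here I would apply part (c) of axiom (4) unshifted: ``if $gf$ is $k$-connected and $g$ is $(k+1)$-connected, then $f$ is $k$-connected.'' With the same $f = i$, $g = r$, the composite $r \circ i$ is $k$-connected (again because it is an equivalence) and $r$ is $(k+1)$-connected by hypothesis, so the law yields that $i$ is $k$-connected.

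The content is purely formal: each implication is a single invocation of a one-sided cancellation law, with the equivalence $r \circ i$ supplying the connectivity of the composite in both cases, so there is no genuine obstacle to overcome. The only point requiring care is the index bookkeeping. The asymmetry between $k$ and $k+1$ in the statement mirrors precisely the asymmetry between 4(b) and 4(c), and one must pair 4(b) (applied after shifting by one) with the forward direction and 4(c) (applied unshifted) with the reverse direction; mismatching the axiom to the direction is the only way to go wrong. Indeed, the cleanness of the argument suggests that the cancellation axioms were formulated exactly so that retracts exhibit this one-step shift in connectivity.
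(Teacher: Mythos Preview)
Your proof is correct and is exactly the argument the paper has in mind: the paper's own proof is a one-line appeal to axiom (2) together with parts (b) and (c) of axiom (4), and you have simply spelled this out, including the index shift needed to invoke 4(b) at level $k+1$.
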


\begin{proof}
  This follows immediately from the second axiom and items (b) and (c) of the fourth axiom of Definition \ref{def:connectivity}.
\end{proof}

\begin{prop}
  \label{prop:pullbackconnectivity}
  Suppose that we have a commutative diagram
  \[
    \xymatrix{
      X \ar[d] \ar[r] & Y \ar[d] & Z\ar[l] \ar[d] \\
      X'\ar[r] & Y' & Z' \ar[l]
    }
  \]
  where the middle vertical map is $(k+1)$-connected and the outer vertical maps are $k$-connected. Then the induced map of homotopy pullbacks $X \times_Y Z \to X' \times_{Y'} Z'$ is $k$-connected.
\end{prop}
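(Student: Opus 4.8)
The plan is to factor the induced map of homotopy pullbacks into a composite of three maps, each of which is visibly a homotopy pullback of one of the given vertical maps (or of a map derived from them), and then to invoke the composition axiom 4(a). Writing $P = X \times_Y Z$ and $P' = X' \times_{Y'} Z'$, I would introduce the intermediate pullbacks $P_1 = X \times_{Y'} Z$ and $Q = X' \times_{Y'} Z$, where the maps $X \to Y'$ and $Z \to Y'$ are the evident composites through $Y$. These fit into a factorization $P \to P_1 \to Q \to P'$, and it suffices to show each of the three maps is $k$-connected.

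The two ``outer'' steps are the easy ones. First I would observe, via the pasting calculus for homotopy pullbacks, that $P_1 \to Q$ is the base change of $X \to X'$ (since $X \times_{Y'} Z \simeq X \times_{X'} (X' \times_{Y'} Z)$, using the elementary identity $A \times_B B \simeq A$ for a pullback along an identity), and that $Q \to P'$ is the base change of $Z \to Z'$ (since $X' \times_{Y'} Z \simeq (X' \times_{Y'} Z') \times_{Z'} Z$). As $X \to X'$ and $Z \to Z'$ are $k$-connected by hypothesis, axiom 3 gives that both $P_1 \to Q$ and $Q \to P'$ are $k$-connected.

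The main work is the step $P \to P_1$, which changes the base of the pullback from $Y$ to $Y'$. Here I would use the identification $X \times_Y Z \simeq (X \times_{Y'} Z) \times_{Y \times_{Y'} Y} Y$, in which $\Delta \co Y \to Y \times_{Y'} Y$ is the relative diagonal of $f \co Y \to Y'$. This exhibits $P \to P_1$ as the base change of $\Delta$, so by axiom 3 it suffices to show that $\Delta$ is $k$-connected. This is the crux, and it is exactly where the extra degree of connectivity on the middle vertical map is consumed: a $(k+1)$-connected map has a $k$-connected relative diagonal. To see it, note that the first projection $p_1 \co Y \times_{Y'} Y \to Y$ is itself the base change of $f$ along $f$, hence $(k+1)$-connected by axiom 3, while $p_1 \circ \Delta = \id_Y$ is an equivalence and therefore $k$-connected by axiom 2. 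Axiom 4(c), applied to this composite with $g = p_1$ being $(k+1)$-connected and $g \circ \Delta$ being $k$-connected, forces $\Delta$ to be $k$-connected.

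With all three maps $P \to P_1$, $P_1 \to Q$, and $Q \to P'$ shown to be $k$-connected, axiom 4(a) gives that the composite $P \to P'$ is $k$-connected, as desired. I expect the only genuinely delicate point to be the verification of the pullback identities---especially $X \times_Y Z \simeq (X \times_{Y'} Z) \times_{Y \times_{Y'} Y} Y$---which amount to repeated use of the pasting lemma for homotopy pullbacks and the triviality of pulling back along an identity; none of this bookkeeping uses the connectivity structure, only the standard calculus of homotopy pullbacks that $\cC$ is assumed to possess.
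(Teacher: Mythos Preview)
Your proof is correct. Both you and the paper reduce to repeated applications of axiom 3 (pullback stability) together with a single use of axiom 4(c) to extract the ``diagonal'' step where the extra degree of connectivity on $Y \to Y'$ is consumed; the underlying idea is the same.

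The organization differs slightly. The paper uses a two-step factorization
\[
  X \times_Y Z \to X' \times_{Y'} Z \to X' \times_{Y'} Z',
\]
handling the first map by first showing $X \to X' \times_{Y'} Y$ is $k$-connected (via 4(c), since its composite with the $(k+1)$-connected projection $X' \times_{Y'} Y \to X'$ is the given $k$-connected map $X \to X'$) and then pulling back along $Z \to Y$. You instead use a three-step factorization through $X \times_{Y'} Z$, isolating the relative diagonal $\Delta\co Y \to Y \times_{Y'} Y$ explicitly. Your route makes the role of the diagonal more visible---indeed, you are essentially proving the paper's Corollary on diagonal connectivity directly from the axioms and then using it---while the paper's route is a bit more economical. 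Neither approach has any advantage in generality or strength.
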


\begin{proof}
  Consider the composite map
  \[
    X \times_Y Z \to X' \times_{Y'} Z \to X' \times_{Y'} Z'.
  \]
  We will show that both maps are $k$-connected, which implies the desired result.
  
  The map $X' \times_{Y'} Z \to X' \times_{Y'} Z'$ is the pullback of the $k$-connected map $Z \to Z'$ along the map $X' \to Y'$, and so it is $k$-connected.

  The map $X' \times_{Y'} Y \to X'$ is the pullback of a $(k+1)$-connected map $Y \to Y'$ along $X' \to Y'$, and so it is $(k+1)$-connected. The composite $X \to X' \times_{Y'} Y \to X'$ is $k$-connected. Therefore, the map $X \to X' \times_{Y'} Y$ is $k$-connected. Taking pullback along the map $Z \to Y$, we find that the map $X \times_Y Z \to X' \times_{Y'} Z$ is $k$-connected.
\end{proof}

By considering the special case where $X=Y=Z=X'=Z'$, we arrive at the following.
\begin{cor}
  \label{cor:diagonalconnectivity}
  If $f\co X \to Y$ is $(k+1)$-connected, then the induced map $\Delta\co X \to X \times_Y X$ is $k$-connected.
\end{cor}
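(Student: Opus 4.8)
The plan is to obtain this as the announced special case of Proposition \ref{prop:pullbackconnectivity}, by choosing a comparison of cospans whose induced map on homotopy pullbacks is exactly the diagonal $\Delta$.

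First I would set up the diagram. In the notation of Proposition \ref{prop:pullbackconnectivity}, I take the top cospan to be $X \xrightarrow{\id} X \xleftarrow{\id} X$, the bottom cospan to be $X \xrightarrow{f} Y \xleftarrow{f} X$, and the three vertical maps to be $\id_X$, $f$, $\id_X$; that is,
\[
  \xymatrix{
    X \ar[d]_{\id} \ar[r]^{\id} & X \ar[d]^{f} & X \ar[l]_{\id} \ar[d]^{\id} \\
    X \ar[r]_{f} & Y & X. \ar[l]^{f}
  }
\]
This is precisely the specialization in which the proposition's objects $X$, $Y$, $Z$, $X'$, $Z'$ are all the source of $f$ while its object $Y'$ is the target $Y$.

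Next I would verify the hypotheses. The middle vertical map is $f$, which is $(k+1)$-connected by assumption, and the two outer vertical maps are identities, hence equivalences and therefore $k$-connected by the second axiom of Definition \ref{def:connectivity}. Proposition \ref{prop:pullbackconnectivity} then yields that the induced map of homotopy pullbacks is $k$-connected.

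It remains to identify this induced map with $\Delta$. The top homotopy pullback $X \times_X X$ is canonically equivalent to $X$, since it is a pullback along identity maps, while the bottom homotopy pullback is $X \times_Y X$; tracing through the construction, the comparison map $X \simeq X \times_X X \to X \times_Y X$ sends a point to the pair consisting of it with itself, equipped with the constant nullhomotopy of $f$ against $f$, which is exactly $\Delta$. This final identification is the only step requiring genuine (though routine) verification---that the universal map on pullbacks induced by the displayed diagram coincides with $\Delta$; everything else is substitution into Proposition \ref{prop:pullbackconnectivity}. Granting it, the $k$-connectedness of $\Delta$ is immediate.
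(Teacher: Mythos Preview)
Your proof is correct and is exactly the specialization the paper intends: the paper simply remarks that the corollary follows from Proposition~\ref{prop:pullbackconnectivity} by taking $X=Y=Z=X'=Z'$, which is precisely the diagram you wrote down. Your added verification that the induced map on pullbacks is the diagonal is routine and not spelled out in the paper, but the approach is identical.
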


\begin{rmk}
If $\cC$ is an $\infty$-topos then \cite[6.5.1.18]{lurie-htt} gives a partial converse to the above corollary. Specifically, if the map $\Delta$ is $k$-connected and also an effective epimorphism then $f$ is $(k+1)$-connected.
\end{rmk}

As another special case, we can take $Y=Y'$ and $Z=Z'$.

\begin{cor}\label{cor:pullbackpreservessliceconnectivity}
Let $f\co Z\to Y$ be a morphism in a category $\cC$ with a connectivity structure. Then the pullback functor $f^\ast\co \cC_{/Y}\to \cC_{/Z}$ preserves connectivity (for the connectivity structures lifted from $\cC$).
\end{cor}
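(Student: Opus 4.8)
The plan is to deduce this directly from Proposition \ref{prop:pullbackconnectivity}, exactly as the preceding sentence suggests, by specializing to the case $Y = Y'$ and $Z = Z'$ with the two opposite legs taken to be identity maps.

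First I would unwind what the statement is asking. By Example \ref{exam:slicecoslice}, both $\cC_{/Y}$ and $\cC_{/Z}$ carry the connectivity structures lifted from $\cC$ along their forgetful functors, so by Definition \ref{def:liftedstructure} a morphism in either slice is $k$-connected precisely when its underlying morphism in $\cC$ is. Hence, to show that $f^\ast$ preserves connectivity, it suffices to take an arbitrary morphism $g\co (A \to Y) \to (B \to Y)$ of $\cC_{/Y}$ whose underlying map $A \to B$ in $\cC$ is $k$-connected, and to prove that the underlying map of $f^\ast(g)$---the map $A \times_Y Z \to B \times_Y Z$ induced on homotopy pullbacks along $f\co Z \to Y$---is $k$-connected in $\cC$.

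Next I would feed this into Proposition \ref{prop:pullbackconnectivity} applied to the diagram
\[
  \xymatrix{
    A \ar[d]_{g} \ar[r] & Y \ar[d]^{\id} & Z \ar[l]_{f} \ar[d]^{\id} \\
    B \ar[r] & Y & Z \ar[l]_{f}
  }
\]
Here the middle vertical map is $\id_Y$, which is an equivalence and hence $(k+1)$-connected by the second axiom of Definition \ref{def:connectivity}; the right-hand vertical map is $\id_Z$, which is $k$-connected for the same reason; and the left-hand vertical map is $g$, which is $k$-connected by assumption. The hypotheses of the Proposition are therefore met, and its conclusion is precisely that the induced map $A \times_Y Z \to B \times_Y Z$ of homotopy pullbacks is $k$-connected, which is exactly what we needed.

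I do not expect any genuine obstacle here: the entire content is carried by Proposition \ref{prop:pullbackconnectivity}, and the corollary is a clean specialization in which the two ``constant'' legs become identities. The only points demanding minor care are bookkeeping---confirming that the connectivity structures on the slice categories are the lifted ones of Example \ref{exam:slicecoslice} (which presupposes that the forgetful functors preserve the relevant homotopy pullbacks), and noting that it is legitimate to use $\id_Y$ and $\id_Z$ as vertical maps, which is immediate since equivalences are $k$-connected for every $k$.
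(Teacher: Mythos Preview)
Your proposal is correct and takes essentially the same approach as the paper: the corollary is stated immediately after the sentence ``As another special case, we can take $Y=Y'$ and $Z=Z'$,'' and your argument is precisely this specialization of Proposition~\ref{prop:pullbackconnectivity}, with the identity maps on $Y$ and $Z$ supplying the required $(k+1)$- and $k$-connectivity. Your added bookkeeping about the lifted connectivity structures on the slices is accurate and makes explicit what the paper leaves implicit.
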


The following is a dual argument to Proposition~\ref{prop:pullbackconnectivity}.
\begin{prop}
  \label{prop:pushoutconnectivity}
  Suppose that the connectivity structure on $\cC$ is compatible with cobase change, and that we have a commutative diagram
  \[
    \xymatrix{
      A \ar[d] & B \ar[d] \ar[l] \ar[r] & C \ar[d] \\
      A' & B' \ar[l] \ar[r] & C'
    }
  \]
  where the middle vertical map is $(k-1)$-connected and the outer vertical maps are $k$-connected. Then the induced map of homotopy pushouts $A \amalg_B C \to A' \amalg_{B'} C'$ is $k$-connected.
\end{prop}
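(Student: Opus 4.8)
The plan is to imitate the factorization strategy used for Proposition~\ref{prop:pullbackconnectivity}, while accounting for an asymmetry: here the apex $B$ is the \emph{source} of both legs, so (unlike in the pullback case) there is no map $B' \to C$ and we cannot replace the apex until the legs have already been replaced. Accordingly, I would factor the induced map of pushouts through the intermediate spans obtained by changing one vertex at a time,
\[
A \amalg_B C \longrightarrow A' \amalg_B C \longrightarrow A' \amalg_B C' \longrightarrow A' \amalg_{B'} C',
\]
and show each of the three maps is $k$-connected; the result then follows from item (a) of the fourth axiom of Definition~\ref{def:connectivity}.

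The two leg-changing maps are straightforward. Using the pasting of homotopy pushouts, $A' \amalg_B C \simeq (A \amalg_B C) \amalg_A A'$, so the first map is the cobase change of the $k$-connected map $A \to A'$ along $A \to A\amalg_B C$; since the connectivity structure is assumed compatible with cobase change, this map is $k$-connected, and symmetrically for $A'\amalg_B C \to A'\amalg_B C'$ and $C \to C'$. The entire difficulty is therefore concentrated in the apex-changing map $\phi\co A' \amalg_B C' \to A' \amalg_{B'} C'$, where the only available input is that $B \to B'$ is merely $(k-1)$-connected while we must prove $\phi$ is $k$-connected. My plan for $\phi$ is to recognize it as a cobase change of the codiagonal $\nabla\co B' \amalg_B B' \to B'$ via the homotopy pushout square
\[
\xymatrix{
B'\amalg_B B' \ar[r]^-{\nabla} \ar[d]_-{u} & B' \ar[d] \\
A'\amalg_B C' \ar[r]^-{\phi} & A'\amalg_{B'} C',
}
\]
in which $u$ sends the two copies of $B'$ into $A'$ and $C'$ respectively. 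I would verify this is a pushout by the universal property: a cocone under the top-left span is exactly a pair of maps $A' \to T$, $C' \to T$ whose restrictions to $B'$ agree, which is the same as a map out of $A'\amalg_{B'} C'$.

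The hard part will be the apparent gain of one in connectivity---passing from the $(k-1)$-connected $B \to B'$ to a $k$-connected conclusion---and this is precisely the assertion that $\nabla$ is $k$-connected, one better than its defining map. I would extract this from the retract criterion of Proposition~\ref{prop:retract}: the first inclusion $i_1\co B' \to B'\amalg_B B'$ is the cobase change of $B \to B'$ (the pushout of $B \to B'$ along itself), hence $(k-1)$-connected, and since $\nabla \circ i_1 = \id_{B'}$ we have a retract $B' \to B'\amalg_B B' \to B'$; Proposition~\ref{prop:retract} then upgrades the $(k-1)$-connectivity of $i_1$ to $k$-connectivity of $\nabla$. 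Compatibility with cobase change makes $\phi$ $k$-connected, and composing the three moves completes the argument. The main thing to watch is the bookkeeping in the pushout identification for $\phi$ and the verification that $i_1$ really is a cobase change of $B \to B'$; everything else is formal manipulation of the axioms.
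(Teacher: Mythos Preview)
Your proof is correct, but takes a different route from the paper's. The paper factors into only \emph{two} steps,
\[
A \amalg_B C \longrightarrow A \amalg_B C' \longrightarrow A' \amalg_{B'} C',
\]
and handles the second step by a direct appeal to axiom~4(b): the map $A \to A \amalg_B B'$ is a cobase change of $B \to B'$, hence $(k-1)$-connected, and its composite with $A \amalg_B B' \to A'$ is the $k$-connected map $A \to A'$; so $A \amalg_B B' \to A'$ is $k$-connected, and pushing out along $B' \to C'$ gives the result. This is exactly dual to the pullback argument in Proposition~\ref{prop:pullbackconnectivity}. Your approach instead isolates the ``apex change'' as a separate third step and extracts the connectivity gain from the codiagonal via Proposition~\ref{prop:retract}---which of course is itself an instance of axiom~4(b). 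So the two arguments use the same underlying mechanism, but the paper's packaging is slightly more economical and more transparently dual to the pullback case, while yours makes the ``gain one from a fold map'' phenomenon explicit and reusable on its own.
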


\begin{proof}
  Consider the composite map
  \[
    A \amalg_B C \to A \amalg_B C' \to A' \amalg_{B'} C'.
  \]
  We will show that both maps are $k$-connected, which implies the desired result.
  
  The map $A \amalg_{B} C \to A \amalg_{B} C'$ is the homotopy pushout of the $k$-connected map $C \to C'$ along the map $B \to A$, and so it is $k$-connected.

  The map $A \to A \amalg_{B} B'$ is the homotopy pushout of a $(k-1)$-connected map $B \to B'$ along $B \to A$, and so it is $(k-1)$-connected. The composite $A \to A \amalg_{B} B' \to A'$ is $k$-connected. Therefore, the map $A \amalg_{B} B' \to A'$ is $k$-connected. Taking homotopy pushouts along the map $B' \to C'$, we find that the map $A \amalg_{B} C' \to A' \amalg_{B'} C'$ is $k$-connected.
\end{proof}

\begin{cor}
  \label{cor:amalgconnectivity}
  Suppose that $\cC$ has homotopy pushouts and that the connectivity structure on $\cC$ is compatible with cobase change. Given any map $A \to B$, the functor $B \coprod_A(-)\co \cC_{A/} \to \cC_{B/}$ preserves connectivity.
\end{cor}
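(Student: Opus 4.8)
The plan is to recognize this corollary as the special case of Proposition~\ref{prop:pushoutconnectivity} in which two of the three vertical maps are identities, exactly paralleling the way Corollary~\ref{cor:pullbackpreservessliceconnectivity} is extracted from Proposition~\ref{prop:pullbackconnectivity}. First I would unwind the connectivity structures. By Example~\ref{exam:slicecoslice}, both $\cC_{A/}$ and $\cC_{B/}$ carry the connectivity structures lifted along their forgetful functors to $\cC$; concretely, a morphism in either coslice category is $k$-connected precisely when its underlying morphism in $\cC$ is. Thus the assertion that $B\coprod_A(-)$ preserves connectivity unwinds to: for every morphism $u\co X \to Y$ of $\cC_{A/}$ whose underlying map is $k$-connected in $\cC$, the underlying map of $B\coprod_A u$, namely the induced map of homotopy pushouts $B\coprod_A X \to B\coprod_A Y$, is $k$-connected in $\cC$.

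Next I would assemble the relevant diagram. Writing the given map as $A \to B$ and letting $A \to X$ and $A \to Y$ denote the structure maps of $X, Y \in \cC_{A/}$, the fact that $u$ is a morphism \emph{under} $A$ is precisely the statement that the square involving $A$, $X$, and $Y$ commutes. I then feed the diagram
\[
  \xymatrix{
    B \ar[d] & A \ar[d] \ar[l] \ar[r] & X \ar[d]^{u} \\
    B & A \ar[l] \ar[r] & Y
  }
\]
into Proposition~\ref{prop:pushoutconnectivity}, so that the homotopy pushouts of the two rows are $B\coprod_A X$ and $B\coprod_A Y$ respectively. The left and middle columns are identity maps, hence equivalences and thus $\infty$-connected; in particular the middle column is $(k-1)$-connected and both outer columns ($\id_B$ and $u$) are $k$-connected, which is exactly what the proposition requires. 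This step also consumes the hypothesis that the connectivity structure is compatible with cobase change, which is precisely the standing assumption of Proposition~\ref{prop:pushoutconnectivity}. The proposition then delivers that $B\coprod_A X \to B\coprod_A Y$ is $k$-connected, which is the desired conclusion once translated back through the lifted structure on $\cC_{B/}$.

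Since the substantive content lives entirely in Proposition~\ref{prop:pushoutconnectivity}, I expect no genuine obstacle here beyond bookkeeping. The only points that require care are verifying that the two columns I declare to be identities meet the two \emph{different} connectivity bounds demanded of the middle versus the outer columns—immediate, because an equivalence is $j$-connected for every $j$—and confirming the commutativity of the right-hand squares, which holds exactly because $u$ is a morphism of $\cC_{A/}$.
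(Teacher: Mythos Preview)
Your proposal is correct and is exactly the intended argument: the paper states this corollary immediately after Proposition~\ref{prop:pushoutconnectivity} with no separate proof, as the evident specialization where the left and middle columns are identities, just as you describe.
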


\begin{cor}
  \label{cor:freudenthal}
  Suppose that $\cC$ has homotopy pushouts and that the connectivity structure on $\cC$ is compatible with cobase change. If $X \to Y$ is a $(k-1)$-connected map of objects over $Z$, then the induced map $Z \amalg_X Z \to Z \amalg_Y Z$ is $k$-connected.
\end{cor}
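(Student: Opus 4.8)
The plan is to recognize this corollary as an immediate specialization of Proposition~\ref{prop:pushoutconnectivity}. The hypothesis is a $(k-1)$-connected map $f\co X \to Y$ equipped with structure maps $X \to Z$ and $Y \to Z$ exhibiting $f$ as a morphism in $\cC_{/Z}$; equivalently, the triangle on $X$, $Y$, $Z$ commutes, so that the composite $X \to Y \to Z$ agrees with the given map $X \to Z$. First I would arrange this data into the two-row span diagram demanded by Proposition~\ref{prop:pushoutconnectivity}, setting $A = C = A' = C' = Z$, $B = X$, and $B' = Y$. The two horizontal spans are $Z \from X \to Z$ and $Z \from Y \to Z$, where in each case both legs are the respective structure map to $Z$. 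The commutativity of the ``over $Z$'' triangle is exactly what guarantees that both squares of the resulting diagram (the one involving $A$ and the one involving $C$) commute, so we genuinely have a morphism of spans.

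Next I would verify the connectivity hypotheses of the proposition. The middle vertical map is $f\co X \to Y$, which is $(k-1)$-connected by assumption. The two outer vertical maps are the identity $\mathrm{id}_Z$, which is an equivalence and hence $k$-connected by the second axiom of Definition~\ref{def:connectivity}. Since we have assumed the connectivity structure on $\cC$ is compatible with cobase change, every hypothesis of Proposition~\ref{prop:pushoutconnectivity} is in place.

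Applying the proposition then yields that the induced map of homotopy pushouts $Z \amalg_X Z \to Z \amalg_Y Z$ is $k$-connected, which is the assertion. The content is entirely carried by Proposition~\ref{prop:pushoutconnectivity}, so there is no genuine obstacle; the only point requiring care is to check that the ``objects over $Z$'' condition really does translate into the commutativity of the two squares, and that identities may legitimately serve as the $k$-connected outer vertical maps. Once these bookkeeping matches are made, the conclusion is immediate.
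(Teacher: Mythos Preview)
Your proposal is correct and matches the paper's approach: the corollary is stated without proof as an immediate specialization of Proposition~\ref{prop:pushoutconnectivity}, using exactly the substitution $A=C=A'=C'=Z$, $B=X$, $B'=Y$ with identity outer verticals that you describe.
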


\subsection{Stable connectivity structures}\label{sec:stableconnectivity}

Suppose $\cC$ is pointed. Proposition~\ref{prop:pullbackconnectivity} shows that whenever $f\co X \to Y$ is $k$-connected, $\Omega f\co \Omega X \to \Omega Y$ is $(k-1)$-connected: the loop operator $\Omega$ adds at least $(-1)$ to connnectivity. In the case of a stable category, $\Omega$ is an autoequivalence, and it is common for connectivity structures to satisfy a converse.

\begin{defn}
  \label{def:stablestructure}
  Suppose that $\cC$ is stable. A \emph{stable} connectivity structure on $\cC$ is a connectivity structure such that $f\co X \to Y$ is $k$-connected if and only if $\Omega f\co \Omega X \to \Omega Y$ is $(k-1)$-connected.
\end{defn}

In the category of spectra, we can detect connectivity by examining connectivity of the cofiber. This property is always possible with stable connectivity structures.

\begin{defn}\label{def:connectedobjects}
Let $\cC$ be a pointed category with a connectivity structure. Then we say that $X\in\cC$ is $k$-connected if the map from the zero object $\ast \to X$ is $k$-connected.
\end{defn}

\begin{rmk}\label{rmk:zeroiskconnected}
Note that the zero object of a pointed category will be $\infty$-connected.
\end{rmk}

\begin{lem}\label{lem:connectedfiberinpointed}
If $\cC$ is pointed, then the fiber of any $k$-connected morphism is $(k-1)$-connected.
\end{lem}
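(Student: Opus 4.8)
The plan is to realize the fiber as a homotopy pullback and then exploit the fact that, in a pointed category, the structure map from the fiber to the zero object is canonically split. Write $F = \fib(f)$ for the fiber of $f\co X \to Y$, so that $F$ is the homotopy pullback
\[
  \xymatrix{
    F \ar[r] \ar[d]_g & X \ar[d]^f \\
    \ast \ar[r] & Y,
  }
\]
where $\ast \to Y$ is the (essentially unique) map out of the zero object. First I would observe that the left vertical map $g\co F \to \ast$ is exactly the homotopy pullback of $f$ along $\ast \to Y$, so by the third axiom of Definition~\ref{def:connectivity} the map $g$ is $k$-connected.

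Next I would use the zero object to produce a retract. Since $\ast$ is terminal, the canonical map $\ast \to F$ composed with $g\co F \to \ast$ is the unique endomorphism of $\ast$, namely the identity; this exhibits a retract diagram $\ast \to F \to \ast$. Applying Proposition~\ref{prop:retract} with $A = \ast$ and $X = F$, the statement that $F \to \ast$ is $k$-connected is equivalent to the statement that $\ast \to F$ is $(k-1)$-connected. Hence $\ast \to F$ is $(k-1)$-connected, which by Definition~\ref{def:connectedobjects} is precisely the assertion that $F$ is $(k-1)$-connected.

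There is essentially no serious obstacle here once these two ingredients are combined; the only point requiring care is the index bookkeeping in Proposition~\ref{prop:retract}, matching ``$F \to \ast$ is $k$-connected'' with ``$\ast \to F$ is $(k-1)$-connected'', together with confirming that the composite $\ast \to F \to \ast$ genuinely is the identity, which holds because $\ast$ is terminal. Notably, no compatibility with cobase change or colimits is needed: the argument relies only on the pullback axiom for connectivity and on the retract proposition.
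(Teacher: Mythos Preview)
Your proof is correct and follows essentially the same route as the paper: pull back $f$ to see that $F \to \ast$ is $k$-connected, then apply Proposition~\ref{prop:retract} to the retract $\ast \to F \to \ast$ to deduce that $\ast \to F$ is $(k-1)$-connected. The only difference is that you spell out the retract structure explicitly, while the paper invokes Proposition~\ref{prop:retract} directly.
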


\begin{proof}
Let $f\co C\to D$ be a $k$-connected morphism in $\cC$ and consider the pullback square:
\[
\xymatrix{
\fib(f)\ar[d]_{p}\ar[r] & C\ar[d]^f\\
\ast\ar[r] & D
}
\]
By pullback stability the morphism $p$ is $k$-connected, so the zero morphism $\ast\to\fib(f)$ is $(k-1)$-connected by Proposition~\ref{prop:retract}.
\end{proof}

The above definition allows us to give another characterization of the $k$-connected morphisms in a stable connectivity structure, since stable categories always have zero objects.
\begin{prop}
  \label{prop:stablecofiber}
  If $\cC$ has a stable connectivity structure, then a map $f\co X \to Y$ is $k$-connected if and only if the cofiber $\cofib(f)$ is $k$-connected.
\end{prop}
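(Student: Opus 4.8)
The plan is to treat the two implications asymmetrically, since we are only handed a \emph{stable} connectivity structure and not one that is compatible with cobase change; pushing $f$ forward to its cofiber is therefore off-limits, and the directions must be argued by different means. The two inputs I would lean on are the defining loop--shift relation of Definition~\ref{def:stablestructure} and the fact that in a stable category a cofiber sequence is also a fiber sequence, so the pushout square exhibiting $\cofib(f)$ is simultaneously a pullback square.

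First I would record some suspension bookkeeping. For any object $X$, apply Definition~\ref{def:stablestructure} to the map $\ast \to \Sigma X$; using that $\Omega$ is inverse to $\Sigma$ and that $\Omega\ast \simeq \ast$, the loop of this map is $\ast \to X$, so $\ast \to \Sigma X$ is $k$-connected if and only if $\ast \to X$ is $(k-1)$-connected. In the language of Definition~\ref{def:connectedobjects}, $\Sigma$ raises the connectivity of an object by exactly one. Since $\cC$ is stable we also have $\cofib(f) \simeq \Sigma\,\fib(f)$. With this in hand the forward direction is quick: if $f$ is $k$-connected, then Lemma~\ref{lem:connectedfiberinpointed} makes $\fib(f)$ an $(k-1)$-connected object, and applying the bookkeeping to $\Sigma\,\fib(f) \simeq \cofib(f)$ shows that $\cofib(f)$ is $k$-connected.

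For the reverse direction I would not try to desuspend any attaching data, but instead exploit stability directly. Given the cofiber sequence $X \xrightarrow{f} Y \xrightarrow{g} \cofib(f)$, the square with corners $X, Y, \ast, \cofib(f)$ is a pushout, hence a pullback because $\cC$ is stable. Thus $f$ is identified with the homotopy pullback of $\ast \to \cofib(f)$ along $g$. By Definition~\ref{def:connectedobjects} the hypothesis that $\cofib(f)$ is $k$-connected says precisely that $\ast \to \cofib(f)$ is $k$-connected, and pullback-stability (Axiom~3 of Definition~\ref{def:connectivity}) then forces its pullback $f$ to be $k$-connected.

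The step I expect to be the real crux is this reverse direction. The instinct is to obtain $\ast \to \cofib(f)$ as a cobase change of $f$ and conclude by compatibility with cobase change---but that compatibility is not part of the hypothesis here, so the argument has to be inverted: stability turns the governing pushout into a pullback, which lets us transport connectivity \emph{backwards} from $\ast \to \cofib(f)$ to $f$ via the one closure axiom (pullback-stability) that is always available. The remaining points---the identifications $\Sigma\ast \simeq \ast$, $\fib(f) \simeq \Omega\,\cofib(f)$, and the bicartesianness of the cofiber square---I expect to be routine consequences of stability.
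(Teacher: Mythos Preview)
Your proof is correct and follows essentially the same route as the paper's: both directions match, with the reverse implication via pullback-stability applied to the bicartesian cofiber square, and the forward implication via Lemma~\ref{lem:connectedfiberinpointed} together with the loop--shift relation of Definition~\ref{def:stablestructure}. The only cosmetic difference is that the paper phrases the forward step as ``$\Omega\cofib(f)$ is $(k-1)$-connected, hence by stability $\cofib(f)$ is $k$-connected,'' whereas you equivalently phrase it as ``$\fib(f)$ is $(k-1)$-connected, hence $\Sigma\fib(f)\simeq\cofib(f)$ is $k$-connected.''
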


\begin{proof}
  Because $\cC$ is stable, we have the following two homotopy pullback squares:
  \[
    \xymatrix{
      X \ar[d]_f \ar[r] & \ast \ar[d] &
      \Omega \cofib(f) \ar[r] \ar[d] & X \ar[d]^f \\
      Y \ar[r] & \cofib(f) &
      \ast \ar[r] & Y
    }
  \]
  The left-hand pullback, along with pullback stability of connectivity, shows that if $\cofib(f)$ is $k$-connected, then the map $f$ is $k$-connected.

  For the converse, consider the right-hand pullback. It shows that if $f$ is $k$-connected, then $\Omega \cofib(f)$ is $(k-1)$-connected by Lemma~\ref{lem:connectedfiberinpointed}. By stability, the map $\ast \to \cofib(f)$ is $k$-connected.
\end{proof}

\begin{rmk}
Stability of $\cC$ and Proposition \ref{prop:stablecofiber} of course immediately also imply that a map $f\co X\to Y$ is $k$-connected if and only if $\fib(f)$ is $(k-1)$-connected, giving a converse to Lemma \ref{lem:connectedfiberinpointed}. 
\end{rmk}

\begin{rmk}\label{rmk:stableconnectivitystructuregeneratedbyconnected}
Given a stable connectivity structure on $\cC$ we can define the subcategory of connective objects, $\cC_{\geq 0}$, as the full subcategory of all objects which are $(-1)$-connected. It is not hard to check that $\cC_{\geq 0}$ satisfies the following conditions:
\begin{enumerate}
  \item $\cC_{\geq 0}$ contains a zero object;
  \item $\cC_{\geq 0}$ is closed under extensions; and
  \item $\cC_{\geq 0}$ is closed under cofibers.
  \end{enumerate}
 Conversely, given a stable $\infty$-category $\cC$, a full subcategory $\dD\subseteq\cC$ determines a stable connectivity structure on $\cC$ so long as it satisfies these three conditions. The third condition can be replaced by closure under the suspension operator $\Sigma$, because a cofiber sequence $X \to Y \to \cofib(f)$ is equivalent data to an extension $Y \to \cofib(f) \to \Sigma X$.
 
More specifically, we can say that a morphism $f\co X\to Y$ is $k$-connected, for each $k\in\mb Z$, exactly when $\cofib(f)\in \Sigma^{k+1}\dD$. Checking that this determines a connectivity structure on $\cC$ is straightforward for the first three axioms. All three parts of the fourth axiom can be checked by recalling that if $\cC$ is stable then $h\cC$ is triangulated and applying the octahedral axiom.

To see that this connectivity structure is stable, notice that the suspension functor $\Sigma$ is an equivalence, so preserves cofibers. Thus  $f\co X\to Y$ is $k$-connected, i.e.~$\cofib(f)\in\Sigma^{k+1}\dD$, if and only if $\cofib(\Sigma^{-1}f)\simeq \Sigma^{-1}\cofib(f)\in\Sigma^k\dD$, i.e.~the desuspended morphism $\Sigma^{-1}f\co \Sigma^{-1}X\to \Sigma^{-1}Y$ is $(k-1)$-connected.  It also follows immediately that an object $X\in \cC$ is $k$-connected in the sense of Definition \ref{def:connectedobjects} if and only if $X\in\Sigma^{k+1}\dD$.
\end{rmk}

\begin{exam}
Let $E\in \Sp$ be a spectrum. Then there is a connectivity structure on $\Sp$ determined by declaring the subcategory of connective objects to be precisely those spectra $X$ such that the $E$-homology groups $E_i(X) = \pi_i(E\otimes X)$ are trivial for all $i<0$. 
\end{exam}

\subsection{Stabilization}\label{sec:stabilization}

Recall that a functor is reduced if it preserves terminal objects and excisive if takes pushout squares to pullback squares. If $\cC$ is an $\infty$-category with finite limits then there is a category $\Sp(\cC)$ of \emph{spectrum objects of $\cC$} \cite[1.4.2.8]{lurie-higheralgebra}. Explicitly, $\Sp(\cC)$ is the full subcategory $\Exc_\ast(\sfin,\cC)\subseteq \Fun(\sfin,\cC)$ of \textit{reduced, excisive} functors from finite pointed spaces to $\cC$. 

\begin{rmk}
For any $\infty$-category $\cC$ with finite limits there are functors $ev_d\co\Sp(\cC)\to \cC$ for all $d\geq 0$ given by evaluation on the $d$-spheres $S^d\in\sfin$ as in \cite[1.4.2.20]{lurie-higheralgebra}. For $d>0$ functors can be equivalently defined by composing $ev_0$ with $\Sigma^d$.  We will often write $\Omega^\infty$ instead of $ev_0$.
\end{rmk} 

\begin{defn}\label{def:connectedobjectsinstableconnectivitystructure}
Let $\cC$ be an $\infty$-category with finite limits and a connectivity structure, and let $\Sp(\cC)$ be its associated category of spectra. Say that an object $X$ of $\Sp(\cC)$ is \textit{connective} if for each $d\geq 0$, $ev_d(X)$ is $(d-1)$-connected in $\cC_\ast$. Let $\Sp(\cC)_{\geq 0}$ denote the full subcategory of $\Sp(\cC)$ spanned by the connective objects. 
\end{defn}

\begin{prop}
If $\cC$ has finite limits and a connectivity structure then $\Sp(\cC)_{\geq 0}\subseteq \Sp(\cC)$ determines a stable connectivity structure on $\Sp(\cC)$ as in Remark \ref{rmk:stableconnectivitystructuregeneratedbyconnected}. 
\end{prop}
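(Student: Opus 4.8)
The plan is to verify the three closure conditions of Remark~\ref{rmk:stableconnectivitystructuregeneratedbyconnected} for the full subcategory $\dD = \Sp(\cC)_{\geq 0}$ of the stable category $\Sp(\cC)$: that it contains a zero object, that it is closed under extensions, and that it is closed under the suspension $\Sigma$. The only real input I need is a clear understanding of how the evaluation functors $ev_d\co \Sp(\cC) \to \cC_\ast$ interact with limits and with the suspension on $\Sp(\cC)$; once that is in place, each condition reduces to bookkeeping with the axioms of Definition~\ref{def:connectivity}. Here I read each $ev_d$ as landing in the pointed category $\cC_\ast$ (via the basepoint of $S^d$ and reducedness), which carries the connectivity structure lifted from $\cC$ as in Example~\ref{exam:slicecoslice}, so that the phrase ``$ev_d(X)$ is $(d-1)$-connected in $\cC_\ast$'' makes sense; equivalently, since the forgetful functor $\cC_\ast \to \cC$ both reflects connectivity and creates limits, I may carry out all the connectivity arguments in $\cC$ itself.

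First I would record two structural facts. Because reducedness and excision are conditions on finite limits, the inclusion $\Sp(\cC) = \Exc_\ast(\sfin,\cC) \hookrightarrow \Fun(\sfin,\cC)$ is closed under limits, and evaluation at $S^d$ preserves all limits; hence each $ev_d$ preserves finite limits, carries the zero object of $\Sp(\cC)$ to the zero object $\ast$ of $\cC_\ast$, and commutes with $\Omega$. In particular $ev_d$ sends a fiber sequence in $\Sp(\cC)$ to a fiber sequence in $\cC_\ast$. Second, I would establish the shift relation $ev_d(\Sigma X) \simeq ev_{d+1}(X)$: applying excision to the pushout $S^{d+1} = \ast \amalg_{S^d} \ast$ yields the $\Omega$-spectrum identity $ev_d(W) \simeq \Omega\, ev_{d+1}(W)$ for every $W$, and since $\Sigma$ is inverse to $\Omega$ on the stable category $\Sp(\cC)$, setting $W = \Sigma X$ gives $ev_d(\Sigma X) \simeq \Omega\, ev_{d+1}(\Sigma X) \simeq ev_{d+1}(\Omega \Sigma X) \simeq ev_{d+1}(X)$, using that $ev_{d+1}$ commutes with $\Omega$.

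With these in hand, conditions (1) and (3) are immediate. The zero object satisfies $ev_d(\ast) = \ast$, which is $\infty$-connected by Remark~\ref{rmk:zeroiskconnected}, so it is connective. If $X$ is connective, then by the shift relation $ev_d(\Sigma X) \simeq ev_{d+1}(X)$ is $d$-connected, hence $(d-1)$-connected by the first axiom of Definition~\ref{def:connectivity}; thus $\Sigma X$ is connective and $\dD$ is closed under suspension. Here using the suspension in place of cofibers, as permitted by Remark~\ref{rmk:stableconnectivitystructuregeneratedbyconnected}, is what makes this step clean.

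The remaining condition, closure under extensions, is the crux. Given a fiber sequence $X \to Y \to Z$ in $\Sp(\cC)$ with $X$ and $Z$ connective, I would apply $ev_d$ to obtain a fiber sequence $ev_d(X) \to ev_d(Y) \to ev_d(Z)$ in $\cC_\ast$, realized by the pullback square in which $ev_d(X) \to ev_d(Y)$ is the pullback of $\ast \to ev_d(Z)$ along $ev_d(Y) \to ev_d(Z)$. Since $ev_d(Z)$ is $(d-1)$-connected, the map $\ast \to ev_d(Z)$ is $(d-1)$-connected, so its pullback $ev_d(X) \to ev_d(Y)$ is $(d-1)$-connected by the third axiom. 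The map $\ast \to ev_d(X)$ is $(d-1)$-connected because $ev_d(X)$ is connective; composing it with $ev_d(X) \to ev_d(Y)$ and invoking part (a) of the fourth axiom shows that $\ast \to ev_d(Y)$ is $(d-1)$-connected, i.e.\ $ev_d(Y)$ is connective. I expect the main obstacle to be organizational rather than deep: pinning down the compatibility of $ev_d$ with the stable structure (limit-preservation and the suspension shift) and keeping track of the passage between $\cC$ and $\cC_\ast$. Once those are settled, the connectivity statements follow formally from the axioms, without any recourse to stability of $\cC$ (which is crucial, since $\cC$ is not assumed stable).
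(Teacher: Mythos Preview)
Your proposal is correct and follows essentially the same approach as the paper: both verify the three closure conditions of Remark~\ref{rmk:stableconnectivitystructuregeneratedbyconnected}, handle the extension case via the same pullback-stability-then-compose argument, and use the shift relation $ev_d(\Sigma X)\simeq ev_{d+1}(X)$ for closure under suspension. The only cosmetic difference is that the paper cites \cite[1.4.2.13(2)]{lurie-higheralgebra} for the identity $(\Sigma X)(K)\simeq X(K\wedge S^1)$, whereas you derive the shift relation from the $\Omega$-spectrum identity and the invertibility of $\Sigma$ on $\Sp(\cC)$.
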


\begin{proof}
  We only need to check the three conditions of Remark \ref{rmk:stableconnectivitystructuregeneratedbyconnected}.

  First, the zero object of $\Sp(\cC)$ is the functor given by $K\mapsto\ast$ for every $K\in\sfin$. It follows from Remark \ref{rmk:zeroiskconnected}
that its image is contained in $\Sp(\cC)_{\geq 0}$. 

Now let $X\to Y\to Z$ be a fiber sequence in $\Sp(\cC)$ with $X,Z\in\Sp(\cC)_{\geq 0}$, and fix $d\geq 0$. Then we have a pullback square
\[
\xymatrix{
X(S^d)\ar[r]\ar[d] & Y(S^d)\ar[d]\\
\ast\ar[r] & Z(S^d),
}
\]
in which the bottom horizontal morphism is $(d-1)$-connected. By pullback stability we have the top horizontal morphism is also $(d-1)$-connected. Because $X(S^d)$ is $(d-1)$-connected, the zero map $\ast\to X(S^d)$ is $(d-1)$-connected, so the composite $\ast\to X(S^d)\to Y(S^d)$ is $(d-1)$-connected, and therefore $Y(S^d)$ is $(d-1)$-connected.

Finally, we show that $\Sp(\cC)_{\geq 0}$ is closed under the shift operator $\Sigma$. Recall that for a reduced excisive functor $X\in\Sp(\cC)$ we have $\Sigma X(K)\simeq X(K\wedge S^1)$ (this follows for instance from \cite[1.4.2.13 (2)]{lurie-higheralgebra}). If $X(S^d)$ is $(d-1)$-connected for all $d$, then $(\Sigma X)(S^d)\simeq X(S^{d+1})$ is $d$-connected (and therefore $(d-1)$-connected) for all $d$.
\end{proof}

Recall the following definition from \cite[6.1.1.6]{lurie-higheralgebra}:

\begin{defn}\label{def:differentiable}
An $\infty$-category $\cC$ is \textit{differentiable} if it satisfies the following conditions:
\begin{enumerate}
\item $\cC$ admits finite limits;
\item $\cC$ admits sequential colimits; and
\item sequential colimits commute with finite limits.
\end{enumerate}
\end{defn}

\begin{lem}\label{lem:sigmainfinity}
  Suppose that $\cC$ is differentiable and has finite colimits. Then the functor $\Omega^\infty\co \Sp(\cC)\to \cC$ has a left adjoint $\Sigma^\infty$.
\end{lem}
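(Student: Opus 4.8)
The plan is to realize $\Sigma^\infty$ as a two-stage composite---a ``free prespectrum'' functor followed by a spectrification---with the differentiability hypothesis entering only in the second stage. Recall that $\Sp(\cC)=\Exc_\ast(\sfin,\cC)$ is a full subcategory of the $\infty$-category $\Fun_\ast(\sfin,\cC)$ of \emph{reduced} functors, and that under this identification $\Omega^\infty=ev_0$ is the restriction of evaluation at $S^0$. Since every reduced $F$ sends the map $\ast\to S^0$ to a map from the terminal object, $\Omega^\infty$ factors as $\Sp(\cC)\to\cC_\ast\to\cC$, where $\cC_\ast=\cC_{\ast/}$ and the second functor is the forgetful one; as $\cC$ has finite coproducts, the forgetful functor $\cC_\ast\to\cC$ has the left adjoint $c\mapsto c\amalg\ast$. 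Moreover stabilization is insensitive to passing to pointed objects, so $\Sp(\cC)\simeq\Sp(\cC_\ast)$, and $\cC_\ast$ again satisfies the hypotheses of the lemma (finite limits and colimits are computed from those of $\cC$, and sequential colimits still commute with finite limits). It therefore suffices to produce a left adjoint to $ev_0\co\Exc_\ast(\sfin,\cC_\ast)\to\cC_\ast$ and then precompose with $(-)_+$; so from now on I assume $\cC$ is pointed.

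First I would construct a left adjoint $F$ to the \emph{unrestricted} evaluation $ev_0\co\Fun_\ast(\sfin,\cC)\to\cC$. Since $\cC$ is pointed with finite colimits and every finite pointed space is built from $S^0$ by finite colimits, the pointed copower $K\otimes c$ of $K\in\sfin$ with $c\in\cC$ exists and is assembled from iterated suspensions by finite colimits; note $\ast\otimes c\simeq\ast$ and $S^0\otimes c\simeq c$. Setting $F(c)=(K\mapsto K\otimes c)$ then defines a reduced functor with $F(c)(S^0)\simeq c$, and identifies $F$ with the reduced left Kan extension of $c$ along $\{S^0\}\hookrightarrow\sfin$; hence $F$ is left adjoint to $ev_0$.

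The second and decisive stage is spectrification. Here I would invoke Goodwillie calculus in the differentiable setting \cite{lurie-higheralgebra}: the inclusion $\Exc_\ast(\sfin,\cC)\hookrightarrow\Fun_\ast(\sfin,\cC)$ is reflective, with reflector the first excisive approximation $P_1$, given on a reduced functor $G$ by
\[
  P_1 G \;\simeq\; \colim_n\, \Omega^n\circ G\circ \Sigma^n .
\]
This is exactly where differentiability is used: the sequential colimit exists by hypothesis, and the commutation of sequential colimits with the finite limits computing $\Omega$ is precisely what forces $P_1 G$ to carry pushouts to pullbacks---i.e.\ to be excisive---and makes $G\to P_1 G$ exhibit $P_1$ as left adjoint to the inclusion. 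Granting this, I set $\Sigma^\infty=P_1\circ F$, and chasing the two adjunctions gives, for $X\in\Sp(\cC)$,
\[
  \Map_{\Sp(\cC)}(\Sigma^\infty c, X)\simeq \Map_{\Fun_\ast}(F(c),X)\simeq \Map_\cC\bigl(c, ev_0(X)\bigr)=\Map_\cC(c,\Omega^\infty X),
\]
as desired. The resulting functor satisfies $(\Sigma^\infty c)(S^m)\simeq\colim_n\Omega^n\Sigma^{n+m}c$, recovering the usual stabilization formula.

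The hard part is the spectrification step: verifying that $P_1 G=\colim_n\Omega^n G\Sigma^n$ is genuinely excisive and that $G\mapsto P_1 G$ is a localization onto $\Exc_\ast(\sfin,\cC)$. Everything else is formal once finite colimits and the copower description of $F$ are in hand; by contrast this step is where differentiability is indispensable, and I would either cite Lurie's construction of the Taylor tower for differentiable $\infty$-categories or reproduce the commutation-of-limits-with-sequential-colimits argument that forces $P_1 G$ to be excisive.
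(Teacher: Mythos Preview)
Your proposal is correct and follows essentially the same approach as the paper: both factor $\Sigma^\infty$ as a left adjoint to $ev_0$ on $\Fun_\ast(\sfin,\cC)$ followed by the excisive approximation $P_1$, with differentiability used only to guarantee that $P_1$ exists as a reflector. The only cosmetic difference is that the paper describes the first left adjoint via the construction $f_C^+(K)=f_C(K)\amalg_C\ast$ (left Kan extending from $\sfinunpointed$ and then reducing), whereas you phrase it directly as the pointed copower $K\otimes c$; in the pointed case these agree, and both yield the same formula $(\Sigma^\infty c)(S^m)\simeq\colim_n\Omega^n\Sigma^{n+m}c$.
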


\begin{proof}
  This is \cite[6.2.3.16]{lurie-higheralgebra}, but we give an explicit description of the functor $\Sigma^\infty$ for future use. The functor described in \cite{lurie-higheralgebra} is a composite. An object $C\in\cC_\ast$ determines a functor $f_C\in \Fun(\sfinunpointed,\cC_\ast)$ by declaring $f_C(\ast)\simeq C$ and extending by homotopy colimits. This may be extended to a functor $f_C^+\in \Fun(\sfin,\cC)$ by setting
  \[
    f_C^+(K)\simeq f_C(K)\amalg_C \ast
  \]
  where the morphism $C \simeq f_C(\ast) \to f_C(K)$ is determined by the base point of $K$. This gives a functor $\cC_\ast\to \Fun_\ast(\sfin,\cC)$, left adjoint to evaluation on $S^0$. There is an inclusion $\Exc_\ast(\sfin,\cC)\hookrightarrow \Fun_\ast(\sfin,\cC)$ whose left adjoint is given by the excisive approximation $F\mapsto P_1F$ (cf. the proof of \cite[6.2.1.12]{lurie-higheralgebra}). Thus the functor $\Sigma^\infty$ takes $C$ to the functor $P_1f_C^+$.
\end{proof}

\begin{rmk}\label{rmk:pointedstabilization}
When $\cC$ is also pointed, \cite[6.1.1.28]{lurie-higheralgebra} gives an equivalence $P_1F \simeq  \colim_{n}\Omega^n\circ F\circ \Sigma^n$, where $\Omega^n$ is formed in $\cC$ and $\Sigma^n$ is formed in $\sfin$. Thus $\Sigma^\infty$ is the composite functor \[ C\mapsto \colim_n\left(\Omega^n \circ f_C^+\circ \Sigma^n\right).\] The fact that $\cC$ is pointed also implies an equivalence $f_C^+(S^d)\simeq \Sigma^d C$ giving the familiar formula \[(\Sigma^\infty C)(S^d) \simeq \colim_n \Omega^n \Sigma^{n+d} C.\] In particular, $\Omega^\infty\Sigma^\infty C\simeq \colim_n\Omega^n\Sigma^n C$. 
\end{rmk}

\begin{prop}\label{prop:stabilizationpreservesconnective}
  Suppose that $\cC$ is pointed, differentiable, and has finite colimits, and that the connectivity structure on $\cC$ is compatible with cobase change and sequential colimits. Then the stabilization functor $\Sigma^\infty\co \cC \to \Sp(\cC)$ preserves connectivity.
\end{prop}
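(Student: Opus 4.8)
The plan is to reduce the statement to one about objects, analyze the connectivity of the evaluations $ev_d(\Sigma^\infty Z)$ level by level using the colimit formula of Remark~\ref{rmk:pointedstabilization}, and then translate the resulting levelwise estimate back into connectivity in $\Sp(\cC)$.

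First I would reduce to objects. Given a $k$-connected map $f\co X\to Y$ in $\cC$, the target $\Sp(\cC)$ carries a stable connectivity structure, so by Proposition~\ref{prop:stablecofiber} the map $\Sigma^\infty f$ is $k$-connected if and only if $\cofib(\Sigma^\infty f)$ is $k$-connected. Since $\Sigma^\infty$ is a left adjoint (Lemma~\ref{lem:sigmainfinity}) it preserves cofibers, so $\cofib(\Sigma^\infty f)\simeq\Sigma^\infty\cofib(f)$. Because $\cC$ is pointed and the connectivity structure is compatible with cobase change, the map $\ast\to\cofib(f)$ is the cobase change of $f$ along $X\to\ast$ and is therefore $k$-connected; that is, $\cofib(f)$ is a $k$-connected object of $\cC$. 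It thus suffices to show that $\Sigma^\infty$ carries $k$-connected objects of $\cC$ to $k$-connected objects of $\Sp(\cC)$.

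Next I would record two elementary facts about the connectivity structure on the pointed category $\cC$. If $W$ is $m$-connected then $\Omega W$ is $(m-1)$-connected (apply $\Omega$ to $\ast\to W$, using that $\Omega$ adds at least $-1$ to connectivity as noted after Proposition~\ref{prop:pullbackconnectivity}); and if $W$ is $m$-connected then $\Sigma W$ is $(m+1)$-connected (apply Corollary~\ref{cor:freudenthal} with base $\ast$ to the $m$-connected map $\ast\to W$, identifying $\ast\to\Sigma W$ as $(m+1)$-connected, which uses compatibility with cobase change). Now let $Z$ be $k$-connected and fix $d\ge 0$. By Remark~\ref{rmk:pointedstabilization},
\[
  ev_d(\Sigma^\infty Z)\simeq\colim_n\,\Omega^n\Sigma^{n+d}Z.
\]
Iterating the two facts, $\Sigma^{n+d}Z$ is $(k+n+d)$-connected and hence $\Omega^n\Sigma^{n+d}Z$ is $(k+d)$-connected, uniformly in $n$. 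Comparing the sequential diagram $n\mapsto\Omega^n\Sigma^{n+d}Z$ with the constant diagram at $\ast$, the objectwise map $\ast\to\Omega^n\Sigma^{n+d}Z$ is $(k+d)$-connected, so by compatibility with sequential colimits the induced map $\ast\to ev_d(\Sigma^\infty Z)$ on colimits is $(k+d)$-connected. Thus $ev_d(\Sigma^\infty Z)$ is $(k+d)$-connected for every $d\ge 0$.

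Finally I would translate this levelwise statement into connectivity in $\Sp(\cC)$. Unwinding Definition~\ref{def:connectedobjectsinstableconnectivitystructure} together with the shift description of the stable connectivity structure (Remark~\ref{rmk:stableconnectivitystructuregeneratedbyconnected}), an object $W$ of $\Sp(\cC)$ is $k$-connected exactly when $\Sigma^{-(k+1)}W$ is connective, i.e.\ when $ev_d(\Sigma^{-(k+1)}W)$ is $(d-1)$-connected for all $d\ge 0$. Using the spectrum-object structure maps $ev_e(V)\simeq\Omega\,ev_{e+1}(V)$, one has $ev_d(\Sigma^{-(k+1)}W)\simeq ev_{d-k-1}(W)$ for $d\ge k+1$ and $ev_d(\Sigma^{-(k+1)}W)\simeq\Omega^{k+1-d}\,ev_0(W)$ otherwise, and in both ranges the requirement is equivalent to $ev_e(W)$ being $(e+k)$-connected for all $e\ge 0$. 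Applying this with $W=\Sigma^\infty Z$ and the computation above shows $\Sigma^\infty Z$ is $k$-connected, which completes the argument. I expect the main obstacle to be precisely this last step: correctly matching the abstract $(k+1)$-fold desuspension in $\Sp(\cC)$ against the levelwise evaluations $ev_d$, and verifying that the conditions at levels $d<k+1$ are automatically subsumed, via the loop-space structure maps, by the single condition that $ev_0(W)=\Omega^\infty W$ is $k$-connected. The colimit estimate is then routine given the stated compatibility hypotheses.
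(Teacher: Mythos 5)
Your argument is correct, and its computational core coincides with the paper's: both rest on the levelwise formula $ev_d(\Sigma^\infty -)\simeq \colim_n \Omega^n\Sigma^{n+d}(-)$ of Remark~\ref{rmk:pointedstabilization}, the suspension estimate of Corollary~\ref{cor:freudenthal}, the fact that $\Omega$ lowers connectivity by at most one, and compatibility with sequential colimits. But the decomposition is genuinely different. The paper never reduces to a statement about objects: it fixes the $k$-connected map $g\co C\to D$, passes to $\fib(\Sigma^\infty g)$, and then must commute this fiber past the sequential colimit (an additional, essential use of differentiability: finite limits commute with sequential colimits) and past the functors $\Omega^n$, before estimating termwise via Lemma~\ref{lem:connectedfiberinpointed}. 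You instead observe that $\Sigma^\infty$, being a left adjoint (Lemma~\ref{lem:sigmainfinity}), preserves cofibers and the zero object, and that compatibility with cobase change makes $\cofib(f)$ a $k$-connected \emph{object} of $\cC$; this reduces the proposition to showing that $\Sigma^\infty$ preserves connectivity of objects, after which no limit--colimit interchange is needed at all---only the termwise estimate and sequential-colimit compatibility applied against the constant diagram at $\ast$. Your route is the more economical one: differentiability enters only through the existence of $\Sigma^\infty$ and the colimit formula, and the fiber bookkeeping disappears. The paper's route, by staying with fibers inside $\Sp(\cC)$, never needs the identification $\cofib(\Sigma^\infty f)\simeq\Sigma^\infty\cofib(f)$ and keeps the argument phrased in terms of maps throughout. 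A small additional merit of your write-up: you handle the levels $d<k+1$ in the final translation explicitly, via $ev_d(\Sigma^{-(k+1)}W)\simeq \Omega^{k+1-d}\,ev_0(W)$, whereas the paper writes $ev_{d-k}$ without comment even when $d-k<0$ and lets the colimit formula absorb the issue; your version makes precise why those low levels impose no extra condition.
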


\begin{proof}
  By Proposition \ref{prop:stablecofiber} it suffices to show that if $g\co C\to D$ is $k$-connected in $\cC$ then $\cofib(\Sigma^\infty g)$ is $k$-connected in $\Sp(\cC)$. By stability, we may equivalently show that $\fib(\Sigma^\infty g)\simeq \Omega\cofib(\Sigma^\infty g)$ is $(k-1)$-connected. To say that $\fib(\Sigma^\infty g)$ is $(k-1)$-connected is equivalent to saying that $\Omega^{k}\fib(\Sigma^\infty g)\in\Sp(\cC)_{\geq 0}$, i.e.~$ev_d\Omega^k\fib(\Sigma^\infty g)$ is $(d-1)$-connected in $\cC$ for all $d\geq 0$. But from \cite[1.4.2.20]{lurie-higheralgebra} we have that
  \[
    ev_d\Omega^k\fib(\Sigma^\infty g)\simeq \Omega^\infty \Sigma^d\Omega^k\fib(\Sigma^\infty g)\simeq ev_{d-k}\fib(\Sigma^\infty g)
  \] 
  because $\Sigma$ and $\Omega$ are homotopy inverses in $\Sp(\cC)$. Now notice that by the description of the functor $\Sigma^\infty$ given in Lemma \ref{lem:sigmainfinity} and Remark \ref{rmk:pointedstabilization}, we can write $ev_{d-k}\Sigma^\infty g$ as the colimit of the morphisms
  \[
    \Omega^n f_C^+(S^{d-k+n})\simeq \Omega^n\Sigma^{d-k+n}C\to\Omega^n\Sigma^{d-k+n}D\simeq \Omega^n f_D^+(S^{d-k+n})
  \] where $\Omega$ is being applied in $\cC$ rather than $\Sp(\cC)$. We have the following string of equivalences:
\begin{align*}
&\fib\left( \colim_n\left(\Omega^n\Sigma^{d-k+n}C\to\Omega^n\Sigma^{d-k+n}D\right)\right)\\
&\simeq \colim_n\left( \fib\left(\Omega^n\Sigma^{d-k+n}C\to\Omega^n\Sigma^{d-k+n}D\right)\right)\\
&\simeq \colim_n\left(\Omega^n \fib\left(\Sigma^{d-k+n}C\to\Sigma^{d-k+n}D\right)\right)
\end{align*}
in which the first equivalence uses the fact that finite limits commute with sequential colimits in a differentiable category. By Corollary~\ref{cor:freudenthal} we have that $\Sigma^{d-k+n}C\to\Sigma^{d-k+n}D$ is $(d+n)$-connected, so by Lemma \ref{lem:connectedfiberinpointed} we have that $\fib\left(\Sigma^{d-k+n}C\to\Sigma^{d-k+n}D\right)$ is $(d+n-1)$-connected. The result follows from Proposition \ref{prop:pullbackconnectivity} and the fact that the connectivity structure of $\cC$ is compatible with sequential colimits.
\end{proof}

In the unpointed case, Corollary~\ref{cor:amalgconnectivity} allows us to compose with a disjoint basepoint functor $X \mapsto X \amalg \ast$.

\begin{cor}
  Suppose that $\cC$ is differentiable, admits pushouts, and has an initial object. Suppose that the connectivity structure on $\cC$ is compatible with cobase change and sequential colimits. Then the stabilization functor $\Sigma^\infty_+\co \cC \to \Sp(\cC)$ preserves connectivity.
\end{cor}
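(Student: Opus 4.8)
The plan is to factor $\Sigma^\infty_+$ through the pointed category $\cC_\ast = \cC_{\ast/}$ of objects under a terminal object, and then combine the already-proven pointed statement (Proposition~\ref{prop:stabilizationpreservesconnective}) with the disjoint-basepoint result (Corollary~\ref{cor:amalgconnectivity}), exactly as the remark preceding the statement suggests. Since $\cC$ is differentiable it has finite limits, hence a terminal object $\ast$; since it admits pushouts and an initial object $\emptyset$, it has all finite colimits. The disjoint basepoint functor $X \mapsto X \amalg \ast$ is, under the canonical identification $\cC \simeq \cC_{\emptyset/}$, precisely the functor $\ast \coprod_\emptyset (-)\co \cC_{\emptyset/} \to \cC_{\ast/}$ attached to the unique map $\emptyset \to \ast$ (here I use that $\ast \coprod_\emptyset X \simeq \ast \amalg X$ because $\emptyset$ is initial). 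By Corollary~\ref{cor:amalgconnectivity}, whose hypotheses---existence of pushouts and compatibility with cobase change---hold by assumption, this functor preserves connectivity for the lifted connectivity structures; on $\cC_{\emptyset/} \simeq \cC$ the lifted structure agrees with the given one.

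Next I would verify that $\cC_\ast$ satisfies all the hypotheses needed to feed it into Proposition~\ref{prop:stabilizationpreservesconnective}. It is pointed, its zero object being $\ast$ (terminal in $\cC$, hence both initial and terminal in the coslice). The forgetful functor $\cC_\ast \to \cC$ creates connected limits and colimits---in particular pullbacks, sequential colimits, and pushouts---and takes the initial and terminal objects of $\cC_\ast$ to $\ast$; consequently $\cC_\ast$ is again differentiable and has finite colimits. Because the connectivity structure on $\cC_\ast$ is by definition lifted from $\cC$ (Example~\ref{exam:slicecoslice}) while the relevant (co)limits are computed in $\cC$, compatibility with cobase change and with sequential colimits are inherited as well. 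Thus Proposition~\ref{prop:stabilizationpreservesconnective} applies to $\cC_\ast$ and shows that the pointed stabilization $\Sigma^\infty\co \cC_\ast \to \Sp(\cC_\ast)$ preserves connectivity.

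Finally I would invoke the standard equivalence $\Sp(\cC) \simeq \Sp(\cC_\ast)$ (cf.\ \cite[1.4.2.17]{lurie-higheralgebra}), under which the two connectivity structures coincide: both are defined in Definition~\ref{def:connectedobjectsinstableconnectivitystructure} by requiring $ev_d$ to land in the $(d-1)$-connected objects of the pointed category $\cC_\ast$ (note $(\cC_\ast)_\ast = \cC_\ast$), and this equivalence is compatible with the evaluation functors $ev_d$. Consequently $\Sigma^\infty_+$ is the composite $\cC \xrightarrow{(-)_+} \cC_\ast \xrightarrow{\Sigma^\infty} \Sp(\cC_\ast) \simeq \Sp(\cC)$ of connectivity-preserving functors, and therefore preserves connectivity.

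The main obstacle I anticipate is not any single deep step but the bookkeeping that makes the reduction legitimate: checking that $\cC_\ast$ genuinely inherits differentiability, finite colimits, and the two compatibility properties of the connectivity structure, and---more delicately---confirming that the equivalence $\Sp(\cC) \simeq \Sp(\cC_\ast)$ identifies the connectivity structures of Definition~\ref{def:connectedobjectsinstableconnectivitystructure} on the two sides, so that ``preserves connectivity into $\Sp(\cC_\ast)$'' is literally the same assertion as ``preserves connectivity into $\Sp(\cC)$.'' Once that identification is in hand, everything else is a formal consequence of the pointed case.
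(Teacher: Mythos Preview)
Your proposal is correct and follows exactly the approach the paper indicates in the sentence immediately preceding the corollary: factor $\Sigma^\infty_+$ as the disjoint-basepoint functor $(-)_+$ (connectivity-preserving by Corollary~\ref{cor:amalgconnectivity}) followed by the pointed stabilization $\Sigma^\infty$ (connectivity-preserving by Proposition~\ref{prop:stabilizationpreservesconnective}). The paper leaves the bookkeeping you spell out (inheritance of differentiability and compatibility to $\cC_\ast$, and the identification $\Sp(\cC)\simeq\Sp(\cC_\ast)$ with matching connectivity structures) entirely implicit, so your write-up is in fact more thorough than the original.
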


Finally, in the relative case, we have the following assembly of the above results.

\begin{thm}
  \label{thm:stabilizationconnectivity}
  Suppose that $\cC$ is differentiable, admits pushouts, and has an initial object. Suppose that the connectivity structure on $\cC$ is compatible with cobase change and sequential colimits. Then, for any object $Z \in \cC$, the stabilization functor
  \[
    \Sigma^\infty_{Z,+}\co \cC_{/Z} \to \Sp(\cC_{/Z})
  \]
  and its pointed variant $\Sigma^\infty_Z$ preserve connectivity.
\end{thm}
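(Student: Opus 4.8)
The plan is to deduce the relative statement by applying the two preceding results---Proposition~\ref{prop:stabilizationpreservesconnective} in the pointed case and the immediately preceding (unpointed) corollary in the general case---to the slice category $\cC_{/Z}$ in place of $\cC$. By Example~\ref{exam:slicecoslice}, $\cC_{/Z}$ carries a connectivity structure lifted from $\cC$ along the forgetful functor $U\co \cC_{/Z}\to\cC$, and $\Sp(\cC_{/Z})$ is defined as soon as $\cC_{/Z}$ has finite limits. So the whole task reduces to checking that $\cC_{/Z}$ inherits the hypotheses of those results, and then identifying the resulting stabilization functors with $\Sigma^\infty_{Z,+}$ and $\Sigma^\infty_Z$.

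First I would verify the structural hypotheses. Sequential colimits and pushouts in $\cC_{/Z}$ are created by $U$, hence exist and are computed in $\cC$; the initial object of $\cC_{/Z}$ is the canonical map $\emptyset\to Z$ out of the initial object of $\cC$. Finite limits in $\cC_{/Z}$ also exist: the terminal object is $\id_Z$ and pullbacks are computed by $U$ in $\cC$. Differentiability then follows because a finite limit in the slice is the limit in $\cC$ of the diagram augmented by its canonical cone to $Z$, and sequential colimits pass through such augmented finite limits precisely because they commute with finite limits in $\cC$.

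Next I would check that the lifted connectivity structure on $\cC_{/Z}$ is compatible with cobase change and with sequential colimits. Since $U$ reflects connectivity by construction and carries the relevant homotopy pushouts and sequential colimits to the corresponding colimits in $\cC$, a $k$-connected map of diagrams in $\cC_{/Z}$ maps to a $k$-connected map of diagrams in $\cC$, so compatibility in $\cC$ yields compatibility in $\cC_{/Z}$. With these verifications in place, the preceding (unpointed) corollary applied to $\cC_{/Z}$ shows that $\Sigma^\infty_+\co \cC_{/Z}\to\Sp(\cC_{/Z})$---which is exactly $\Sigma^\infty_{Z,+}$---preserves connectivity. For the pointed variant I would run the same argument with the pointed slice category $(\cC_{/Z})_\ast$: by Example~\ref{exam:slicecoslice} it inherits a connectivity structure as a coslice of $\cC_{/Z}$, it is pointed, and its finite colimits (being connected, hence computed in $\cC_{/Z}$) together with its finite limits inherit differentiability and the two compatibility properties exactly as above. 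Applying Proposition~\ref{prop:stabilizationpreservesconnective} to $(\cC_{/Z})_\ast$ and using $\Sp((\cC_{/Z})_\ast)\simeq\Sp(\cC_{/Z})$ then shows that $\Sigma^\infty_Z$ preserves connectivity.

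The main obstacle I anticipate is the bookkeeping around limits in the slice and coslice categories. Unlike colimits, finite limits are not literally computed by $U$ (the terminal object moves from the terminal object of $\cC$ to $\id_Z$), so the differentiability check---that sequential colimits commute with finite limits---must be argued through the augmented-diagram description rather than by naive transport along $U$. Correspondingly, one must confirm compatibility of the inherited connectivity structure precisely on the homotopy pushouts and sequential colimits that actually occur in the proofs of the cited results, rather than asserting it wholesale.
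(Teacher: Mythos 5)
Your proposal is correct and is essentially the paper's own argument: the paper offers no written proof, presenting the theorem as an ``assembly of the above results,'' i.e.\ precisely the application of Proposition~\ref{prop:stabilizationpreservesconnective} and the unpointed corollary to $\cC_{/Z}$ and $(\cC_{/Z})_\ast$ with their lifted connectivity structures (Example~\ref{exam:slicecoslice}). Your careful verification that the slice and pointed-slice categories inherit differentiability, the relevant colimits, and compatibility of the lifted connectivity structure supplies exactly the routine bookkeeping the paper leaves implicit.
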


\subsection{Cartesian squares}

\begin{defn}\label{def:cartesian}
  A commutative square
  \[
    \xymatrix{X \ar[r] \ar[d] & Y \ar[d] \\ Z \ar[r] & W}
  \]
  in $\cC$ is $k$-Cartesian if the induced map $X \to Y \times_W Z$ to the homotopy pullback is $k$-connected (cf. \cite[1.3]{goodwillie-calcii}).\footnote{More generally, just as in the classical case we have notions of $k$-Cartesian cubes, and $k$-coCartesian cubes, in terms of connectivity of maps to total homotopy pullbacks and from total homotopy pushouts}
\end{defn}

As expected, there is a ``stacking lemma'' for $k$-Cartesian squares.

\begin{prop}
  \label{prop:cartesianstacking}
  Given a commutative diagram
  \[
    \xymatrix{
      X \ar[r] \ar[d] & Y \ar[r] \ar[d] & Z \ar[d] \\
      X' \ar[r] & Y' \ar[r] & Z'
    }
  \]
  with both squares $k$-Cartesian, the outside rectangular diagram is also $k$-Cartesian.
\end{prop}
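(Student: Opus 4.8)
The plan is to factor the comparison map of the outer rectangle through the comparison map of the left square, identifying the remaining factor as a base change of the comparison map of the right square. Write $P_R = Z \times_{Z'} Y'$ for the homotopy pullback of the right square and $P_O = Z \times_{Z'} X'$ for that of the outer rectangle, so that the hypotheses say $Y \to P_R$ and $X \to Y \times_{Y'} X'$ are both $k$-connected, and the goal is that $X \to P_O$ is $k$-connected. The first ingredient is the elementary pullback-pasting identity: since the map $X' \to Z'$ factors through $Y'$, there is a canonical equivalence
\[
  P_O = Z \times_{Z'} X' \simeq (Z \times_{Z'} Y') \times_{Y'} X' = P_R \times_{Y'} X'.
\]

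First I would use this identity to exhibit the comparison map $X \to P_O$ of the outer rectangle as a composite
\[
  X \to Y \times_{Y'} X' \to P_R \times_{Y'} X' \simeq P_O,
\]
where the first arrow is the comparison map of the left square and the second arrow is obtained by applying the base-change functor $(-) \times_{Y'} X'$ to the comparison map $Y \to P_R$ of the right square (both $Y$ and $P_R$ live over $Y'$, and $Y \to P_R$ is a map over $Y'$). The first arrow is $k$-connected by the hypothesis that the left square is $k$-Cartesian. For the second arrow, I would check that the square with horizontal maps $Y \times_{Y'} X' \to Y$ and $P_R \times_{Y'} X' \to P_R$ is a homotopy pullback: computing $Y \times_{P_R}(P_R \times_{Y'} X')$ as iterated base change along $Y \to P_R \to Y'$, which is just $Y \to Y'$, yields $Y \times_{Y'} X'$. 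Thus the second arrow is a homotopy pullback of the $k$-connected map $Y \to P_R$, hence $k$-connected by axiom (3) of Definition~\ref{def:connectivity} (equivalently, by Corollary~\ref{cor:pullbackpreservessliceconnectivity} applied to the pullback functor along $X' \to Y'$).

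Finally, the composite of two $k$-connected maps is $k$-connected by axiom (4)(a), and since this composite is exactly the comparison map $X \to P_O$, the outer rectangle is $k$-Cartesian. The only delicate point is the bookkeeping in the pasting step: one must check that the equivalence $P_O \simeq P_R \times_{Y'} X'$ is compatible with the comparison maps so that the displayed factorization genuinely recomputes $X \to P_O$, and that the second arrow really is the base change of $Y \to P_R$ (rather than merely equivalent to something connected to it). Once these coherences are pinned down, no connectivity input beyond pullback-stability (axiom (3)) and composability (axiom (4)(a)) is required.
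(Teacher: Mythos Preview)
Your proof is correct and is essentially identical to the paper's own argument: both factor $X \to Z \times_{Z'} X'$ as $X \to Y \times_{Y'} X' \to (Z \times_{Z'} Y') \times_{Y'} X' \simeq Z \times_{Z'} X'$, observe the first map is $k$-connected by hypothesis and the second is a pullback of the $k$-connected map $Y \to Z \times_{Z'} Y'$, and conclude by composition. You have simply written out the bookkeeping in more detail than the paper does.
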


\begin{proof}
  The map $X \to Z \times_{Z'} X'$ is the composite
  \[
    X \to Y \times_{Y'} X' \to (Z \times_{Z'} Y') \times_{Y'} X'
    \simeq Z \times_{Z'} X'.
  \]
  The first map is $k$-connected, and the second is the pullback of a $k$-connected map.
\end{proof}

\begin{rmk}
  We say that \emph{pullbacks reflect connectivity} in $\cC$ if the connectivity of any pullback of $f$ is the same as the connectivity of $f$. For example, this is true of stable connectivity structures by Proposition~\ref{prop:stablecofiber}.

  When pullbacks reflect connectivity, the proof of Proposition~\ref{prop:cartesianstacking} can be upgraded to show that Cartesianness determines a connectivity structure on the arrow category of $\cC$.
\end{rmk}

\section{Skeleta}
\label{sec:skeleta}

\subsection{Skeletal objects}

\begin{defn}
  \label{def:skeletalobject}
  An object $B$ is \emph{$k$-skeletal} if, for any $k$-connected map $X \to Y$, every map $B \to Y$ in $h\cC$ lifts to a map $B \to X$.

  A map $B\to Y$ is \emph{relatively $k$-skeletal} if it is $k$-skeletal when viewed as a map in the undercategory $\cC_{A/}$.
\end{defn}

\begin{rmk}
  If $\cC$ has an initial object $\varnothing$, then an object $B$ is $k$-skeletal if and only if the map $\varnothing \to B$ is relatively $k$-skeletal.
\end{rmk}

\begin{rmk}
Relative $k$-skeletality can be phrased as a lifting property: given a commutative diagram in $\cC$
  \[
  \xymatrix{
  A\ar[r]^{\alpha}\ar[d]_{\beta} & X\ar[d]^{\gamma}\\
  B\ar[r]_{\delta} & Y
  }
  \]
  where $\gamma$ is $k$-connected, we ask for the existence of a lift $B \to X$ together with coherences. This asks that the map to the homotopy pullback
  \[
    \Map(B,X) \to \Map(B,Y) \times_{\Map(A,Y)} \Map(A,X)
  \]
  is surjective on $\pi_0$.
  
  More explicitly, we can consider $\delta$ to be a point of the mapping space $\cC_{A/}(\beta,\gamma\alpha)$. Note that, given some $\phi\in\cC_{A/}(\beta,\alpha)$, we can compose with $\gamma$ to get a point $\gamma\phi\in\cC_{A/}(\beta,\gamma\alpha)$, i.e.~there is a map of spaces $\gamma_\ast\co\cC_{/A}(\beta,\alpha)\to \cC_{A/}(\beta,\gamma\alpha)$. By \cite[5.5.5.12]{lurie-htt} the fiber over $\delta$ of this map is precisely the space of maps from $\alpha$ to $\beta$ in $(\cC_{A/})_{/\delta}$. Therefore relative $k$-skeletality can be rephrased as asking for $\Lift(\beta,\alpha)$, the mapping space in $(\cC_{A/})_{/\delta}$, to be non-empty. Or, equivalently, for the morphism $\gamma_\ast$ to be be surjective on $\pi_0$. This should be compared to the discussion regarding \textit{unique} lifts in \cite[5.2.8.3]{lurie-htt}.
\end{rmk}

\begin{exam}
	Recall that a map $X\to Y$ in $\Spaces$ is called $k$-connected if it is an isomorphism on $\pi_i$ for all $i<k$ and a surjection on $\pi_k$. The cases of $i=0$ and $i=k$ in particular imply that $S^k$ is $k$-skeletal in $\Spaces$, in the sense of Definition \ref{def:skeletalobject}.
\end{exam}

\begin{prop}
  If $A$ is $k$-skeletal and $X \to Y$ is $(k+d)$-connected for $d \geq 0$, then the map of function spaces $\Map(A,X) \to \Map(A,Y)$ is $d$-connected.
\end{prop}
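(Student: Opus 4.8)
The plan is to induct on $d$. For the base case $d=0$, observe that a map of spaces is $0$-connected exactly when it is surjective on $\pi_0$. Since $A$ is $k$-skeletal and the map $X \to Y$ is $k$-connected (it is $(k+d)$-connected with $d=0$, hence $k$-connected by axiom (1) of Definition~\ref{def:connectivity}), every map $A \to Y$ lifts through $X$ in $h\cC$; this is precisely the statement that $\Map(A,X) \to \Map(A,Y)$ is surjective on $\pi_0$, i.e.\ $0$-connected.

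For the inductive step I would use a standard reformulation of connectivity for maps of spaces: a map $g\co U \to V$ is $d$-connected (for $d \geq 1$) if and only if it is surjective on $\pi_0$ \emph{and} its relative diagonal $U \to U \times_V U$ is $(d-1)$-connected. This follows from the fiber description of connectivity in $\Spaces$ (Example~\ref{exam:spaces}): $g$ is $d$-connected if and only if each homotopy fiber $F$ of $g$ is $(d-1)$-connected, via the long exact sequence. The fibers of the relative diagonal are the loop spaces $\Omega F$ of the fibers of $g$, so $(d-1)$-connectivity of the diagonal encodes connectedness of $F$ together with the vanishing of $\pi_1,\dots,\pi_{d-1}(F)$, while the separate clause of $\pi_0$-surjectivity supplies nonemptiness of $F$. (The $\pi_0$ clause genuinely cannot be dropped: over a component of $V$ missed by $g$ the diagonal can be vacuously an equivalence.)

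Now I would apply this with $g = \gamma_\ast\co \Map(A,X) \to \Map(A,Y)$. Surjectivity on $\pi_0$ is exactly the base-case argument. For the diagonal, since $\Map(A,-)$ preserves homotopy pullbacks, the relative diagonal of $g$ is identified with the map
\[
  \Map(A,X) \to \Map(A,X) \times_{\Map(A,Y)} \Map(A,X) \simeq \Map(A,\, X \times_Y X)
\]
induced by the diagonal $\Delta\co X \to X \times_Y X$. Because $\gamma$ is $(k+d)$-connected, Corollary~\ref{cor:diagonalconnectivity} shows $\Delta$ is $(k+d-1)$-connected. As $A$ is still $k$-skeletal and $k+d-1 = k+(d-1)$, the inductive hypothesis applies to $\Delta$ and shows the induced map on mapping spaces is $(d-1)$-connected. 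Thus the relative diagonal of $g$ is $(d-1)$-connected, and combined with $\pi_0$-surjectivity this gives that $g$ is $d$-connected.

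I expect the main obstacle to be the lemma about spaces reformulating $d$-connectivity through the relative diagonal: one must verify it at every basepoint of the target (which is why $\pi_0$-surjectivity must be carried as a separate hypothesis), and set up the loop-space identification of the diagonal's fibers carefully. Once that lemma is in hand, the induction runs formally, driven entirely by Corollary~\ref{cor:diagonalconnectivity} and the limit-preservation of $\Map(A,-)$.
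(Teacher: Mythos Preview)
Your proof is correct and follows essentially the same route as the paper: induction on $d$, the base case given by $\pi_0$-surjectivity from skeletality, and the inductive step via the characterization of $d$-connectivity through the relative diagonal combined with Corollary~\ref{cor:diagonalconnectivity} and limit-preservation of $\Map(A,-)$. Your justification of the diagonal reformulation is in fact more detailed than the paper's, which simply cites \cite[6.5.1.18]{lurie-htt}.
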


\begin{proof}
  Recall the following inductive characterization of connectedness in $\Spaces$: a map $U \to V$ is $0$-connected if it is surjective on $\pi_0$, and it is $d$-connected for $d > 0$ if it is $0$-connected and the map $U \to \holim(U \to V \from U)$ to the homotopy fiber product is $(d-1)$-connected (cf.~for instance \cite[6.5.1.18]{lurie-htt}).

  We now prove the proposition by induction on $d$. Asking that the map $\Map(A,X) \to \Map(A,Y)$ be $0$-connected is the same as asking that every map $A \to Y$ in $h\cC$ lifts to $X$, which is implied by the definition of $k$-skeletality.

  Therefore, to complete the induction we need to show that if $X \to Y$ is $(k+d)$-connected, the diagonal map
  \[
    \Map(A,X) \to \holim(\Map(A,X) \to \Map(A,Y) \from \Map(A,X))
  \]
  is $(d-1)$-connected. However, the functor $\Map(A,-)$ preserves homotopy limits; therefore, this is equivalent to asking that the map
  \[
    \Map(A,X) \to \Map(A, \holim (X \to Y \from X)),
  \]
  induced by the diagonal $\Delta\co X \to \holim(X \to Y \from X)$, is $(d-1)$-connected. However, by Corollary~\ref{cor:diagonalconnectivity} the map $\Delta$ is $k+(d-1)$-connected, and so the inductive hypothesis completes the proof.
\end{proof}

\begin{cor}
  \label{cor:uniquelifts}
  If $A$ is $k$-skeletal and $f\co X \to Y$ is $(k+1)$-connected, then
  every map $A \to Y$ in $h\cC$ lifts uniquely to a map $A \to X$ in $h\cC$.
\end{cor}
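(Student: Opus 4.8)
The plan is to deduce the corollary immediately from the preceding proposition, of which it is essentially the case $d=1$ read off at the level of $\pi_0$. First I would specialize that proposition to $d = 1$: since $A$ is $k$-skeletal and $f \co X \to Y$ is $(k+1)$-connected, the induced map of mapping spaces $\Map(A,X) \to \Map(A,Y)$ is $1$-connected in $\Spaces$.

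Next I would translate this connectivity statement back into the homotopy category. By the convention of Example~\ref{exam:spaces}, a $1$-connected map of spaces induces an isomorphism on $\pi_j$ for every $j<1$ and a surjection on $\pi_1$; in particular it is a bijection on $\pi_0$. Since $\pi_0\Map(A,-)$ is precisely the hom-set $h\cC(A,-)$, this says exactly that
\[
  f_* \co h\cC(A,X) \to h\cC(A,Y)
\]
is a bijection.

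Finally I would unpack this bijection into the two assertions of the statement: surjectivity of $f_*$ says that every map $A \to Y$ in $h\cC$ lifts to a map $A \to X$, and injectivity says that any two such lifts coincide in $h\cC$, i.e.\ the lift is unique.

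I do not expect a genuine obstacle here, as all the real content is carried by the proposition just established; the only points requiring care are bookkeeping ones, namely that ``$1$-connected'' in the sense of Example~\ref{exam:spaces} yields a $\pi_0$-\emph{bijection} (not merely a surjection, which is what the bare $k$-skeletality hypothesis gives), and the identification $h\cC(A,-) = \pi_0\Map(A,-)$ that converts the surjectivity/injectivity of $f_*$ on $\pi_0$ into the existence and uniqueness of lifts.
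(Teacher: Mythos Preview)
Your proposal is correct and matches the paper's own proof essentially verbatim: apply the preceding proposition with $d=1$ to conclude that $\Map(A,X) \to \Map(A,Y)$ is $1$-connected, hence a bijection on $\pi_0$, which is exactly the statement that $\Hom_{h\cC}(A,X) \to \Hom_{h\cC}(A,Y)$ is an isomorphism.
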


\begin{proof}
  The map of spaces $\Map(A,X) \to \Map(A,Y)$ is an isomorphism on $\pi_0$, which equivalently says that $\Hom_{h\cC}(A,X) \to \Hom_{h\cC}(A,Y)$ is an isomorphism.
\end{proof}

\begin{defn}
  \label{def:liftcategory}
  Let $\phi\co X\to Y$ and $f\co A\to Y$ be maps in $\cC$. Then we define $\Lift(f,\phi)$ to be the following homotopy pullback:
  \[
    \xymatrix{
      \Lift(f,\phi) \ar[r] \ar[d] & \Map(A,X) \ar[d]^-{\phi_\ast} \\
      \ast \ar[r]_-{f} & \Map(A,Y).
    }\]
\end{defn}

\begin{rmk}\label{rmk:liftashom}
  By \cite[5.5.5.12]{lurie-htt} the space $\Lift(f,\phi)$ is precisely the space of morphisms from $f$ to $\phi$ in $\cC_{/Y}$.
\end{rmk}

\begin{prop}
  \label{prop:adjointskeletal}
  Let $L\co \cC \to \dD$ be a functor with a right adjoint $R$. If $R$ preserves $k$-connected morphisms then $L$ preserves $k$-skeletal objects.
\end{prop}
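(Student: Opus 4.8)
The plan is to exploit the natural bijection on homotopy classes of maps supplied by the adjunction $L \dashv R$, translating the lifting problem for $L(A)$ in $\dD$ into a lifting problem for $A$ in $\cC$, where the $k$-skeletality of $A$ can be applied directly. The whole argument lives at the level of homotopy categories, which is exactly where Definition~\ref{def:skeletalobject} is phrased, so the adjunction is used only in its descended form: an $\infty$-categorical adjunction $L\dashv R$ induces natural bijections $\Hom_{h\dD}(L(A),Z)\cong \Hom_{h\cC}(A,R(Z))$ for $Z\in\dD$.

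First I would fix a $k$-skeletal object $A\in\cC$ and, to test that $L(A)$ is $k$-skeletal, an arbitrary $k$-connected map $\phi\co X\to Y$ in $\dD$ together with a map $L(A)\to Y$ in $h\dD$; the goal is to produce a lift $L(A)\to X$. Applying the adjunction bijection with $Z=Y$ transports the given map $L(A)\to Y$ to a map $A\to R(Y)$ in $h\cC$.

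Next I would invoke the hypothesis that $R$ preserves $k$-connected morphisms: the map $R(\phi)\co R(X)\to R(Y)$ is $k$-connected in $\cC$. Because $A$ is $k$-skeletal, the map $A\to R(Y)$ therefore lifts along $R(\phi)$ to a map $A\to R(X)$ in $h\cC$. Transporting this lift back across the adjunction bijection, now applied with $Z=X$, yields a map $L(A)\to X$ in $h\dD$.

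The only point requiring care---and the closest thing to an obstacle, though a mild one---is verifying that the map just produced is genuinely a lift, i.e.\ that its composite with $\phi$ recovers the original $L(A)\to Y$. This is exactly the naturality of the adjunction bijection in the variable $Z$: the square relating $\Hom_{h\dD}(L(A),-)$ and $\Hom_{h\cC}(A,R(-))$ along the morphisms $\phi$ and $R(\phi)$ commutes, so post-composition with $\phi$ corresponds to post-composition with $R(\phi)$ under the bijection. Hence a lift of $A\to R(Y)$ along $R(\phi)$ transports precisely to a lift of $L(A)\to Y$ along $\phi$. Since the definition of skeletality demands only a lift in the homotopy category, no higher coherences are needed, and this completes the argument.
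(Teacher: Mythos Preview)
Your proof is correct and follows essentially the same approach as the paper: both transport the lifting problem across the adjunction, solve it using the $k$-skeletality of $A$ against the $k$-connected map $R(\phi)$, and transport back. The only cosmetic difference is that the paper writes out the return trip explicitly via $L$ and the counit $\epsilon$, whereas you invoke the adjunction bijection and its naturality directly; these are the same argument in slightly different language.
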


\begin{proof}
  Let $A$ be $k$-skeletal in $\cC$ and suppose we have a diagram
  \[
    \xymatrix{
      & X\ar[d]^f\\
      L(A)\ar[r]_{\phi}& Y}
  \]
  in $\dD$. Then by adjunction we have a diagram
  \[
    \xymatrix{
      & R(X)\ar[d]^{R(f)}\\
      A\ar[r]_-{adj(\phi)}& R(Y)}
  \] 
  in $\cC$ and by the hypothesis, $R(f)$ is $k$-connected. Therefore there is a lift
  \[
    \xymatrix{
      & R(X)\ar[d]^{R(f)}\\
      A\ar[r]_-{adj(\phi)}\ar[ur]^{\wt{adj(\phi)}}& R(Y)}
  \] 
  by the $k$-skeletality of $A$. Applying $L$ to above diagram and using the counit natural transformation of the adjunction, we obtain the following commutative diagram in $h\dD$:
  \[
    \xymatrix{
      & LR(X)\ar[d]^{LR(f)}\ar[r]^-{\epsilon_X}& X\ar[d]^f\\
      L(A)\ar[r]_-{L(adj(\phi))}\ar[ur]^{L(\wt{adj(\phi)})}& LR(Y)\ar[r]_-{\epsilon_Y}&Y}
  \]
  However, the lower composite is $\phi$.
\end{proof}

\begin{cor}\label{cor:adjointrelativeskeletal}
  The left adjoint $L$ preserves relatively $k$-skeletal morphisms.
\end{cor}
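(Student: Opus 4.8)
The plan is to deduce this directly from Proposition~\ref{prop:adjointskeletal} by passing to undercategories. Recall that a map $A \to B$ is relatively $k$-skeletal precisely when the corresponding object of $\cC_{A/}$ is $k$-skeletal, and that $\cC_{A/}$ carries the connectivity structure lifted from $\cC$ (Example~\ref{exam:slicecoslice}), so that a morphism there is $k$-connected exactly when its underlying map in $\cC$ is. It therefore suffices to produce an adjunction $L_{A/} \dashv R_{A/}$ between $\cC_{A/}$ and $\dD_{L(A)/}$ whose left adjoint sends $(A \to B)$ to $(L(A) \to L(B))$ and whose right adjoint preserves $k$-connected morphisms; then Proposition~\ref{prop:adjointskeletal} immediately gives that $(L(A) \to L(B))$ is $k$-skeletal in $\dD_{L(A)/}$, which is to say that $L(A) \to L(B)$ is relatively $k$-skeletal.

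First I would construct the induced adjunction. The functor $L$ carries an object $A \to X$ of $\cC_{A/}$ to $L(A) \to L(X)$, giving a functor $L_{A/}\co \cC_{A/} \to \dD_{L(A)/}$ lying over $L$. Its right adjoint $R_{A/}$ should send an object $s\co L(A) \to Y$ to the composite $A \xrightarrow{\eta_A} RL(A) \xrightarrow{R(s)} R(Y)$, using the unit $\eta$ of $L \dashv R$. To verify this is a right adjoint I would identify the mapping space $\dD_{L(A)/}(L_{A/}X, Y)$ with the fiber of $\dD(L(X),Y) \to \dD(L(A),Y)$ over $s$; transporting across the adjunction equivalences $\dD(L(-),Y)\simeq \cC(-,R(Y))$ turns this into the fiber of $\cC(X,R(Y))\to \cC(A,R(Y))$ over $R(s)\circ\eta_A$, which is exactly $\cC_{A/}(X, R_{A/}Y)$. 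The essential bookkeeping point is that the structure map of $R_{A/}Y$ is the adjoint transpose of the structure map of $Y$, which is precisely what makes these two fibers correspond.

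With the adjunction in hand the remaining steps are formal. On morphisms $R_{A/}$ acts by applying $R$ to the underlying map in $\dD$, so since connectivity in both undercategories is detected on underlying maps, $R_{A/}$ preserves $k$-connected morphisms precisely because $R$ does---which is the standing hypothesis of Proposition~\ref{prop:adjointskeletal}. Applying that proposition to $L_{A/} \dashv R_{A/}$ shows that $L_{A/}$ preserves $k$-skeletal objects, and translating back through the dictionary of the first paragraph completes the argument. I expect the main obstacle to be that first step: checking carefully, in the $\infty$-categorical setting, that $L \dashv R$ really does induce an adjunction on undercategories with the expected underlying functors, rather than asserting it. This is standard and can be extracted from the behavior of slices under adjunctions in \cite{lurie-htt}, with the only content beyond formalities being the mapping-space identification sketched above.
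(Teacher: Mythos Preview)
Your proposal is correct and follows essentially the same approach as the paper: pass to undercategories, invoke the induced adjunction between $\cC_{A/}$ and $\dD_{L(A)/}$, and apply Proposition~\ref{prop:adjointskeletal}. The paper simply cites \cite[5.2.5.1]{lurie-htt} for the existence of this induced adjunction rather than spelling out the mapping-space verification you sketch, but the logic is identical.
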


\begin{proof}
  The adjunction between $L$ and $R$ induces an adjunction between the slice categories $\cC_{A/}$ and $\dD_{L(A)/}$ by \cite[5.2.5.1]{lurie-htt}, and we can apply the proposition to the slice categories.
\end{proof}

\subsection{Colimits of skeletal objects}

A standard result about left-lifting properties implies the following.
\begin{prop}
  \label{prop:skeletalsaturation}
  Relatively $k$-skeletal maps are closed under equivalences, coproducts, composition, pushout, and retracts.
\end{prop}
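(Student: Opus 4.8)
The plan is to reformulate relative $k$-skeletality as a left lifting property (LLP) and then appeal to the standard fact that such classes are weakly saturated. First I would unwind Definition~\ref{def:skeletalobject} together with the lifted connectivity structure on $\cC_{A/}$ (Example~\ref{exam:slicecoslice}): a map $f\co A \to B$ is relatively $k$-skeletal precisely when it has the LLP in $\cC$ against every $k$-connected map $\gamma\co X \to Y$, i.e.\ every commuting square with $f$ on the left and $\gamma$ on the right admits a lift $B \to X$. In mapping-space terms, as in the remark following Definition~\ref{def:skeletalobject}, this says that
\[
  \Map(B,X) \to \Map(B,Y) \times_{\Map(A,Y)} \Map(A,X)
\]
is surjective on $\pi_0$ for every $k$-connected $\gamma$. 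I would then fix a single $k$-connected map $\gamma$ and let $\mathcal{L}_\gamma$ be the class of maps with the LLP against $\gamma$, so that the relatively $k$-skeletal maps are exactly $\bigcap_\gamma \mathcal{L}_\gamma$, the intersection over all $k$-connected $\gamma$. Since an intersection of classes closed under an operation is again closed under that operation, it suffices to prove each of the five closure statements for a single $\mathcal{L}_\gamma$.

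Each property then follows formally from the $\pi_0$-surjectivity condition, using that $\Map(-,Z)$ carries the relevant colimits to limits. For \emph{equivalences}, if $f$ is an equivalence then $\Map(B,-) \to \Map(A,-)$ is an equivalence, so the displayed map is an equivalence and in particular $\pi_0$-surjective. For \emph{coproducts}, the identity $\Map(\coprod_i B_i, Z) \simeq \prod_i \Map(B_i,Z)$ identifies a lifting problem for $\coprod_i f_i$ with a product of lifting problems for the individual $f_i$, each solvable. For \emph{pushout}, given a cobase change $f'$ of $f$ along $A \to A'$, the identity $\Map(B \amalg_A A', Z) \simeq \Map(B,Z) \times_{\Map(A,Z)} \Map(A',Z)$ lets one restrict a lifting problem for $f'$ to one for $f$, solve it, and reassemble the solution over the pushout. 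For \emph{composition}, given $f\co A \to B$ and $g\co B \to C$ in $\mathcal{L}_\gamma$, a lifting problem for $gf$ restricts to one for $f$; solving it produces a square for $g$, which one solves in turn to obtain a lift for $gf$. For \emph{retracts}, a lifting problem for a retract of $f$ is transported along the retract data to a lifting problem for $f$, solved there, and transported back.

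The hard part will be the homotopy-coherent bookkeeping: each ``solve and reassemble'' step must be carried out at the level of spaces, so that the partial solutions assemble to an honest point of $\Map(B,X)$ (or its pushout/coproduct analogue) rather than merely a compatible family of path components. This is exactly the content of the standard statement that, in an $\infty$-category, the class of morphisms left orthogonal to a fixed morphism is weakly saturated---closed under pushout, retract, and (transfinite) composition, hence under all the operations above---and contains the equivalences; I would cite the treatment of saturated classes in \cite{lurie-htt}. The arguments sketched above are simply the specializations of that statement to the lifting class $\mathcal{L}_\gamma$, and the conclusion for relatively $k$-skeletal maps follows by intersecting over all $k$-connected $\gamma$.
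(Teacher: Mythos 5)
Your proposal is correct and matches the paper's approach exactly: the paper offers no detailed argument, simply asserting that ``a standard result about left-lifting properties implies'' the proposition, which is precisely the weak-saturation-of-lifting-classes fact you invoke (after the same reformulation of relative $k$-skeletality as $\pi_0$-surjectivity of the map to the pullback of mapping spaces given in the paper's remark following Definition~\ref{def:skeletalobject}). Your sketches of the individual closure properties are a fleshed-out version of that standard result, so nothing further is needed.
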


\begin{prop}
  \label{prop:coproductskeletal}
  A coproduct of objects is $k$-skeletal if all the summands are $k$-skeletal.
\end{prop}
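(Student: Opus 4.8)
The plan is to reduce to the universal property of the coproduct, which converts a single map out of $\coprod_i A_i$ into the data of a map out of each summand, and then lift summand-by-summand. Write $A = \coprod_i A_i$ with each $A_i$ being $k$-skeletal, and suppose given a $k$-connected map $f\co X \to Y$ together with a map $g\co A \to Y$ in $h\cC$. Since $\cC$ has coproducts, the mapping space $\Map(A, Z)$ is naturally equivalent to $\prod_i \Map(A_i, Z)$ for any $Z$, and passing to $\pi_0$ yields a natural bijection $\Hom_{h\cC}(A, Z) \cong \prod_i \Hom_{h\cC}(A_i, Z)$.

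Under this identification, $g$ corresponds to the tuple $(g_i)_i$, where $g_i\co A_i \to Y$ is the composite of $g$ with the structure map $A_i \to A$. First I would apply the $k$-skeletality of each $A_i$ to the $k$-connected map $f$: for each $i$ there is a lift $\wt{g}_i\co A_i \to X$ in $h\cC$ with $f \circ \wt{g}_i = g_i$. Then I would reassemble the tuple $(\wt{g}_i)_i$ into a single morphism $\wt{g}\co A \to X$ in $h\cC$ using the product decomposition above.

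It remains to check that $\wt{g}$ is a lift of $g$, i.e.~that $f_* \wt{g} = g$ in $\Hom_{h\cC}(A, Y)$. This is immediate from naturality of the bijection: postcomposition with $f$ acts componentwise under the product decomposition, so $f_* \wt{g}$ corresponds to the tuple $(f \circ \wt{g}_i)_i = (g_i)_i$, which is exactly the tuple corresponding to $g$. Hence $f_* \wt{g} = g$, and $A$ is $k$-skeletal.

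There is no serious obstacle here, as the lifting condition in the definition of $k$-skeletality is purely a statement about $\pi_0$ of mapping spaces, so no higher coherence data needs to be matched. The only point requiring care is that the coproduct in $\cC$ induces a product decomposition of hom-sets in $h\cC$, which follows from the universal property of the colimit at the level of mapping spaces together with the fact that $\pi_0$ preserves products. (Alternatively, when $\cC$ has an initial object $\emptyset$, the result is a consequence of Proposition~\ref{prop:skeletalsaturation}: the map $\emptyset \to A$ is the coproduct of the relatively $k$-skeletal maps $\emptyset \to A_i$, since a coproduct of initial objects is initial.)
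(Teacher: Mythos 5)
Your proof is correct and follows essentially the same route as the paper: both arguments use the universal property of the coproduct to split the lifting problem into a product of lifting problems, one per summand, each solvable by $k$-skeletality of $A_i$. The paper packages this as a product decomposition of the spaces $\Lift(f\circ i_j,\phi)$ (mapping spaces in $\cC_{/Y}$) and concludes by nonemptiness, whereas you work at the level of $\pi_0$ of mapping spaces and verify the compatibility by naturality---a purely cosmetic difference.
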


\begin{proof}
  Let $\{A_j\}_{j \in J}$ be a collection of $k$-skeletal objects, and suppose $\phi\co X\to Y$ is a $k$-connective morphism.  Let $i_j\co A_j\to \coprod A_j$ be the standard inclusions and let $f\co \coprod A_j \to Y$ be any map so that $\coprod f\circ i_j\simeq f$ in $\cC_{/Y}$.  By Remark \ref{rmk:liftashom} we have that
  \[
    \Lift(f, \phi)\simeq\Lift\left(\amalg_j f\circ i_j,\phi\right)\simeq \prod_j \Lift(f\circ i_j,\phi).
  \]
  Because all $A_j$ are $k$-skeletal, the right-hand side is nonempty; therefore, the left-hand side is nonempty.
\end{proof}

\begin{rmk}
  This result can be upgraded to an if-and-only-if statement if \emph{enough maps exist}: if there exist maps $A_j \to Y$ for every object $Y$. For example, this is true if $\cC$ is pointed.
\end{rmk}

\begin{prop}
  \label{prop:pushoutskeletal}
  Suppose that we have a homotopy pushout diagram
  \[
    \xymatrix{
      A \ar[r]^p \ar[d]_q & B \ar[d]^{i_B} \\
      C \ar[r]_{i_C} & P.
    }
  \]
  If $B$ and $C$ are $k$-skeletal and $A$ is $(k-1)$-skeletal, then the homotopy pushout $P$ is $k$-skeletal.
\end{prop}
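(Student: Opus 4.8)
The plan is to verify $k$-skeletality of $P$ directly from the definition: given an arbitrary $k$-connected map $\phi\co X \to Y$, I must show that $\Map(P,X) \to \Map(P,Y)$ is surjective on $\pi_0$. The first step is to trade the pushout for a pullback of mapping spaces. Since $\Map(-,Z)$ carries homotopy colimits in its source variable to homotopy limits, the defining homotopy pushout square yields, for every $Z$, an equivalence
\[
  \Map(P,Z) \simeq \Map(B,Z) \times_{\Map(A,Z)} \Map(C,Z),
\]
with structure maps given by restriction along $p$ and $q$. The map to be analyzed is therefore the induced map of homotopy pullbacks
\[
  \Map(B,X)\times_{\Map(A,X)}\Map(C,X) \longrightarrow \Map(B,Y)\times_{\Map(A,Y)}\Map(C,Y).
\]

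Next I would record the connectivity of each of the three ``columns'' of this comparison, all measured in $\Spaces$ via Example~\ref{exam:spaces}. Because $B$ and $C$ are $k$-skeletal and $\phi$ is $k$-connected, the earlier proposition bounding the connectivity of $\Map(A,-)$ in terms of the skeletality of $A$, applied with $d = 0$, shows that $\Map(B,X)\to\Map(B,Y)$ and $\Map(C,X)\to\Map(C,Y)$ are $0$-connected. Because $A$ is $(k-1)$-skeletal and $\phi$ is $k = (k-1)+1$ connected, the same proposition with $d = 1$ shows that $\Map(A,X)\to\Map(A,Y)$ is $1$-connected. This is exactly the input required by Proposition~\ref{prop:pullbackconnectivity}: the middle vertical map is one degree more connected than the two outer ones.

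I would then invoke Proposition~\ref{prop:pullbackconnectivity} in $\Spaces$ with its integer parameter equal to $0$, applied to the cospan $\Map(B,X)\to\Map(A,X)\from\Map(C,X)$ mapping to $\Map(B,Y)\to\Map(A,Y)\from\Map(C,Y)$. Its conclusion is that the induced map of homotopy pullbacks is $0$-connected, that is, surjective on $\pi_0$. Unwinding the identification above, this says exactly that $\Map(P,X)\to\Map(P,Y)$ is surjective on $\pi_0$, so that every map $P\to Y$ lifts to $X$ and $P$ is $k$-skeletal.

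I do not anticipate a serious obstacle, since the argument is a bookkeeping of connectivity degrees feeding into two results already established. The point deserving the most care is why $A$ need only be $(k-1)$-skeletal: Proposition~\ref{prop:pullbackconnectivity} insists that the middle leg of the cospan be $(k+1)$-connected (here $1$-connected) while the outer legs are merely $k$-connected (here $0$-connected), and it is precisely the extra degree of connectivity of $\Map(A,-)$ furnished by $(k-1)$-skeletality that supplies this. It is also worth recording explicitly that $\Map(-,Z)$ converts the defining pushout square into a pullback square of spaces, since this is what justifies replacing maps out of $P$ by the fiber product of maps out of $B$ and $C$.
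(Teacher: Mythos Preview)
Your proof is correct and takes a genuinely different route from the paper's. The paper argues directly in $h\cC$: given $\phi\co P\to Y$, it lifts $\phi i_B$ and $\phi i_C$ separately using $k$-skeletality of $B$ and $C$, then invokes Corollary~\ref{cor:uniquelifts} (uniqueness of lifts from a $(k-1)$-skeletal object along a $k$-connected map) to see that the two resulting maps $A\to X$ agree, and finally appeals to the universal property of the pushout to assemble a lift $P\to X$. Your approach instead passes immediately to mapping spaces, identifies $\Map(P,-)$ as a homotopy pullback, and feeds the connectivity bounds on $\Map(B,-)$, $\Map(C,-)$, and $\Map(A,-)$ into Proposition~\ref{prop:pullbackconnectivity} in $\Spaces$.

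What each buys: the paper's argument is more elementary in that it only needs the $\pi_0$-statement of Corollary~\ref{cor:uniquelifts}, but as written it is a little loose about why the assembled map $P\to X$ really covers the original $\phi$ (one must also lift the homotopy witnessing $\phi i_B p\simeq \phi i_C q$, which uses $1$-connectedness of $\Map(A,X)\to\Map(A,Y)$ on $\pi_1$). Your argument packages this coherence issue into Proposition~\ref{prop:pullbackconnectivity}, so it is cleaner and more uniform, at the cost of invoking the full connectivity statement for $\Map(A,-)$ rather than just uniqueness in $h\cC$.
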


\begin{proof}
  Let $f\co X\to Y$ be a $k$-connected map, $\phi\co P\to Y$ any map, and consider the following diagram:

\[
\xymatrix{
& B\ar[dr]^{i_B}& & X\ar[d]^f\\
A\ar[ur]^{p}\ar[dr]_q & & P\ar[r]^\phi & Y\\
& C\ar[ur]_{i_C} & &
}
\] 

Because $B$ and $C$ are $k$-skeletal, there are lifts $\wt{\phi i_B}\co B\to X$ and $\wt{\phi i_C}\co C\to X$.
We can obtain lifts of $\phi i_B p$ and $\phi i_C q$ to $X$ by simply composing $p$ and $q$ with $\wt{\phi i_B}$ and $\wt{\phi i_C}$ respectively. Using the fact that $f$ is not just $(k-1)$-connected, but $k$-connected, and Proposition \ref{cor:uniquelifts}, both of these lifts must agree in $h\cC$ because they all lift $\phi i_B p\simeq \phi i_C q$. Call that unique lift $\psi\co A\to X$. Since $\psi \simeq \wt{\phi i_B}p \simeq \wt{\phi i_C} q$, the universal property of $P$ induces a lift $\wt\phi\co P\to X$.
\end{proof}

\subsection{Skeleta and minimal skeleta}

\begin{defn}
  \label{def:skeleton}
  A map $f\co A \to X$ is a \emph{$k$-skeleton} if $A$ is $k$-skeletal and $f$ is $k$-connected. If $f$ is $(k+1)$-connected, we refer to this as a \emph{minimal $k$-skeleton}.

  Similarly, a factorization $A \to B \to X$ is a \emph{relative $k$-skeleton} if it is a $k$-skeleton in the undercategory $\cC_{A/}$.
\end{defn}

\begin{rmk}
Notice that saying a factorization $A\to B\to X$ of a map $A\to X$ is a relative $k$-skeleton is equivalent to saying that $A\to B$ is relatively $k$-skeletal and that $B\to X$ is $k$-connected. We will show in Section \ref{sec:cellularskeleta} that in good cases, every map of $\cC$ admits such a factorization. In other words, the classes of relatively $k$-skeletal maps and $k$-connected maps are the left and right classes of a weak factorization system, respectively.
\end{rmk}

\begin{exam}
  Every $k$-skeletal object is a minimal $k$-skeleton of itself.
\end{exam}

\begin{exam}
  Suppose that $A$ is a CW-complex with $k$-skeleton $A^{(k)}$. Then these spaces are built, inductively, by a sequence of (homotopy) pushout diagrams:
  \[
    \xymatrix{
      \coprod S^{k-1} \ar[r] \ar[d] & A^{(k-1)} \ar[d] \\
      \coprod D^k \ar[r] & A^{(k)}
    }
  \]
  By Proposition~\ref{prop:coproductskeletal}, the space $\coprod S^{k-1}$ is $(k-1)$-skeletal and the space $\coprod D^k$ is $0$-skeletal. Inductively assuming that $A^{(k-1)}$ is $(k-1)$-skeletal, then by Proposition~\ref{prop:pushoutskeletal} we find that $A^{(k)}$ is $k$-skeletal.

  The maps $A^{(k)} \to A$ are then $k$-skeleta. More generally, if $X$ is a space and $A \to X$ is a weak equivalence from a CW-complex, then the maps $A^{(k)} \to X$ are $k$-skeleta.
\end{exam}

The following proposition shows that a minimal skeleton is a retract of any other skeleton.

\begin{prop}
  \label{prop:minimalretract}
  Suppose that $A \to X$ is a minimal $k$-skeleton and $B \to X$ is another $k$-skeleton. Then there exists a map $r\co B \to A$ of objects over $X$, unique in the homotopy category, and it admits a section.
\end{prop}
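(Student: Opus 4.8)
The plan is to produce $r$ by a unique-lifting argument, to produce a candidate section $s$ by ordinary lifting, and then to run a second unique-lifting argument to verify that $s$ really is a section. Throughout write $g\co A \to X$ for the minimal $k$-skeleton and $h\co B \to X$ for the other $k$-skeleton.

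First I would construct $r$. Since $g$ is $(k+1)$-connected (this is minimality) and $B$ is $k$-skeletal, Corollary~\ref{cor:uniquelifts}, applied to lifting the map $h\co B \to X$ along $g$, produces a map $r\co B \to A$ with $g r \simeq h$, and asserts that it is \emph{unique} in $h\cC$. This $r$ is the desired map of objects over $X$, and its uniqueness in the homotopy category is precisely the uniqueness clause of Corollary~\ref{cor:uniquelifts}.

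Next I would build a candidate section. Since $A$ is $k$-skeletal and $h\co B \to X$ is $k$-connected, Definition~\ref{def:skeletalobject} lets me lift $g\co A \to X$ along $h$ to a map $s\co A \to B$ with $h s \simeq g$. Here only existence is needed, so $k$-connectedness of $h$---rather than $(k+1)$-connectedness---suffices, which is why $B$ may be an ordinary (non-minimal) skeleton. Both $r$ and $s$ are maps over $X$. To see that $s$ is a section of $r$, compose the two homotopies: $g(rs) \simeq (gr)s \simeq h s \simeq g$, while trivially $g\cdot\id_A \simeq g$. Thus $rs$ and $\id_A$ are both lifts of $g\co A \to X$ along $g$. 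As $g$ is $(k+1)$-connected and $A$ is $k$-skeletal, Corollary~\ref{cor:uniquelifts} again forces such a lift to be unique in $h\cC$, whence $rs \simeq \id_A$ and $s$ exhibits $r$ as a retraction.

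The argument is essentially formal, and the only real content is bookkeeping. Minimality of $A \to X$---the upgrade from $k$-connected to $(k+1)$-connected---is exactly what turns the lifting property into \emph{unique} lifting via Corollary~\ref{cor:uniquelifts}, and it is invoked twice: once to obtain a well-defined $r$ and once to identify $rs$ with $\id_A$. The main point to be careful about is that every lift and every uniqueness assertion takes place over $X$ (equivalently, as a map in $h\cC$ together with its specified homotopy to the structure map), so that in the last step the two maps $rs$ and $\id_A$ genuinely lie in the single set of homotopy classes of lifts to which the uniqueness in Corollary~\ref{cor:uniquelifts} applies.
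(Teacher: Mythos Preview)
Your proof is correct and follows essentially the same approach as the paper's: both use Corollary~\ref{cor:uniquelifts} to obtain the unique $r$, the definition of $k$-skeletal to obtain the section (the paper calls it $i$), and a second application of Corollary~\ref{cor:uniquelifts} to identify $ri$ with $\id_A$. Your added commentary about the role of minimality and the care with lifts over $X$ is accurate and makes explicit what the paper leaves implicit.
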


\begin{proof}
  Because the map $A \to X$ is $(k+1)$-connected by assumption, Corollary~\ref{cor:uniquelifts} implies that in the homotopy category $h\cC$ there is a unique lift $r\co B \to A$ over $X$. To show the existence of a section, the definitions imply that there is a lift $i\co A \to B$ over $X$.   The maps $ri$ and $\id_A$ are then two lifts $A \to A$ along a $(k+1)$-connected map $A \to X$, and another application of Corollary~\ref{cor:uniquelifts} implies that $ri = \id_A$ in $h\cC$.
\end{proof}

If we have two minimal $k$-skeleta of $X$, we can then arrive
at the following conclusion.

\begin{prop}
  \label{prop:skeletaluniqueness}
  Suppose that $A_1 \to X$ and $A_2 \to X$ are minimal $k$-skeleta of $X$. Then in the homotopy category $h\cC$ there exists a unique isomorphism $A_1 \simeq A_2$ over $X$.
\end{prop}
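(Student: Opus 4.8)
The plan is to deduce everything from Proposition~\ref{prop:minimalretract} by applying it symmetrically in both directions, and then to promote the resulting pair of retractions to a two-sided inverse using the uniqueness of lifts from Corollary~\ref{cor:uniquelifts}. The key observation is that each $A_i \to X$ is \emph{simultaneously} a minimal $k$-skeleton (so it plays the role of ``$A$'' in Proposition~\ref{prop:minimalretract}) and an ordinary $k$-skeleton (so it plays the role of ``$B$''). Thus both objects qualify for both roles, and we may run the retract construction twice, once in each direction.

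First I would apply Proposition~\ref{prop:minimalretract} with $A = A_1$ minimal and $B = A_2$ the other $k$-skeleton, obtaining a map $r\co A_2 \to A_1$ over $X$, unique in $h\cC$. Then I would apply it again with the roles reversed, taking $A = A_2$ minimal and $B = A_1$, obtaining $s\co A_1 \to A_2$ over $X$, also unique in $h\cC$. Each of $r$ and $s$ is a morphism of objects over $X$.

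Next I would show $r$ and $s$ are mutually inverse in $h\cC$. Consider the composite $rs\co A_1 \to A_1$, which is a map over $X$. Since $A_1$ is $k$-skeletal and the structure map $A_1 \to X$ is $(k+1)$-connected (by minimality), Corollary~\ref{cor:uniquelifts} guarantees that lifts of $A_1 \to X$ along $A_1 \to X$ are unique in $h\cC$. As both $rs$ and $\id_{A_1}$ are such lifts, we conclude $rs = \id_{A_1}$ in $h\cC$. The symmetric argument, using that $A_2$ is $k$-skeletal and $A_2 \to X$ is $(k+1)$-connected, gives $sr = \id_{A_2}$. Hence $r$ is an isomorphism over $X$. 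Uniqueness of the isomorphism is immediate: any map $A_1 \to A_2$ over $X$ is a lift of $A_1 \to X$ along the $(k+1)$-connected map $A_2 \to X$, and such lifts are unique in $h\cC$ by Corollary~\ref{cor:uniquelifts}; in particular there is only one candidate, and we have just shown it is invertible.

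I expect no serious obstacle here, since Proposition~\ref{prop:minimalretract} already supplies the retractions and their sections; the only point requiring care is to check that the composites $rs$ and $sr$ are genuinely the respective identities rather than merely self-maps over $X$. This is exactly where it matters that \emph{both} skeleta are \emph{minimal}, i.e.\ that both structure maps are $(k+1)$-connected, so that Corollary~\ref{cor:uniquelifts} applies on each side. Were only one of the two skeleta minimal, we would recover Proposition~\ref{prop:minimalretract} (a retraction with section) but not a two-sided inverse.
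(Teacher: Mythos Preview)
Your argument is correct and is precisely the intended one: the paper states this proposition immediately after Proposition~\ref{prop:minimalretract} with no separate proof, treating it as a direct consequence, and your write-up simply spells out the details using Corollary~\ref{cor:uniquelifts} exactly as the proof of Proposition~\ref{prop:minimalretract} does.
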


\begin{cor}\label{cor:skeletalequivalence}
  Any $(k+1)$-connected map between $k$-skeletal objects is an equivalence.
\end{cor}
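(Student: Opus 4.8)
The plan is to deduce this directly from the uniqueness of lifts in Corollary~\ref{cor:uniquelifts}, constructing an inverse in the homotopy category and then promoting it to an equivalence in $\cC$. Let $f\co A \to B$ be the given $(k+1)$-connected map, with both $A$ and $B$ assumed $k$-skeletal.

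First I would build a homotopy section. Since $B$ is $k$-skeletal and $f$ is $(k+1)$-connected, Corollary~\ref{cor:uniquelifts} applies to the map $\id_B\co B \to B$: it lifts uniquely through $f$ to a map $g\co B \to A$ in $h\cC$ with $f g \simeq \id_B$. Next I would show $g$ is also a retraction, i.e.~$g f \simeq \id_A$. For this I apply Corollary~\ref{cor:uniquelifts} again, now using that $A$ is $k$-skeletal and that $f$ itself is the $(k+1)$-connected map: the two maps $g f\co A \to A$ and $\id_A\co A \to A$ both satisfy $f \circ (g f) = (f g) f \simeq f = f \circ \id_A$, so both are lifts of $f$ along $f$. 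By the uniqueness clause of Corollary~\ref{cor:uniquelifts} they must agree, giving $g f \simeq \id_A$ in $h\cC$. Hence $f$ is an isomorphism in the homotopy category $h\cC$, and since a morphism of an $\infty$-category is an equivalence precisely when its image in $h\cC$ is an isomorphism, $f$ is an equivalence in $\cC$.

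Alternatively—and perhaps more cleanly—one can observe that this is essentially a special case of Proposition~\ref{prop:skeletaluniqueness}. Taking $X = B$, the map $f\co A \to B$ exhibits $A$ as a minimal $k$-skeleton of $B$ (as $A$ is $k$-skeletal and $f$ is $(k+1)$-connected), while $\id_B\co B \to B$ exhibits $B$ as a minimal $k$-skeleton of itself. Proposition~\ref{prop:skeletaluniqueness} then provides a unique isomorphism $A \simeq B$ over $B$ in $h\cC$; since the structure map of the first skeleton is $f$ and that of the second is $\id_B$, commuting over $B$ forces this isomorphism to be $f$ itself, so $f$ is an equivalence.

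I do not anticipate a serious obstacle here: the content is entirely in correctly invoking the uniqueness of lifts against a $(k+1)$-connected map, and the only point requiring care is the standard passage from being an isomorphism in $h\cC$ to being an equivalence in $\cC$. The main thing to get right is the bookkeeping of which object supplies the skeletality hypothesis at each application of Corollary~\ref{cor:uniquelifts}.
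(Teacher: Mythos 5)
Your proposal is correct and matches the paper's (implicit) argument: the corollary is stated as an immediate consequence of Proposition~\ref{prop:skeletaluniqueness}, which is exactly your second route --- $f\co A \to B$ and $\id_B\co B \to B$ are both minimal $k$-skeleta of $B$, and the unique isomorphism over $B$ is forced to be $f$ itself. Your first route merely inlines the lifting arguments via Corollary~\ref{cor:uniquelifts} that the paper uses to prove Propositions~\ref{prop:minimalretract} and~\ref{prop:skeletaluniqueness}, so it is the same approach unwound.
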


\begin{exam}\label{example: retracts of kdim CW}
  Retracts of $k$-dimensional CW-complexes are $k$-skeletal. For the converse, suppose $X$ is a $k$-skeletal space, and construct a CW-approximation $A \to X$. Then $A^{(k)} \to X$ is a $k$-skeleton and $X \to X$ is a minimal $k$-skeleton, so $X$ is a retract of a $k$-dimensional CW-complex $A^{(k)}$. 
  
  In particular, a connected $1$-skeletal space $X$ is a retract of a wedge of circles. This means $X$ is a $K(G,1)$ for $G$ a retract of a free group; such a group $G$ is free, and choosing generators gives an equivalence from a wedge of circles to $X$.
\end{exam}

\subsection{Cells}

\begin{defn}
  \label{def:cell}
  A \emph{$k$-cell} is a $(k-1)$-connected map $A \to B$ such that $A$ is $(k-1)$-skeletal and the map is relatively $k$-skeletal.

  We say that a set $S_k$ of $k$-cells is a set of \emph{generating $k$-cells} for the connectivity structure if it satisfies the following property: a map is $k$-connected if and only if it is $(k-1)$-connected and satisfies the right lifting property with respect to the maps in $S_k$.

  A set of \emph{generating cells} is a choice of sets $S_k$ of generating $k$-cells for all $k$; if one exists, we say that the connectivity structure is \emph{determined by cells}.
\end{defn}

\begin{defn}\label{def:boundedcell}
  A cell $A \to B$ is \emph{$j$-bounded} if $B$ is $j$-skeletal.
\end{defn}

\begin{exam}
  For $k \geq 0$ the inclusion $S^{k-1} \to D^k$ is a ($0$-bounded) $k$-cell in $\Spaces$, and the connectivity structure on spaces is determined by these cells.
\end{exam}

\begin{exam}
  Similarly, for $k \in \mb Z$ the maps $\Sigma^{k-1} \mb S \to \ast$ and $\ast \to \Sigma^k \mb S$ are both $k$-cells ($(-\infty)$-bounded and $k$-bounded respectively) for the standard connectivity structure on the category $\Sp$ of spectra. The connectivity structure on spectra is determined by the first type of cells: a map $f\co X \to Y$ has the right lifting property with respect to $\Sigma^{k-1} \mb S \to \ast$ if and only if $\pi_k \cofib(f)$ is trivial.

  By contrast, the connectivity structure is not determined by the latter cells. A $(k-1)$-connected map $f\co X \to Y$ has the right lifting property with respect to $\ast \to \Sigma^k \mb S$ if and only if $\pi_k(X)$ surjects onto $\pi_k(Y)$, which does not ensure that the map $\pi_{k-1}(X) \to \pi_{k-1}(Y)$ is an isomorphism.
\end{exam}

\begin{exam}
  Consider the category $\Sp_{\geq 0}$ of connective spectra. The connectivity structure on $\Sp$ restricts to one on $\Sp_{\geq 0}$.\footnote{The inclusion $\Sp_{\geq 0} \to \Sp$ does not preserve homotopy pullbacks, and so the connectivity structure on $\Sp_{\geq 0}$ is not lifted from $\Sp$. Instead, it is lifted from $\Spaces$ along the functor $\Omega^\infty$.} The maps $\Sigma^{k-1} \mb S \to \ast$ for $k \geq 1$ are also $k$-cells in $\Sp_{\geq 0}$, but they do not determine the connectivity structure: they do not detect surjectivity on $\pi_0$. To repair this, we must also include the $0$-cell $\ast \to \mb S$.

  Alternatively, the maps $\Sigma^\infty_+ S^{k-1} \to \Sigma^\infty_+ D^k$ and $\ast \to \Sigma^\infty_+ D^0$ are also generating cells for $\Sp_{\geq 0}$.
\end{exam}

\begin{prop}
  \label{prop:adjointspreservecells}
  Suppose that $\dD$ has a connectivity structure lifted from $\cC$ along a functor $R\co \dD \to \cC$ with left adjoint $L$.

  If $L$ preserves connectivity, then $L$ takes $k$-cells to $k$-cells. If, additionally, there are sets $S_k$ of generating $k$-cells that determine the connectivity structure on $\cC$, then the sets $L(S_k)$ are generating $k$-cells that determine the connectivity structure on $\dD$.
\end{prop}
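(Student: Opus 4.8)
The plan is to verify the two assertions in turn. The first is a direct consequence of the preservation results already in hand, while the second reduces, via the adjunction, to the generating property assumed in $\cC$.

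For the first assertion, let $c\co A \to B$ be a $k$-cell in $\cC$, so by Definition~\ref{def:cell} the map $c$ is $(k-1)$-connected, the object $A$ is $(k-1)$-skeletal, and $c$ is relatively $k$-skeletal. I would check that $L(c)\co L(A) \to L(B)$ satisfies the three corresponding conditions in $\dD$. Since $L$ preserves connectivity by hypothesis, $L(c)$ is again $(k-1)$-connected. Since the connectivity structure on $\dD$ is lifted along $R$, the functor $R$ preserves connectivity (Definition~\ref{def:liftedstructure}), so Proposition~\ref{prop:adjointskeletal} applies and carries the $(k-1)$-skeletal object $A$ to a $(k-1)$-skeletal object $L(A)$. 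Finally, Corollary~\ref{cor:adjointrelativeskeletal} shows that $L$ preserves relatively $k$-skeletal maps, so $L(c)$ is relatively $k$-skeletal. Thus $L(c)$ is a $k$-cell, and in particular every member of $L(S_k)$ is a $k$-cell in $\dD$.

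For the second assertion the key input is that the adjunction $L \dashv R$ transports lifting problems. Precisely, for any $c\co A \to B$ in $\cC$ and any $g\co X \to Y$ in $\dD$, the adjunction equivalences $\Map_\dD(L(-),-)\simeq \Map_\cC(-,R(-))$ identify the comparison map
\[
  \Map_\dD(L(B),X) \to \Map_\dD(L(A),X)\times_{\Map_\dD(L(A),Y)}\Map_\dD(L(B),Y)
\]
with the comparison map
\[
  \Map_\cC(B,R(X)) \to \Map_\cC(A,R(X))\times_{\Map_\cC(A,R(Y))}\Map_\cC(B,R(Y)).
\]
Consequently $g$ has the right lifting property with respect to $L(c)$ if and only if $R(g)$ has the right lifting property with respect to $c$; letting $c$ range over $S_k$, the map $g$ lifts against all of $L(S_k)$ if and only if $R(g)$ lifts against all of $S_k$. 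With this in hand I would run the chain of equivalences: $g$ is $k$-connected in $\dD$ iff $R(g)$ is $k$-connected in $\cC$ (lifted structure), iff $R(g)$ is $(k-1)$-connected and has the right lifting property against $S_k$ (since $S_k$ generates the structure on $\cC$), iff $g$ is $(k-1)$-connected (lifted structure again) and $g$ has the right lifting property against $L(S_k)$ (the adjunction fact). This is exactly the statement that $L(S_k)$ is a set of generating $k$-cells for $\dD$.

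The main obstacle is the adjunction fact used in the second paragraph: one must check that the adjunction equivalence of mapping spaces is natural enough to identify the two maps to the homotopy pullbacks, and not merely the individual mapping spaces. This amounts to a formal naturality statement for the unit and counit of $L \dashv R$ in both variables, ensuring compatibility of precomposition by $c$ with precomposition by $L(c)$ and of postcomposition by $R(g)$ with postcomposition by $g$. Once this naturality is secured, surjectivity on $\pi_0$ of one comparison map is equivalent to that of the other, and the remainder is bookkeeping with the definitions of the lifted connectivity structure and of generating cells.
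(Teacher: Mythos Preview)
Your proposal is correct and takes essentially the same approach as the paper: verify the three defining conditions of a $k$-cell using Proposition~\ref{prop:adjointskeletal}, Corollary~\ref{cor:adjointrelativeskeletal}, and the hypothesis that $L$ preserves connectivity, then transfer lifting properties across the adjunction to deduce the generating statement. Your version is in fact slightly more careful in the second part, since you check the recursive form of Definition~\ref{def:cell} directly rather than unfolding it; the naturality concern you flag is a standard formal property of adjunctions and poses no real difficulty.
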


\begin{proof}
  Suppose $A \to B$ is a $k$-cell in $\cC$. Then $L(A)$ is $(k-1)$-skeletal by Proposition~\ref{prop:adjointskeletal} and $L(A) \to L(B)$ is relatively $k$-skeletal by Corollary~\ref{cor:adjointrelativeskeletal}. Therefore, if $L$ preserves connectivity then $L(A)\to L(B)$ is a $k$-cell.   
  
  Suppose now that the connectivity structure on $\cC$ is determined by cells. A map $f$ in $\dD$ is $n$-connected if and only if $Rf$ is $n$-connected, and this is true if and only if $Rf$ has the right lifting property with respect to the generating cells $A \to B$ in $S_k$ for $k \leq n$; by adjunction, this is true if and only if $f$ has the right lifting property with respect to the cells $LA \to LB$.
\end{proof}

\begin{exam}
  If $S$ is an associative ring spectrum, the category $\LMod_S$ of left $S$-modules has a connectivity structure lifted from the forgetful functor $R\co \LMod_S \to \Sp$. There is a left adjoint $L(X) = S \otimes X$, and it preserves connectivity if and only if $S$ is connective. In this case, the maps $\Sigma^{k-1} S \to \ast$ are $k$-cells that determine the connectivity structure on $\LMod_S$. Similarly, $\Sigma^{k-1} S \to \ast$ for $k \geq 1$ and $\ast \to S$ are cells that determine the connectivity structure on $(\LMod_S)_{\geq 0}$.
\end{exam}

\begin{exam}\label{exam:connectivityforcommutativealgebras}
  Consider the category $\CAlg(\Sp)$ of commutative algebras in $\Sp$, with the connectivity structure lifted from $\Sp$. The left adjoint to the forgetful functor is the free algebra functor $\mb P$:
  \[
    \mb P(X) \simeq \coprod_{k \geq 0} X^{\otimes k}_{h\Sigma_k}.
  \]
  The functor $\mb P$ does not preserve connectivity in general, because the symmetric power functors don't: the map $S^{-1} \to \ast$ is $(-1)$-connected, but the map $(S^{-1} \otimes S^{-1})_{h\Sigma_2} \to \ast$ is only $(-2)$-connected. However, if we restrict attention to the category $\Sp_{\geq 0}$ of connective spectra, the free functor $\mb P$ does preserve connectivity.   Therefore, the maps $\mb P(\Sigma^{k-1} \mb S) \to \mb S$ and $\mb S \to \mb P(S)$ are generating cells for the connectivity structure on $\CAlg(\Sp_{\geq 0}$) lifted from $\Sp_{\geq 0}$.

  (A similar result applies to an arbitrary operad acting on the category $(\LMod_S)_{\geq 0}$ of connective modules over a connective commutative ring spectrum.)
\end{exam}

\begin{exam}\label{exam:connectivityforalgebras}
  Similarly, if $R$ is a connective $\mb E_2$-ring spectrum then there is a connectivity structure on $\Alg(\LMod_R)_{\geq 0}$ lifted along the forgetful functor $\Alg(\LMod_R)_{\geq 0}\to (\LMod_R)_{\geq 0}$. This connectivity structure is determined by the cells $\mb T^R(\Sigma^{k-1} R)\to R$ for $k\geq 1$ and $R\to \mb T^R(R)$, where $\mb T^R\co \LMod_R \to \Alg(\LMod_R)$ is the free $R$-algebra functor.

  (A similar result applies to $\mb E_k$-algebras in the category of left modules over a connective $\mb E_{k+1}$-ring spectrum.)
\end{exam}

\subsection{Cellular skeleta}\label{sec:cellularskeleta}

\begin{defn}
  We say that a set $S_k$ of $k$-cells \emph{is sufficient for $k$-skeleta} if the following properties hold.
  \begin{enumerate}
  \item The set $S_k$ is a set of generating $k$-cells for $\cC$.
  \item $\cC$ has homotopy colimits.
  \item Given any cells $A_i \to B_i$ in $S_k$, any cobase change of the map $\coprod A_i \to \coprod B_i$ is $(k-1)$-connected.
  \end{enumerate}
\end{defn}

\begin{rmk}
  In particular, the third item is automatic if the connectivity structure is compatible with cobase change.
\end{rmk}

\begin{prop}
  \label{prop:skeletaexist}
  Suppose that, for all $k > N$, we have a set of $k$-cells $S_k$ that is sufficient for $k$-skeleta. Then an $N$-connected map $f\co A \to X$ admits relative $n$-skeleta for any $n$.
\end{prop}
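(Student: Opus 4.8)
The plan is to run an inductive, small-object-style construction of a cellular approximation, attaching $k$-cells one dimension at a time. Concretely, I would build a sequence
\[
  A = B_N \to B_{N+1} \to \cdots \to B_n
\]
of objects under $A$ and over $X$, maintaining the inductive hypothesis that $B_k \to X$ is $k$-connected and that $A \to B_k$ is relatively $k$-skeletal. The base case $B_N = A$ holds because $f$ is $N$-connected by assumption and $\id_A$ is initial in $\cC_{A/}$, hence relatively $j$-skeletal for every $j$. (If $n \le N$ we are already done by taking $B = A$, since an $N$-connected map is $n$-connected.) The final term $B_n$ then furnishes the desired relative $n$-skeleton $A \to B_n \to X$.

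For the inductive step from $B_{k-1}$ to $B_k$ (with $N < k \le n$), I would index over a set of representatives of all attaching squares
\[
  \xymatrix{ A_i \ar[r] \ar[d] & B_{k-1} \ar[d]^-{\psi} \\ B_i \ar[r] & X }
\]
with $(A_i \to B_i) \in S_k$, and form the homotopy pushout $B_k = B_{k-1} \amalg_{\coprod A_i} \coprod B_i$, with $\phi\co B_k \to X$ the induced map. That $A \to B_k$ is relatively $k$-skeletal is then routine: $A \to B_{k-1}$ is relatively $(k-1)$-skeletal and hence relatively $k$-skeletal (there are fewer $k$-connected maps than $(k-1)$-connected ones, so lifting against the smaller class is weaker), while $B_{k-1} \to B_k$ is a pushout of a coproduct of the cells $A_i \to B_i$, which are relatively $k$-skeletal, so $B_{k-1} \to B_k$ is relatively $k$-skeletal by Proposition~\ref{prop:skeletalsaturation}; composing gives the claim. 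That $\phi$ is $(k-1)$-connected follows from the sufficiency hypothesis---any cobase change of $\coprod A_i \to \coprod B_i$ is $(k-1)$-connected, so $B_{k-1}\to B_k$ is---together with part (b) of the fourth axiom of Definition~\ref{def:connectivity} applied (one degree lower) to $B_{k-1} \to B_k \to X$, whose outer composite is $(k-1)$-connected.

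The crux, and the step I expect to be the main obstacle, is verifying that $\phi\co B_k \to X$ has the right lifting property against $S_k$, which (being already $(k-1)$-connected) upgrades it to $k$-connected via the generating-cells characterization in Definition~\ref{def:cell}. The subtlety is that a lifting problem is posed over $B_k$, whereas by construction I have only solved lifting problems posed over $B_{k-1}$. The device that resolves this is that the domain $A_i$ of each $k$-cell is $(k-1)$-skeletal: given a square $(a\co A_i \to B_k,\ b\co B_i \to X)$ with $\phi a \simeq b|_{A_i}$, the $(k-1)$-connectivity of $B_{k-1} \to B_k$ lets me lift $a$ to $\tilde a\co A_i \to B_{k-1}$. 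The pair $(\tilde a, b)$ is then one of the squares in the indexing set used to form $B_k$, so the corresponding cell inclusion $B_i \to B_k$ is a lift making the upper triangle commute, while the lower triangle commutes because $\phi$ restricted to this cell is by definition the chosen filler $b$. This produces the required point of the relevant lift space, establishing $\pi_0$-surjectivity; I would only need to assemble the homotopies $\phi a \simeq b|_{A_i}$ and $\mathrm{incl}\circ\tilde a \simeq a$ into a coherent square over $B_{k-1}$. Crucially, \emph{no} further transfinite iteration of the small-object argument is needed, precisely because the reduction of $B_k$-problems to $B_{k-1}$-problems is exact---this is what the $(k-1)$-skeletality of the cell domains buys us.
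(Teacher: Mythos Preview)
Your proposal is correct and follows essentially the same approach as the paper: an inductive cellular approximation attaching all $k$-cells at once, with the key step being the reduction of a lifting problem over $B_k$ to one over $B_{k-1}$ via the $(k-1)$-skeletality of the cell domains. The paper's proof is the same argument with slightly different bookkeeping (it phrases the induction as extending a relative $(n-1)$-skeleton to a relative $n$-skeleton and is a bit terser about the coherence you flag at the end), but the structure, the use of axiom~4(b), Proposition~\ref{prop:skeletalsaturation}, and the lift-the-domain trick are identical.
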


\begin{proof}
  The factorization $A \to A \too{f} X$ is a relative $n$-skeleton for any $n \leq N$: the map $A \to A$ is relatively $n$-skeletal because it is initial in the undercategory $\cC_{/A}$, and the map $f$ is $N$-connected by assumption.

  We then apply a standard cellular approximation method. Suppose inductively that we have found a relative $(n-1)$-skeleton $A \to X^{(n-1)} \to X$. Let $S_n = \{A_j \to B_j\}_{j \in J}$ be a set of generating $n$-cells for the connectivity structure, and consider the collection $T$ of all (up to equivalance) commutative diagrams
  \[
    \xymatrix{
      A_j \ar[r] \ar[d] & X^{(n-1)} \ar[d] \\
      B_j \ar[r] & X.
    }
  \]
  Factor the map $X^{(n-1)} \to X$ through the homotopy pushout
  \[
    \xymatrix{
      \coprod_{t \in T} A_{j_t} \ar[r] \ar[d] & X^{(n-1)} \ar[d] \\
      \coprod_{t \in T} B_{j_t} \ar[r] & X^{(n)}.
    }
  \]
  By assumption, this cobase change $X^{(n-1)} \to X^{(n)}$ is $(n-1)$-connected. Therefore, the factorization $X^{(n-1)} \to X^{(n)} \to X$ shows that the map $X^{(n)} \to X$ is at least $(n-1)$-connected using Definition~\ref{def:connectivity}. Moreover, since $\coprod A_j \to \coprod B_j$ is relatively $n$-skeletal, so is the map $X^{(n-1)} \to X^{(n)}$ and hence the composite $A \to X^{(n-1)} \to X^{(n)}$ by Proposition~\ref{prop:skeletalsaturation}.

  To prove that $X^{(n)} \to X$ is $n$-connected, it then suffices to show that any diagram
  \[
    \xymatrix{
      A_j \ar[r] \ar[d] & X^{(n)} \ar[d] \\
      B_j \ar[r] & X.
    }
  \]
  has a lift when the left-hand vertical arrow is a cell in the generating set $S_n$. However, in that case $A_j$ is $(n-1)$-skeletal, and so the topmost map factors through a map $A_j \to X^{(n-1)}$; the resulting commutative diagram is then equivalent to a map in $T$, and so there is a lift $B_j \to X^{(n)}$ by construction of the homotopy pushout.
\end{proof}

\begin{rmk}
  We refer to this type of construction of a factorization $A \to B \to X$, via iterated pushout, as a \emph{cellular} construction of a map using the cells in $S_k$---regardless of whether or not $B$ is a relative skeleton of $X$.
\end{rmk}

\begin{rmk}
Suppose that the connectivity structure of $\cC$ is determined by sets of $k$-cells $S_k$ and all of the $S_k$ are sufficient for $k$-skeleta. Then the above proposition implies that for every $k$  there is a weak factorization system on $h\cC$ whose left class is the class of relative $k$-skeleta and whose right class is the class of $k$-connected morphisms.
\end{rmk}

\begin{cor}
  If $\varnothing$ is an initial object and $\varnothing \to X$ is $N$-connected, then $X$ admits $n$-skeleta for any $n$.
\end{cor}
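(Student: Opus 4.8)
The plan is to specialize Proposition~\ref{prop:skeletaexist} to the case $A = \emptyset$ and then reinterpret the resulting \emph{relative} skeleton as an \emph{absolute} one. The entire content of the corollary is already contained in the proposition; what remains is only the bookkeeping that identifies relative skeletality over an initial object with ordinary skeletality.

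First I would observe that the standing hypothesis of Proposition~\ref{prop:skeletaexist}---that for every $k > N$ there is a set $S_k$ of $k$-cells sufficient for $k$-skeleta---is in force, and that $\emptyset \to X$ is $N$-connected by assumption. Applying the proposition to the $N$-connected map $f\co \emptyset \to X$ then produces, for each $n$, a relative $n$-skeleton $\emptyset \to B \to X$: that is, a factorization in which $\emptyset \to B$ is relatively $n$-skeletal and $B \to X$ is $n$-connected.

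Next I would translate this into the desired absolute statement. By the remark following Definition~\ref{def:skeletalobject}, because $\emptyset$ is initial, the object $B$ is $n$-skeletal precisely when the map $\emptyset \to B$ is relatively $n$-skeletal. Hence $B$ is $n$-skeletal, and combined with the $n$-connectivity of $B \to X$ this is exactly the assertion (Definition~\ref{def:skeleton}) that $B \to X$ is an $n$-skeleton of $X$. Since $n$ was arbitrary, $X$ admits $n$-skeleta for all $n$.

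There is essentially no obstacle here. The only point requiring any care is the identification of the undercategory $\cC_{\emptyset/}$ with $\cC$ itself (up to equivalence), so that a relative $n$-skeleton taken in $\cC_{\emptyset/}$ genuinely delivers an $n$-skeleton in $\cC$; this is standard since $\emptyset$ is initial, and it is precisely what the cited remark records.
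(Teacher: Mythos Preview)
Your proposal is correct and is exactly the argument the paper intends: the corollary is stated without proof precisely because it is the specialization of Proposition~\ref{prop:skeletaexist} to $A=\emptyset$, together with the remark after Definition~\ref{def:skeletalobject} identifying relative $n$-skeletality over an initial object with absolute $n$-skeletality. There is nothing to add.
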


\begin{exam}
  All objects in the category $\Spaces$ of spaces have skeleta, and the proof above is a standard construction of a CW-approximation to any space using the cellular approximation theorem.
\end{exam}

\begin{exam}
  Suppose $S$ is a connective ring spectrum. The forgetful functor $\LMod_S \to \Sp$ detects homotopy colimits and connectivity, and so $\LMod_S$ satisfies the assumptions of the proposition. Therefore, an object in $\LMod_S$ has an $n$-skeleton so long as it is $N$-connected for some $N > -\infty$.
\end{exam}

\section{Detecting skeleta}
\label{sec:detectingskeleta}

\subsection{Nilpotence}

\begin{defn}
  \label{def:nilpotentmaps}
  Suppose $E$ is a collection of maps in $\cC$. The class of \emph{$E$-nilpotent maps} is the smallest collection of maps in $h\cC$ which contains $E$ and is closed under base change, transfinite composition, products, filtered homotopy limits, and retracts.
\end{defn}

\begin{exam}\label{exam:classicalnilpotence}
  Let $E$ be the collection of maps $\ast \to K(A,n)$ in $\Spaces$ for $n \geq 2$. Recall that if a space $X$ is nilpotent in the classical sense (i.e.~$X$ has nilpotent fundamental group, which acts nilpotently on the higher homotopy groups) then it can be written as a limit of a tower $X\simeq\lim(\cdots\to Y_n\to Y_{n-1}\to\cdots Y_2\to Y_1\to Y_0\simeq \ast)$ in which $Y_{n}\to Y_{n-1}$ is the fiber of a map $Y_{n-1}\to K(A,m)$ for some $A$ and some $m\geq 2$ (cf. \cite[3.2.2]{mayponto-moreconcise}).  Then it follows from Definition \ref{def:nilpotentmaps} that the terminal morphism $X \to \ast$ is $E$-nilpotent.
\end{exam}

\begin{prop}
  An object $A$, or a map $A \to B$, has the left lifting property with respect to $E$-nilpotent maps if and only it has the left lifting property with respect to maps in $E$.
\end{prop}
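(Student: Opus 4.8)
The plan is to fix the object $A$ (the case of a map $A \to B$ being entirely parallel, and discussed at the end) and to study the class $\mathcal{R}$ of all maps $f\co X \to Y$ in $h\cC$ against which $A$ has the left lifting property. One direction is immediate: since $E$ is contained in the class of $E$-nilpotent maps, any $A$ that lifts against all $E$-nilpotent maps lifts in particular against the maps in $E$. For the converse I would show that $\mathcal{R}$ contains $E$ by hypothesis and is closed under the five operations of Definition~\ref{def:nilpotentmaps}---base change, transfinite composition, products, filtered homotopy limits, and retracts. Since the $E$-nilpotent maps form the \emph{smallest} such class, this forces every $E$-nilpotent map into $\mathcal{R}$, which is exactly the assertion.

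The engine for all of the closure statements is the reformulation of the lifting property in terms of the spaces $\Lift(g,f)$ of Definition~\ref{def:liftcategory}: a map $f\co X \to Y$ lies in $\mathcal{R}$ if and only if $\Lift(g,f)$ is nonempty for every $g\co A \to Y$. The crucial structural fact is that $\Map(A,-)$ preserves all homotopy limits that exist in $\cC$, so forming $\Lift(g,f)$ commutes with the homotopy-limit constructions appearing in Definition~\ref{def:nilpotentmaps}. Base change, products, and retracts are then routine. For a base change of $f$ along $Y' \to Y$, the lift space of $g'\co A \to Y'$ is a homotopy pullback of $\Lift$ spaces, and a point of it is assembled from a lift of the image of $g'$ in $Y$ (which exists since $f \in \mathcal{R}$) together with the connecting path in $\Map(A,Y)$; for a product $\prod f_i$ the lift space is $\prod_i \Lift(g_i, f_i)$, a product of nonempty spaces; and closure under retracts is the standard fact that $\pi_0$-surjectivity of a map of spaces is inherited by retracts in the arrow category.

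The main obstacle is the two limit-type operations, transfinite composition and filtered homotopy limits, because nonemptiness of mapping spaces is a purely $\pi_0$-level, ``weak'' lifting condition that is not automatically preserved by inverse limits. Here I would argue as follows for a tower $\cdots \to Y_2 \to Y_1 \to Y_0$ whose successive maps lie in $\mathcal{R}$, with limit $X = \holim_n Y_n$ and structure map $f\co X \to Y_0$. Writing $p_n\co Y_n \to Y_0$ for the composites, a lift of $g\co A \to Y_0$ through $f$ is a point of $\holim_n \Lift(g,p_n)$; each transition map $\Lift(g,p_{n+1}) \to \Lift(g,p_n)$ is surjective on $\pi_0$ precisely because $Y_{n+1} \to Y_n \in \mathcal{R}$, while $\Lift(g,p_0) \simeq \ast$. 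Thus $\{\pi_0 \Lift(g,p_n)\}$ is a tower of nonempty sets with surjective transition maps, so its inverse limit is nonempty, and the Milnor $\lim^1$ exact sequence shows that $\pi_0 \holim_n \Lift(g,p_n) \to \lim_n \pi_0 \Lift(g,p_n)$ is surjective. Combining these produces the required point of $\holim_n \Lift(g,p_n)$. The filtered (cofiltered) case and the limit stages of a transfinite composition are handled by the same two ingredients---nonemptiness of cofiltered limits of nonempty sets with surjective bonding maps, and surjectivity of $\pi_0\holim \to \lim \pi_0$---applied along the relevant indexing category.

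Finally, for a map $A \to B$ the identical strategy applies after replacing $\Map(A,-)$ by the matching construction $\Map(B,X) \to \Map(B,Y) \times_{\Map(A,Y)} \Map(A,X)$, whose $\pi_0$-surjectivity encodes the relative lifting property. This functor is again built from mapping spaces and homotopy pullbacks, hence preserves homotopy limits, so each of the five closure arguments goes through verbatim. Equivalently, one may run the object-level argument inside the undercategory $\cC_{A/}$, in which $A \to B$ is an object.
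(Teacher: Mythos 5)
Your strategy is exactly the paper's: the proof given there is the one-sentence assertion that the class $\mathcal{R}$ of maps against which $A$ has the left lifting property is closed under the operations defining $E$-nilpotence in Definition~\ref{def:nilpotentmaps}, so that minimality of the $E$-nilpotent class finishes the argument. Your treatment of base change, products, and retracts is correct, and your tower argument is also correct for transfinite compositions of length $\omega$: the transition maps $\Lift(g,p_{n+1})\to\Lift(g,p_n)$ are $\pi_0$-surjective precisely because the \emph{successive} maps $Y_{n+1}\to Y_n$ lie in $\mathcal{R}$, and the Milnor sequence then applies. You have also correctly located the danger: a lifting property recorded only on $\pi_0$ is not automatically preserved by inverse limits.

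The genuine gap is in the final step, where you dispose of filtered homotopy limits and uncountable limit stages ``by the same two ingredients.'' For a cofiltered diagram $\{f_i\co X_i \to Y_i\}$ of maps in $\mathcal{R}$, only the \emph{objects} of the diagram lie in $\mathcal{R}$; the bonding squares induce maps $\Lift(g_i,f_i)\to\Lift(g_j,f_j)$ that have no reason to be $\pi_0$-surjective, so the structure your tower argument exploits is simply absent. Moreover both invoked principles fail beyond countable towers: a cofiltered limit of nonempty sets with surjective bonding maps can be empty (Waterhouse's classical example), and surjectivity of $\pi_0\holim\to\lim\pi_0$ is a tower-specific consequence of the Milnor sequence. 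Worse, this step cannot be repaired at all under a face-value reading of Definition~\ref{def:nilpotentmaps}: take $E=\{\ast\to\Sigma H\mb Z\}$ in spectra and $A=\mb S$, so $A$ lifts against $E$ since $\pi_0\Sigma H\mb Z=0$; the homotopy limit in the arrow category of the tower of copies of $\ast\to\Sigma H\mb Z$ with bonding maps $(\id,\,\cdot p)$ is $\ast\to W$ with $\pi_0 W\cong{\lim}^1(\mb Z,\cdot p)\neq 0$, and $A$ does not lift against $\ast\to W$. Thus weak lifting properties are \emph{not} closed under cofiltered limits in the arrow category, and any correct proof must either read ``filtered homotopy limits'' restrictively (limits of systems whose structure maps themselves lie in the class, i.e.\ essentially transfinite composition) or add Mittag--Leffler/connectivity hypotheses that kill the ${\lim}^1$ obstruction, as happens in the paper's actual applications to Postnikov towers. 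For transfinite compositions of uncountable length the closure \emph{is} true, but not by your two-step $\pi_0$-level argument: one needs a transfinite recursion producing strictly compatible lifts in a fibrant model, with path-lifting at successors and honest limits at limit ordinals. To be fair, the paper's own one-line proof elides exactly the same subtlety; but as written your proposal does not establish the proposition.
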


\begin{proof}
  This is the assertion that ``right lifting properties'' are closed under the operations that define $E$-nilpotent maps.
\end{proof}

\begin{cor}
  \label{cor:nilpotentadjunctions}
  Suppose $R\co \dD \to \cC$ is a functor with a left adjoint $L$ and $E$ is a collection of maps in $\dD$. An object $A$ of $\cC$ has the lifting property with respect to $R(E)$-nilpotent maps if and only if $LA$ has the lifting property with respect to maps in $E$.
\end{cor}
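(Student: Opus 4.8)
The plan is to factor the statement through the previous proposition---that lifting against an $E$-nilpotent class is equivalent to lifting against $E$ itself---together with the adjunction isomorphism, so that no new lifting machinery is required. First I would apply that proposition not in $\dD$, but in $\cC$, to the collection $R(E)$ of maps. It yields that $A$ has the left lifting property with respect to $R(E)$-nilpotent maps if and only if $A$ has the left lifting property with respect to the maps in $R(E)$ themselves. This reduces the corollary to the purely formal claim that, for each $f \in E$, the object $A$ lifts against $R(f)$ exactly when $L(A)$ lifts against $f$; letting $f$ range over all of $E$ then closes the argument.

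The remaining step is the adjunction translation, which is the diagram chase already carried out in Proposition~\ref{prop:adjointskeletal} but stripped of any connectivity hypothesis. Fix $f\co X \to Y$ in $E$ and a map $\phi\co L(A) \to Y$. Under the natural bijection $\Hom_{h\dD}(L(A),-) \cong \Hom_{h\cC}(A, R(-))$, the map $\phi$ corresponds to $adj(\phi)\co A \to R(Y)$, and by naturality postcomposition with $f$ on the left corresponds to postcomposition with $R(f)$ on the right. Hence a lift of $\phi$ along $f$ is, up to this bijection, the same datum as a lift of $adj(\phi)$ along $R(f)$: from a lift $A \to R(X)$ one produces $L(A) \to X$ via the counit exactly as in Proposition~\ref{prop:adjointskeletal}, and the inverse adjunction gives the converse. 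Therefore $L(A)$ has the left lifting property against $f$ if and only if $A$ has the left lifting property against $R(f)$, as needed.

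The only point requiring care is that all of these lifting properties are taken in the homotopy categories, so one must confirm that the adjunction bijection on $\pi_0$ of mapping spaces is compatible with postcomposition, i.e.\ that $adj(g \circ h) = R(g) \circ adj(h)$ in $h\cC$. This is precisely the naturality of the unit and counit that makes the chase in Proposition~\ref{prop:adjointskeletal} go through, and it is exactly what guarantees that no hypothesis on $R$ (such as preservation of connectivity) is needed for the bare lifting statement. I expect this bookkeeping to be the main---though minor---obstacle; everything else is a direct appeal to the previous proposition and to the adjunction.
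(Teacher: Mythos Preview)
Your proposal is correct and is exactly the argument the paper intends: the corollary is stated without proof because it follows immediately from the preceding proposition (applied in $\cC$ to the collection $R(E)$) together with the adjunction bijection $\Hom_{h\dD}(LA,-)\cong\Hom_{h\cC}(A,R(-))$, whose naturality identifies lifts along $f$ with lifts along $Rf$. Your write-up simply makes explicit the bookkeeping that the paper leaves implicit.
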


\begin{exam}
  \label{exam:pi0skeletal}
  Suppose that $S$ is a connective ring spectrum, and let $E$ be the class of maps $\ast \to \Sigma^n HN$ for $N$ a $\pi_0(S)$-module and $n \geq k+1$. Then any $k$-connected map $X \to Y$ in $\LMod_S$ is $E$-nilpotent: one can apply the Postnikov tower to its cofiber. Therefore, if a left $S$-module $M$ has a $k$-skeletal image
  \[
    H\pi_0(S) \otimes_S M
  \]
  in the category $\LMod_{H\pi_0(S)}$, then $M$ is $k$-skeletal. The converse holds by Proposition~\ref{prop:adjointskeletal}.
\end{exam}

\subsection{Tangent categories}

We begin with a reminder about tangent $\infty$-categories.

Suppose that $\cC$ is a presentable $\infty$-category. Let $T_\cC \to \Fun(\Delta^1, \cC)$ be a \emph{tangent bundle} to $\cC$ in the sense of \cite[7.3.1.9, 7.3.1.10]{lurie-higheralgebra}, i.e.~the category of excisive functors $\Exc(\sfin,\cC)$. There are functors
\[
  ev_d\co T_{\cC}\to \cC
\]
for $d\geq 0$ given by evaluation at $S^d$,
\[
  U\co T_{\cC}\to \cC
\]
given by evaluation at $\ast$, and a map
\[
  T_{\cC}\to \Fun(\Delta^1,\cC)
\]
that takes $X$ to $X(S^0)\to X(\ast)$.  By restricting to functors which take $\ast$ to a fixed object $Z\in C$ we obtain \emph{pointed} excisive functors $\Exc_\ast(\sfin,\cC_{/Z})$. In other words, the fiber over an object $Z\in \cC$, along the map $U\co T_{\cC}\to \cC$, is equivalent to the category $\Sp(\cC_{/Z})$ discussed in Section~\ref{sec:stabilization}.

The results of \cite[7.3.1]{lurie-higheralgebra} imply that for any morphism $f\co W\to Z$ in $\cC$ there is an pair of functors  $f^\ast\co \Sp(\cC_{/Z})\to \Sp(\cC_{/W})$ and $f_!\co \Sp(\cC_{/W})\to \Sp(\cC_{/Z})$ that form an adjunction $f_!\dashv f^\ast$. The functor $f_*$ is described as follows: given an excisive functor $Y\co \sfin\to \cC$ with $Y(\ast) = Z$,
\[
  (f^\ast Y)(K) = Y(K) \times_Z W.
\]
Its left adjoint $f_!$ is described as follows: given an excisive functor $X\co \sfin \to \cC$ with $X(\ast) = W$, the image $f_! X$ is the excisive approximation to the functor sending $K$ to the pushout $X(K) \amalg_W Z$.

Up to equivalence of categories, the objects of $T_\cC$ are pairs $(Z,M)$ of an object $Z$ of $\cC$ and an object $M$ of the stable category $\Sp(\cC_{/Z})$. A morphism from $(W,M)$ to $(Z,N)$ in $T_{\cC}$ is equivalent to a pair of morphisms $(f,\phi)$ where $f\co W\to Z$ is a morphism in $\cC$ and $\phi\co M\to f^\ast(N)$ is a morphism in $\Sp(\cC_{/W})$ (or, equivalently, a morphism $\phi\co f_!(M)\to N$ in $\Sp(\cC_{/Z})$). Under this identification, the functor $T_{\cC}\to\Fun(\Delta^1, \cC)$ sends the object $(Z,M)$ to the object $\Omega^\infty M \to Z$ in $\cC_{/Z}\subseteq \Fun(\Delta^1,\cC)$.

\begin{prop}\label{prop:tangentcatconnectivity}
  Suppose $\cC$ has a connectivity structure. Then there is a \emph{fiberwise connectivity structure} on $T_\cC$, defined as follows. A map $X \to Y$ in $T_{\cC}$ is $k$-connected if the map $UX \to UY$ is an equivalence and each map $ev_n X \to ev_n Y$ is $(k-n)$-connected for $n\geq 0$.
\end{prop}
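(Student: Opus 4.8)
The plan is to verify the four axioms of Definition~\ref{def:connectivity} directly, exploiting that $U$ and each $ev_n$ are evaluation functors on $T_\cC = \Exc(\sfin,\cC)$. Every functor preserves equivalences, so the $U$-component of the definition is automatically well-behaved under equivalences; the one structural input I would isolate at the outset is that $U$ and each $ev_n$ preserve homotopy pullbacks. This holds because homotopy limits in $\Exc(\sfin,\cC)\subseteq\Fun(\sfin,\cC)$ are computed pointwise and evaluation at a fixed object (here $\ast$, respectively $S^n$) preserves all limits. I would also record the formal fact that equivalences satisfy two-out-of-three, which is what governs the $U$-component in the composition axiom.

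Axioms~(1) and~(2) are then immediate: the $U$-condition does not involve $k$, while on each $ev_n$-component $(k-n)$-connectedness implies $(k-1-n)$-connectedness by Axiom~(1) in $\cC$, and the $ev_n$-components of an equivalence are equivalences, hence $(k-n)$-connected for all $n$ by Axiom~(2) in $\cC$. For Axiom~(3), given a $k$-connected map $g\co Y\to W$ in $T_\cC$ and any $X\to W$, I would form the homotopy pullback $P = X\times_W Y$ and apply pullback-preservation: $UP\to UX$ is the pullback of the equivalence $UY\to UW$, hence an equivalence, and $ev_n P\to ev_n X$ is the pullback of the $(k-n)$-connected map $ev_n Y\to ev_n W$, hence $(k-n)$-connected by Axiom~(3) in $\cC$. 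Thus $P\to X$ is $k$-connected in the fiberwise sense.

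Axiom~(4) I would treat component by component for a composable pair $f,g$. On the $U$-components I apply two-out-of-three for equivalences: in part~(a) a composite of equivalences is an equivalence, and in parts~(b) and~(c) the hypotheses make two of $Uf, Ug, U(gf)$ into equivalences, forcing the third. On each $ev_n$-component I apply the corresponding part of Axiom~(4) in $\cC$ with $k$ replaced by $k-n$; for example, in part~(b), $ev_n f$ is $((k-n)-1)$-connected and $ev_n(gf)$ is $(k-n)$-connected, so $ev_n g$ is $(k-n)$-connected. Assembling the two components gives the conclusion. I do not expect a genuine obstacle here: the argument is essentially bookkeeping, and the only points demanding care are the pullback-preservation of $U$ and $ev_n$ (which underwrites Axiom~(3)) and the consistent use of the index shift $k-n$ across the three subcases of Axiom~(4).
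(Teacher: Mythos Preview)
Your proof is correct. Axioms~(1)--(3) are handled identically to the paper: pointwise computation of limits in $T_\cC$ reduces everything to the corresponding axiom in $\cC$, applied at each $ev_n$.

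For Axiom~(4) your route diverges slightly from the paper's. You verify each subcase componentwise, applying two-out-of-three for equivalences on the $U$-component and Axiom~(4) in $\cC$ (with the appropriate index shift) on each $ev_n$-component. The paper instead observes that once two of $Uf$, $Ug$, $U(gf)$ are equivalences, all three are, and then the diagram lies entirely in a single fiber $\Sp(\cC_{/UX})$; it then appeals to the fact that the restricted structure on that fiber coincides with the stable connectivity structure already shown to be a connectivity structure (via the octahedral axiom, as in Remark~\ref{rmk:stableconnectivitystructuregeneratedbyconnected}). Your approach is more self-contained and avoids the implicit identification of the fiberwise definition with the stable connectivity structure on the fiber; the paper's approach is slightly more conceptual and foreshadows the later remark that the fibers carry exactly the stable structure of Section~\ref{sec:stabilization}. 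Both are fine.
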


\begin{proof}
  It is clear that $k$-connected maps are $(k-1)$-connected and that equivalences are $\infty$-connected. Moreover, connectivity is preserved by pullback because pullbacks in the category $T_\cC$ are pullbacks of functors to $\cC$.

  Suppose that we have composable maps $f\co X \to Y$ and $g\co Y \to Z$ in $T_\cC$, and two out of three of $f$, $g$, and $gf$ are $k$-connected for some $k$. Then the maps $UX \to UY \to UZ$ are all equivalences. Therefore, this diagram is equivalent to a diagram in the full subcategory $\Sp(\cC_{/UX})$. The restriction of connectivity to this subcategory is the stable connectivity structure of Definition~\ref{def:stablestructure}, where the 2-out-of-3 axioms for connectivity are satisfied.
\end{proof}

\begin{rmk}\label{rmk:loopsinfinitypreservesconnectivity}
Note that the functors $ev_0$ and $U$ both preserve connectivity; the former by definition and the latter because equivalences are $k$-connected for all $k$.
\end{rmk}

\begin{rmk}
When restricted to the fiber over $Z\in\cC$ the connectivity structure of Proposition~\ref{prop:tangentcatconnectivity} recovers the stable connectivity structure on $\Sp(\cC_{/Z})$ induced by stabilizing the connectivity structure on $\cC_{/Z}$ (which is in turn induced by the connectivity structure on $\cC$). It is not hard to check that, for a morphism $f\co Z\to W$ in $\cC$, the pullback functor $f^\ast$ preserves connectivity. The pushforward $f_!$ preserves connectivity if the connectivity structure is compatible with cobase change and $\cC$ is differentiable.
\end{rmk}

\begin{rmk}
  The base-change functors $f^*$ induced by maps $f\co W \to Z$ give the following interpretation of the fiberwise connectivity structure. The map $T_\cC \to \cC$ is a Cartesian fibration, whose fiber over $Z$ is the stable category $\Sp(\cC_{/Z})$. Each fiber has the associated stable connectivity structure from Definition~\ref{def:liftedstructure}, and a map $f$ in $\cC$ induces a connectivity-preserving functor $f^*$. This fibration determines, by straightening, a contravariant functor from $\cC$ to a category of stable categories and connectivity-preserving functors. If the connectivity structure is compatible with cobase change and $\cC$ is differentiable, a dual statement holds: the coCartesian fibration $T_{\cC}\to \cC$ classifies a covariant functor from $\cC$ to stable categories and connectivity-preserving functors.
\end{rmk}

\begin{rmk}
  If $\cC$ is an $\infty$-topos it can be shown that $T_{\cC}$ is also an $\infty$-topos. In this case $T_{\cC}$ has the connectivity structure described in Example \ref{exam:topoi}. Note that this connectivity structure is very different from the connectivity structure on $T_{\cC}$ described in Proposition \ref{prop:tangentcatconnectivity}. More specifically, a morphism $f\co X\to Y$ in $T_{\cC}$ is $k$-connected in the toposic connectivity structure on $T_{\cC}$ exactly when $UX\to UY$ is $k$-connected in the toposic connectivity structure on $\cC$. 
\end{rmk}

\begin{prop}
  \label{prop:tangentskeletal}
  An object $A \in T_{\cC}$ is $k$-skeletal if and only if it is $k$-skeletal in the subcategory $\Sp(\cC_{/UA})$.
\end{prop}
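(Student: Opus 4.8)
The plan is to exploit the structure of $U\co T_\cC \to \cC$ as a Cartesian fibration with fibers $\Sp(\cC_{/Z})$, together with two facts already recorded above: the fiberwise connectivity structure restricts on each fiber to the stable connectivity structure of $\Sp(\cC_{/Z})$, and each base-change functor $g^\ast$ preserves connectivity. Since skeletality is a statement about $\pi_0$-surjectivity of induced maps of mapping spaces, I would analyze these mapping spaces through the fibration rather than try to build lifts by hand.

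For any $X\in T_\cC$, the map $\Map_{T_\cC}(A,X)\to\Map_\cC(UA,UX)$ induced by $U$ is, by the standard description of mapping spaces in a Cartesian fibration, one whose homotopy fiber over a point $g\co UA\to UX$ is $\Map_{\Sp(\cC_{/UA})}(A,g^\ast X)$, where $g^\ast X\in\Sp(\cC_{/UA})$ is the base change of $X$ along $g$. First I would record that if $f\co X\to Y$ is $k$-connected in $T_\cC$ then $Uf$ is an equivalence and the vertical part $X\to (Uf)^\ast Y$ is $k$-connected in the fiber $\Sp(\cC_{/UX})$; applying the connectivity-preserving functor $g^\ast$ then shows that the induced map of homotopy fibers $\Map_{\Sp(\cC_{/UA})}(A,g^\ast X)\to\Map_{\Sp(\cC_{/UA})}(A,g^\ast (Uf)^\ast Y)$ is $\Map_{\Sp(\cC_{/UA})}(A,-)$ applied to a $k$-connected map of $\Sp(\cC_{/UA})$.

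The core of the argument is then a comparison of $\pi_0$-surjectivity on total spaces with that on fibers. Because $Uf$ is an equivalence, the map of base spaces $\Map_\cC(UA,UX)\to\Map_\cC(UA,UY)$ is an equivalence, so $f_\ast$ on the total mapping spaces is a map of two fibrations lying over an equivalence. I would isolate the elementary lemma that, for a commutative square of spaces whose bottom edge is an equivalence, the top edge is $\pi_0$-surjective if and only if, over each component of the base, the induced map of homotopy fibers is $\pi_0$-surjective. Feeding in the identifications above, $A$ is $k$-skeletal in $T_\cC$ (that is, $f_\ast$ is $\pi_0$-surjective for every $k$-connected $f$) precisely when $\Map_{\Sp(\cC_{/UA})}(A,-)$ is $\pi_0$-surjective on every $k$-connected map of $\Sp(\cC_{/UA})$ arising as some base-changed vertical part. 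Since every $k$-connected map of $\Sp(\cC_{/UA})$ occurs this way — take $g=\id_{UA}$ and let $f$ be the chosen map of the fiber, viewed in $T_\cC$ — this is exactly the condition that $A$ be $k$-skeletal in $\Sp(\cC_{/UA})$, giving both implications at once.

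The step I expect to be the main obstacle is the ``$\pi_0$-surjective on totals if and only if $\pi_0$-surjective on fibers'' lemma, equivalently the point that a lift produced in $T_\cC$ can be corrected to lie in the fiber. The subtlety is that the functor $h\Sp(\cC_{/UA})\to hT_\cC$ induced by the fiber inclusion need not be faithful, so a lift $A\to X$ built in $T_\cC$ only covers a map homotopic to $\id_{UA}$ rather than lying on the nose in the fiber. This is handled by the long exact sequence of the fibration: over a connected base, $\pi_0$ of the total space is the quotient of $\pi_0$ of the fiber by the $\pi_1$-action of the base, the comparison map $f_\ast$ is equivariant for this action, and equivariance lets one move a lift that lands in the wrong orbit back into the fiber. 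Apart from this point, the remaining verifications — the homotopy-fiber description of $\Map_{T_\cC}(A,X)$, and the claim that $k$-connectivity in $T_\cC$ descends to the vertical parts and is preserved by $g^\ast$ — are bookkeeping with the results already in hand.
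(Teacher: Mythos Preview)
Your argument is correct and is the paper's proof recast through the fibration of mapping spaces: the paper simply chooses a lift $h\co UA\to UX$ of $Ug$ along the equivalence $Uf$ and then lifts in the fiber $\Sp(\cC_{/UA})$ using $k$-skeletality there, which is exactly what your fiberwise analysis unwinds to. The obstacle you flag is not one: once $\phi\co E\to E'$ covers an equivalence of bases, pullback pasting gives $\fib_{e'}(\phi)\simeq\fib_{e'}(E_b\to E'_{b'})$ for $b'=p'(e')$, so $\pi_0$-surjectivity of $\phi$ and of every $E_b\to E'_{b'}$ are tautologically the same statement---no $\pi_1$-equivariance bookkeeping is needed, and in particular when $f$ and the given map $A\to Y$ both already lie in the fiber, the lift space in $T_\cC$ and the lift space in $\Sp(\cC_{/UA})$ are literally the same.
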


\begin{proof}
  One direction is clear: objects which are skeletal in $T_\cC$ are still skeletal in the full subcategory.

  For the converse, suppose that $A$ is $k$-skeletal in $\Sp(\cC_{/UA})$, and that we have a $k$-connected map $g\co X \to Y$ in $T_\cC$. Let $f$ be a map $f\co A \to Y$ which we would like to lift, with associated base-change functor $f^*\co \Sp(\cC_{/UY}) \to \Sp(\cC_{/UA})$.

  The $k$-connected map $g$ determines an equivalence $UY \simeq UX$ and a $k$-connected map $X \to g^*Y \in \Sp(\cC_{/UX})$. The map $UA \to UY$ automatically lifts to a map $h\co A \to UX$, and a lift of $f$ is equivalent to asking for a lift of the map $A \to f^* Y = h^* g^* Y$ along $h^* X \to h^* g^* Y$ in $\Sp(\cC_{/UA})$. However, $h^* X \to h^* g^* Y$ is still $k$-connected, and so this lift exists.
\end{proof}

The forgetful functor $U\co T_\cC \to \cC$ has a left adjoint: the absolute cotangent functor $\mb L\co \cC \to T_\cC$, sending $A \in \cC$ to $\mb L(A) \in \Sp(\cC_{/A}) \subset T_\cC$ \cite[7.3.2.14]{lurie-higheralgebra}.

\begin{defn}\label{def:tangentiallynilpotent}
  A map $f\co X \to Y$ in $\cC$ is \emph{nilpotent} if it is $\Omega^\infty (E)$-nilpotent, where $E$ is the class of morphisms $f$ in $T_\cC$ whose underlying morphism $Uf$ is an equivalence.
\end{defn}

\begin{rmk}
We may think of such a morphism $X\to Y$ as being a map of abelian group objects in some slice category $\cC_{/Z}$. 
\end{rmk}

\noindent The following lemma gives a good supply of nilpotent morphisms in a presentable $\infty$-category:

\begin{lem}\label{lem:squarezeroisnilpotent}
If $\cC$ is presentable and a map $f\co A\to B$ is a square-zero morphism in the sense of \cite[7.4.1.6]{lurie-higheralgebra} then it is nilpotent. 
\end{lem}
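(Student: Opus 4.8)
The plan is to unwind the definition of a square-zero extension \cite[7.4.1.6]{lurie-higheralgebra} into a homotopy pullback square, recognize one leg of that square as an element of $\Omega^\infty(E)$, and then invoke closure of nilpotent maps under base change.

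First I would recall the relevant structure. By \cite[7.4.1.6]{lurie-higheralgebra}, a square-zero structure on $f\co A \to B$ consists of a module $M \in \Sp(\cC_{/B})$, a derivation $\eta\co \mb L(B) \to M$, and a homotopy pullback square
\[
  \xymatrix{
    A \ar[r] \ar[d]_f & B \ar[d]^{d_0} \\
    B \ar[r]_-{d_\eta} & \Omega^\infty M
  }
\]
in $\cC$, where $d_\eta\co B \to \Omega^\infty M$ is the section classified by $\eta$ and $d_0\co B \to \Omega^\infty M$ is the zero section (classified by the trivial derivation). In particular this exhibits $f$ as the base change of $d_0$ along $d_\eta$.

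Next I would show that the zero section $d_0$ lies in $\Omega^\infty(E)$. The zero object $0 \in \Sp(\cC_{/B})$ satisfies $\Omega^\infty(0) \simeq B$, the terminal object of $\cC_{/B}$, and the canonical map $0 \to M$ in $\Sp(\cC_{/B})$ induces, under $\Omega^\infty = ev_0$, precisely the zero section $d_0\co B \simeq \Omega^\infty(0) \to \Omega^\infty M$. Viewed as a morphism of $T_\cC$, the map $0 \to M$ has underlying morphism $U(0 \to M) \simeq \id_B$, which is an equivalence; hence $0 \to M$ belongs to the class $E$ of Definition~\ref{def:tangentiallynilpotent}, so $d_0 \in \Omega^\infty(E)$. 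By Definition~\ref{def:nilpotentmaps} the class of $\Omega^\infty(E)$-nilpotent maps contains $\Omega^\infty(E)$, so in particular $d_0$ is $\Omega^\infty(E)$-nilpotent.

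Finally, since the square above exhibits $f$ as a base change of $d_0$, and $\Omega^\infty(E)$-nilpotent maps are closed under base change, $f$ is $\Omega^\infty(E)$-nilpotent, i.e.~nilpotent in the sense of Definition~\ref{def:tangentiallynilpotent}. The only real work is the first step: reading off the pullback square from \cite[7.4.1.6]{lurie-higheralgebra} and confirming that its relevant leg is the zero section, together with the routine identification of $d_0$ as the image under $\Omega^\infty$ of the zero-object inclusion $0 \to M$. Everything downstream is formal from the closure properties packaged into Definition~\ref{def:nilpotentmaps}.
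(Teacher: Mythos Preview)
Your proposal is correct and follows essentially the same argument as the paper: exhibit $f$ as a pullback of a zero-section map $B \to B \oplus (-)$, identify that zero section as $\Omega^\infty$ of a map in the fiber $\Sp(\cC_{/B}) \subset T_\cC$ (hence in $E$), and conclude by closure under base change. The only cosmetic difference is that the paper cites \cite[7.4.1.7]{lurie-higheralgebra} and writes the target as $B \oplus \Sigma M$, whereas you cite \cite[7.4.1.6]{lurie-higheralgebra} and write $\Omega^\infty M$; this is a matter of where the suspension is absorbed into the notation and does not affect the argument.
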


\begin{proof}
Note that by \cite[7.4.1.7]{lurie-higheralgebra}, any square-zero extension $f\co A \to B$ by an object $M \in \Sp(\cC_{/B})$ is a pullback of a map $B\to B\oplus \Sigma M$. This latter map is $\Omega^\infty(0_B\to \Sigma M)$ where $0_B\to \Sigma M$ is the image of the zero morphism in $\Sp(\cC_{/B})$ under its inclusion (as the fiber over $B$) into $T_{\cC}$. Therefore $A\to B$ is the pullback of a nilpotent morphism.
\end{proof}

\begin{prop}\label{prop:skeletaliffcotangentskeletal}
  Suppose that all $k$-connected maps in $\cC$ are nilpotent. Then an object $A$ of $\cC$ is $k$-skeletal if and only if the absolute cotangent complex $\mb L(A)$ is $k$-skeletal in $\Sp(\cC_{/A})$.
\end{prop}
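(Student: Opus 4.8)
The plan is to transport the question into the tangent $\infty$-category $T_\cC$ and feed it through the nilpotence machinery. By Proposition~\ref{prop:tangentskeletal}, applied to $\mb L(A)\in\Sp(\cC_{/A})$ (whose image under $U$ is $A$, so that $\Sp(\cC_{/U\mb L(A)})=\Sp(\cC_{/A})$), the object $\mb L(A)$ is $k$-skeletal in $\Sp(\cC_{/A})$ if and only if it is $k$-skeletal in $T_\cC$. It therefore suffices to show that $A$ is $k$-skeletal in $\cC$ if and only if $\mb L(A)$ is $k$-skeletal in $T_\cC$, and the bridge between the two categories is the adjunction exhibiting the absolute cotangent complex functor $\mb L$ as a left adjoint of $\Omega^\infty=ev_0\co T_\cC\to\cC$ — the functor that appears in Definition~\ref{def:tangentiallynilpotent}.

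The forward direction is immediate and uses no hypothesis on connected maps: the right adjoint $\Omega^\infty=ev_0$ preserves connectivity by Remark~\ref{rmk:loopsinfinitypreservesconnectivity}, so Proposition~\ref{prop:adjointskeletal} shows directly that $\mb L$ carries $k$-skeletal objects to $k$-skeletal objects. Hence if $A$ is $k$-skeletal then so is $\mb L(A)$.

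The content is the converse, for which I would apply Corollary~\ref{cor:nilpotentadjunctions} to the adjunction $\mb L\dashv\Omega^\infty$ with $E$ taken to be the class of $k$-connected morphisms in $T_\cC$. Every such morphism has underlying $U$-image an equivalence, by the definition of the fiberwise connectivity structure in Proposition~\ref{prop:tangentcatconnectivity}, so $E$ lies inside the class of maps used to define nilpotence in Definition~\ref{def:tangentiallynilpotent}. Corollary~\ref{cor:nilpotentadjunctions} then yields that $\mb L(A)$ has the left lifting property against $E$ — that is, $\mb L(A)$ is $k$-skeletal in $T_\cC$ — if and only if $A$ has the left lifting property against the class of $\Omega^\infty(E)$-nilpotent maps. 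It remains to identify this latter class with the $k$-connected maps of $\cC$, for then the right-hand lifting property is exactly $k$-skeletality of $A$.

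This identification is the crux, and it is where the standing hypothesis enters. One inclusion is formal: since $\Omega^\infty=ev_0$ preserves connectivity, every map in $\Omega^\infty(E)$ is $k$-connected, and the $k$-connected maps are stable under the operations generating the nilpotent class — base change by axiom (3) of Definition~\ref{def:connectivity}, composition by axiom (4a), and the relevant (transfinite) compositions, products, filtered limits, and retracts provided the connectivity structure is compatible with these limits — so every $\Omega^\infty(E)$-nilpotent map is again $k$-connected, and $k$-skeletality of $A$ forces the required lift. The reverse inclusion, that every $k$-connected map of $\cC$ is $\Omega^\infty(E)$-nilpotent with building blocks drawn from the $k$-connected class $E$, is the hypothesis that $k$-connected maps are nilpotent, read at the correct connectivity degree; the mechanism is the one illustrated in Example~\ref{exam:pi0skeletal} and Lemma~\ref{lem:squarezeroisnilpotent}, where a $k$-connected map is assembled from square-zero extensions whose classifying modules are suitably connected, so that its nilpotent presentation can be taken with fiberwise pieces lying in $E$. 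The main obstacle is exactly this degree-matching: checking that the nilpotence hypothesis supplies a presentation at the right connectivity, so that the generated nilpotent class neither undershoots nor overshoots the $k$-connected maps. Once the two inclusions are established the equivalence closes up.
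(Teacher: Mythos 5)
Your route is the same as the paper's: its entire proof consists of applying Corollary~\ref{cor:nilpotentadjunctions} to the adjunction $\mb L \dashv \Omega^\infty$ to obtain that $A$ is $k$-skeletal in $\cC$ if and only if $\mb L(A)$ is $k$-skeletal in $T_\cC$, and then invoking Proposition~\ref{prop:tangentskeletal}. Your forward implication is complete and correct, and indeed cleaner than what the paper leaves implicit: Proposition~\ref{prop:adjointskeletal} together with Remark~\ref{rmk:loopsinfinitypreservesconnectivity} does the job, and you correctly pin down the right adjoint of $\mb L$ as $\Omega^\infty = ev_0$, which is what Definition~\ref{def:tangentiallynilpotent} needs (even though the text calls that adjoint $U$).

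The converse, however, is incomplete at exactly the point you label ``the main obstacle,'' and that obstacle is a genuine missing step, not a routine verification. Write $E_k$ for the class of $k$-connected maps of $T_\cC$ (all of which lie in $E$ by Proposition~\ref{prop:tangentcatconnectivity}). Corollary~\ref{cor:nilpotentadjunctions} reduces the converse to showing that every $k$-connected map of $\cC$ is $\Omega^\infty(E_k)$-nilpotent, i.e.\ admits a nilpotent presentation whose fiberwise generators are themselves $k$-connected. The standing hypothesis, read literally against Definition~\ref{def:tangentiallynilpotent}, only provides $\Omega^\infty(E)$-nilpotence for the \emph{full} fiberwise class $E$, and the definitions supply no formal passage from one to the other: knowing that $A$ lifts against the closure of $\Omega^\infty(E_k)$ says nothing about a map whose only known presentation uses generators of low connectivity. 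This is not a hypothetical worry: the paper's own verification of the hypothesis for connective $\mb E_n$-algebras (Proposition~\ref{prop:algebrapostnikov} and Corollary~\ref{cor:connectiveisnilpotent}) interleaves \emph{absolute} Postnikov towers, so its square-zero generators $0 \to \Sigma^{m+2}H\pi_{m+1}$ occur in all degrees $m \geq 0$, not only in degrees where they are $k$-connected. To finish, you must either prove the degree-matched statement (for instance via relative Postnikov towers, whose pieces automatically have the right connectivity) or build it into the hypothesis; as it stands you have located a soft spot that the paper's two-sentence proof silently elides, but your write-up does not close it either. A smaller point: your ``formal'' inclusion, that all $\Omega^\infty(E_k)$-nilpotent maps are $k$-connected, needs closure of $k$-connected maps under products, filtered limits, and transfinite composition, none of which Definition~\ref{def:connectivity} grants; it is also unnecessary, since lifting against the generators $\Omega^\infty(E_k)$ already yields lifting against their nilpotent closure by the proposition immediately preceding Corollary~\ref{cor:nilpotentadjunctions}.
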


\begin{proof}
  The object $A$ is $k$-skeletal in $\cC$ if and only if $\mb L(A)$ is $k$-skeletal in $T_{\cC}$ by Proposition~\ref{cor:nilpotentadjunctions}. We can then deduce the result from Proposition~\ref{prop:tangentskeletal}.
\end{proof}

\begin{rmk}
Recall that $T_{\cC}\to \Fun(\Delta^1,\cC)$ is the \textit{stable envelope}, in the sense of \cite[7.3.1.1]{lurie-higheralgebra}, of the presentable fibration $\Fun(\Delta^1,\cC)\to \cC$ given by evaluating a morphism at its target. One can generalize Definition \ref{def:tangentiallynilpotent} and say that for any presentable fibration $p\co\cC\to \dD$ with stable envelope $q\co \cC'\to \cC$, an object of $\cC$ is nilpotent if it is in the essential image of $q$.  In the case that we take $p$ to be the presentable fibration $\Spaces\to\Delta^0$, the classically nilpotent spaces are nilpotent objects of $\Spaces$ (cf.~Example \ref{exam:classicalnilpotence}).
\end{rmk}

\section{Lifting constructions of skeleta}
\label{sec:lifting}

\subsection{Subductivity}

\begin{defn}
  Suppose that we have an adjunction
  \[
    \cC \mathop{\rightleftarrows}^L_R \dD.
  \]
  We say that the adjunction is $k$-\emph{subductive} on $\cC$ if, whenever a map $X \to Y$ in $\cC$ is $k$-connected, the square
  \[
    \xymatrix{
      X \ar[r] \ar[d] & Y \ar[d] \\
      RL(X) \ar[r] & RL(Y)
    }
  \]
  is always $k$-Cartesian, and \emph{strongly $k$-subductive} if the square is always $(k+1)$-Cartesian.

  The \emph{subduction zone} is the set of $k$ such that the adjunction is $k$-subductive. If it is subductive for all $k$, we simply refer to it as \emph{subductive}. (Similar definitions apply for terms such as the \emph{strong subduction zone}.)
\end{defn}

\begin{exam}\label{exam:sigmaomegasubductive}
  Consider the $\Sigma-\Omega$ adjunction on pointed spaces. The condition of $k$-subductivity asks whether, for a $k$-connected map $X \to Y$, the diagram
  \[
    \xymatrix{
      X \ar[r] \ar[d] & Y \ar[d]\\
      \Omega \Sigma X \ar[r] &
      \Omega \Sigma Y
    }
  \]
  is $k$-Cartesian. If the spaces are allowed to be disconnected, this is generically false. If we restrict attention to pointed connected spaces, this adjunction is subductive: subductivity is equivalent to asking for surjectivity of the map
  \[
    \pi_{k+1}(Y,X) \to \pi_{k+2}(\Sigma Y, \Sigma X),
  \]
  which is a consequence of the Blakers--Massey excision theorem for any $k \geq 1$. If we further restrict to pointed 1-connected paces, this adjunction is strongly subductive.
\end{exam}

\begin{exam}\label{exam:0connectedringmapgivessubductive}
  Suppose that $f\co R \to S$ is a map of connective ring spectra, determining an adjunction between $\LMod_R$ and $\LMod_S$. Then this adjunction is subductive if and only if, for any $k$-connected map $M \to N$, the square
  \[
    \xymatrix{
      M \ar[r] \ar[d] & N \ar[d] \\
      S \otimes_R M \ar[r] & S \otimes_R N
    }
  \]
  is $k$-Cartesian. Taking fibers, we find that subductivity is equivalent to showing that for any $(k-1)$-connected object $F$, the map $F \to S \otimes_R F$ is $k$-connected. On $\pi_k$, this is the map
  \[
    \pi_k(F) \to \pi_0(S) \otimes_{\pi_0(R)} \pi_k(F),
  \]
  Therefore, this adjunction is subductive if and only if the map $R \to S$ is $0$-connected. Similarly, the adjunction is strongly subductive if and only if the map $R \to S$ is $1$-connected.
\end{exam}

\begin{prop}\label{prop:strongsubductivereflects}
  Suppose that we have a strongly $k$-subductive adjunction
  \[
    \cC \mathop{\rightleftarrows}^L_R \dD
  \]
  such that $R$ preserves $(k+1)$-connectivity, and that $f\co X \to Y$ is $k$-connected in $\cC$. If $Lf\co LX \to LY$ is $(k+1)$-connected, then $f$ is $(k+1)$-connected.
\end{prop}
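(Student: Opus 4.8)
The plan is to factor $f$ through the homotopy pullback that appears in the subductivity square and then read off the connectivity of each factor using the connectivity axioms. Write $P = Y \times_{RL(Y)} RL(X)$ for the homotopy pullback of the cospan $Y \to RL(Y) \from RL(X)$. The canonical comparison map $X \to P$, followed by the projection $P \to Y$, recovers the original map $f$, because the projection to $Y$ is, by construction, the top edge of the square.

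First I would invoke strong $k$-subductivity. Since $f$ is $k$-connected by hypothesis, the square
\[
  \xymatrix{
    X \ar[r] \ar[d] & Y \ar[d] \\
    RL(X) \ar[r] & RL(Y)
  }
\]
is $(k+1)$-Cartesian. By Definition~\ref{def:cartesian}, this means exactly that the comparison map $X \to P$ is $(k+1)$-connected.

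Next I would control the projection $P \to Y$. Since $Lf$ is assumed $(k+1)$-connected and $R$ preserves $(k+1)$-connectivity, the map $RL(f)\co RL(X) \to RL(Y)$ is $(k+1)$-connected. The projection $P \to Y$ is the homotopy pullback of $RL(f)$ along the map $Y \to RL(Y)$, so by the pullback-stability axiom (axiom 3 of Definition~\ref{def:connectivity}) it too is $(k+1)$-connected.

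Finally, $f$ is the composite of the two $(k+1)$-connected maps $X \to P \to Y$, so axiom 4(a) of Definition~\ref{def:connectivity} gives that $f$ is $(k+1)$-connected, as desired. The proof is almost entirely a bookkeeping exercise in the axioms; the only step requiring care is confirming that the composite $X \to P \to Y$ genuinely recovers $f$, which is the standard coherence for maps into a homotopy pullback. I therefore expect no real obstacle beyond this identification.
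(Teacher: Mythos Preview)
Your proof is correct and is essentially identical to the paper's own argument: the paper factors $f$ as $X \to RL(X) \times_{RL(Y)} Y \to Y$, observes the first map is $(k+1)$-connected by strong $k$-subductivity and the second is a pullback of the $(k+1)$-connected map $RL(X) \to RL(Y)$, and concludes. You have simply written out the same steps with more explicit citations to the axioms.
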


\begin{proof}
  The map $f$ factors as a composite
  \[
    X \to RL(X) \times_{RL(Y)} Y \to Y.
  \]
  The first map is $(k+1)$-connected by strong $k$-subductivity, and the second is a pullback of the $(k+1)$-connected map $RL(X) \to RL(Y)$.
\end{proof}

Applying the previous proposition inductively gives the following result.

\begin{cor}\label{cor:strongsubductivereflects}
  Suppose that we have an adjunction
  \[
    \cC \mathop{\rightleftarrows}^L_R \dD
  \]
  such that $\{k,k+1,\dots,N\}$ are in the strong subduction zone, that $f\co X \to Y$ in $\cC$ is $k$-connected, and that $L$ and $R$ preserve connectivity. Then $f$ is $N$-connected if and only if $Lf$ is $N$-connected.
\end{cor}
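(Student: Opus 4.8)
The plan is to prove the corollary by induction on the connectivity of $f$, using Proposition~\ref{prop:strongsubductivereflects} as the engine that advances connectivity one level at a time. The forward direction---that $Lf$ is $N$-connected whenever $f$ is---is immediate from the hypothesis that $L$ preserves connectivity, so the entire content of the statement is the converse, and the induction will establish that converse.

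First I would set up the induction on an integer $j$ ranging over $\{k, k+1, \dots, N\}$, with the inductive hypothesis being the statement: if $f$ is $j$-connected (and $Lf$ is $N$-connected), then $f$ is $(j+1)$-connected. The base case is $j = k$: since $f$ is assumed $k$-connected and $L$ preserves connectivity, $Lf$ is $N$-connected and hence $(k+1)$-connected (by axiom (1) of Definition~\ref{def:connectivity}, as $N \geq k+1$); because $k$ is in the strong subduction zone and $R$ preserves $(k+1)$-connectivity---which follows from the hypothesis that $R$ preserves connectivity---Proposition~\ref{prop:strongsubductivereflects} applies directly to conclude that $f$ is $(k+1)$-connected.

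For the inductive step, I would suppose we have already shown $f$ is $j$-connected for some $j$ with $k \leq j < N$. Since $Lf$ is $N$-connected and $N \geq j+1$, axiom (1) gives that $Lf$ is $(j+1)$-connected. Because $j$ lies in the strong subduction zone and $R$ preserves $(j+1)$-connectivity, Proposition~\ref{prop:strongsubductivereflects} (applied with $k$ replaced by $j$) upgrades $f$ from $j$-connected to $(j+1)$-connected. Iterating this step from $j = k$ up to $j = N-1$ yields that $f$ is $N$-connected, which is exactly the nontrivial direction of the equivalence.

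The main obstacle, such as it is, lies in bookkeeping rather than in any deep idea: I must ensure that at each stage the three hypotheses of Proposition~\ref{prop:strongsubductivereflects} are genuinely met---namely that the relevant integer is in the strong subduction zone (guaranteed for all of $k, \dots, N$ by assumption), that $R$ preserves connectivity at the needed level (guaranteed since $R$ preserves connectivity outright), and that $Lf$ has the requisite connectivity at that level (guaranteed by $N$-connectivity of $Lf$ together with the downward-closure axiom). The only subtlety is confirming that the strong subduction zone contains every index $j$ actually used, i.e.\ each $j$ with $k \leq j \leq N-1$, which is precisely what the hypothesis $\{k, k+1, \dots, N\}$ provides. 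Since each application strictly increases the established connectivity and the process terminates at $N$, no transfinite or limiting argument is required.
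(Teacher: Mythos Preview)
Your proof is correct and is precisely the inductive application of Proposition~\ref{prop:strongsubductivereflects} that the paper has in mind; the paper itself only indicates this with a one-line remark, and you have faithfully spelled out the details. The one cosmetic quibble is that your phrasing of the inductive hypothesis as an implication is slightly awkward---it is cleaner to induct directly on the statement ``$f$ is $j$-connected''---and the trivial case $N=k$ could be noted separately, but neither affects the validity of the argument.
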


\begin{prop}
  \label{prop:compositeexcisive}
Let $k$ and $j$ be integers with $j\geq 0$.  Suppose that we have a (strongly) $k$-subductive adjunction
  \[
    \cC \mathop{\rightleftarrows}^L_R \dD
  \]
  and a (strongly) $(k+j)$-subductive adjunction
  \[
    \dD \mathop{\rightleftarrows}^M_S \eE,
  \]
  that $L$ increases connectivity by at least $j$, and that $R$ decreases connectivity by no more than $j$. Then the composite adjunction is also (strongly) $k$-subductive.
\end{prop}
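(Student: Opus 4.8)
The plan is to factor the unit square of the composite adjunction $ML \dashv RS$ vertically into two stacked squares and then invoke the stacking lemma (Proposition~\ref{prop:cartesianstacking}). Given a $k$-connected map $f\co X \to Y$ in $\cC$, the unit $\id_\cC \to RSML$ of the composite adjunction is the composite $X \xrightarrow{\eta^L_X} RL(X) \xrightarrow{R\eta^M_{LX}} RSML(X)$, where $\eta^L$ and $\eta^M$ are the units of the first and second adjunctions. Consequently the square
\[
  \xymatrix{
    X \ar[r] \ar[d] & Y \ar[d] \\
    RSML(X) \ar[r] & RSML(Y)
  }
\]
factors as a stack of two squares with middle row $RL(X) \to RL(Y)$: the top square is the unit square of the first adjunction applied to $f$, and the bottom square is $R$ applied to the unit square of the second adjunction applied to $Lf$.

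First I would handle the top square. Since $f$ is $k$-connected and the first adjunction is (strongly) $k$-subductive, this square is $k$-Cartesian (respectively $(k+1)$-Cartesian) straight from the definition of subductivity.

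Next I would handle the bottom square. Because $L$ increases connectivity by at least $j$, the map $Lf$ is $(k+j)$-connected; since the second adjunction is (strongly) $(k+j)$-subductive, the square
\[
  \xymatrix{
    L(X) \ar[r] \ar[d] & L(Y) \ar[d] \\
    SML(X) \ar[r] & SML(Y)
  }
\]
is $(k+j)$-Cartesian (respectively $(k+j+1)$-Cartesian); that is, the comparison map $L(X) \to SML(X) \times_{SML(Y)} L(Y)$ is $(k+j)$-connected (respectively $(k+j+1)$-connected). Now I apply $R$: as a right adjoint it preserves homotopy pullbacks, so it carries this comparison map to the comparison map $RL(X) \to RSML(X) \times_{RSML(Y)} RL(Y)$ of the bottom square, and since $R$ decreases connectivity by no more than $j$, this map is $k$-connected (respectively $(k+1)$-connected). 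Hence the bottom square is $k$-Cartesian (respectively $(k+1)$-Cartesian).

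Finally, with both squares $k$-Cartesian (respectively $(k+1)$-Cartesian), the vertical transpose of Proposition~\ref{prop:cartesianstacking}—valid since being $k$-Cartesian is invariant under transposing a square—shows the outer square is $k$-Cartesian (respectively $(k+1)$-Cartesian), which is exactly (strong) $k$-subductivity of the composite. I expect the only real subtlety to be the bottom-square step: one must correctly recognize it as $R$ applied to a unit square of the \emph{second} adjunction at $Lf$, and then track how the connectivity of the comparison map changes—gaining $j$ through $L$ when forming $Lf$ and then losing at most $j$ again upon applying $R$—so that the two bounds cancel and leave precisely the connectivity required for stacking.
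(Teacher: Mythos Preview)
Your proof is correct and is essentially the same argument as the paper's, packaged slightly differently. The paper factors the comparison map $X \to Y \times_{RSML(Y)} RSML(X)$ explicitly as a composite of two $(k+\epsilon)$-connected maps (the first being the comparison map for the top square, the second a pullback of the comparison map for the bottom square), whereas you invoke the stacking lemma after transposing; both routes rest on the same two facts---subductivity of the first adjunction for the top square, and applying $R$ (which preserves pullbacks and loses at most $j$ in connectivity) to the subductivity square of the second adjunction at $Lf$ for the bottom square.
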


\begin{proof}
  Suppose that $f\co X \to Y$ is $k$-connected. Let $\epsilon$ be $0$ in the case of subductivity, and $1$ in the case of strong subductivity; we wish to show $X \to Y \times_{RSML(Y)} RSML(X)$ is $(k+\epsilon)$-connected.

  From \cite[Proposition 2.1.9]{riehlverity-elements}, we know that the unit of the composite adjunction is given the composite transformation $id_{\cC}\to RL\to RSML$. Therefore we have a composite of cospans:
\[\begin{tikzcd}
	X && X \\
	Y & X & {RL(X)} \\
	Y & {RL(Y)} & {RSML(X)} \\
	& {RSML(Y)}
	\arrow["{id_X}", from=1-1, to=2-2]
	\arrow["{id_X}"', from=1-3, to=2-2]
	\arrow[from=2-1, to=3-2]
	\arrow[from=2-3, to=3-2]
	\arrow[from=3-3, to=4-2]
	\arrow[from=3-1, to=4-2]
	\arrow[Rightarrow, from=2-1, to=3-1]
	\arrow[Rightarrow, from=1-1, to=2-1]
	\arrow[Rightarrow, from=2-2, to=3-2]
	\arrow[Rightarrow, from=3-2, to=4-2]
	\arrow[Rightarrow, from=1-3, to=2-3]
	\arrow[Rightarrow, from=2-3, to=3-3]
\end{tikzcd}\]
which induces a factorization of the universal map $X\to Y\times_{RSML(Y)} RSML(X)$. Thus we need to prove that the composite map
  \[
    X\to
    Y \times_{RL(Y)} RL(X) \to
    Y \times_{RL(Y)} \left(RL(Y) \fibprod_{RSML(X)} RSML(X)\right)
    \simeq Y \times_{RSML(Y)} RSML(X)
  \]
  is $(k+\epsilon)$-connected. The first map is $(k+\epsilon)$-connected because the first adjunction is (strongly) $k$-subductive, and so it suffices to prove that the second map is $(k+\epsilon)$-connected.

  Because $L$ increases connectivity by at least $j$, the map $Lf\co LX \to LY$ is $(k+j)$-connected, and therefore the map
  \[
    LX \to LY \times_{SML(X)} SML(Y)
  \]
  is $(k+j+\epsilon)$-connected because the second adjunction is (strongly) $(k+j)$-subductive. Applying $R$, the map
  \[
    RL(X) \to RL(Y) \times_{RSML(X)} RSML(X)
  \]
  is $(k+\epsilon)$-connected. Taking the fiber product with $Y$ over $RL(Y)$, we find that the map
  \[
    Y \times_{RL(Y)} RL(X) \to Y \times_{RSML(X)} RSML(X)
  \]
  is $(k+\epsilon)$-connected, as desired.
\end{proof}

If the $\Sigma-\Omega$ adjunction is (strongly) subductive, we get the following result.

\begin{cor}
  \label{cor:subductiveiterate}
  Suppose $\cC$ is pointed and has pushouts, that connectivity is compatible with cobase change, and that $\{k,k+1,\dots,k+n-1\}$ are in the (strong) subduction zone of the $\Sigma-\Omega$ adjunction. Then the $\Sigma^n-\Omega^n$ adjunction is (strongly) $k$-subductive. 
\end{cor}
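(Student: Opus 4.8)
The plan is to induct on $n$, realizing the $\Sigma^n-\Omega^n$ adjunction as the composite of the single $\Sigma-\Omega$ adjunction with the $\Sigma^{n-1}-\Omega^{n-1}$ adjunction and invoking Proposition~\ref{prop:compositeexcisive}. The case $n=1$ is precisely the hypothesis that $k$ lies in the (strong) subduction zone, so I may assume $n>1$ and that the result holds for $n-1$.

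The key preliminary step is to record how $\Sigma$ and $\Omega$ shift connectivity, so as to pin down the integer $j$ appearing in Proposition~\ref{prop:compositeexcisive}. Since $\cC$ is pointed, the suspension is the homotopy pushout $\Sigma X \simeq \ast \amalg_X \ast$; because connectivity is compatible with cobase change, Corollary~\ref{cor:freudenthal} (applied with $Z=\ast$) shows that a $k$-connected map $X \to Y$ induces a $(k+1)$-connected map $\Sigma X \to \Sigma Y$. Thus $\Sigma$ increases connectivity by at least $1$. Dually, $\Omega X \simeq \ast \times_X \ast$, and Proposition~\ref{prop:pullbackconnectivity} shows that $\Omega$ sends $k$-connected maps to $(k-1)$-connected maps, so $\Omega$ decreases connectivity by no more than $1$. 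Hence I may take $j=1$.

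Now I decompose the left adjoint as $\Sigma^n \simeq \Sigma^{n-1} \circ \Sigma$, with corresponding composite right adjoint $\Omega^n \simeq \Omega \circ \Omega^{n-1}$. Writing $(L,R)=(\Sigma,\Omega)$ for the first adjunction and $(M,S)=(\Sigma^{n-1},\Omega^{n-1})$ for the second, the first adjunction is (strongly) $k$-subductive because $k$ lies in the (strong) subduction zone, and $L,R$ satisfy the connectivity-shift hypotheses with $j=1$ by the previous paragraph. For the second adjunction, the inductive hypothesis applied with $k$ replaced by $k+1$ shows that $\Sigma^{n-1}-\Omega^{n-1}$ is (strongly) $(k+1)$-subductive, since the range $\{k+1,\dots,k+n-1\}$ lies in the (strong) subduction zone by assumption. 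Proposition~\ref{prop:compositeexcisive}, with $j=1$, then yields that the composite $\Sigma^n-\Omega^n$ adjunction is (strongly) $k$-subductive, completing the induction.

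I expect no serious obstacle beyond correct bookkeeping of the subduction zone: the substantive content is carried entirely by Proposition~\ref{prop:compositeexcisive}, and the only facts to check by hand are the connectivity shifts $j=1$ of $\Sigma$ and $\Omega$, which follow from Corollary~\ref{cor:freudenthal} and Proposition~\ref{prop:pullbackconnectivity} respectively. The one point that needs care is that the hypotheses of the composite proposition consume exactly the integers $\{k,k+1,\dots,k+n-1\}$: the value $k$ is used for the single $\Sigma-\Omega$ factor, while the shifted range $\{k+1,\dots,k+n-1\}$ feeds the inductive step for $\Sigma^{n-1}-\Omega^{n-1}$.
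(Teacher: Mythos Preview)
Your proof is correct and follows essentially the same approach as the paper: both establish the connectivity shifts $j=1$ for $\Sigma$ and $\Omega$ via Corollary~\ref{cor:freudenthal} and Proposition~\ref{prop:pullbackconnectivity}, then iterate Proposition~\ref{prop:compositeexcisive}. You simply spell out the induction and the bookkeeping of the subduction zone more explicitly than the paper's two-sentence sketch.
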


\begin{proof}
  The functors $\Omega$ and $\Sigma$ both shift connectivity by $1$ in opposing directions, the latter by Corollary~\ref{cor:freudenthal}. Therefore, iterated application of Proposition~\ref{prop:compositeexcisive} implies that the composite $\Sigma^n-\Omega^n$ adjunction is subductive.
\end{proof}

\begin{prop}
  \label{prop:subductivestabilization}
  Suppose that $\cC$ is differentiable, pointed, and admits pushouts. Suppose that the connectivity structure on $\cC$ is compatible with cobase change and sequential colimits.

  If the $\Sigma-\Omega$ adjunction is (strongly) subductive, then the $\Sigma^\infty-\Omega^\infty$ adjunction between $\cC$ and $\Sp(\cC)$ is (strongly) subductive.
\end{prop}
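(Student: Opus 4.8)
The plan is to exhibit the unit square of the $\Sigma^\infty-\Omega^\infty$ adjunction as a sequential colimit of the unit squares of the $\Sigma^n-\Omega^n$ adjunctions, and then to transport subductivity through this colimit using differentiability together with compatibility with sequential colimits. Write $\epsilon = 0$ in the subductive case and $\epsilon = 1$ in the strongly subductive case. Fixing a $k$-connected map $f\co X \to Y$, I must show that the gap map
\[
  X \to Y \times_{\Omega^\infty\Sigma^\infty Y} \Omega^\infty\Sigma^\infty X
\]
is $(k+\epsilon)$-connected.

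First I would record that, since the $\Sigma-\Omega$ adjunction is (strongly) subductive, every integer lies in its (strong) subduction zone, so Corollary~\ref{cor:subductiveiterate} applies to show that the $\Sigma^n-\Omega^n$ adjunction is (strongly) $k$-subductive for every $n \geq 0$. Concretely, for each $n$ the gap map $X \to Y \times_{\Omega^n\Sigma^n Y} \Omega^n\Sigma^n X$ is $(k+\epsilon)$-connected.

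Next I would organize these into a single $\mb N$-indexed diagram of commutative squares $\sigma_n$, namely the unit squares
\[
  \xymatrix{
    X \ar[r]^f \ar[d] & Y \ar[d] \\
    \Omega^n\Sigma^n X \ar[r] & \Omega^n\Sigma^n Y,
  }
\]
with transition maps induced by the units $\Omega^n\Sigma^n(-) \to \Omega^{n+1}\Sigma^{n+1}(-)$. Using Remark~\ref{rmk:pointedstabilization}, which identifies $\Omega^\infty\Sigma^\infty(-)$ with $\colim_n \Omega^n\Sigma^n(-)$, together with the fact that the top row of each $\sigma_n$ is the constant diagram $f\co X \to Y$ (so has colimit $X \to Y$), the colimit $\colim_n \sigma_n$ is precisely the unit square of the $\Sigma^\infty-\Omega^\infty$ adjunction. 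The gap-map construction sends a square to the map from its initial vertex to the pullback of the remaining cospan; it is natural in the square, and both the initial-vertex functor and the pullback functor preserve sequential colimits—the former trivially, the latter because $\cC$ is differentiable. Hence the gap map of $\colim_n \sigma_n$ is the colimit of the gap maps of the $\sigma_n$, and in particular
\[
  Y \times_{\Omega^\infty\Sigma^\infty Y} \Omega^\infty\Sigma^\infty X \simeq \colim_n \left( Y \times_{\Omega^n\Sigma^n Y} \Omega^n\Sigma^n X \right).
\]

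Finally, these gap maps assemble into a natural transformation from the constant $\mb N$-diagram at $X$ to the diagram $n \mapsto Y \times_{\Omega^n\Sigma^n Y} \Omega^n\Sigma^n X$, which is objectwise $(k+\epsilon)$-connected by the second step. Since the connectivity structure is compatible with sequential colimits, the induced map on colimits—which is exactly the gap map of the $\Sigma^\infty-\Omega^\infty$ unit square for $f$—is $(k+\epsilon)$-connected, establishing (strong) $k$-subductivity. I expect the main obstacle to be the bookkeeping in the third paragraph: verifying carefully that the gap-map construction commutes with the sequential colimit (so that the ``infinite'' pullback is genuinely the colimit of the finite ones) and that $\colim_n \sigma_n$ really is the $\Sigma^\infty-\Omega^\infty$ unit square, rather than the connectivity estimate itself, which is immediate once this colimit identification is in hand.
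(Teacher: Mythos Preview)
Your proposal is correct and follows essentially the same route as the paper: use Corollary~\ref{cor:subductiveiterate} to get $(k+\epsilon)$-Cartesianness for each $\Sigma^n-\Omega^n$ square, identify $\Omega^\infty\Sigma^\infty$ with $\colim_n \Omega^n\Sigma^n$ via Remark~\ref{rmk:pointedstabilization}, and then use differentiability to commute the fiber product with the sequential colimit together with compatibility of the connectivity structure with sequential colimits. The paper's proof is terser—compressing your last two paragraphs into one sentence—but the argument is the same.
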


\begin{proof}
The proof of Proposition \ref{prop:stabilizationpreservesconnective} shows that $\Omega^\infty\Sigma^\infty X\simeq\colim_m\Omega^m\Sigma^m X$. Therefore we need to prove that, for any $k$-connected map $X \to Y$, the square
  \[
    \xymatrix{
      X \ar[r] \ar[d] & Y \ar[d] \\
      \colim_m \Omega^m \Sigma^m X \ar[r] &
      \colim_m \Omega^m \Sigma^m Y
    }
  \]
  is $(k+\epsilon)$-Cartesian. For any individual $m$, this diagram is $(k+\epsilon)$-Cartesian by Corollary~\ref{cor:subductiveiterate}. The fiber product, and the connectivity of the map from $X$ to it, are preserved by sequential colimits because $\cC$ is differentiable.
\end{proof}

\subsection{Excavation}

\begin{defn}
  Let $L\co \cC \to \dD$ be a colimit-preserving functor. Suppose that we have  a set $S$ of maps in $\cC$, a map $X \to Y$ in $\cC$, and a commutative diagram
  \[
    \xymatrix{
      \coprod_{t \in T} LA_{j_t} \ar[r] \ar[d] &
      LX \ar[d] \\
      \coprod_{t \in T} LB_{j_t} \ar[r] &
      LY
    }
  \]
  where the left-hand map is a coproduct of maps in $LS$. We say that this can be excavated from $\dD$ if the square diagram from each term in the coproduct is equivalent to the image of a diagram in $L$. In particular, the diagram lifts to a commutative diagram
  \[
    \xymatrix{
      \coprod_{t \in T} A_{j_t} \ar[r] \ar[d] &
      X\ar[d] \\
      \coprod_{t \in T} B_{j_t} \ar[r] &
      Y
    }
  \]
  in $\cC$, and the constructed factorization $LX \to Z \to LY$ through the pushout lifts to a constructed factorization $X \to W \to Y$.

  More generally, if we have sets of cells $S_k$ and a map $X \to Y$ with a factorization
  \[
    LX = Z^{(n-1)} \to Z^{(n)} \to Z^{(n+1)} \to \dots \to Z^{(N)} \to LY,
  \]
  where each map $Z^{(k-1)} \to Z^{(k)}$ is cellularly constructed from maps in $LS_k$, then we say that this can be excavated if each stage can be excavated: there exists a lift to a sequence
  \[
    X = W^{(n-1)} \to W^{(n)} \to W^{(n+1)} \to \dots \to W^{(N)} \to Y,
  \]
  in $\cC$, where each map $W^{(k-1)} \to W^{(k)}$ is cellularly constructed from the maps in $S_k$, lifting the construction of $Z^{(k)}$.
\end{defn}

\subsection{Excavating cells}

The following lemma assures us that, when we have a subductive adjunction, ``cells can be excavated so long as their ranges can.''
\begin{lem}
  Suppose that
  \[
    \cC \mathop{\rightleftarrows}^L_R \dD
  \]
  is a $k$-subductive adjunction and that $f\co X \to Y$ is a $k$-connected map in $\cC$. Then, for any $(k+1)$-cell $A \to B$ in $\cC$, a commutative diagram
  \[
    \xymatrix{
      LA \ar[r] \ar[d] & LX \ar[d] \\
      LB \ar[r] & LY
    }
  \]
  can be excavated from $\dD$ if and only if the map $LB \to LY$ lifts to $\cC$.
\end{lem}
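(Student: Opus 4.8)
The plan is to treat the two implications separately; the forward direction is formal and the reverse contains all the content. For the ``only if'' direction: if the square can be excavated then, by definition, it lifts to a commutative square in $\cC$ whose lower edge is a map $B \to Y$ whose image under $L$ is the given $\beta\co LB \to LY$. Hence a lift of $LB \to LY$ to $\cC$ necessarily exists.

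For the ``if'' direction, write $\iota\co A \to B$ for the cell, $\alpha\co LA \to LX$ and $\beta\co LB \to LY$ for the horizontal maps of the given square, and let $\eta$ denote the unit of the adjunction. Suppose $g\co B \to Y$ is a lift of $\beta$, so that $\eta_Y \circ g$ agrees with the adjunct $\tilde\beta\co B \to RL(Y)$ of $\beta$. The goal is to produce a compatible lift $a\co A \to X$ of $\alpha$; together with $g$ and $\iota$ this will assemble into the excavated square. First I would form the homotopy pullback $P := Y \times_{RL(Y)} RL(X)$ of the unit square. Since $f\co X \to Y$ is $k$-connected and the adjunction is $k$-subductive, the canonical comparison map $X \to P$ is $k$-connected.

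Next I would construct a map $A \to P$, with first component $g \circ \iota\co A \to Y$ and second component the adjunct $\tilde\alpha\co A \to RL(X)$ of $\alpha$. To see these are compatible over $RL(Y)$, apply $R$ to the commuting square in $\dD$, precompose with $\eta_A$, and use naturality of the unit together with $\eta_Y \circ g \simeq \tilde\beta$; this yields a homotopy $RL(f) \circ \tilde\alpha \simeq \eta_Y \circ g \circ \iota$, which is exactly the datum defining a map to the homotopy pullback $P$. Because $\iota$ is a $(k+1)$-cell, the object $A$ is $k$-skeletal, so the map $A \to P$ lifts along the $k$-connected map $X \to P$ to a map $a\co A \to X$ in $h\cC$.

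Finally I would verify that $a$ works. Composing $a$ with the projection $P \to Y$ gives $f \circ a \simeq g \circ \iota$, so the square in $\cC$ commutes; composing with $P \to RL(X)$ gives $\eta_X \circ a \simeq \tilde\alpha$, and transposing across the adjunction (via the triangle identity $R(La)\circ \eta_A \simeq \eta_X \circ a$) gives $La \simeq \alpha$. Thus applying $L$ recovers the original square, i.e.\ the diagram is excavated. I expect the main obstacle to be the coherent construction of $A \to P$: one must assemble its two components and the connecting homotopy from the commuting $\dD$-square and the adjunction's unit, and it is precisely here that $k$-subductivity is used, ensuring $X \to P$ is connected enough for the $k$-skeletal object $A$ to lift against it.
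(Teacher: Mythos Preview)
Your proof is correct and follows essentially the same approach as the paper: both reduce the excavation to producing a lift of a map $A \to Y \times_{RL(Y)} RL(X)$ along the $k$-connected comparison $X \to Y \times_{RL(Y)} RL(X)$, using that $A$ is $k$-skeletal. The only cosmetic difference is that the paper routes the construction of $A \to P$ through the intermediate pullback $B \times_{RL(Y)} RL(X)$ (which packages the coherence automatically), whereas you assemble the two components and the homotopy by hand; your added verification that the resulting square indeed maps under $L$ to the original is a nice explicit step the paper leaves implicit.
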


\begin{proof}
  Showing that this diagram is the image of one in $L$ is the same as asking that we can complete the commutative diagram
  \[
    \xymatrix@R=1pc{
      A \ar[dd] \ar[rr] \ar@{.>}[dr] && RL(X) \ar[dd] \\
      & X \ar[ur] \ar[dd]\\
      B \ar[rr]|-\hole \ar@{.>}[dr]&& RL(Y)\\
      & Y \ar[ur].
    }
  \]
  By adjunction, lifting $LB \to LY$ to a map $B \to Y$ is the same as constructing the bottom commutative triangle.
  
  Suppose that a lift $B \to Y$ is chosen. Then we have a map $A \to B \times_{RL(Y)} RL(X) \to Y \times_{RL(Y)} RL(X)$, and what remains is to lift it along the map $X \to Y \times_{RL(Y)} RL(X)$. However, this map is $k$-connected by assumption and $A$ is $k$-skeletal, so such a lift exists.
\end{proof}

\begin{rmk}
  The lifting requirement holds automatically in several important cases, such as if the object $B$ is a zero object of $\cC$. Our principal case will use boundedness: lifting holds if the cell is $j$-bounded and the unit $Y \to RL(Y)$ is $j$-connected.
\end{rmk}

\begin{cor}
  \label{cor:excavateattachment}
  Fix a $k$-subductive adjunction $\cC \mathop{\rightleftarrows}^L_R \dD$. Let $S_{k+1}$ be a set of $j$-bounded $(k+1)$-cells in $\cC$.

  Suppose that $Y \to RL(Y)$ is $j$-connected, that we have a $k$-connected map $f\co X \to Y$ in $\cC$ and a choice of factorization $LX \to Z \to LY$ where $LX \to Z$ is cellularly constructed from the maps in $LS_{k+1}$. Then $LX \to Z \to LY$ can be excavated from $\dD$.
\end{cor}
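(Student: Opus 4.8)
The plan is to reduce the Corollary to the preceding Lemma, verifying that its lifting hypothesis holds automatically under the boundedness and connectivity assumptions—exactly as anticipated in the Remark. The hypotheses of the Lemma are already in hand: the adjunction is $k$-subductive and $f\co X \to Y$ is $k$-connected.

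First I would unpack what it means for $LX \to Z$ to be cellularly constructed from the maps in $LS_{k+1}$: the object $Z$ is a homotopy pushout
\[
\xymatrix{
\coprod_{t \in T} LA_{j_t} \ar[r] \ar[d] & LX \ar[d] \\
\coprod_{t \in T} LB_{j_t} \ar[r] & Z,
}
\]
with each $A_{j_t} \to B_{j_t}$ in $S_{k+1}$. Composing the bottom-left corner with $Z \to LY$ yields the square appearing in the definition of excavation, and since excavation is checked one term at a time, it suffices to excavate, for each $t \in T$, the individual square with corners $LA_{j_t}, LX, LB_{j_t}, LY$. Because the adjunction is $k$-subductive and $f$ is $k$-connected, the Lemma tells me that each such square—attached to a $(k+1)$-cell—can be excavated from $\dD$ if and only if the map $LB_{j_t} \to LY$ lifts to $\cC$.

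The crux—the step I would flag as the main obstacle, though it is really the whole point of imposing boundedness—is producing these lifts. Under the adjunction $L \dashv R$, a lift of $LB_{j_t} \to LY$ to $\cC$ amounts to factoring the adjunct map $B_{j_t} \to RLY$ through the unit $\eta_Y\co Y \to RLY$. Here I would use that each cell is $j$-bounded, so $B_{j_t}$ is $j$-skeletal, together with the hypothesis that $\eta_Y$ is $j$-connected: by the very definition of $j$-skeletality, the lifting problem against the $j$-connected map $\eta_Y$ admits a solution $B_{j_t} \to Y$. This furnishes the required lift for every $t \in T$, so the Lemma applies termwise; each square is excavated, and the constructed factorization $LX \to Z \to LY$ therefore lifts to a factorization $X \to W \to Y$ in $\cC$, as claimed.
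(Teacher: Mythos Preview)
Your proof is correct and takes essentially the same approach as the paper: the Corollary is stated immediately after a Remark observing that the lifting hypothesis of the Lemma holds automatically when the cell is $j$-bounded and the unit $Y \to RL(Y)$ is $j$-connected, which is precisely the argument you give. Your unpacking of ``excavation is checked one term at a time'' and the use of $j$-skeletality of $B_{j_t}$ against the $j$-connected unit $\eta_Y$ to produce the required lift $B_{j_t} \to Y$ matches the paper's intended reasoning exactly.
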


\subsection{Excavating skeleta}

Our goal in this section is to expand the previous section inductively, and show that by using subductive adjunctions $\cC \rightleftarrows \dD$, we can lift cellular constructions from $\dD$. In particular, this will allow us to construct minimal skeleta.

\begin{thm}
  \label{thm:excavateskeleta}
  Suppose that we have an adjunction
  \[
    \cC \mathop{\rightleftarrows}^L_R \dD
  \]
   such that $\{n, n+1, \dots, N-1\}$ are in the subduction zone.  Fix sets $S_k$ of $j$-bounded $k$-cells in $\cC$ for $n < k \leq N$.

  Suppose $Y \to RL(Y)$ is $j$-connected, that we have an $n$-connected map $f\co X \to Y$ in $\cC$, and a cellular construction $LX = Z^{(n)} \to Z^{(n+1)} \to \dots \to Z^{(N)} \to LY$ using the cells in $LS_k$. Then the cellular construction of $Z^{(N)}$ can be excavated from $\dD$ to a sequence $X = W^{(n)} \to W^{(n+1)} \to \dots \to W^{(N)} \to Y$.

  If $L$ reflects $N$-connectivity, then the map $W^{(N)} \to Y$ is an $N$-skeleton if the original map $Z^{(N)} \to Y$ was. If $L$ also reflects $(N+1)$-connectivity, this skeleton is minimal if the original map was.
\end{thm}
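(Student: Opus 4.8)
The plan is to induct on the stages of the tower, excavating one set of cells at a time with Corollary~\ref{cor:excavateattachment}. The inductive hypothesis I would carry is that for some $n \leq k \leq N$ there is a factorization $X = W^{(n)} \to W^{(n+1)} \to \dots \to W^{(k)} \to Y$ in $\cC$ in which each $W^{(i-1)} \to W^{(i)}$ is cellularly constructed from the maps in $S_i$, in which $L(W^{(i)}) \simeq Z^{(i)}$ over $LY$ compatibly with the construction, and --- crucially --- in which the running map $W^{(k)} \to Y$ is $k$-connected. The base case $k=n$ is immediate: $W^{(n)} = X$, $L(X) = Z^{(n)}$, and $f$ is $n$-connected by hypothesis.

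For the inductive step I would feed the data consisting of the running map $W^{(k)} \to Y$, the cells $S_{k+1}$, and the factorization $Z^{(k)} \to Z^{(k+1)} \to LY$ into Corollary~\ref{cor:excavateattachment}. Its hypotheses are all met: $k$ lies in the subduction zone, $S_{k+1}$ is a set of $j$-bounded $(k+1)$-cells, the unit $Y \to RL(Y)$ is $j$-connected, the running map $W^{(k)} \to Y$ is $k$-connected by the inductive hypothesis, and $L(W^{(k)}) = Z^{(k)} \to Z^{(k+1)}$ is cellularly built from $LS_{k+1}$. This excavates the stage to a map $W^{(k)} \to W^{(k+1)}$ in $\cC$, cellularly constructed from $S_{k+1}$, with $L(W^{(k+1)}) \simeq Z^{(k+1)}$ over $LY$.

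The main obstacle I anticipate is re-establishing the connectivity clause, namely that $W^{(k+1)} \to Y$ is again $(k+1)$-connected so that the next stage can be excavated. Since $S_{k+1}$ consists of $k$-connected maps, the excavated stage $W^{(k)} \to W^{(k+1)}$ is only $k$-connected, and the two-out-of-three axiom yields no more than $k$-connectivity of $W^{(k+1)} \to Y$; moreover the missing unit of connectivity cannot be reflected from $\dD$ through \emph{ordinary} subductivity, since the relevant comparison map is controlled by strong subductivity (Proposition~\ref{prop:strongsubductivereflects}), which we do not assume. I would therefore argue inside $\cC$, in the spirit of the proof of Proposition~\ref{prop:skeletaexist}, by verifying that $W^{(k+1)} \to Y$ has the appropriate lifting property against the cells $S_{k+1}$. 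Given a test diagram with $A \to W^{(k+1)}$ and $B \to Y$ for a cell $A \to B$, the $k$-skeletality of $A$ together with the $k$-connectivity of $W^{(k)} \to W^{(k+1)}$ lets me factor the top map through $W^{(k)}$; applying $L$ produces a lifting problem against the $(k+1)$-connected map $Z^{(k+1)} \to LY$ (each $Z^{(i)} \to LY$ is $i$-connected, being the composite of the $i$-connected $Z^{(i)} \to Z^{(N)}$ with the $N$-connected skeleton $Z^{(N)} \to LY$), which is solvable because $L$ carries the cell to a relatively $(k+1)$-skeletal map; and the resulting solution is transported back across the adjunction using the $j$-boundedness of $B$ and the $j$-connectivity of the units. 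This is precisely where the boundedness hypotheses earn their keep, and it mirrors the proof of the excavation lemma.

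Finally, with the full tower $X = W^{(n)} \to \dots \to W^{(N)} \to Y$ in hand, I would read off the stated conclusion. The composite $X \to W^{(N)}$ is relatively $N$-skeletal: each stage map $W^{(i-1)} \to W^{(i)}$ is relatively $i$-skeletal, hence relatively $N$-skeletal since $i \leq N$, and relatively skeletal maps are closed under composition by Proposition~\ref{prop:skeletalsaturation}. For the connectivity, $L(W^{(N)} \to Y) = Z^{(N)} \to LY$ is $N$-connected whenever the original map is an $N$-skeleton, so if $L$ reflects $N$-connectivity then $W^{(N)} \to Y$ is $N$-connected, making the factorization a relative $N$-skeleton in the sense of Definition~\ref{def:skeleton}. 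If the original skeleton is moreover minimal, then $Z^{(N)} \to LY$ is $(N+1)$-connected, and reflecting $(N+1)$-connectivity through $L$ shows $W^{(N)} \to Y$ is $(N+1)$-connected, so the excavated skeleton is minimal as well.
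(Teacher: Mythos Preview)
Your overall approach---induct on stages, applying Corollary~\ref{cor:excavateattachment} at each step, then read off the skeleton and minimality claims by reflecting connectivity through $L$---is exactly the paper's, and your final paragraph is correct. You also go further than the paper's terse proof in flagging a real issue: Corollary~\ref{cor:excavateattachment} requires the running map $W^{(k)} \to Y$ to be $k$-connected, and this must be re-established after each cell attachment. The paper's proof does not address this point at all; it simply asserts that the induction proceeds.

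Your proposed fix, however, does not go through with only the stated hypotheses. Deducing $(k+1)$-connectivity of $W^{(k+1)} \to Y$ from its lifting property against the cells in $S_{k+1}$ would require those to be \emph{generating} $(k+1)$-cells, which the theorem does not assume. Your claim that $Z^{(k+1)} \to LY$ is $(k+1)$-connected relies both on $Z^{(N)} \to LY$ being $N$-connected---only hypothesized in the theorem's second clause, not in the bare excavation claim---and on $Z^{(i)} \to Z^{(N)}$ being $i$-connected, which needs $L$ to preserve connectivity and the structure on $\dD$ to be compatible with cobase change. Likewise, the $k$-connectivity of $W^{(k)} \to W^{(k+1)}$ that you invoke needs cobase-change compatibility in $\cC$. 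In the paper's applications (Theorem~\ref{thm:skeletaalgebras}) all of these side conditions hold and the adjunctions are in fact strongly subductive, so Proposition~\ref{prop:strongsubductivereflects} would close the gap directly there; but the general statement as written appears to need such additional hypotheses, and neither your argument nor the paper's proof supplies them.
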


\begin{proof}
  Starting with $W^{(n)} = X$, inductively apply Corollary~\ref{cor:excavateattachment} to excavate the cellular constructions of $(LW)^{(k-1)} \simeq Z^{(k-1)} \to Z^{(k)} \to LY$ to cellular constructions of $W^{(k-1)} \to W^{(k)} \to Y$.

  Because the $S_k$ are $k$-cells, each map $W^{(k-1)} \to W^{(k)}$ is relatively $k$-cellular.

  If $L$ reflects connectivity appropriately, and the map $(LW)^{(N)} \simeq Z^{(N)} \to LY$ is $N$-connected, then the map $W^{(N)} \to Y$ is $N$-connected and hence an $N$-skeleton. If the original was a minimal $N$-skeleton, then this connectivity estimate improves, making $W^{(N)}$ into a minimal $N$-skeleton of $Y$.
\end{proof}

\begin{exam}
  The $\Sigma-\Omega$ adjunction between $0$-connected pointed spaces and $1$-connected pointed spaces has subduction zone $[1,\infty)$ as in Example~\ref{exam:sigmaomegasubductive}. The connectivity structure in both cases is determined by $0$-bounded cells $S^k \to D^{k+1}$, which are preserved by suspension.

  For a path-connected space $Y$, the map $Y \to \Omega \Sigma Y$ is $0$-connected. Given a 1-connected map $X \to Y$ between 0-connected spaces, we can therefore excavate any relative CW-factorization $\Sigma X \to Z^{(2)} \to \dots \to Z^{(N+1)} \to \Sigma Y$ to a relative CW-factorization $X \to W^{(1)} \to \dots \to W^{(N)} \to Y$.

  On $1$-connected spaces, $\Sigma$ reflects connectivity. Therefore, if $Y$ is $1$-connected, any cellular construction of an $(N+1)$-skeleton for $\Sigma Y$ can be excavated to a cellular construction of an $N$-skeleton for $Y$.
\end{exam}

\section{Excision}
\label{sec:excision}

In this section we will show that many of these important properties hold for categories of algebras and modules over an operad in spectra. The fundamental tools to prove this are excision theorems, and excision theorems are hard work. For our applications, we will be appealing to the work of Ching--Harper on excision for algebras in a category of module spectra \cite{ching-harper-excision}. However, our use of their main result is simple enough that we can axiomatize it in this section.

\subsection{Excisive connectivity structures}
\label{sec:algebraicexcision}

\begin{defn}
  \label{def:excisive}
  Fix $\cC$ with homotopy pushouts and an initial object, with a connectivity structure that is compatible with cobase change. We say that the connectivity structure on $\cC$ \emph{satisfies algebraic excision} if the following conditions hold.

  \begin{enumerate}
  \item All maps $X \to Y$ in $\cC$ are $(-1)$-connected.
  \item Let $W$ be a nonempty finite set with power set $\mathcal{P}(W)$, viewed as a poset, and $X\co \mathcal{P}(W) \to \cC$ a ``$W$-cube'' in $\cC$. Suppose that:
    \begin{enumerate}
    \item for each nonempty subset $V \subset W$, the $V$-cube $X|_{\mathcal{P}(V)}$ is $k_V$-coCartesian, and
    \item $k_U \leq k_V$ for all $U \subset V \subset W$.
    \end{enumerate}
    Then $X$ is $k$-Cartesian, where $k$ is the minimum of $-|W| + \sum_{V \in \lambda} (k_V + 1)$ over all partitions $\lambda$ of $W$ by nonempty finite sets.
  \end{enumerate}
\end{defn}

\begin{rmk}
  The category $\Spaces$ of spaces does not satisfy this type of excision: excision estimates for spaces differ by a shift.\footnote{In principle, pointed connected spaces are equivalent to topological groups via the loop space/classifying space relationship, and topological groups \emph{do} satisfy algebraic excision; this can help make sense of the shift in connectivity.}
\end{rmk}

\begin{exam}\label{exam:excisionnnuke}
  Suppose $\mf O$ is a connective operad in the category of modules over a connective commutative symmetric ring spectrum $S$, with category $\Alg_{\mf O}$ of \emph{connective} algebras. Then \cite[Theorem 1.7]{ching-harper-excision} is precisely that $\Alg_{\mf O}$ satisfies algebraic excision. This includes models for the category of $\mb E_n$ $S$-algebras, where $S$ is a commutative ring spectrum.\footnote{The paper \cite{ching-harper-excision} is written in terms of operads $\mf O$ in symmetric spectra, using the Quillen adjunction between positive stable model structures on $\Mod_S$ and $\Alg_{\mf O}$. However, passage to the associated $\infty$-category preserves structures of rings, modules, and algebras, and the definitions of Cartesian and coCartesian cubes are defined using homotopy limits and colimits, which are equally well computed in $\Alg_{\mf O}$ or the associated $\infty$-category. Moreover, in the case where $\mf O$ is the suspension spectrum of an ordinary operad $\mathcal{O}$, algebras can be rectified: the $\infty$-category associated to $\Alg_{\mf O}$ is a model for the category of algebras for the associated $\infty$-operad.}
\end{exam}

\begin{rmk}
  The property of having algebraic excision is inherited by slice and coslice categories, because the colimits and limits of deleted cubical diagrams are calculated in the underlying $\infty$-category.
\end{rmk}

\subsection{Excision and augmentations}

\begin{prop}
  Suppose that $\cC$ satisfies algebraic excision and has a final object, and that the map $\varnothing \to \ast$ from the initial object to the final object is $j$-connected. Then there is an adjunction
  \[
    \cC \rightleftarrows \cC_{\ast}
  \]
  that is subductive if $j \geq 0$, and strongly subductive if $j > 0$.
\end{prop}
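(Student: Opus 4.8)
The plan is to realize the adjunction as the free/forgetful adjunction between $\cC$ and its category of pointed objects $\cC_\ast = \cC_{\ast/}$. Here $R\co \cC_\ast \to \cC$ forgets the basepoint and its left adjoint $L\co \cC \to \cC_\ast$ adjoins a disjoint basepoint, $L(X) = X \amalg \ast$ with the canonical point $\ast \to X \amalg \ast$; this functor exists because $\cC$ has pushouts and an initial object. With this description $RL(X) \simeq X \amalg \ast$, so the subductivity square attached to a map $f\co X \to Y$ becomes the naturality square of the unit
\[
  \xymatrix{
    X \ar[r]^f \ar[d] & Y \ar[d] \\
    X \amalg \ast \ar[r] & Y \amalg \ast.
  }
\]
The whole strategy is to analyze this as a $2$-cube and feed it into the excision estimate of Definition~\ref{def:excisive}.

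First I would record the coCartesian connectivities of the faces. The top edge is $f$, which is $k$-connected by hypothesis, so this $1$-cube is $k$-coCartesian. The left edge $X \to X \amalg \ast$ is the cobase change of $\emptyset \to \ast$ along $\emptyset \to X$ (since $X \amalg \ast = X \amalg_\emptyset \ast$); as $\emptyset \to \ast$ is $j$-connected and the connectivity structure is compatible with cobase change, this edge is $j$-connected, hence $j$-coCartesian. Finally, the full square is a homotopy pushout: stacking it against the pushout defining $X \amalg \ast$,
\[
  \xymatrix{
    \emptyset \ar[r] \ar[d] & X \ar[r]^f \ar[d] & Y \ar[d] \\
    \ast \ar[r] & X \amalg \ast \ar[r] & Y \amalg \ast,
  }
\]
the left square and the outer rectangle are both pushouts, so by the pasting law the right square is a pushout and is therefore $\infty$-coCartesian. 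Taking $k_{\{1\}} = k$, $k_{\{2\}} = j$, and $k_{\{1,2\}} = \infty$, the monotonicity hypothesis $k_U \le k_V$ holds trivially.

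Now I would apply the excision estimate. With $|W| = 2$ there are two partitions: the single-block partition contributes $-2 + (\infty + 1) = \infty$, and the partition into singletons contributes $-2 + (k+1) + (j+1) = k+j$. Hence the square is $(k+j)$-Cartesian, i.e.\ the comparison map $X \to Y \times_{Y \amalg \ast} (X \amalg \ast)$ is $(k+j)$-connected. Since $(k+j)$-connectivity implies $k$-connectivity when $j \ge 0$, the adjunction is subductive; and it implies $(k+1)$-connectivity when $j \ge 1$, giving strong subductivity. I expect the only genuine work to be the cube bookkeeping—identifying the full square as a pushout and reading off the excision estimate correctly—while the $j$-connectivity of the disjoint-basepoint map falls straight out of compatibility with cobase change, and we use only the second clause of Definition~\ref{def:excisive}.
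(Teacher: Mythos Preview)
Your proof is correct and follows the same approach as the paper: identify the unit square as a pushout, read off the coCartesian connectivities of its edges, and feed the cube into the algebraic-excision estimate to obtain $(k+j)$-Cartesianness. You are simply more explicit than the paper in verifying the square is coCartesian (via the pasting argument) and in checking the monotonicity hypothesis, but the argument is otherwise identical.
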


\begin{proof}
  The left adjoint is the functor $(-)_+ = (-) \amalg \ast$. By pushing out along the map $\varnothing \to \ast$, we find that $X \to X_+$ is always $j$-connected. As a result, consider the coCartesian square
  \[
    \xymatrix{
      X \ar[r] \ar[d] & Y \ar[d] \\
      X_+ \ar[r] & Y_+,
    }
  \]
  viewed as a $2$-cube. If the map $X \to Y$ is $k$-connected, then algebraic excision shows that the square is $n$-Cartesian, where $n = \min\{-2+(k+1)+(j+1), -2+\infty\} = k+j$, as desired.
\end{proof}

\begin{cor}\label{cor:unitconnected}
  Suppose that $\cC$ satisfies algebraic excision and that $\varnothing \to Z$ is $j$-connected. Then the adjunction $\cC_{/Z} \rightleftarrows (\cC_{/Z})_\ast$, between objects over $Z$ and objects augmented over $Z$, is subductive if $j \geq 0$ and strongly subductive if $j > 0$. The unit $X \to X \amalg Z$ is always $j$-connected.
\end{cor}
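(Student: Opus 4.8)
The plan is to deduce Corollary~\ref{cor:unitconnected} directly from the preceding proposition by passing to the slice category $\cC_{/Z}$. First I would observe that, by the remark preceding this subsection, the property of satisfying algebraic excision is inherited by the slice category $\cC_{/Z}$, since the relevant limits and colimits of cubical diagrams are computed in the underlying category $\cC$. Thus $\cC_{/Z}$ also satisfies algebraic excision.

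The key step is to identify the initial and final objects of $\cC_{/Z}$ and the connectivity of the map between them. The final object of $\cC_{/Z}$ is $\id\co Z \to Z$, while the initial object is $\emptyset \to Z$ (the structure map being the unique map from the initial object of $\cC$). The map from the initial to the final object in $\cC_{/Z}$ is then represented, on underlying objects, by the map $\emptyset \to Z$, which is $j$-connected by hypothesis. Since connectivity in $\cC_{/Z}$ is measured on underlying objects (the slice connectivity structure is lifted from $\cC$ as in Example~\ref{exam:slicecoslice}), the map from the initial to the final object of $\cC_{/Z}$ is $j$-connected.

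Having verified that $\cC_{/Z}$ satisfies algebraic excision, has a final object, and that the map from its initial to its final object is $j$-connected, I would simply apply the previous proposition to $\cC_{/Z}$ in place of $\cC$. This immediately yields that the adjunction $\cC_{/Z} \rightleftarrows (\cC_{/Z})_\ast$ is subductive when $j \geq 0$ and strongly subductive when $j > 0$, where the left adjoint is the disjoint-basepoint functor $(-)_+$ in $\cC_{/Z}$. Unwinding this functor, adjoining a disjoint basepoint in $\cC_{/Z}$ is the cobase change along $\emptyset \to Z$, so on underlying objects it sends $X$ to $X \amalg Z$, and the unit $X \to X \amalg Z$ is $j$-connected by the connectivity statement in the previous proposition.

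I do not expect a serious obstacle here, since the corollary is essentially a relativization of the proposition obtained by base change. The one point requiring a little care is the identification of the disjoint-basepoint functor in $\cC_{/Z}$ with the cobase change $X \mapsto X \amalg Z$ and of its final object with $\id_Z$; once these identifications are made explicit, the connectivity of the unit $X \to X \amalg Z$ follows from the proposition's assertion that $X \to X_+$ is always $j$-connected, reinterpreted in the slice.
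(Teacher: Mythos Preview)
Your proposal is correct and is exactly the argument the paper has in mind: the corollary is stated without proof because it is the direct application of the preceding proposition to the slice category $\cC_{/Z}$, using the remark that algebraic excision is inherited by slices. Your identifications of the initial and final objects of $\cC_{/Z}$, of the disjoint-basepoint functor with $X \mapsto X \amalg Z$, and of the unit's connectivity are all as intended.
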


\subsection{Excision and suspension}

\begin{prop}\label{prop:excisionsuspension}
  Suppose that $\cC$ satisfies algebraic excision and that $X \to Z$ is a $k$-connected map in $\cC$, where $k \geq 0$. Then the map $X \to \Omega_Z \Sigma_Z X$ is $2k$-connected.
\end{prop}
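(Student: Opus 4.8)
The plan is to recognize the map $X \to \Omega_Z \Sigma_Z X$ as the universal comparison map of a single homotopy pushout square to its homotopy pullback, and then to read off its connectivity directly from algebraic excision applied to that square, viewed as a $2$-cube. This is the relative Freudenthal statement, and since $\cC$ is assumed to satisfy algebraic excision the heavy analytic input is already available to us.

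First I would unwind the relative suspension and loop constructions. As in Corollary~\ref{cor:freudenthal}, $\Sigma_Z X$ is the homotopy pushout $Z \amalg_X Z$ of the structure map $X \to Z$ against itself, equipped with its two inclusions $i_1, i_2 \co Z \to \Sigma_Z X$; and $\Omega_Z \Sigma_Z X$ is computed as the homotopy pullback $Z \times_{\Sigma_Z X} Z$ of these two inclusions. The map $X \to \Omega_Z \Sigma_Z X$ in the statement is then exactly the comparison map to this pullback induced by the defining commuting square
\[
  \xymatrix{
    X \ar[r] \ar[d] & Z \ar[d]^{i_1} \\
    Z \ar[r]_-{i_2} & \Sigma_Z X.
  }
\]
By Definition~\ref{def:cartesian}, asserting that this comparison map is $2k$-connected is precisely asserting that this square---a homotopy pushout by construction---is $2k$-Cartesian. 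So the whole problem reduces to bounding the Cartesianness of this one pushout square.

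Next I would feed this into algebraic excision (Definition~\ref{def:excisive}), regarding the square as a $W$-cube with $W = \{1,2\}$, $X(\emptyset) = X$, $X(\{1\}) = X(\{2\}) = Z$, and $X(\{1,2\}) = \Sigma_Z X$. I need the coCartesian estimates on every nonempty sub-cube. The two one-dimensional sub-cubes $X|_{\mathcal{P}(\{i\})}$ are each just the given map $X \to Z$; as $1$-cubes these are $k$-coCartesian, since $X \to Z$ is $k$-connected by hypothesis, so $k_{\{1\}} = k_{\{2\}} = k$. The full $2$-cube is a homotopy pushout, hence $\infty$-coCartesian, giving $k_{\{1,2\}} = \infty$. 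The monotonicity hypothesis $k_U \le k_V$ for $U \subset V$ holds because $k \le \infty$; here the assumption $k \ge 0$ ensures the resulting estimate is a genuine improvement over the bare connectivity of $X \to Z$ rather than a vacuous bound.

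Finally I would extract the conclusion from the excision formula: the cube is $m$-Cartesian for $m = \min_\lambda\bigl(-|W| + \sum_{V \in \lambda}(k_V + 1)\bigr)$, the minimum over partitions $\lambda$ of $\{1,2\}$ into nonempty blocks. The one-block partition contributes $-2 + (\infty + 1) = \infty$, while the partition into singletons contributes $-2 + (k+1) + (k+1) = 2k$, so $m = 2k$. Thus the square is $2k$-Cartesian and the comparison map $X \to \Omega_Z \Sigma_Z X$ is $2k$-connected, as claimed. I expect the only genuine care to lie in the first step---correctly identifying the relative constructions and the comparison map, and confirming that the face coCartesian estimates are matched to the hypotheses of Definition~\ref{def:excisive}---after which the excision input does all the work and the partition bookkeeping is purely mechanical.
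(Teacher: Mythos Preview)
Your proof is correct and essentially identical to the paper's: both view the defining pushout square for $\Sigma_Z X$ as a $2$-cube, record the singleton faces as $k$-coCartesian and the full cube as $\infty$-coCartesian, and read off the $2k$-Cartesian estimate from algebraic excision. Your write-up is slightly more detailed in unwinding the relative constructions and checking the monotonicity hypothesis, but the argument is the same.
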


\begin{proof}
  Consider a homotopy pushout diagram
  \[
    \xymatrix{
      X \ar[r] \ar[d] & Z \ar[d] \\
      Z \ar[r] & \Sigma_Z X,
    }
  \]
  viewed as a $2$-cube. The two arrows $X \to Z$ are $k$-connected by assumption, and the full cube is a pushout cube and hence is $\infty$-coCartesian. Algebraic excision then shows that it is $n$-Cartesian where $n = \min\{-2+(k+1)+(k+1), -2+\infty\} = 2k$. By definition, this means that the map $X \to \Omega_Z \Sigma_Z X$ is $2k$-connected.
\end{proof}

\begin{rmk}
  In particular, this implies that the map $\Omega_Z \Sigma_Z X \to Z$ is still $k$-connected.
\end{rmk}

\begin{prop}
  \label{prop:suspensionsubductive}
  Suppose that $\cC$ satisfies algebraic excision. Let $X \to Y \to Z$ be maps, where $X \to Y$ is $k$-connected and the maps $X \to Z$ and $Y \to Z$ are $\ell$-connected, for $k+1 \geq \ell \geq 0$. Then the diagram
  \[
    \xymatrix{
      X \ar[r] \ar[d] & Y \ar[d] \\
      \Omega_Z \Sigma_Z X \ar[r] &
      \Omega_Z \Sigma_Z Y
    }
  \]
  is $(k+\ell)$-Cartesian.
\end{prop}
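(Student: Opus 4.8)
The plan is to recognize the square as the comparison map of a three-dimensional cube to which algebraic excision (Definition~\ref{def:excisive}) applies, exactly as Proposition~\ref{prop:excisionsuspension} handled the absolute case with a $2$-cube. Concretely, I would build the $W$-cube $\mathcal{X}\colon\mathcal{P}(\{1,2,3\})\to\cC$ given by the natural map from the fiberwise-suspension pushout square of $X$ to that of $Y$. Writing coordinate $3$ for the ``front-to-back'' direction, the front face is the pushout square $\begin{smallmatrix}X&\to&Z\\\downarrow&&\downarrow\\Z&\to&\Sigma_Z X\end{smallmatrix}$ defining $\Sigma_Z X=Z\amalg_X Z$, the back face is the analogous square defining $\Sigma_Z Y$, and the direction-$3$ edges are $X\to Y$, two identities $Z\xrightarrow{=}Z$, and $\Sigma_Z X\to\Sigma_Z Y$.

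The first key step is to identify the homotopy limit of the punctured cube. Viewing $\mathcal{X}$ as a map of squares $F\to G$ and deleting the initial vertex $X$, a standard iterated-pullback computation gives $\holim_{\emptyset\ne S}\mathcal{X}(S)\simeq \mathrm{pb}(F)\times_{\mathrm{pb}(G)}\mathcal{X}(\{3\})$, where $\mathrm{pb}(F)=Z\times_{\Sigma_Z X}Z=\Omega_Z\Sigma_Z X$ and $\mathrm{pb}(G)=\Omega_Z\Sigma_Z Y$. This is precisely $Y\times_{\Omega_Z\Sigma_Z Y}\Omega_Z\Sigma_Z X$, and the comparison map out of $X$ is the comparison map of the square in the statement. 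Hence, by Definition~\ref{def:cartesian}, the square is $m$-Cartesian if and only if the cube $\mathcal{X}$ is $m$-Cartesian, and it remains to show $\mathcal{X}$ is $(k+\ell)$-Cartesian.

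To apply algebraic excision I would compute the coCartesian estimate $k_V$ of each face $\mathcal{X}|_{\mathcal{P}(V)}$. The edges out of $X$ are $X\to Z$ and $X\to Y$, giving $k_{\{1\}}=k_{\{2\}}=\ell$ and $k_{\{3\}}=k$. The face on $\{1,2\}$ is the defining pushout square for $\Sigma_Z X$, hence $\infty$-coCartesian; and the whole cube is $\infty$-coCartesian, since both front and back faces being pushouts, the dual of the punctured-limit computation identifies $\colim$ of the punctured cube with $\mathcal{X}(\{1,2,3\})$. The interesting faces are those on $\{1,3\}$ and $\{2,3\}$: the $\{1,3\}$-face has pushout $Z\amalg_X Y$ and comparison map $g\colon Z\amalg_X Y\to Z$, and since the inclusion $Z\to Z\amalg_X Y$ is a cobase change of the $k$-connected map $X\to Y$ (connectivity being compatible with cobase change) it is $k$-connected, while $g$ splits it; Proposition~\ref{prop:retract} then shows $g$ is $(k+1)$-connected, so $k_{\{1,3\}}=k_{\{2,3\}}=k+1$. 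The hypothesis $\ell\le k+1$ is exactly what makes the monotonicity condition $k_U\le k_V$ hold.

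Finally I would substitute into the excision formula $k=\min_\lambda\big(-3+\sum_{V\in\lambda}(k_V+1)\big)$. The partitions with a size-$2$ block containing $3$ give value $k+\ell$ (e.g.\ $\{\{1,3\},\{2\}\}$ gives $-3+(k+2)+(\ell+1)=k+\ell$), the partition into singletons gives $2\ell+k\ge k+\ell$ since $\ell\ge0$, and every partition using an $\infty$-coCartesian face gives $\infty$; thus the minimum is $k+\ell$, as desired. The main work — and the step most likely to hide an error — is pinning down the coCartesian estimates of the two mixed faces via the retract argument and confirming that the punctured-cube homotopy limit really is the pullback defining the square; once these are in hand, the excision bookkeeping is routine.
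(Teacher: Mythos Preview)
Your proposal is correct and follows essentially the same approach as the paper: both build the $3$-cube whose front and back faces are the pushout squares defining $\Sigma_Z X$ and $\Sigma_Z Y$, identify the punctured-cube limit with $Y\times_{\Omega_Z\Sigma_Z Y}\Omega_Z\Sigma_Z X$, and feed the same face-by-face coCartesian estimates into algebraic excision. The only cosmetic difference is that the paper cites \cite[3.8(b)]{ching-harper-excision} for the $(k+1)$-coCartesianness of the two mixed faces, whereas you obtain it via the retract argument using Proposition~\ref{prop:retract}; both are valid.
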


\begin{proof}
  Consider the cubical diagram:
  \[
    \xymatrix@R=1pc{
      X \ar[rr] \ar[dd] \ar[dr] &&
      Z \ar[dr] \ar[dd]|\hole \\
      & Z \ar[rr] \ar[dd] &&
      \Sigma_Z X \ar[dd] \\
      Y \ar[rr]|\hole \ar[dr] &&
      Z \ar[dr] \\
      & Z \ar[rr] &&
      \Sigma_Z Y
    }
  \]
  The natural map from $X$ to the total homotopy pullback of the cube with the initial vertex deleted is equivalent to the map from $X$ to the homotopy pullback in the proposition statement, and hence it suffices for us to show that this cube is $(k+\ell)$-Cartesian.

  The maps $X \to Z$ are $\ell$-connected, and the map $X \to Y$ is $k$-connected. The top and bottom faces are homotopy pushout diagrams, and hence $\infty$-coCartesian; in particular, the whole cube is $\infty$-coCartesian by \cite[3.8(b)]{ching-harper-excision}. Because the map $X \to Y$ is $k$-connected and the map $Z \to Z$ is $\infty$-connected, the left-hand and back faces are both $(k+1)$-coCartesian, again by \cite[3.8(b)]{ching-harper-excision}.

  The inclusion of each initial face into a larger face improves connectivity, and so the excision estimate applies. The whole cube is $n$-Cartesian, where
  \[
    n = \min\{-3+(k+1)+(\ell+1)+(\ell+1),
    -3+(k+1)+\infty,-3+(\ell+1)+(k+2),-3+\infty\} = k+\ell,
  \]
  as desired.
\end{proof}

\begin{cor}\label{cor:subductivealgebras}
If $\cC$ satisfies algebraic excision, the $\Sigma_Z-\Omega_Z$ adjunction is subductive when restricted to objects $X$ such that $X \to Z$ is $0$-connected, and strongly subductive when restricted to objects such that $X \to Z$ is $1$-connected.
\end{cor}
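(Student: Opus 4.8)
The plan is to specialize Proposition~\ref{prop:suspensionsubductive} to the case $Y = Z$ is the base of the slice category. Recall that the $\Sigma_Z$--$\Omega_Z$ adjunction is really the suspension--loops adjunction internal to the coslice-over-slice category $(\cC_{/Z})_{Z/}$, i.e.\ the category of objects $X$ equipped with a map $X \to Z$, pointed by a chosen section-type structure making $Z$ into the zero object. Subductivity of this adjunction asks precisely that, for a map $f\co X \to Y$ which is $k$-connected \emph{in the relevant connectivity structure} (the one lifted to objects over $Z$), the square
\[
  \xymatrix{
    X \ar[r] \ar[d] & Y \ar[d] \\
    \Omega_Z \Sigma_Z X \ar[r] & \Omega_Z \Sigma_Z Y
  }
\]
be $k$-Cartesian (respectively $(k+1)$-Cartesian for strong subductivity).

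\medskip

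\noindent First I would unwind the hypotheses. Working in the pointed slice category, the relevant objects $X, Y$ come equipped with structure maps to $Z$, and the connectivity of the map $X \to Y$ is exactly its connectivity as a map in $\cC$. The content of Corollary~\ref{cor:subductivealgebras} is obtained by taking, in Proposition~\ref{prop:suspensionsubductive}, the three maps $X \to Y$, $X \to Z$, and $Y \to Z$, and choosing the connectivity parameter $\ell$ to match the hypothesis on the structure maps. For the subductive case, we assume each object satisfies $X \to Z$ is $0$-connected, so we set $\ell = 0$; then the constraint $k+1 \geq \ell \geq 0$ from Proposition~\ref{prop:suspensionsubductive} reduces to $k \geq -1$, which holds for any $k$ that could make the statement non-vacuous (and indeed all maps are $(-1)$-connected by algebraic excision). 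The conclusion of Proposition~\ref{prop:suspensionsubductive} is then that the square is $(k+\ell) = k$-Cartesian, which is exactly $k$-subductivity.

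\medskip

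\noindent Second, for the strongly subductive case I would set $\ell = 1$, using the hypothesis that each structure map $X \to Z$ is $1$-connected. Here the constraint $k + 1 \geq \ell = 1$ becomes $k \geq 0$, which is satisfied for the $k$-connected maps under consideration (since subductivity is only being asserted over this range). Proposition~\ref{prop:suspensionsubductive} then yields that the square is $(k + \ell) = (k+1)$-Cartesian, which is precisely the definition of strong $k$-subductivity. One must check that the map $X \to Y$ being $k$-connected, together with both structure maps being $\ell$-connected, is consistent: by the composition axiom (Definition~\ref{def:connectivity}(4b)), since $Y \to Z$ is $\ell$-connected and $\ell \le k+1$, a $k$-connected $X \to Y$ automatically makes $X \to Z$ at least $\ell$-connected as well, so the hypotheses cohere.

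\medskip

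\noindent \textbf{The main obstacle} I anticipate is purely bookkeeping rather than conceptual: verifying that the $\Sigma_Z$--$\Omega_Z$ adjunction on the pointed slice category really is the adjunction whose subductivity is governed by the square appearing in Proposition~\ref{prop:suspensionsubductive}, and that the connectivity structure lifted to this category agrees with connectivity measured in $\cC$. Once the two hypotheses are translated into the correct choices of $\ell$, the proof is a direct invocation of Proposition~\ref{prop:suspensionsubductive}, so the only real work is confirming that the range condition $k+1 \geq \ell \geq 0$ is met in each case and that no edge case (such as very negative $k$) spoils the argument. Since algebraic excision guarantees all maps are $(-1)$-connected, these boundary concerns dissolve, and the corollary follows immediately.
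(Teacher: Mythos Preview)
Your approach is correct and is precisely the (implicit) argument the paper intends: the corollary is immediate from Proposition~\ref{prop:suspensionsubductive} by taking $\ell=0$ and $\ell=1$ respectively, exactly as you do. Two small wrinkles: your opening phrase ``specialize to the case $Y=Z$'' is misleading (you do not set $Y=Z$; you correctly keep $X\to Y\to Z$ with both structure maps $\ell$-connected), and in the strongly subductive case your claim that ``subductivity is only being asserted over $k\ge 0$'' is not quite right---strong subductivity means for \emph{all} $k$---but this is harmless because axiom~4(c) forces every map $X\to Y$ between $1$-connected objects over $Z$ to be $0$-connected, so you may always replace $k$ by $\max(k,0)$ before invoking the proposition.
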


\begin{prop}\label{prop:reflectivealgebras}
  Suppose that $\cC$ satisfies algebraic excision, and that $X \to Y$ is a map of objects over $Z$ such that $X \to Z$ and $Y \to Z$ are $1$-connected. Then, for $k \geq 0$, the map $X \to Y$ is $k$-connected if and only if the map $\Omega_Z \Sigma_Z X \to \Omega_Z \Sigma_Z Y$ is $k$-connected.
\end{prop}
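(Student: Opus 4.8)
The plan is to prove the two implications separately: the forward direction follows immediately from how $\Sigma_Z$ and $\Omega_Z$ each shift connectivity, while the converse is the substantive claim and I would establish it by induction on $k$, using the strong subductivity of the $\Sigma_Z$--$\Omega_Z$ adjunction provided by Corollary~\ref{cor:subductivealgebras}. For the forward direction, suppose $f\co X \to Y$ is $k$-connected. Since algebraic excision makes connectivity compatible with cobase change, Corollary~\ref{cor:freudenthal} shows that $\Sigma_Z f\co \Sigma_Z X \to \Sigma_Z Y$ is $(k+1)$-connected. The relative loop functor $\Omega_Z$ is assembled from homotopy pullbacks, so by Proposition~\ref{prop:pullbackconnectivity} (as recorded at the beginning of Section~\ref{sec:stableconnectivity}) it lowers connectivity by at most one; hence $\Omega_Z \Sigma_Z f$ is $k$-connected. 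This direction requires neither induction nor the $1$-connectivity hypotheses.

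For the converse I would induct on $k \geq 0$. The base case $k=0$ is automatic: from the hypotheses that $X \to Z$ and $Y \to Z$ are $1$-connected, part (c) of the fourth axiom of Definition~\ref{def:connectivity} applied to the composite $X \to Y \to Z$ already forces $X \to Y$ to be $0$-connected, so there is nothing to check. For the inductive step, assume the equivalence holds at level $k-1$ and suppose $\Omega_Z\Sigma_Z f$ is $k$-connected. It is then in particular $(k-1)$-connected, so the inductive hypothesis gives that $f$ is $(k-1)$-connected. Because $X \to Z$ is $1$-connected, Corollary~\ref{cor:subductivealgebras} makes the $\Sigma_Z$--$\Omega_Z$ adjunction strongly subductive on the objects in question, so the naturality square
\[
  \xymatrix{
    X \ar[r] \ar[d] & Y \ar[d] \\
    \Omega_Z\Sigma_Z X \ar[r] & \Omega_Z\Sigma_Z Y
  }
\]
is $k$-Cartesian; that is, the map $X \to Y \times_{\Omega_Z\Sigma_Z Y} \Omega_Z\Sigma_Z X$ is $k$-connected.

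To finish, I would factor $f$ through this pullback as
\[
  X \to Y \times_{\Omega_Z\Sigma_Z Y} \Omega_Z\Sigma_Z X \to Y.
\]
The first map is $k$-connected by the previous step, and the second is the base change of the hypothesised $k$-connected map $\Omega_Z\Sigma_Z X \to \Omega_Z\Sigma_Z Y$ along $Y \to \Omega_Z\Sigma_Z Y$, hence $k$-connected by pullback stability (the third axiom of Definition~\ref{def:connectivity}). Composing these via part (a) of the fourth axiom shows $f$ is $k$-connected, completing the induction. This factorization argument is essentially the one used in Proposition~\ref{prop:strongsubductivereflects}, except that here the relevant pullback leg is controlled directly by the assumption on $\Omega_Z\Sigma_Z f$ rather than by a hypothesis on $\Sigma_Z f$.

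The argument is largely mechanical once the prior results are in hand, so the main thing to get right is the bookkeeping of the induction: one must extract the $(k-1)$-connectedness of $f$ from the inductive hypothesis \emph{before} invoking strong $(k-1)$-subductivity to conclude that the naturality square is $k$-Cartesian, and one must confirm that the base case is genuinely forced by the $1$-connectivity of $X \to Z$ and $Y \to Z$ (which is what restricts the statement to $k \geq 0$). After that, the only remaining care is to verify that $\Sigma_Z X$ and $\Sigma_Z Y$ remain $1$-connected over $Z$ so that the suspended objects also lie in the subductive range, which follows from the retract $Z \to \Sigma_Z X \to Z$ together with Proposition~\ref{prop:retract}.
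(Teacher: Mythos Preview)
Your proof is correct and follows essentially the same approach as the paper. The paper's proof is extremely terse---it just observes that $X\to Y$ is $0$-connected and then cites Proposition~\ref{prop:strongsubductivereflects}---whereas you have written out the induction explicitly and, more importantly, you correctly note that the factorization argument only needs control over $RLf = \Omega_Z\Sigma_Z f$ rather than over $Lf = \Sigma_Z f$, which is exactly what the hypothesis provides. That observation is what makes the invocation of Proposition~\ref{prop:strongsubductivereflects} legitimate here, and it is good that you flagged it. One minor point: your closing remark about checking that $\Sigma_Z X$ and $\Sigma_Z Y$ remain $1$-connected over $Z$ is unnecessary, since your induction never applies subductivity to the suspended objects---you only ever use the $k$-Cartesian square for $X\to Y$ itself, where the $1$-connectivity of $X\to Z$ and $Y\to Z$ is given.
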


\begin{proof}
  Because the maps $X \to Z$ and $Y \to Z$ are $1$-connected, the map $X \to Y$ is $0$-connected. Therefore, we can apply Proposition~\ref{prop:strongsubductivereflects}.
\end{proof}

\section{Applications}
\label{sec:applications}

\subsection{Skeleta in derived categories}

In this section we will examine what skeletality means for an object in the classical derived category of a ring $R$, with the connectivity structure determined by the $t$-structure as in Example \ref{exam:spectratstructure}.

\begin{defn}
  \label{def:amplitude}
  Let $R$ be an ordinary ring. We say that a complex of $R$-modules has \emph{projective amplitude in $[a,b]$} if it is equivalent to a complex of projectives concentrated in degrees $a$ through $b$.
\end{defn}

\begin{prop}
  \label{prop:skeletalamplitude}
  A complex of $R$-modules has projective amplitude in $[a,b]$ if and only if it is $(a-1)$-connected and $b$-skeletal.
\end{prop}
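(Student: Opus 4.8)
The statement is an "if and only if" characterizing which complexes of $R$-modules are $(a-1)$-connected and $b$-skeletal in terms of projective amplitude $[a,b]$. I would prove the two directions separately, treating the connectivity condition and the skeletality condition as two independent constraints that together pin down the amplitude. The guiding principle is that the $t$-structure connectivity forces the complex to vanish in low degrees (homology below $a$), while $b$-skeletality is exactly a lifting property that, via the cells for the derived category, forces the complex to be representable by projectives concentrated in degrees $\le b$.

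\textbf{The forward direction.} Suppose $M$ has projective amplitude in $[a,b]$, so $M$ is equivalent to a complex $P_\bullet$ of projectives concentrated in degrees $a$ through $b$. Connectivity is immediate: since $P_\bullet$ is concentrated in degrees $\ge a$, its homotopy groups vanish below $a$, so $\ast \to M$ is $(a-1)$-connected in the sense of Example~\ref{exam:spectratstructure}. For skeletality, I would build $M$ cellularly out of the generating cells $\Sigma^{k-1}R \to \ast$ (the cells for the standard connectivity structure on modules, as in the $\LMod_S$ example), using one cell in each degree $k$ with $a \le k \le b$ for each free generator of $P_k$. Concretely, the filtration of $P_\bullet$ by its truncations realizes $M$ as an iterated pushout attaching cells of dimension at most $b$; by Proposition~\ref{prop:coproductskeletal} each wedge of $k$-spheres $\Sigma^k R$ is $k$-skeletal, hence $b$-skeletal for $k \le b$, and by Proposition~\ref{prop:pushoutskeletal} the pushouts preserve $b$-skeletality. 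Thus $M$ is a retract of (indeed built from) $b$-skeletal cells and is itself $b$-skeletal.

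\textbf{The converse.} Assume $M$ is $(a-1)$-connected and $b$-skeletal. The connectivity gives $H_i(M) = 0$ for $i < a$, so $M$ is represented by a complex of projectives concentrated in degrees $\ge a$; I can take a minimal or standard projective resolution $P_\bullet$ with $P_i = 0$ for $i < a$. The content is to show that $b$-skeletality forces $P_\bullet$ to be equivalent to one concentrated in degrees $\le b$ as well. Here I would use the $b$-skeletality as a lifting property against the $b$-connected map $\tau_{\le b} M \to M$ — or more precisely against the truncation map whose cofiber is $(b+1)$-connected. The truncation $\tau_{\le b}M$ (the brutal truncation of $P_\bullet$ to degrees $[a,b]$) receives a natural map to $M$, and the identity map $M \to M$ lifts through this $(b+1)$-connected comparison by Corollary~\ref{cor:uniquelifts}, exhibiting $M$ as a retract of an object of projective amplitude $[a,b]$. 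Since retracts of complexes of projectives concentrated in $[a,b]$ again have projective amplitude in $[a,b]$ (a summand of a projective is projective), this gives the result.

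\textbf{Main obstacle.} The delicate point is the converse, specifically verifying that $b$-skeletality precisely detects the vanishing of the obstruction to truncating at $b$, rather than merely giving a retraction onto something one degree too large. I expect the crux is matching the generating cells $\Sigma^{k-1}R \to \ast$ to the correct connectivity bookkeeping: a $b$-skeletal object lifts against $b$-connected maps, and I must check that the relevant comparison map $\tau_{\le b}M \to M$ is $(b+1)$-connected (so that the lift is \emph{unique}, forcing a genuine splitting via $ri = \id$) rather than only $b$-connected. Getting this off-by-one correct — and confirming that a retract of a projective-amplitude-$[a,b]$ complex still has projective amplitude in $[a,b]$, using that direct summands of projectives are projective — is where the real care is needed.
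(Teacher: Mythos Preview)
Your forward direction is fine: building $M$ cellularly via the brutal filtration of a representing complex of projectives and invoking Propositions~\ref{prop:coproductskeletal} and~\ref{prop:pushoutskeletal} works, and is a reasonable alternative to the paper's hypercohomology spectral sequence argument.

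The converse, however, has two issues. First, your ``main obstacle'' is a red herring. The map $\sigma_{\le b}P \to M$ is only $b$-connected (its cofiber lives in degrees $\ge b+1$), not $(b+1)$-connected---but that is already enough. A \emph{lift} of $\id_M$ along a $b$-connected map is, by definition of lift, a section; you do not need uniqueness, so you do not need Corollary~\ref{cor:uniquelifts}, only the defining property of $b$-skeletality. So the retraction $M \to \sigma_{\le b}P \to M$ exists.

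Second, and this is the real gap: your final step---that a retract in $D(R)$ of a complex with projective amplitude in $[a,b]$ again has projective amplitude in $[a,b]$---is not justified by ``a summand of a projective is projective.'' The retraction lives in the derived category, not at the chain level, so you cannot split levelwise. The claim is true, but proving it requires either an $\mathrm{Ext}$-vanishing characterization of projective amplitude (together with the observation that retracts inherit homology bounds), or an inductive argument peeling off one degree at a time. The paper takes the latter route for the whole converse: it inducts on $b-a$, proving in the base case that $H_0(A)$ is projective by testing the lifting property against surjections of discrete modules (which are $0$-connected), then for the inductive step maps a projective $P$ onto $H_0(A)$, lifts to $P \to A$, and applies the inductive hypothesis to the cofiber $A/P$. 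Filling your gap would require work of comparable weight.
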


\begin{proof}
  Suppose $A$ is a complex with the given projective amplitude; since the properties we need to show are invariant under equivalence, we may assume that $A$ is a complex of projectives concentrated in degrees $a$ through $b$. Then its homology is clearly concentrated in degrees $a$ through $b$, so it is $(a-1)$-connected. Moreover, if $X \to Y$ is a $b$-connected map of complexes, its cofiber $Y/X$ is $(b+1)$-connected, and hence $[A,Y/X] = 0$ via a hypercohomology spectral sequence with $E_1$-term
  \[
    \Hom_R(A_s, H_t(Y/X)) \Rightarrow H_{t-s} \Hom(A,Y/X).
  \]
  This implies the desired lifting property by the long exact sequence for $[A,-]$.
  
  To prove the converse, we proceed by induction on $b-a$. By first applying a shift $\Sigma^{-a}$, we may assume without loss of generality that $a=0$.

  For the base case, suppose that $A$ is $(-1)$-connected and $0$-skeletal; we wish to show that $A$ is equivalent to a projective complex concentrated in degree $0$. Without loss of generality we may assume $A$ is a complex of projectives in nonnegative degrees. Any surjective map of (discrete) $R$-modules $M \to N$ can be viewed as a $0$-connected map of complexes concentrated in degree $0$. The map $[A,M] \to [A,N]$ is therefore surjective; however, this is isomorphic to the map
  \[
    \Hom_R(H_0 A, M) \to \Hom_R(H_0 A, N).
  \]
  Since $M \to N$ was an arbitrary surjection, $H_0(A)$ is a projective module $P$. Both $A$ and $P$ are $0$-skeletal, and the maps $A \to P$ and $P \to P$ are both $1$-connected. This makes both $A$ and $P$ minimal $0$-skeleta of $P$, and hence equivalent by Proposition~\ref{prop:skeletaluniqueness}.

  Now suppose by induction that we have shown the result for $b-1$. Given a complex $A$ which is $(-1)$-connected and $b$-skeletal, let $P \to H_0(A)$ be a surjection from a projective module, with a lift to a map $P \to A$. Then the cofiber $A/P$ is $0$-connected, and it is $b$-skeletal by Proposition~\ref{prop:pushoutskeletal}. By induction it then has projective amplitude in $[1,b]$. The complex $A$ is equivalent the cofiber of the map $\Sigma^{-1} A/P \to P$, and thus has projective amplitude in $[0,b]$.
\end{proof}

\begin{prop}
  \label{prop:PIDskeletal}
  Suppose that $R$ is a principal ideal domain. Then a complex $M$ of $R$-modules is $k$-skeletal if and only if $H_k(M)$ is free and $H_* M = 0$ for $* > k$.
\end{prop}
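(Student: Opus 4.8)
The plan is to exploit the fact that $R$ being a PID makes $D(R)$ behave like a graded category: since $R$ is hereditary (global dimension $\le 1$), every complex is formal, i.e.\ $M \simeq \bigoplus_n H_n(M)[n]$, the higher $k$-invariants vanishing because they live in $\Ext^2$-groups. Granting this, and using that $D(R)$ is pointed so that the pointed upgrade of Proposition~\ref{prop:coproductskeletal} applies, $M$ is $k$-skeletal if and only if each shifted module $H_n(M)[n]$ is. Thus the whole statement reduces to deciding, for a single $R$-module $A$ and integer $n$, exactly when $A[n]$ is $k$-skeletal; I claim this happens precisely when $A = 0$, or $n < k$, or ($n = k$ and $A$ is free).

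First I would handle the degrees $n \le k$ via projective amplitude. Over a PID every module $A$ has a free resolution of length at most one, so $A[n]$ has projective amplitude in $[n, n+1]$, with the sharper amplitude $[n,n]$ exactly when $A$ is free. By Proposition~\ref{prop:skeletalamplitude}, amplitude in $[n,n+1]$ makes $A[n]$ be $(n+1)$-skeletal; since every $k$-connected map is $(n+1)$-connected once $n+1 \le k$, this shows $A[n]$ is $k$-skeletal for all $A$ whenever $n < k$. For $n = k$, freeness of $A$ gives amplitude $[k,k]$ and hence $k$-skeletality; conversely, if $A[k]$ is $k$-skeletal I would test the lifting property against the $k$-connected maps $P[k] \to Q[k]$ induced by an arbitrary surjection of modules $P \twoheadrightarrow Q$ (a surjection of modules is a $0$-connected map by Proposition~\ref{prop:stablecofiber}, so its $k$-fold shift is $k$-connected). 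Since $\Hom_{D(R)}(A[k], P[k]) = \Hom_R(A,P)$, skeletality forces $\Hom_R(A,P) \to \Hom_R(A,Q)$ to be surjective for every such surjection, so $A$ is projective and therefore free.

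It remains to rule out $n > k$ with $A \neq 0$. Here the clean test is the map $0 \to A[n]$: its cofiber is $A[n]$, which is $(n-1)$-connected, and $n - 1 \ge k$, so by Proposition~\ref{prop:stablecofiber} this map is $k$-connected. If $A[n]$ were $k$-skeletal, then $\id_{A[n]}$ would lift along $0 \to A[n]$, i.e.\ factor through the zero object, forcing $A[n] \simeq 0$ and hence $A = 0$. Assembling the three cases, $M \simeq \bigoplus_n H_n(M)[n]$ is $k$-skeletal if and only if for every $n$ we have $H_n(M) = 0$, or $n < k$, or ($n = k$ and $H_k(M)$ is free)---equivalently, $H_*(M) = 0$ for $* > k$ and $H_k(M)$ is free, as claimed. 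The only non-formal input is the splitting $M \simeq \bigoplus_n H_n(M)[n]$ for hereditary $R$ (and the standard fact that projective modules over a PID are free); I expect verifying or citing this formality---especially for complexes unbounded below, where Proposition~\ref{prop:skeletalamplitude} does not directly apply---to be the main point requiring care, the rest being bookkeeping with the connectivity axioms.
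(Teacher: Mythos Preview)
Your proposal is correct and follows essentially the same approach as the paper: both use the splitting $M \simeq \bigoplus_d \Sigma^d H_d(M)$ over a hereditary ring, reduce via the pointed version of Proposition~\ref{prop:coproductskeletal} to the skeletality of each shifted homology module, and then invoke Proposition~\ref{prop:skeletalamplitude}. The paper streamlines slightly by noting that $\Sigma^d H_d(M)$ is $k$-skeletal if and only if $H_d(M)$ is $(k-d)$-skeletal and then applying Proposition~\ref{prop:skeletalamplitude} once to the discrete module, whereas you re-test surjections at $n=k$ and the map $0 \to A[n]$ for $n>k$ directly; your explicit treatment of the $n>k$ case is in fact a bit more transparent than the paper's implicit appeal to ``amplitude in $[0,k-d]$'' with $k-d < 0$.
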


\begin{proof}
  Since principal ideal domains have projective dimension $1$, any complex of $R$-modules splits: there is an equivalence
  \[
    \bigoplus_d \Sigma^d H_d(M) \simeq M.
  \]
  Therefore, $M$ is $k$-skeletal if and only if each $\Sigma^d H_d(M)$ is $k$-skeletal by Proposition~\ref{prop:coproductskeletal}, which is true if and only if $H_d(M)$ is $(k-d)$-skeletal.

  By the previous proposition, this is equivalent to $H_d(M)$ having projective amplitude in $[0,k-d]$. Since $R$ has projective dimension $1$, every discrete module has projective amplitude in $[0,1]$, and so this is automatically satisfied for $d \leq k-1$. When $d=k$, we must have that $H_k(M)$ has projective amplitude in $[0,0]$, which is equivalent to asking that $H_k(M)$ is projective, and hence free. When $d > k$, we must have that $H_k(M)$ is trivial.
\end{proof}

\begin{cor}
  \label{cor:fieldskeletal}
  Suppose that $R$ is a field. Then a complex $M$ of $R$-modules is
  $k$-skeletal if and only if $H_*(M) = 0$ for $* > k$.
\end{cor}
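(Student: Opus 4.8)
The plan is to derive this as an immediate special case of Proposition~\ref{prop:PIDskeletal}. The key observation is that any field $R$ is a principal ideal domain: its only ideals are $0$ and $R$, both of which are principal. First I would invoke Proposition~\ref{prop:PIDskeletal} verbatim, which tells us that a complex $M$ of $R$-modules is $k$-skeletal if and only if $H_k(M)$ is free and $H_*(M) = 0$ for all $* > k$. No new cellular or lifting argument is needed, since all of that work has already been carried out at the level of a general PID.

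It then remains only to simplify the resulting criterion in the presence of the extra hypothesis. Over a field every module is a vector space and hence free, so the condition ``$H_k(M)$ is free'' is automatically satisfied and imposes no constraint whatsoever. What survives is exactly the requirement that $H_*(M) = 0$ for $* > k$, which is the claimed characterization. There is essentially no obstacle here; the only mathematical content beyond quoting the previous proposition is the elementary fact that vector spaces are free, so the proof is a one-line specialization.
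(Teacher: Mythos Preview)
Your proposal is correct and matches the paper's approach: the corollary is stated without proof immediately after Proposition~\ref{prop:PIDskeletal}, so the intended argument is exactly the specialization you describe---a field is a PID, and the freeness condition on $H_k(M)$ becomes vacuous since every vector space is free.
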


We now consider the construction of minimal skeleta.
\begin{prop}
  Suppose that $A \to M$ is a map of chain complexes over $R$ that is a $(k-1)$-skeleton, with cofiber $M/A$. Then there exists a minimal $k$-skeleton $B \to M$ if and only if $H_k(M/A)$ is a projective $R$-module and $H_{k+1}(M/A) = 0$.
\end{prop}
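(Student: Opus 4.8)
The plan is to prove both implications by passing to the cofiber $C = M/A$, which is $(k-1)$-connected because $A \to M$ is a $(k-1)$-skeleton, and by repeatedly using that in a stable connectivity structure a map is $k$-connected exactly when its cofiber is $k$-connected (Proposition~\ref{prop:stablecofiber}), i.e.\ its homology vanishes in degrees $\le k$. The whole argument is organized around the octahedral cofiber sequence associated to a factorization $A \to B \to M$ and the resulting long exact sequence in homology.

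For the direction that existence forces the conditions, suppose $B \to M$ is a minimal $k$-skeleton, so $B$ is $k$-skeletal and $B \to M$ is $(k+1)$-connected. Since $A$ is $(k-1)$-skeletal and $B \to M$ is in particular $k$-connected, Corollary~\ref{cor:uniquelifts} produces a lift $A \to B$ over $M$. I would then form the octahedral cofiber sequence $B/A \to C \to M/B$. First $B/A$ is $k$-skeletal: it is the pushout of $0 \leftarrow A \to B$, so this follows from Proposition~\ref{prop:pushoutskeletal} (using that a $(k-1)$-skeletal object is a fortiori $k$-skeletal, since $k$-connected maps are $(k-1)$-connected). The long exact sequence, together with $M/B$ being $(k+1)$-connected and $C$ being $(k-1)$-connected, forces $B/A$ to be $(k-1)$-connected as well; hence by Proposition~\ref{prop:skeletalamplitude} it has projective amplitude in $[k,k]$, i.e.\ $B/A \simeq \Sigma^k Q$ with $Q$ projective. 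Reading off the long exact sequence in degrees $k$ and $k+1$ then gives $H_k(C) \cong H_k(B/A) = Q$ projective and $H_{k+1}(C) = 0$.

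For the converse I would construct a minimal $k$-skeleton explicitly by attaching a single projective $k$-cell. Write $P = H_k(C)$, so that $\Sigma^{k-1}P$ is $(k-1)$-skeletal since $P$ is projective (Proposition~\ref{prop:skeletalamplitude}). Using the connecting map $\partial\co \Sigma^{-1}C \to A$ of the cofiber sequence $A \to M \to C$, I would lift the bottom cell: the truncation $\Sigma^{-1}C \to \tau_{\le k-1}(\Sigma^{-1}C) \simeq \Sigma^{k-1}P$ is $k$-connected, so by Corollary~\ref{cor:uniquelifts} the identity of $\Sigma^{k-1}P$ lifts to a map $s\co \Sigma^{k-1}P \to \Sigma^{-1}C$ that is an isomorphism on $H_{k-1}$. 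Setting $\alpha = \partial s$ and $B = \cofib(\alpha)$, the composite $\Sigma^{k-1}P \to A \to M$ is null (consecutive maps in a fiber sequence), so $A \to M$ extends to a map $B \to M$. Then $B/A \simeq \Sigma^k P$, so $B$ is the pushout of $0 \leftarrow \Sigma^{k-1}P \to A$ and hence $k$-skeletal by Proposition~\ref{prop:pushoutskeletal}, while the induced map $B/A \to C$ is $\Sigma s$, an isomorphism on $H_k$. The long exact sequence for $M/B = \cofib(\Sigma s)$, using $H_{k+1}(C) = 0$, shows $M/B$ is $(k+1)$-connected, so $B \to M$ is a minimal $k$-skeleton; by Proposition~\ref{prop:skeletaluniqueness} it is the only one.

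I expect the genuine obstacle to be this sufficiency direction, and specifically the bookkeeping needed to attach exactly the bottom homology of $C$. Two points require care: first, the attaching map must land in $A$ rather than in all of $\Sigma^{-1}C$, which is what forces the detour through $\partial$ and the section $s$; second, because $P$ is only projective (not free) one cannot simply attach free cells, so it is essential that a minimal $k$-skeleton is only required to be $k$-skeletal --- a retract of a cellular object --- which is exactly what lets $\Sigma^{k-1}P$ serve as a legitimate $(k-1)$-skeletal source. Checking that $s$ is an isomorphism on $H_{k-1}$ (so that $\Sigma s$ is an isomorphism on $H_k$), and that the hypothesis $H_{k+1}(C)=0$ is precisely what clears degree $k+1$ of $M/B$, is the crux of the computation.
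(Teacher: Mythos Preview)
Your proof is correct and follows essentially the same strategy as the paper. The only cosmetic difference is in the converse: the paper first produces the section $\Sigma^k P \to M/A$, takes its cofiber $C'$, and defines $B = \fib(M \to C')$, deducing the cofiber sequence $\Sigma^{k-1}P \to A \to B$ from the octahedral axiom; you instead desuspend to get $s\colon \Sigma^{k-1}P \to \Sigma^{-1}(M/A)$, set $\alpha = \partial s$, and define $B = \cofib(\alpha)$ directly. These constructions agree by octahedral, and the remaining verifications (that $B$ is $k$-skeletal and $M/B$ is $(k+1)$-connected) are identical. One small point worth making explicit: the extension $B \to M$ depends on a choice of nullhomotopy of $\Sigma^{k-1}P \to A \to M$, and it is precisely the nullhomotopy coming from the factorization through $\partial$ that makes the induced map $B/A \to C$ equal to $\Sigma s$; you use this implicitly when invoking the map of cofiber sequences.
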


\begin{proof}
  Suppose that $B \to M$ is a minimal $k$-skeleton. Then there exists a unique lift $A \to B$ over $M$ by Corollary~\ref{cor:uniquelifts}. The object $B/A$ is $(k-1)$-connected and $k$-skeletal by Proposition~\ref{prop:pushoutskeletal}, and hence equivalent to a shift $\Sigma^k P$ of a projective $R$-module.

  Because of the identification
  \[
    (M/A) / (B/A) \simeq M/B,
  \]
  the map $B \to M$ is $(k+1)$-connected if and only if the map $B/A \to M/A$ is $(k+1)$-connected. However, for the map $\Sigma^k P \to M/A$ to be $(k+1)$-connected we must have that $H_k(M/A) = P$ and $H_{k+1}(M/A) = 0$.

  Conversely, suppose that $H_k(M/A)$ is a projective module $P$ and that $H_{k+1}(M/A) = 0$. Then, since $M/A$ is $(k-1)$-connected, there is a $(k+2)$-connected map $M/A \to \Sigma^k P$, which has a section because $\Sigma^k P$ is $k$-skeletal; the section is $(k+1)$-connected. Let $C$ be the cofiber of the map $\Sigma^k P \to M/A$; it is $(k+1)$-connected. Let $B$ be the fiber of the map $M \to C$; the map $B \to M$ is $(k+1)$-connected. The octahedral axiom implies that there is a cofiber sequence
  \[
    \Sigma^{k-1} P \to A \to B
  \]
  and hence $B$ is $k$-skeletal. Therefore, $B$ is a minimal $k$-skeleton of $M$.
\end{proof}

\begin{cor}
  Suppose $R$ is a principal ideal domain. Then a chain complex of $R$-modules $M$ admits a minimal $k$-skeleton if and only if $H_k(M)$ is free and $H_{k+1}(M) = 0$.
\end{cor}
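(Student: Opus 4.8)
The plan is to reduce immediately to the preceding proposition by manufacturing a convenient $(k-1)$-skeleton of $M$ and then translating its two conditions on the cofiber into conditions on $H_*(M)$ via a long exact sequence. Throughout I will use the splitting $M \simeq \bigoplus_d \Sigma^d H_d(M)$ guaranteed over a principal ideal domain, together with the characterization of skeletality in Proposition~\ref{prop:PIDskeletal}.

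First I would construct a $(k-1)$-skeleton $A \to M$. Choose a surjection $F \to H_{k-1}(M)$ from a free module $F$ and set
\[
  A = \Bigl(\bigoplus_{d \le k-2} \Sigma^d H_d(M)\Bigr) \oplus \Sigma^{k-1} F.
\]
Its homology is $H_d(M)$ for $d \le k-2$, equal to $F$ in degree $k-1$, and zero above; since $H_{k-1}(A) = F$ is free, Proposition~\ref{prop:PIDskeletal} shows $A$ is $(k-1)$-skeletal. The evident map $A \to M$ is an isomorphism on $H_d$ for $d \le k-2$ and surjective on $H_{k-1}$, so its cofiber $M/A$ satisfies $H_j(M/A) = 0$ for $j \le k-1$; that is, $A \to M$ is $(k-1)$-connected, hence a $(k-1)$-skeleton.

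Next I would invoke the preceding proposition: relative to this $A$, a minimal $k$-skeleton $B \to M$ exists if and only if $H_k(M/A)$ is projective and $H_{k+1}(M/A) = 0$. It remains to rewrite these using the long exact sequence of the cofiber sequence $A \to M \to M/A$. Since $H_j(A) = 0$ for $j \ge k$, the sequence gives $H_{k+1}(M/A) \cong H_{k+1}(M)$, and it yields a short exact sequence
\[
  0 \to H_k(M) \to H_k(M/A) \to K \to 0,
  \qquad K = \ker\bigl(H_{k-1}(A) \to H_{k-1}(M)\bigr).
\]
Because $K$ is a submodule of the free module $F = H_{k-1}(A)$ over a PID, it is itself free, so this sequence splits and $H_k(M/A) \cong H_k(M) \oplus K$. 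Hence $H_k(M/A)$ is projective if and only if $H_k(M)$ is a direct summand of a projective, i.e.\ projective, which over a PID is the same as free. Combining, the proposition's criterion becomes exactly ``$H_k(M)$ free and $H_{k+1}(M) = 0$,'' proving both directions at once.

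The only genuinely substantive points are (i) that a $(k-1)$-skeleton actually exists, so that the proposition can be applied---handled by the explicit construction above using the PID splitting---and (ii) the splitting of the extension computing $H_k(M/A)$, which rests on the fact that submodules of free modules over a PID are free. Everything else is bookkeeping with the long exact sequence, so I expect no serious obstacle.
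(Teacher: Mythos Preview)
Your proof is correct and follows essentially the same route as the paper: apply the preceding proposition to a $(k-1)$-skeleton $A$, use the long exact sequence of $A \to M \to M/A$ to identify $H_{k+1}(M/A) \cong H_{k+1}(M)$, and split the short exact sequence $0 \to H_k(M) \to H_k(M/A) \to K \to 0$ using that $K$, as a submodule of a free module over a PID, is free. The one difference is that the paper simply says ``let $A$ be any $(k-1)$-skeleton of $M$'' and then appeals to Proposition~\ref{prop:PIDskeletal} to conclude $H_{k-1}(A)$ is free, whereas you explicitly build a particular $A$ from the splitting $M \simeq \bigoplus_d \Sigma^d H_d(M)$; your version has the minor virtue of making the existence of such an $A$ self-contained, while the paper's version makes clear that the argument is insensitive to the choice of $(k-1)$-skeleton.
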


\begin{proof}
  Let $A$ be any $(k-1)$-skeleton of $M$. Then $H_{k+1}(M/A) = H_{k+1}(M)$, and so these groups are either both zero or both nonzero. There is also an exact sequence
  \[
    0 \to H_k(M) \to H_k(M/A) \to H_{k-1}(A).
  \]
  The group $H_{k-1}(A)$ is free by Proposition~\ref{prop:PIDskeletal}, and since $R$ is a principal ideal domain the image of the map $H_k(M/A) \to H_{k-1}(A)$ is also free. Therefore, the group $H_k(M/A)$ splits as a direct sum of $H_k(M)$ and a free module, and so $H_k(M)$ is projective if and only if $H_k(M/A)$ is.
\end{proof}

Because there is an equivalence between complexes of $R$-modules and modules over the Eilenberg--Mac Lane spectrum $HR$ \cite[5.1.6]{schwede-shipley-stablemodules}, taking homology groups to homotopy groups, we arrive at the following.

\begin{prop}
  \label{prop:EMskeletal}
  Suppose $R$ is a principal ideal domain. An $HR$-module $M$ is $k$-skeletal if and only if $\pi_k(M)$ is projective and $\pi_*(M) = 0$ for $* > k$. An $HR$-module $M$ admits a minimal $k$-skeleton if and only of $\pi_k(M)$ is free and $\pi_{k+1}(M) = 0$.

  In particular, suppose $R$ is a field. An $HR$-module $M$ is $k$-skeletal if and only if $\pi_*(M) = 0$ for $\ast > k$. An $HR$-module $M$ admits a minimal $k$-skeleton if and only if $\pi_{k+1}(M) = 0$.
\end{prop}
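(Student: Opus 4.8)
The plan is to transport the chain-level results already established to $HR$-modules along the Schwede--Shipley equivalence. The cited equivalence $D(R) \simeq \Mod_{HR}$ of \cite[5.1.6]{schwede-shipley-stablemodules} is an equivalence of stable $\infty$-categories carrying homology groups isomorphically to homotopy groups, and hence carries the standard $t$-structure on $D(R)$ to the standard $t$-structure on $\Mod_{HR}$. Since both connectivity structures arise from these $t$-structures as in Example~\ref{exam:spectratstructure}, and $k$-connectedness is detected purely on $H_*$ (respectively $\pi_*$), a map is $k$-connected in $D(R)$ if and only if its image is $k$-connected in $\Mod_{HR}$. First I would record this: the equivalence both preserves and reflects connectivity.

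Granting that, everything is formal. Because $k$-skeletality (Definition~\ref{def:skeletalobject}) is phrased solely in terms of lifting against $k$-connected maps, a connectivity-preserving equivalence carries $k$-skeletal objects to $k$-skeletal objects; likewise it carries $k$-skeleta to $k$-skeleta and minimal $k$-skeleta to minimal $k$-skeleta. Thus the first assertion is exactly Proposition~\ref{prop:PIDskeletal} read through the equivalence, once one observes that over a principal ideal domain every projective module is free, so the conditions ``$\pi_k(M)$ projective'' and ``$\pi_k(M)$ free'' coincide. The statement about admitting a minimal $k$-skeleton is the transport of the corresponding chain-complex corollary proved just above, which characterizes minimal $k$-skeleta of complexes over a PID by freeness of $H_k$ together with vanishing of $H_{k+1}$.

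For the field case, the only simplification is that over a field every module is free, so the projectivity/freeness hypotheses on $\pi_k(M)$ are automatically satisfied. The skeletal characterization then reduces to $\pi_*(M) = 0$ for $* > k$, which is Corollary~\ref{cor:fieldskeletal} transported, and the minimal-skeleton characterization reduces to the single vanishing condition $\pi_{k+1}(M) = 0$.

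The only step carrying genuine content is the first one: verifying that the Schwede--Shipley functor respects the connectivity structures. This is where I would be most careful, but I do not expect it to be a real obstacle, since connectivity is visible on homotopy/homology groups and the equivalence is constructed precisely so as to match $H_*$ with $\pi_*$. Once that compatibility is in hand, the translation of skeletality and of minimal skeleta is purely formal.
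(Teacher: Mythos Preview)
Your proposal is correct and is essentially the same as the paper's approach: the paper simply states that the Schwede--Shipley equivalence between chain complexes of $R$-modules and $HR$-modules, which carries homology to homotopy, transports the preceding chain-level results (Proposition~\ref{prop:PIDskeletal}, Corollary~\ref{cor:fieldskeletal}, and the minimal-skeleton corollary) directly. You have spelled out more carefully why connectivity, skeletality, and minimal skeleta are preserved by such an equivalence, but the underlying argument is identical.
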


\begin{cor}\label{cor:PIDskeleton}
  If $R$ is a principal ideal domain, then any $N$-skeleton of an $(n-1)$-connected $HR$-module $M$ admits a cellular construction using the cells $(\{\Sigma^{k-1} HR \to \ast\})_{n \leq k \leq N}$.
\end{cor}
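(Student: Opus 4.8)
The plan is to build the cellular filtration explicitly out of the homotopy of the given skeleton $B$, exploiting that over a principal ideal domain every $HR$-module splits as a wedge of shifted Eilenberg--Mac Lane modules, so that an $HR$-module is determined up to equivalence by its graded homotopy.

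First I would record what $B$ looks like. Recall that $\LMod_{HR}$ carries the connectivity structure lifted from $\Sp$, which is compatible with cobase change, and that the maps $\Sigma^{k-1} HR \to \ast$ generate it. Since $B \to M$ is $N$-connected and $M$ is $(n-1)$-connected, the groups $\pi_j B$ vanish for $j < n$; since $B$ is $N$-skeletal, Proposition~\ref{prop:PIDskeletal} (equivalently Proposition~\ref{prop:EMskeletal}) gives $\pi_j B = 0$ for $j > N$ and $\pi_N B$ free. Thus $\pi_* B$ is concentrated in degrees $n \le j \le N$ with $\pi_N B$ free, and because $R$ is a principal ideal domain the Schwede--Shipley correspondence \cite{schwede-shipley-stablemodules} yields a splitting $B \simeq \bigoplus_{j=n}^{N} \Sigma^j H\pi_j B$. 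This reduces the problem to building a module with prescribed homotopy groups by cells.

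Next I would choose, for each $n \le j \le N$, a short free resolution $0 \to F_1^j \to F_0^j \to \pi_j B \to 0$, which exists with $F_0^j, F_1^j$ free since submodules of free modules over a principal ideal domain are free; for $j = N$ take $F_1^N = 0$ as $\pi_N B$ is already free. Then I build the filtration $0 = W^{(n-1)} \to W^{(n)} \to \dots \to W^{(N)}$ one dimension at a time, where the map $W^{(k-1)} \to W^{(k)}$ is the homotopy pushout attaching a coproduct of $k$-cells $\Sigma^{k-1} HR \to \ast$ along a map with two kinds of components: (i) the zero map on $\coprod_{F_0^k} \Sigma^{k-1} HR$, creating free generators $\coprod_{F_0^k}\Sigma^k HR$ in degree $k$; and (ii) on $\coprod_{F_1^{k-1}} \Sigma^{k-1} HR$ the map realizing the resolution inclusion $F_1^{k-1} \hookrightarrow F_0^{k-1} = \pi_{k-1} W^{(k-1)}$. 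Here I use that $[\Sigma^{k-1} HR, Z] \cong \pi_{k-1} Z$ is $R$-linear, so an $R$-homomorphism out of a free module is realized by a genuine map of $HR$-modules. A bookkeeping of the cofiber long exact sequences then gives $\pi_j W^{(k)} \cong \pi_j B$ for $j \le k-1$ and $\pi_k W^{(k)} \cong F_0^k$, with all other homotopy groups zero: the relation cells attached at stage $k$ replace $F_0^{k-1}$ by its cokernel $\pi_{k-1} B$ and, being attached along injections, contribute nothing in degree $k$.

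At the final stage $\pi_* W^{(N)} \cong \pi_* B$, so by the splitting $W^{(N)} \simeq B$, and every cell used is one of $\Sigma^{k-1} HR \to \ast$ with $n \le k \le N$; this is the desired cellular construction in the sense of Section~\ref{sec:cellularskeleta}. The main thing to get right is the third step: realizing the chosen resolutions as attaching maps and checking via the cofiber sequences that the degree-$k$ relation cells impose exactly the relations $F_1^{k-1}$ while introducing no spurious homotopy. This is precisely where the hypothesis that $R$ has projective (hence global) dimension one is essential, since it supplies the length-one resolutions and the clean splitting that identifies $W^{(N)}$ with $B$.
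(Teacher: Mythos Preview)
Your argument is correct. The paper states this result as an unproved corollary, so the intended proof has to be inferred from the surrounding material, and yours diverges from that route in an interesting way.

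The paper has just shown (Proposition~\ref{prop:skeletalamplitude}) that being $(n-1)$-connected and $N$-skeletal is \emph{equivalent} to having projective amplitude in $[n,N]$, i.e.\ being quasi-isomorphic to a complex $P_N \to P_{N-1} \to \dots \to P_n$ of projectives; over a PID these $P_k$ are free. The brutal truncations $\sigma_{\le k}P$ of such a complex are then literally the cellular filtration: the mapping cone of the map $\Sigma^{k-1}P_k \to \sigma_{\le k-1}P$ induced by the differential $d\colon P_k \to P_{k-1}$ is exactly $\sigma_{\le k}P$, so each stage is a pushout along a coproduct of the cells $\Sigma^{k-1}HR \to \ast$. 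This is a one-line deduction once Proposition~\ref{prop:skeletalamplitude} is in hand.

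You instead reconstruct this complex from scratch: you read off $\pi_*B$, choose length-one free resolutions degree by degree, and assemble the attaching maps by hand, finally invoking the PID splitting $B \simeq \bigoplus_j \Sigma^j H\pi_j B$ to identify your $W^{(N)}$ with $B$. This is perfectly valid and has the virtue of being explicit about exactly which cells get attached and why, but it effectively reproves the content of Proposition~\ref{prop:skeletalamplitude} rather than citing it. The paper's route is shorter and makes clearer that the corollary is just the translation ``bounded free complex $=$ cellular object''; your route makes the mechanism visible. One small point: your claim that $\pi_j B = 0$ for $j<n$ uses $N \ge n$, which is the only regime in which the statement has content (otherwise the set of allowed cells is empty), so this is harmless.
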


\subsection{Skeleta for spaces}

For $k \geq 2$, any $k$-connected map of spaces $X \to Y$ is nilpotent: the relative Postnikov tower expresses each path component as a limit of pullbacks
\[
  \xymatrix{
    P_n (Y,X) \ar[r] \ar[d] & K(\pi_1 Y, 1) \ar[d] \\
    P_{n-1} (Y,X) \ar[r] & K(\pi_1 Y, 1) \ltimes K(\pi_n (Y,X), n+1).
  }
\]
As a result, $X$ is $k$-skeletal for $k \geq 2$ if and only if its image in the stable category $\Sp(\Spaces_{/X})$ is $k$-skeletal.

Following Waldhausen, this stable category is equivalent to the category of functors $X \to \Sp$, and the image of $X$ is the constant functor with value $\mb S$. In the case where $X$ is connected, this is equivalent to the category of modules over the spherical group algebra $\mb S[\Omega X]$, and the image of $X$ is the trivial module $\mb S$.

The Postnikov truncation map $\mb S[\Omega X] \to H\mb Z[\pi_1X]$ is $1$-connected. As in Example~\ref{exam:pi0skeletal} we find that $X$ is $k$-skeletal if and only if the left $H\mb Z[\pi_1X]$-module
\[
  \mb S \otimes_{\mb S[\Omega X]} H\mb Z[\pi_1 X] \simeq H\mb Z
  \otimes \widetilde X
\]
has projective amplitude in $[0,k]$, where $\widetilde X$ is the universal cover. This is equivalent to asking that the complex $C_*(\widetilde X)$ of $\mb Z[\pi_1 X]$ has projective amplitude in $[0,k]$. This is related, but somewhat orthogonal, to Wall's finiteness obstruction.

\subsection{Algebras in module categories}

Throughout this section we assume that $S$ is a commutative ring spectrum and $\Alg_{\mb E_n}(S)$ is the $\infty$-category of $\mb E_n$-algebras in left $S$-modules. Recall that there is a connectivity structure on $\Alg_{\mb E_n}(S)_{\geq 0}$, lifted from $\LMod_S$, that is described in Examples \ref{exam:connectivityforcommutativealgebras} and \ref{exam:connectivityforalgebras}. In particular, this connectivity structure is determined by the ($0$-bounded) cells $\mb T^S_{\mb E_n}(\Sigma^{k-1} S)\to S$ for $k\geq 1$ and $S\to \mb T_{\mb E_n}^S(S)$, where $\mb T_{\mb E_n}^S\co \LMod_S \to \Alg_{\mb E_n}(\LMod_S)$ is the free $\mb E_n$ $S$-algebra functor. Moreover, Proposition \ref{prop:adjointspreservecells} and the fact that the connectivity structure on $\Alg_{\mb E_n}(S)$ is compatible with cobase change imply that these cells are sufficient for $k$-skeleta for all $k\geq 0$, in the sense of Section \ref{sec:cellularskeleta}.

As discussed in Example~\ref{exam:excisionnnuke}, the category $\Alg_{\mb E_n}(S)$ satisfies algebraic excision by \cite[Theorem 1.7]{ching-harper-excision}, and hence all the results of Section~\ref{sec:excision} apply.

\begin{prop}\label{prop:algebrapostnikov}
  Any connective $\mb E_n$ $S$-algebra $A$ has a (convergent) Postnikov tower
  \[
    \dots \to P_2 A \to P_1 A \to P_0 A
  \]
  in $\mb E_n$-algebras, where each stage is given by a pullback diagram
  \[
    \xymatrix{
      P_{m+1} A \ar[r] \ar[d] & P_m A \ar[d] \\
      P_m A \ar[r] & P_m A \oplus \Sigma^{m+2} H\pi_{m+1} A
    }
  \]
  in which the right-hand map is $\Omega^\infty$ of a map in $\Sp(\Alg_{\mb E_n}(R)_{/P_m A})$. In particular, the maps $P_{m+1} A \to P_m A$ are nilpotent for $m > 0$.
\end{prop}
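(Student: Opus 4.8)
The plan is to build the Postnikov tower inside $\Alg_{\mb E_n}(S)_{\geq 0}$, identify each stage as a square-zero extension using the cotangent-complex formalism, and then conclude nilpotence from Lemma~\ref{lem:squarezeroisnilpotent}. First I would construct the tower itself. Since $\Alg_{\mb E_n}(S)_{\geq 0}$ is presentable and the forgetful functor to $(\LMod_S)_{\geq 0}$ is conservative and preserves limits and filtered colimits, the truncation functors $P_m = \tau_{\leq m}$ on connective $S$-modules lift to connective $\mb E_n$-algebras: the underlying truncation $\tau_{\leq m}A$ acquires a canonical $\mb E_n$-algebra structure, the map $A \to P_m A$ is an algebra map inducing isomorphisms on $\pi_i$ for $i \leq m$ with $\pi_i(P_m A) = 0$ for $i > m$ (cf.~\cite[\S7.1.3]{lurie-higheralgebra}). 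This yields the tower $\cdots \to P_2 A \to P_1 A \to P_0 A$, and because $A$ is connective the tower converges, so that $A \simeq \lim_m P_m A$. By construction the fiber of $P_{m+1} A \to P_m A$, computed in $\LMod_S$, is the Eilenberg--Mac Lane module $\Sigma^{m+1} H\pi_{m+1}(A)$, which I would regard as an object of $\Sp(\Alg_{\mb E_n}(S)_{/P_m A})$.

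The heart of the argument is the identification of $P_{m+1} A \to P_m A$ with a square-zero extension. I would invoke Lurie's square-zero decomposition of Postnikov towers for structured ring spectra \cite[7.4.1.26]{lurie-higheralgebra}, applied in the $\mb E_n$ setting, to see that this map is a square-zero extension of $P_m A$ by $\Sigma^{m+1} H\pi_{m+1}(A)$, i.e.~that it is classified by a derivation. Unwinding the definition of a square-zero extension \cite[7.4.1.6, 7.4.1.7]{lurie-higheralgebra} then exhibits the pullback square
\[
  \xymatrix{
    P_{m+1} A \ar[r] \ar[d] & P_m A \ar[d] \\
    P_m A \ar[r] & P_m A \oplus \Sigma^{m+2} H\pi_{m+1} A,
  }
\]
in which the right-hand vertical map is $\Omega^\infty$ of the zero morphism $0 \to \Sigma^{m+2} H\pi_{m+1} A$ in $\Sp(\Alg_{\mb E_n}(S)_{/P_m A})$ and the lower horizontal map is $\Omega^\infty$ of the classifying derivation. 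This is exactly the displayed diagram.

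The nilpotence assertion is then immediate: for $m > 0$ the map $P_{m+1} A \to P_m A$ has been exhibited as a square-zero morphism, and since $\Alg_{\mb E_n}(S)_{\geq 0}$ is presentable, Lemma~\ref{lem:squarezeroisnilpotent} shows it is nilpotent in the sense of Definition~\ref{def:tangentiallynilpotent}. The hard part will be the second step: establishing the square-zero decomposition of the Postnikov tower in the $\mb E_n$ (rather than $\mb E_\infty$) context and checking that the classifying derivation genuinely lives in the tangent category $\Sp(\Alg_{\mb E_n}(S)_{/P_m A})$. Once the cotangent-complex machinery of \cite[\S7.4]{lurie-higheralgebra} is available for $\Alg_{\mb E_n}(S)$, the construction of the tower, its convergence, and the passage to nilpotence via Lemma~\ref{lem:squarezeroisnilpotent} are all formal.
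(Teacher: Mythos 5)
Your proposal is correct and follows essentially the same route as the paper's proof: both construct the tower from Lurie's Postnikov theory for connective $\mb E_n$-algebras (HA \S 7.1.3), identify each map $P_{m+1}A \to P_m A$ as a square-zero extension via the results of HA \S 7.4.1 (the paper cites 7.4.1.28 and 7.4.1.7, identifying the fiber via 7.1.3.14), and deduce nilpotence from Lemma~\ref{lem:squarezeroisnilpotent}. The only caveat is a slight imprecision in your last identification---the classifying derivation $P_m A \to P_m A \oplus \Sigma^{m+2}H\pi_{m+1}A$ is an algebra map over $P_m A$ but is not itself $\Omega^\infty$ of a map in the tangent category; only the zero-section is, which is all the proposition (and the nilpotence argument) requires.
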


\begin{proof}
It follows from \cite[7.1.3.19]{lurie-higheralgebra} that every connective  $\mb E_n$ $S$-algebra has a Postnikov tower. Moreover, by Lemma \ref{lem:squarezeroisnilpotent} and \cite[7.4.1.28]{lurie-higheralgebra} we have that each of the morphisms $P_{m+1}A\to P_{m}A$ is nilpotent. It remains to show that each level of this tower can be obtained by the given pullback. By \cite[7.4.1.7]{lurie-higheralgebra} we have that $P_{n+1}A\to P_{m}A$ is \textit{some} pullback of a nilpotent morphism. To check that it is a pullback specifically of a morphism $P_{m}A\to P_mA\oplus\Sigma^{m+2}H\pi_{m+1}A$ it suffices to identify the fiber of $P_{m+1}A\to P_mA$, which is $\Sigma^{m+1}H\pi_{m+1}A$ by \cite[7.1.3.14]{lurie-higheralgebra} parts (3) and (5). 
\end{proof}

\begin{prop}
Suppose $f\co A\to B$ is a morphism of $\mb E_n$ $S$-algebras spectra such that $\pi_0(f)$ is surjective and has nilpotent kernel. Then $f$ is nilpotent. 
\end{prop}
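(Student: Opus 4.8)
The plan is to factor $f$ through a relative Postnikov tower and reduce to two classes of maps already known to be nilpotent: the induced map on $\pi_0$, and the square-zero extensions occurring in absolute Postnikov towers. For each $m\geq 0$, let $V_m = P_m A \times_{P_m B} B$ be the pullback of the truncation $B\to P_m B$ along $P_m f\co P_m A \to P_m B$. Since the Postnikov towers of $A$ and $B$ converge (Proposition~\ref{prop:algebrapostnikov}), taking the limit over $m$ commutes with the pullback and yields $\lim_m V_m \simeq A\times_B B\simeq A$, so the tower $\dots \to V_{m+1}\to V_m \to \dots \to V_0$ exhibits $f$ as the composite $A\simeq \lim_m V_m \to V_0\to B$, where $V_0 = H\pi_0 A \times_{H\pi_0 B}B$. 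By construction $V_0 \to B$ is the base change of $H\pi_0 f\co H\pi_0 A \to H\pi_0 B$ along $B\to H\pi_0 B$.

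First I would check that $H\pi_0 f$ is nilpotent. Writing $R=\pi_0 A$ and $I=\ker \pi_0 f$, the hypothesis gives $I^N=0$ for some $N$, and $\pi_0 f$ factors as $R=R/I^{N}\to R/I^{N-1}\to \dots \to R/I=\pi_0 B$ with each $R/I^{j+1}\to R/I^{j}$ a classical square-zero extension, since $(I^{j})^2\subseteq I^{j+1}$ for $j\geq 1$. Applying $H$ realizes each as a square-zero extension in the sense of \cite[7.4.1.6]{lurie-higheralgebra}, hence nilpotent by Lemma~\ref{lem:squarezeroisnilpotent}; a finite composite of nilpotent maps is nilpotent, so $H\pi_0 f$, and therefore its base change $V_0\to B$, is nilpotent.

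The crux is to show each stage $V_{m+1}\to V_m$ is nilpotent. Setting $V_m' = P_{m+1}A\times_{P_m B}B$, this map factors as $V_{m+1}\to V_m'\to V_m$. A pullback-pasting identification shows $V_m' \to V_m$ is the base change along $V_m \to P_m A$ of the Postnikov extension $P_{m+1}A \to P_m A$, which is square-zero for every $m\geq 0$ by \cite[7.4.1.28]{lurie-higheralgebra} and hence nilpotent by Lemma~\ref{lem:squarezeroisnilpotent}; so $V_m'\to V_m$ is nilpotent. Similarly $V_{m+1}\to V_m'$ is the base change, along $V_m' \to P_{m+1}B\times_{P_m B}B$, of the section $\delta\co B \to P_{m+1}B\times_{P_m B}B$ induced by the truncation $B\to P_{m+1}B$. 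The extension $P_{m+1}B\times_{P_m B}B\to B$ is square-zero by $\Sigma^{m+1}H\pi_{m+1}B$, and admitting the section $\delta$ it must be the trivial one; any section of a trivial square-zero extension is the composite of the zero section $\Omega^\infty(0\to \Sigma^{m+1}H\pi_{m+1}B)$---a generator of the nilpotent class in the sense of Definition~\ref{def:tangentiallynilpotent}---with a translation automorphism determined by a derivation, and is therefore nilpotent. Hence $V_{m+1}\to V_m$ is nilpotent.

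Finally I would assemble the pieces: each composite $V_m\to V_0$ is a finite composite of the nilpotent stages, and $A\simeq\lim_m V_m \to V_0$ is the resulting tower limit of nilpotent maps, hence nilpotent by the closure properties in Definition~\ref{def:nilpotentmaps}; composing with the nilpotent map $V_0\to B$ shows $f$ is nilpotent. I expect the third step to be the main obstacle: the relative Postnikov stage $V_{m+1}\to V_m$ has a two-stage fiber built from $\pi_{m+1}A$ and $\pi_{m+1}B$, so it is not itself square-zero and must be split into the two base changes above, and the second factor forces the auxiliary observation that sections of trivial square-zero extensions are nilpotent.
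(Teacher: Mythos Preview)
Your proof is correct and follows essentially the same route as the paper's: the same relative Postnikov tower $V_m = P_m A \times_{P_m B} B$, the same treatment of $V_0 \to B$ via the discrete tower $R/I^{j}$, and the same two-step factorization $V_{m+1}\to V_m'\to V_m$ with $V_m' = P_{m+1}A\times_{P_m B}B$.  The one point of divergence is the first factor: the paper pulls back the diagonal $P_{m+1}B \to P_{m+1}B\times_{P_m B}P_{m+1}B$ and observes that, under the canonical trivialization coming from the defining pullback square for the Postnikov $k$-invariant, this diagonal \emph{is} the zero section $\Omega^\infty(0\to\Sigma^{m+1}H\pi_{m+1}B)$ and hence lies directly in $\Omega^\infty(E)$; you instead pull back $\delta\co B\to P_{m+1}B\times_{P_m B}B$ and argue that an arbitrary section of a trivial square-zero extension is nilpotent.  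Your conclusion is valid (the translation automorphism is an equivalence, hence a retract of an identity map lying in $\Omega^\infty(E)$, hence nilpotent, so its composite with the zero section is too), but you should make that step explicit---``and is therefore nilpotent'' hides the only nonobvious ingredient.  In fact your section $\delta$ is also the zero section under the natural trivialization, for the same reason the paper's diagonal is, so you could equally well sidestep the issue.
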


\begin{proof}
Recall that a surjective map of discrete rings $R\to R'$ with nilpotent kernel $J$ can always be written as a limit of square-zero extensions: the tower of quotients $\dots \to R/J^3 \to R/J^2 \to R/J \cong R'$. By \cite[7.4.1.21]{lurie-higheralgebra}, if $P_0A\to P_0B$ is a square-zero extension of discrete rings then it is square-zero as a map of $\mb E_n$ $S$-algebras and so, by Lemma \ref{lem:squarezeroisnilpotent}, is nilpotent. Therefore the map $\pi_0(f)\co P_0A\to P_0B$ is nilpotent. 

Now, using the Postnikov towers of $A$ and $B$, factor the map $A \to B$ as a transfinite composite
  \[
    A \to \dots \to P_2 A \times_{P_2 B} B \to P_1 A \times_{P_1 B} B \to P_0 A \times_{P_0 B} B \to B.
  \]
  Notice that $\holim_m(P_mA\times_{P_mB} B)\simeq A\times_B B$, so $A$ is the homotopy limit of the above tower. Therefore, it suffices to show that all of the maps in the composition are nilpotent.
  
  The last map is the base-change of the map $P_0 A \to P_0 B$ along the map $B \to P_0 B$, and so it is nilpotent by the discrete case proven above.

  Each of the remaining maps factors as
  \[
    P_{m+1} A \times_{P_{m+1} B} B \to P_{m+1} A \times_{P_m B} B \to P_m A \times_{P_m B} B.
  \]
  The second map is the base-change of the composite $P_{m+1} A \to P_m A\to P_mB$ along the map $B \to P_m B$, and so it is nilpotent. The first map is the base-change of the map
  \[
    P_{m+1} B \to P_{m+1} B \times_{P_m B} P_{m+1} B
  \]
  along $P_{m+1} A \times_{P_m B} B \to P_{m+1} B \times_{P_m B} P_{m+1} B$, and so it suffices to show that $P_{m+1} B \to P_{m+1} A \times_{P_m B} P_{m+1} B$ is nilpotent.

  However, $P_{m+1} B$ is a square-zero extension of $P_m B$ via a map that becomes trivial when restricted to $P_{m+1} B$. Therefore, the pullback is the trivial square-zero extension
  \[
    P_{m+1} B \oplus \Sigma^{m+1} H\pi_{m+1} B \to P_{m+1} B,
  \]
  and the map $P_{m+1} B \to P_{m+1} B \oplus \Sigma^{m+1} H\pi_{m+1} B$ is the image of the map $0 \to \Sigma^{m+1} H\pi_m B$ in the stable category $\Sp((\Alg_{\mb E_n})_{/P_{m+1} B})$.
\end{proof}

\begin{cor}\label{cor:connectiveisnilpotent}
A $k$-connected map of~ $\mb E_n$ $R$-algebras $f\co A\to B$ is nilpotent whenever $k>0$. 
\end{cor}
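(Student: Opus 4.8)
The plan is to reduce this directly to the preceding proposition, which states that any map $f$ of $\mb E_n$ $R$-algebras with $\pi_0(f)$ surjective and with nilpotent kernel is nilpotent. Thus the whole corollary amounts to verifying that a $k$-connected map with $k>0$ automatically satisfies this condition on $\pi_0$.

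First I would unwind the definition of connectivity. The connectivity structure on $\Alg_{\mb E_n}(R)$ is lifted from $\LMod_R$ along the forgetful functor (Examples~\ref{exam:connectivityforcommutativealgebras} and~\ref{exam:connectivityforalgebras}), and connectivity in $\LMod_R$ is detected on underlying spectra as in Example~\ref{exam:spectratstructure}. Hence $f\co A\to B$ being $k$-connected says exactly that $\pi_j(f)$ is an isomorphism for every $j<k$ and that $\pi_k(f)$ is surjective.

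Next, since $k>0$ means $k\geq 1$, the index $j=0$ lies in the isomorphism range. Therefore $\pi_0(f)\co \pi_0(A)\to \pi_0(B)$ is an isomorphism of discrete rings; in particular it is surjective and its kernel is the zero ideal, which is visibly nilpotent. The hypotheses of the preceding proposition are then satisfied, and I would conclude that $f$ is nilpotent.

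I do not expect any genuine obstacle here: all of the substantive work---the Postnikov-tower factorization and the square-zero analysis---is carried out in the preceding proposition. The only point that warrants a moment's care is tracing through the lifted connectivity structure to confirm that $k$-connectedness with $k>0$ really does force $\pi_0(f)$ to be an isomorphism; once that observation is in hand, the corollary is immediate.
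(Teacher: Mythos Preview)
Your proposal is correct and is exactly the intended deduction: the paper states this as an immediate corollary of the preceding proposition with no further proof, and your argument that $k>0$ forces $\pi_0(f)$ to be an isomorphism (hence surjective with zero kernel) is precisely the missing one-line justification.
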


Recall that the stable category $\Sp(\Alg^{\mb E_n}(S)_{/A})$ is identified with the category of ($S$-linear) $\mb E_n$ $A$-modules \cite[7.3.4.18]{lurie-higheralgebra}.

\begin{defn}\label{def:loday}
  For a commutative ring spectrum $S$ and an $\mb E_n$ $S$-algebra $A$, we define $\loday{S}{A}$ to be the relative Loday construction, or relative factorization homology object:
  \[
    \loday{S}{A} = S \tens_{\int_{S^{n-1}} S} \int_{S^{n-1}} A.
  \]
\end{defn}
The category of $S$-linear $\mb E_n$ $A$-modules is equivalent to the category of modules over $\loday{S}{A}$ \cite[7.3.5.3]{lurie-higheralgebra}.

By combining Corollary \ref{cor:connectiveisnilpotent} with Proposition \ref{prop:skeletaliffcotangentskeletal} we immediately obtain the following result:

\begin{prop}
  A connective $\mb E_n$ $S$-algebra $A$ is $k$-skeletal for $k > 0$ if and only if its absolute cotangent complex $\mb L^{ \mb E_n}(A)$ is $k$-skeletal in the stable category $\LMod_{\loday{S}{A}}$.
\end{prop}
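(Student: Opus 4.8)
The plan is to recognize this statement as an immediate consequence of the general criterion in Proposition~\ref{prop:skeletaliffcotangentskeletal}, once its hypothesis is verified and the relevant stable category is identified. First I would take $\cC = \Alg_{\mb E_n}(S)_{\geq 0}$ with its connectivity structure lifted from $\LMod_S$, and recall from Corollary~\ref{cor:connectiveisnilpotent} that every $k$-connected map of connective $\mb E_n$ $S$-algebras is nilpotent whenever $k > 0$. This is precisely the hypothesis ``all $k$-connected maps in $\cC$ are nilpotent'' required to apply Proposition~\ref{prop:skeletaliffcotangentskeletal}, and the restriction $k > 0$ in the present statement is exactly what makes that nilpotence conclusion available.

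Having verified the hypothesis, Proposition~\ref{prop:skeletaliffcotangentskeletal} tells us directly that $A$ is $k$-skeletal in $\cC$ if and only if the absolute cotangent complex $\mb L^{\mb E_n}(A)$ is $k$-skeletal in the stable category $\Sp(\cC_{/A})$. The only remaining step is to translate this last category into the one named in the statement. Here I would invoke the identification of $\Sp(\Alg_{\mb E_n}(S)_{/A})$ with the category of $S$-linear $\mb E_n$ $A$-modules from \cite[7.3.4.18]{lurie-higheralgebra}, followed by the equivalence of the latter with $\LMod_{\loday{S}{A}}$ from \cite[7.3.5.3]{lurie-higheralgebra}. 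Since $k$-skeletality is defined purely in terms of lifting against $k$-connected maps, it transports along any equivalence of categories that matches the connectivity structures.

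The argument is genuinely short, so the only point requiring care is that the equivalence $\Sp(\cC_{/A}) \simeq \LMod_{\loday{S}{A}}$ is compatible with connectivity: I would check that under it the stable connectivity structure on $\Sp(\cC_{/A})$, obtained by stabilizing the lifted structure on $\cC_{/A}$, corresponds to the standard connectivity structure on $\LMod_{\loday{S}{A}}$, so that ``$k$-skeletal'' denotes the same property on both sides. Once this compatibility is recorded, the equivalence of the two skeletality conditions is formal. I expect no substantive obstacle beyond this bookkeeping, since all of the analytic content has already been absorbed into Corollary~\ref{cor:connectiveisnilpotent} and Proposition~\ref{prop:skeletaliffcotangentskeletal}.
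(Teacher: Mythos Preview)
Your proposal is correct and matches the paper's approach exactly: the paper states that the result follows immediately by combining Corollary~\ref{cor:connectiveisnilpotent} with Proposition~\ref{prop:skeletaliffcotangentskeletal}, having recorded just beforehand the identification $\Sp(\Alg_{\mb E_n}(S)_{/A}) \simeq \LMod_{\loday{S}{A}}$ via \cite[7.3.4.18, 7.3.5.3]{lurie-higheralgebra}. Your additional remark about checking compatibility of connectivity structures under this equivalence is a reasonable point of care that the paper leaves implicit.
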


We recall the following formula for this absolute cotangent complex.

\begin{thm}[{\cite[2.26]{francis-tangentcomplex},
    \cite[7.3.5.1]{lurie-higheralgebra}}]
  \label{thm:cotangentcofiber}
  There is a cofiber sequence
  \[
    \loday{S}{A} \to A \to \Sigma^n \mb L^{\mb E_n}(A).
  \]
  of $\loday{S}{A}$-modules.
\end{thm}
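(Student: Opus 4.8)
The plan is to prove the equivalent statement that the augmentation $\loday{S}{A} \to A$ has cofiber $\Sigma^n \mb L^{\mb E_n}(A)$, working entirely inside the stable category $\LMod_{\loday{S}{A}} \simeq \Sp(\Alg_{\mb E_n}(S)_{/A})$ via the cited equivalence \cite[7.3.4.18, 7.3.5.3]{lurie-higheralgebra}. First I would fix the $\loday{S}{A}$-module structures on all three terms: $\loday{S}{A}$ acts on itself; $A$ is the augmentation module, the image of the free rank-one module under the counit; and $\mb L^{\mb E_n}(A)$ is by definition the image of $\id_A$ under the stabilization $\Sigma^\infty_A\co \Alg_{\mb E_n}(S)_{/A}\to \Sp(\Alg_{\mb E_n}(S)_{/A})$. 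The map $\loday{S}{A}\to A$ is the map of factorization-homology objects induced by the inclusion $S^{n-1}\hookrightarrow D^n$ of the boundary sphere into the disk, using the identification $\int_{D^n}A\simeq A$.

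The conceptual engine is a single reduced, excisive homology theory $F$ from pointed spaces to $\LMod_{\loday{S}{A}}$, namely $S$-relative reduced factorization homology of $A$: on a pointed space $X$ it glues copies of $A$ over the points of $X$ relative to the base $S$. The values I need are $F(S^{n-1}_+)\simeq \loday{S}{A}$ (factorization homology over the sphere is exactly the relative Loday construction) and $F(D^n_+)\simeq A$ (the disk is contractible, so $\int_{D^n}A\simeq A$), with the map $F(S^{n-1}_+)\to F(D^n_+)$ induced by $S^{n-1}\hookrightarrow D^n$ being the augmentation. Because $F$ is a reduced homology theory valued in a stable category, it sends the cofiber sequence of pointed spaces $S^{n-1}_+ \to D^n_+ \to S^n$ (whose cofiber $D^n_+/S^{n-1}_+\simeq D^n/S^{n-1}\simeq S^n$) to a cofiber sequence of $\loday{S}{A}$-modules $\loday{S}{A}\to A \to F(S^n)$, and it converts suspension to suspension, so $F(S^n)\simeq F(\Sigma^n S^0)\simeq \Sigma^n F(S^0)$. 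It then remains to identify the coefficient $F(S^0)\simeq \mb L^{\mb E_n}(A)$: the value of a reduced factorization-homology theory on a single free point is its linearization, which is precisely the stabilized identity, i.e.\ the absolute cotangent complex.

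The hard part will be the central claim that $F$ is genuinely excisive—that factorization homology, as a functor of the pointed space, carries space-level cofiber sequences to cofiber sequences and commutes with suspension—since ordinary factorization homology over manifolds is only a ``homology theory'' for collar gluings rather than arbitrary pushouts. This is exactly where the results of Section~\ref{sec:excision} do the work: the Ching--Harper excision theorem (Example~\ref{exam:excisionnnuke}) and Proposition~\ref{prop:excisionsuspension} control the connectivity of the comparison maps between the iterated pushouts defining $\Sigma^\infty_A$ and the factorization-homology gluing $S^n = D^n \cup_{S^{n-1}\times\mb R} D^n$, forcing the excisive approximation to be an equivalence and pinning down the $\Sigma^n$ shift. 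To keep the argument honest I would, if needed, reduce to free algebras $A=\mb T^S_{\mb E_n}(V)$ by a sifted-colimit resolution—legitimate because $\Sigma^\infty_A$, the Loday construction, and the functor $A\mapsto A$ all commute with the relevant colimits (after base change along the varying enveloping algebra)—where $\mb L^{\mb E_n}$ becomes the free module on $V$ and the identification $F(S^0)\simeq \mb L^{\mb E_n}(A)$ reduces to the reduced homology of configuration spaces of $S^n$ relative to $D^n$.
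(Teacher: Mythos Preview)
The paper does not prove this statement; it is quoted with attribution to Francis \cite[2.26]{francis-tangentcomplex} and Lurie \cite[7.3.5.1]{lurie-higheralgebra} and used as a black box thereafter. So there is no proof in the paper against which to compare your approach.

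That said, your sketch has a genuine gap. The functor $F$ you want---``$S$-relative reduced factorization homology of $A$'' as a functor $\sfin \to \LMod_{\loday{S}{A}}$---is not well-defined. Factorization homology $\int_M A$ of an $\mb E_n$-algebra takes as input a (framed) $n$-manifold $M$, not an arbitrary pointed finite space; in particular $F(S^0)$ has no a priori meaning when $n \geq 1$, so the identification ``$F(S^0) \simeq \mb L^{\mb E_n}(A)$'' is exactly the thing to be proved rather than a known value of a homology theory. You acknowledge this problem (``only a homology theory for collar gluings'') and propose to patch it with the Ching--Harper results of Section~\ref{sec:excision}, but those give connectivity estimates for the map $X \to \Omega_Z^m\Sigma_Z^m X$ inside a fixed category; they do not construct a reduced excisive functor on $\sfin$ out of factorization homology, nor do they identify its value on $S^0$.

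What the cited references actually do is closer to your final paragraph than to your main argument. One works inside $(\Alg_{\mb E_n}(S)_{/A})_\ast$ and computes the $n$-fold suspension of the augmented object $A \amalg A$ directly: a single application of $\otimes$-excision for the collar decomposition $D^n \cup_{S^{n-1}\times\mb R} D^n$ of $S^n$ gives $A \otimes_{\loday{S}{A}} A$, which is the bar construction whose desuspension to degree zero in the stable category is the cofiber in question. The $\Sigma^n$ shift thus comes from one geometric excision step on manifolds, not from an abstract homology-theory axiom applied to $S^{n-1}_+ \to D^n_+ \to S^n$.
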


\begin{cor}
  For $k \geq 1-n$, the absolute cotangent complex $\mb L^{\mb E_n}(A)$ is $k$-skeletal if $A$ is a $(k+n)$-skeletal $\loday{S}{A}$-module.
\end{cor}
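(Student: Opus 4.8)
The plan is to read the result off the cofiber sequence of Theorem~\ref{thm:cotangentcofiber} using the pushout stability of skeletal objects from Proposition~\ref{prop:pushoutskeletal}. Write $Q = \loday{S}{A}$, so that $\LMod_Q$ is stable and carries its standard (hence stable) connectivity structure in the sense of Definition~\ref{def:stablestructure}. Since $\Sigma^n\mb L^{\mb E_n}(A)$ is the cofiber of the unit map $Q \to A$, it is the homotopy pushout of $\ast \leftarrow Q \to A$. My first reduction is to note that in a stable connectivity structure skeletality shifts with suspension: an object $M$ is $k$-skeletal if and only if $\Sigma^n M$ is $(k+n)$-skeletal. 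Indeed, since $\Sigma \dashv \Omega$ is an adjoint equivalence and $\Sigma$ raises connectivity by one, a lift against a $(k+1)$-connected map $X\to Y$ of a map $\Sigma M \to Y$ is adjoint to a lift of $M \to \Omega Y$ against the $k$-connected map $\Omega X \to \Omega Y$, and conversely; iterating gives the $n$-fold statement. Thus it suffices to prove that $\Sigma^n\mb L^{\mb E_n}(A)$ is $(k+n)$-skeletal.

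To that end I would apply Proposition~\ref{prop:pushoutskeletal} to the pushout square
\[
  \xymatrix{
    Q \ar[r] \ar[d] & A \ar[d] \\
    \ast \ar[r] & \Sigma^n\mb L^{\mb E_n}(A).
  }
\]
The proposition requires the two non-apex corners $A$ and $\ast$ to be $(k+n)$-skeletal and the apex $Q$ to be $(k+n-1)$-skeletal. The object $A$ is $(k+n)$-skeletal by hypothesis, and the zero object $\ast$ is $\infty$-skeletal, since every map out of it lifts trivially; in particular it is $(k+n)$-skeletal. It remains only to control $Q$.

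The unit module $Q$ is $0$-skeletal: it is the image $Q\otimes \mb S$ of the $0$-skeletal sphere under the free-module functor $Q\otimes(-)$, which preserves connectivity because $Q=\loday{S}{A}$ is connective (as $S$ and $A$ are), so $Q$ is $0$-skeletal by Proposition~\ref{prop:adjointskeletal}. Now a $0$-skeletal object is automatically $m$-skeletal for every $m\geq 0$, since any $m$-connected map is in particular $0$-connected and hence lifts the maps into its target. The hypothesis $k\geq 1-n$ is precisely the condition $k+n-1\geq 0$, so $Q$ is $(k+n-1)$-skeletal. Proposition~\ref{prop:pushoutskeletal} then yields that $\Sigma^n\mb L^{\mb E_n}(A)$ is $(k+n)$-skeletal, and the suspension shift of the first paragraph concludes that $\mb L^{\mb E_n}(A)$ is $k$-skeletal. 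I expect the only genuinely delicate point to be the suspension-invariance of skeletality, where stability of the connectivity structure is essential; the remainder is an assembly of Proposition~\ref{prop:pushoutskeletal} with the elementary facts that the unit module is $0$-skeletal and the zero object is $\infty$-skeletal.
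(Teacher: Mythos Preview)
Your proof is correct and follows essentially the same route as the paper: both apply Proposition~\ref{prop:pushoutskeletal} to the cofiber sequence of Theorem~\ref{thm:cotangentcofiber}, using that the unit module is $0$-skeletal and that the hypothesis $k\geq 1-n$ is exactly what is needed for the apex to be sufficiently skeletal. The only cosmetic difference is that the paper desuspends the cofiber sequence first (working with $\Sigma^{-n}Q \to \Sigma^{-n}A \to \mb L^{\mb E_n}(A)$) rather than proving suspension-invariance of skeletality separately; note also that your appeal to Proposition~\ref{prop:adjointskeletal} requires the \emph{right} adjoint (the forgetful functor) to preserve connectivity, which is automatic since the connectivity structure on $\LMod_Q$ is lifted along it---connectivity of $Q$ is not needed there.
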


\begin{proof}
  The object $\Sigma^{-n} \loday{S}{A}$ is always a $(-n)$-skeletal module and by assumption $\Sigma^{-n} A$ is a $k$-skeletal module, and so this follows by Proposition~\ref{prop:pushoutskeletal}.
\end{proof}

\begin{rmk}
  For $k \geq 2-n$, the converse is also true.
\end{rmk}

If $A$ is connective, the Loday construction $\loday{S}{A}$ is connective. Under these circumstances, Example~\ref{exam:pi0skeletal} now gives us the following result.

\begin{thm}
  \label{thm:Enskeletality}
  A connective $\mb E_n$ $S$-algebra $A$ is $k$-skeletal for $k \geq 1$ if the $H\pi_0\left(\loday{S}{A}\right)$-module
  \[
    H\pi_0\left(\loday{S}{A}\right) \tens_{\loday{S}{A}} A
  \]
  is $(n+k)$-skeletal.
\end{thm}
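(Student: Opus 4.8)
The plan is to chain together the cotangent-complex characterization of skeletality with the module-theoretic detection principle of Example~\ref{exam:pi0skeletal}, reducing the statement about the $\mb E_n$-algebra $A$ to a statement about a single module over the Loday construction. First I would invoke the Proposition asserting that a connective $\mb E_n$ $S$-algebra $A$ is $k$-skeletal for $k>0$ if and only if its absolute cotangent complex $\mb L^{\mb E_n}(A)$ is $k$-skeletal in $\LMod_{\loday{S}{A}}$. This converts the goal into showing that $\mb L^{\mb E_n}(A)$ is $k$-skeletal as a $\loday{S}{A}$-module, so the entire remaining argument takes place in the \emph{stable} category of $\loday{S}{A}$-modules, where skeletality is much better understood.

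Next I would translate the hypothesis, which is phrased in terms of $A$, into a hypothesis about $\mb L^{\mb E_n}(A)$ using the cofiber sequence of Theorem~\ref{thm:cotangentcofiber},
\[
  \loday{S}{A} \to A \to \Sigma^n \mb L^{\mb E_n}(A).
\]
The Corollary following that theorem already packages exactly this: if $A$ is $(k+n)$-skeletal as a $\loday{S}{A}$-module, then $\mb L^{\mb E_n}(A)$ is $k$-skeletal (for $k \geq 1-n$, which is harmless since we assume $k\geq 1$). So the crux is to verify the module-level skeletality hypothesis of that Corollary, namely that $A$ is $(k+n)$-skeletal over $\loday{S}{A}$, from the stated hypothesis that $H\pi_0(\loday{S}{A}) \tens_{\loday{S}{A}} A$ is $(n+k)$-skeletal over $H\pi_0(\loday{S}{A})$.

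This final reduction is where Example~\ref{exam:pi0skeletal} does the work. Since $A$ is connective, $\loday{S}{A}$ is connective, so $H\pi_0(\loday{S}{A})$ is its Postnikov truncation and the truncation map is $1$-connected (in particular $0$-connected). Example~\ref{exam:pi0skeletal} states precisely that for a connective ring spectrum, a module $M$ is $m$-skeletal whenever its base-change $H\pi_0 \tens M$ is $m$-skeletal over $H\pi_0$. Applying this with $m = n+k$ and $M = A$ viewed as a $\loday{S}{A}$-module gives that $A$ is $(n+k)$-skeletal over $\loday{S}{A}$, which is exactly the input needed for the Corollary. Combining these three steps — cotangent detection of algebra skeletality, the cofiber-sequence Corollary, and the $\pi_0$-detection of module skeletality — yields the theorem.

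The main obstacle I anticipate is bookkeeping the connectivity indices and the regime of validity: the cotangent criterion requires $k>0$ (hence the hypothesis $k\geq 1$), the cofiber Corollary carries its own constraint $k\geq 1-n$, and Example~\ref{exam:pi0skeletal} requires the relevant truncation map to be sufficiently connected, which relies on $\loday{S}{A}$ being connective. I would take care to confirm that connectivity of $A$ genuinely forces connectivity of $\loday{S}{A}$ (so that its $\pi_0$-truncation behaves as expected), and that the skeletal degree $n+k$ matches consistently across the module-level and algebra-level statements; once these indices are aligned, no genuinely new computation is required, as each step is a direct application of an already-established result.
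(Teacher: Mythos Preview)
Your proposal is correct and follows essentially the same route as the paper: the theorem is stated immediately after noting that $\loday{S}{A}$ is connective when $A$ is, and the proof is precisely the chain you describe---the cotangent-complex characterization of skeletality, the Corollary to Theorem~\ref{thm:cotangentcofiber}, and Example~\ref{exam:pi0skeletal}. Your attention to the index constraints ($k\geq 1$, $k\geq 1-n$, connectivity of the Loday construction) is appropriate and matches the hypotheses the paper tracks.
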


When $n=1$, $\loday{S}{A}= A \otimes_S A^\op$ as an algebra, and $\pi_0\left(\loday{S}{A}\right) = \pi_0(A) \otimes_{\pi_0(S)} \pi_0(A)^\op$. The above theorem specializes to a statement about relative topological Hochschild homology.

\begin{prop}
  \label{prop:THHskeletality}
  Suppose $A$ is a connective $\mb E_1$ $S$-algebra and that $\pi_0(A)$ is a localized quotient of $\pi_0 S$ (in particular, a commutative ring). Then $A$ is $k$-skeletal for $k \geq 1$ if the topological Hochschild homology object
  \[
    \THH^S(A; H\pi_0 A) \simeq H\pi_0(A) \otimes_{A \otimes_S
      A^\op} A
  \]
  is a $(k+1)$-skeletal $H\pi_0(A)$-module.
\end{prop}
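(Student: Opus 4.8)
The plan is to deduce this directly from Theorem~\ref{thm:Enskeletality} in the case $n=1$, where the only substantive work is identifying the base-change module appearing there with the stated $\THH$ object. Specializing that theorem to $n=1$, a connective $\mb E_1$ $S$-algebra $A$ is $k$-skeletal for $k \geq 1$ provided the $H\pi_0(\loday{S}{A})$-module
\[
  H\pi_0\left(\loday{S}{A}\right) \tens_{\loday{S}{A}} A
\]
is $(k+1)$-skeletal, since here $n+k = k+1$. Thus it suffices to show that this module agrees with $\THH^S(A; H\pi_0 A)$ as a module over $H\pi_0 A$.

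The key step is an algebraic computation of $\pi_0$. As recorded just before the statement, $\loday{S}{A} = A \otimes_S A^\op$, so $\pi_0(\loday{S}{A}) = \pi_0(A) \otimes_{\pi_0(S)} \pi_0(A)^\op$. I would now invoke the hypothesis that $\pi_0 A$ is a localized quotient of $\pi_0 S$: both localizations and surjective quotients are epimorphisms of commutative rings, and a composite of epimorphisms is again an epimorphism, so the unit $\pi_0 S \to \pi_0 A$ is a ring epimorphism. For any epimorphism $R \to R'$ of commutative rings the multiplication map $R' \otimes_R R' \to R'$ is an isomorphism; since $\pi_0 A$ is commutative we have $\pi_0(A)^\op = \pi_0 A$, and therefore $\pi_0(\loday{S}{A}) = \pi_0(A) \otimes_{\pi_0 S} \pi_0(A) \cong \pi_0(A)$. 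Consequently $H\pi_0(\loday{S}{A}) \simeq H\pi_0 A$.

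With this identification the remaining bookkeeping is immediate: the base-change module of Theorem~\ref{thm:Enskeletality} becomes
\[
  H\pi_0(A) \tens_{A \otimes_S A^\op} A,
\]
which is exactly the relative topological Hochschild homology $\THH^S(A; H\pi_0 A)$ named in the statement, now regarded as an $H\pi_0 A$-module. The hypothesis that this object is $(k+1)$-skeletal is then precisely the condition Theorem~\ref{thm:Enskeletality} requires, and the conclusion follows. The one point I would be most careful about — since everything else is a transcription of the general theorem — is the epimorphism computation $\pi_0(A) \otimes_{\pi_0 S} \pi_0(A) \cong \pi_0(A)$, and in particular the verification that a localized quotient genuinely yields a ring epimorphism; this holds because localizations and surjections are each epimorphisms and epimorphisms are closed under composition.
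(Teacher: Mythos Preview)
Your proposal is correct and matches the paper's approach exactly: the paper presents this proposition as an immediate specialization of Theorem~\ref{thm:Enskeletality} at $n=1$, noting only that $\loday{S}{A} = A \otimes_S A^\op$ and $\pi_0(\loday{S}{A}) = \pi_0(A) \otimes_{\pi_0(S)} \pi_0(A)^\op$, and does not even write out a separate proof. You have in fact supplied more detail than the paper does, correctly identifying that the ``localized quotient'' hypothesis is precisely what makes $\pi_0(A) \otimes_{\pi_0 S} \pi_0(A) \cong \pi_0(A)$ via the ring-epimorphism argument.
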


This allows the detection of skeleta. We next turn our attention to the construction of skeleta and minimal skeleta.

\begin{lem}\label{lem:lodaypi0}
Let $S$ be a commutative ring spectrum and $A$ an $\mb E_n$ $S$-algebra for $n\geq 1$ with $\pi_0(A)$ commutative. If the unit map $S\to A$ is $0$-connected then there is an isomorphism $\pi_0(A)\simeq \pi_0(\loday{S}{A})$.
\end{lem}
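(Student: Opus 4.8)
The plan is to reduce the statement to a computation of $\pi_0$ carried out entirely in discrete commutative rings. Write $R = \pi_0(S)$ and $B = \pi_0(A)$; by hypothesis $B$ is commutative, and the ring map $R \to B$ induced by the $0$-connected unit $S \to A$ is surjective. The natural augmentation $\loday{S}{A} \to A$ appearing in the cofiber sequence of Theorem~\ref{thm:cotangentcofiber} supplies a candidate ring map $\pi_0\loday{S}{A} \to \pi_0(A) = B$, and I would show that it is an isomorphism.

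First I would record the two formal properties of the truncation $\pi_0 = \tau_{\le 0}$ on connective modules: as the left adjoint to the inclusion of the heart it preserves all colimits (computed in the heart), and it is symmetric monoidal, in the sense that $\pi_0(M \otimes_S N) \cong \pi_0(M) \otimes_R \pi_0(N)$ for connective $M, N$. Since both the factorization homology $\int_{S^{n-1}}(-)$ and the relative tensor product defining $\loday{S}{A}$ are assembled from relative tensor powers and sifted colimits, these properties let me commute $\pi_0$ past the whole construction. Concretely, $\pi_0(\int_{S^{n-1}} A)$ becomes the factorization homology of the discrete commutative ring $B$ over $S^{n-1}$ computed in discrete rings (equivalently $H_0$ of higher Hochschild homology), and likewise for $S$; hence $\pi_0\loday{S}{A} \cong R \otimes_{H_0(\int_{S^{n-1}} R)} H_0(\int_{S^{n-1}} B)$.

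It then remains to compute this discrete Loday construction, where I would split on $n$. For $n \geq 2$ the sphere $S^{n-1}$ is connected, so $H_0$ of the higher Hochschild homology of a commutative ring is the ring itself; thus $H_0(\int_{S^{n-1}} B) \cong B$ and $H_0(\int_{S^{n-1}} R) \cong R$, giving $\pi_0\loday{S}{A} \cong R \otimes_R B \cong B$. For $n = 1$, $S^0$ is a pair of points, so $H_0(\int_{S^0} B) \cong B \otimes_R B$ and $H_0(\int_{S^0} R) \cong R$, giving $\pi_0\loday{S}{A} \cong B \otimes_R B$; this matches the identification $\loday{S}{A} \simeq A \otimes_S A^{\op}$ recorded after Theorem~\ref{thm:Enskeletality}. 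Finally, surjectivity of $R \to B$ forces $B \otimes_R B \cong B$: a surjection identifies $B$ with $R/I$, and $R/I \otimes_R R/I \cong R/I$. This completes the $n=1$ case.

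I expect the main obstacle to be the bookkeeping in the second paragraph: justifying that applying $\pi_0$ to the $\mb E_n$-factorization homology $\int_{S^{n-1}} A$ genuinely yields the discrete (higher Hochschild) factorization homology of $B$, rather than $\pi_0$ of a derived object computed with the $\mb E_n$-operad. The two points to verify carefully are (i) that $\pi_0$ commutes with the sifted colimit defining $\int_{S^{n-1}}$, which relies on connectivity so that the relevant connecting maps in the long exact sequences vanish, and (ii) that, once $B$ is discrete and commutative, its $\mb E_n$-factorization homology over the framed sphere agrees at the level of $H_0$ with the space-tensoring $B \otimes S^{n-1}$. The role of the $0$-connectedness hypothesis is then cleanly isolated in the last step, where it is genuinely needed only when $n = 1$.
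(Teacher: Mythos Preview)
Your treatment of the case $n=1$ is essentially identical to the paper's: both use the identification $\loday{S}{A} \simeq A \otimes_S A^{\op}$, pass to $\pi_0$ via the symmetric-monoidal property of truncation to get $B \otimes_R B$, and then invoke surjectivity of $R \to B$ (from $0$-connectedness of the unit) together with commutativity of $B$ to conclude $B \otimes_R B \cong B$.

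For $n \geq 2$, however, the paper takes a much shorter route that bypasses factorization homology entirely. It simply applies the long exact sequence in homotopy to the cofiber sequence
\[
  \loday{S}{A} \to A \to \Sigma^n \mb L^{\mb E_n}(A)
\]
of Theorem~\ref{thm:cotangentcofiber}. Since the $\mb E_n$-cotangent complex of a connective algebra is connective, $\Sigma^n \mb L^{\mb E_n}(A)$ has vanishing $\pi_0$ and $\pi_1$ as soon as $n \geq 2$, so the augmentation induces an isomorphism on $\pi_0$ immediately. Notably, the paper observes that neither the commutativity of $\pi_0(A)$ nor the $0$-connectedness of the unit is needed in this range. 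Your approach---pushing $\pi_0$ through the sifted colimit defining $\int_{S^{n-1}}$ and then identifying the result with discrete higher Hochschild homology---is plausible and would likely go through, but the very obstacle you flag in your final paragraph (comparing $\mb E_n$-factorization homology of a discrete commutative ring to the commutative Loday construction at the level of $H_0$) is real work that the one-line cofiber-sequence argument avoids. What your approach would buy, if carried out, is a more hands-on description of $\pi_0\loday{S}{A}$ in all cases; for the lemma as stated, though, the paper's argument is strictly simpler.
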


\begin{proof}
When $n>1$, the result follows immediately from the long exact sequence in homotopy groups applied to the cofiber sequence of Theorem \ref{thm:cotangentcofiber} and does not require the $0$-connectivity assumption on the unit nor the commutativity condition on $\pi_0(A)$. When $n=1$ however we use both conditions along with the isomorphism $\pi_0\left(\loday{S}{A}\right) = \pi_0(A) \otimes_{\pi_0(S)} \pi_0(A)^\op$ to deduce the result.
\end{proof}

\begin{lem}
If $A$ is an $\mb E_n$ $S$-algebra for a commutative ring spectrum $S$ and the unit $S\to A$ is $0$-connected then the unit $A\to\Omega^\infty\Sigma_+^\infty A\simeq A\oplus \mb L_A$ is $0$-connected.
\end{lem}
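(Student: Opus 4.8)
The plan is to identify the unit $A \to \Omega^\infty\Sigma^\infty_+ A$ with the universal derivation $A \to A\oplus\mb L_A$ and then reduce the statement to a connectivity estimate for the cotangent complex. Under the given equivalence $\Omega^\infty\Sigma^\infty_+ A \simeq A\oplus\mb L_A$, the unit is a section of the augmentation $A\oplus\mb L_A \to A$, so that $A \to A\oplus\mb L_A \to A$ is a retract diagram. By Proposition~\ref{prop:retract}, the unit is $0$-connected if and only if the projection $A\oplus\mb L_A \to A$ is $1$-connected. Since the connectivity structure on $\Alg_{\mb E_n}(S)$ is lifted from $\LMod_S$, I would test this after forgetting to $S$-modules, where the fiber of the split projection is the underlying module of $\mb L_A$ and where a map is $1$-connected exactly when its fiber is $0$-connected. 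Thus the whole statement reduces to showing that $\mb L_A = \mb L^{\mb E_n}(A)$ is $0$-connected.

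To get at $\mb L_A$, I would invoke the cofiber sequence $\loday{S}{A} \to A \to \Sigma^n\mb L^{\mb E_n}(A)$ of Theorem~\ref{thm:cotangentcofiber}. Shifting, $\mb L_A$ is $0$-connected precisely when $\cofib(\loday{S}{A}\to A)=\Sigma^n\mb L_A$ is $n$-connected, i.e. precisely when the augmentation $\loday{S}{A}\to A$ is $n$-connected. This connectivity estimate for the Loday construction is the heart of the argument. For $n=1$ it is clean and self-contained: here $\loday{S}{A} = A\otimes_S A^\op$ and the augmentation is the multiplication $\mu$, while the unit $a\mapsto a\otimes 1$ is a section of $\mu$, so $\mu$ is surjective on all homotopy groups. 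Because $S\to A$ is $0$-connected, $\pi_0 S\to\pi_0 A$ is surjective; writing $\pi_0 A = \pi_0 S/I$ gives $\pi_0(\loday{S}{A}) = \pi_0 A\otimes_{\pi_0 S}\pi_0 A^\op\cong\pi_0 A$ with $\pi_0(\mu)$ the identity (note that the commutativity hypothesis of Lemma~\ref{lem:lodaypi0} is not needed for this computation). Hence $\mu$ is an isomorphism on $\pi_0$ and surjective on $\pi_1$, so it is $1$-connected, as required.

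The main obstacle is the general case, since a section of the augmentation only yields $1$-connectivity whereas we need $n$-connectivity, and the extra connectivity must come from the $(n-2)$-connectedness of $S^{n-1}$. I expect the cleanest route to exploit the cellular structure of $A$: because $\emptyset = S\to A$ is $0$-connected, Proposition~\ref{prop:skeletaexist} constructs $A$ from $S$ by attaching the generating cells $\mb T^S_{\mb E_n}(\Sigma^{k-1}S)\to S$ with $k\geq 1$. The absolute cotangent functor is a left adjoint, hence preserves these pushouts, and carries each such cell to a free $\loday{S}{A}$-module in positive degree; tracking this along the cellular filtration shows that $\mb L_A$ is built from $0$-connected pieces and so is itself $0$-connected. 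Alternatively, one could argue by induction on $n$ using the excision estimates of Section~\ref{sec:excision}, in the spirit of Proposition~\ref{prop:excisionsuspension}, so that each delooping in the presentation of $\loday{S}{A}$ over $S^{n-1}$ raises connectivity by one.

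Managing the base-change (transitivity) behaviour of cotangent complexes along the cellular filtration is the one genuinely technical point, and is where I would expect to spend the most care; everything upstream of it is a formal consequence of Proposition~\ref{prop:retract}, Theorem~\ref{thm:cotangentcofiber}, and the fact that connectivity on $\Alg_{\mb E_n}(S)$ is detected in $\LMod_S$.
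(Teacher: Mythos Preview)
Your reduction in the first paragraph is correct and is genuinely different from the paper's argument: identifying the unit as a section of $A\oplus\mb L_A\to A$ and using Proposition~\ref{prop:retract} to trade $0$-connectedness of the unit for $1$-connectedness of the projection, hence for $0$-connectedness of the underlying $S$-module of $\mb L_A$, is a clean and valid manoeuvre. Your $n=1$ computation via $\loday{S}{A}=A\otimes_S A^\op$ is also fine.

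The gap is exactly the one you flag. For $n>1$ you need the augmentation $\loday{S}{A}\to A$ to be $n$-connected, and neither of your sketched routes is close to complete. The cellular route requires (i) knowing that $A$ is approximated to arbitrary connectivity by its $\mb E_n$-cellular skeleta built from $S$ (Proposition~\ref{prop:skeletaexist} gives skeleta, not a colimit presentation, so you must also use that $\Sigma^\infty_{A,+}$ preserves connectivity to pass from $X^{(N)}$ to $A$), (ii) a base-change/transitivity formula for $\mb E_n$ cotangent complexes to propagate connectivity along cell attachments, and (iii) the identification of $\mb L$ on free $\mb E_n$-algebras. None of this is in the paper, and assembling it correctly is real work. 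Your alternative ``induction on $n$ via excision'' is too vague to evaluate.

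By contrast, the paper never computes $\mb L_A$ at all. It works entirely inside $(\Alg_{\mb E_n}(S)_{/A})_\ast$: first Corollary~\ref{cor:unitconnected} gives that $A\to A_+=A\amalg A$ is $0$-connected; Proposition~\ref{prop:retract} makes $A_+\to A$ $1$-connected; and then a single application of the Ching--Harper excision estimate (Definition~\ref{def:excisive}) to the evident $m$-cube with initial vertex $A_+$ and all other vertices $A$ shows $A_+\to\Omega^m_A\Sigma^m_A A_+$ is $2$-connected for every $m$. Passing to the sequential colimit (compatible with connectivity) gives $A_+\to\Omega^\infty_A\Sigma^\infty_A A_+\simeq A\oplus\mb L_A$ $2$-connected, hence $A\to A\oplus\mb L_A$ $0$-connected. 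This is uniform in $n$ and avoids Theorem~\ref{thm:cotangentcofiber} entirely. Your approach is conceptually illuminating (it explains \emph{why} the unit is $0$-connected in terms of the connectivity of $\mb L_A$), but to make it a proof for $n>1$ you would end up re-deriving, via transitivity and cell attachments, a connectivity estimate that the paper's cubical excision argument gives in one stroke.
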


\begin{proof}
First note that, because $S\to A$ is $0$-connected, Corollary \ref{cor:unitconnected} implies that $A\to A_+\simeq A\oplus A$ is also $0$-connected. Note that, by virtue of $A_+$ being an object of \textit{pointed} $\mb E_n$ $S$ algebras over $A$, that there is a retract $A\to A_+\to A$ and therefore $A_+\to A$ is $1$-connected by Proposition \ref{prop:retract}. By Proposition \ref{prop:excisionsuspension} we then have that $A_+\to \Omega_A\Sigma_A A_+$ is $2$-connected. 

Now let $W$ be a set with $m$-elements and consider the cocartesian $m$-cube $F\co\mathcal{P}(W)\to \Alg_{\mb E_n}(S)_{/A}$ with  $A_+$ as the initial vertex, $\Sigma^{m-1} A_+$ as the final vertex, and $A$ for every other vertex. Given a subset $V\subseteq W$ with $|V|=k$, the homotopy colimit over the diagram $\mathcal{P}(V)-V$ is $\Sigma_A^{k-1}A_+$. By iterating Proposition \ref{prop:pushoutconnectivity} we see that $\Sigma^{k-1}_AA_+\to \Sigma^{k-1}_AA\simeq A$ is $k$-connected, so the subcube of $F$ associated to $V$ is $k$-cocartesian (except when $V=W$ in which case it is $\infty$-coCartesian). In other words, in the notation of Definition \ref{def:excisive}, $k_V=|V|$ whenever $|V|<|W|$ and $k_V=\infty$ when $V=W$. Thus the entire cube $F$ must be $K$-cartesian where $K$ is the minimum $\{-|W|+\Sigma_{V\in\lambda}\left(|V|+1\right)=|\lambda|:|\lambda|>1\}\cup \{\infty\}$. Therefore $K=2$. Thus $A_+\to \Omega^m_A\Sigma_A^m A_+$ is at least $2$-connected for all $m\geq 0$. 

Note that because the connectivity structure on $\Alg_{\mb E_n}(S)_{/A}$ is lifted along a forgetful functor $\Alg_{\mb E_n}(S)_{/A}\to \LMod_S$ which preserves sifted colimits, it is compatible with, in particular, sequential colimits. Thus $A_+\to \colim_{m}\Omega^m_A\Sigma^m_A A_+\simeq A\oplus \mb L_A$ is $2$-connected, and so $A\to A\oplus\mb L_A$ is $0$-connected.
\end{proof}

The following is well known to experts, but the authors could not readily find a reference using the $\infty$-categorical language of this paper.

\begin{lem}\label{lem:basechangedcotangent}
	Let $f\co B\to A$ be a morphism of $\mb E_n$ $S$-algebra spectra for $S$ a commutative ring spectrum. Then the value of the functor $(\Sigma^\infty_+)_A\co Alg^{\mb E_n}_{/A}\to \Sp(Alg^{\mb E_n}_{/A})$ applied to $f$ is equivalent to $\loday{S}{A}\otimes_{\loday{S}{B}}\mathbb{L}^{\mb E_n}_B$. 
\end{lem}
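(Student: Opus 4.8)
The plan is to recognize the right-hand side as the pushforward $f_!$ of the absolute cotangent complex of $B$, and to identify that pushforward with $(\Sigma^\infty_+)_A(f)$ through a base-change (Beck--Chevalley) comparison for relative stabilization. Throughout write $\cC = \Alg_{\mb E_n}(S)$.

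First I would set up two identifications. Under the equivalence $\Sp(\cC_{/A}) \simeq \LMod_{\loday{S}{A}}$ of \cite[7.3.5.3]{lurie-higheralgebra}, functorial in the algebra, the base-change functor $f^*\co \Sp(\cC_{/A}) \to \Sp(\cC_{/B})$ of the tangent-category formalism corresponds to restriction of scalars along the map $\loday{S}{f}\co \loday{S}{B} \to \loday{S}{A}$, so its left adjoint $f_!$ corresponds to extension of scalars $\loday{S}{A} \otimes_{\loday{S}{B}} (-)$. Hence the right-hand side of the lemma is exactly $f_!(\mb L^{\mb E_n}_B)$. Second, by the construction of the absolute cotangent complex as a relative suspension spectrum \cite[7.3.2.14]{lurie-higheralgebra}, we have $\mb L^{\mb E_n}_B \simeq (\Sigma^\infty_+)_B(\id_B)$, the fiberwise stabilization of the terminal object of $\cC_{/B}$. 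It therefore suffices to establish
\[
  (\Sigma^\infty_+)_A(f) \simeq f_!\bigl((\Sigma^\infty_+)_B(\id_B)\bigr).
\]

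The key step is to show that relative stabilization commutes with the pushforwards along $f$. On slice categories $f_!\co \cC_{/B} \to \cC_{/A}$ is post-composition with $f$, left adjoint to the pullback $f^*$; on tangent fibers $f_!$ is as above. I would first verify the commutation of the corresponding right adjoints,
\[
  \Omega^\infty_B \circ f^* \simeq f^* \circ \Omega^\infty_A \co \Sp(\cC_{/A}) \to \cC_{/B}.
\]
This is immediate from the formulas recalled in Section~\ref{sec:detectingskeleta}: on spectrum objects $(f^*Y)(K) \simeq Y(K) \times_A B$, while $\Omega^\infty = ev_0$ is evaluation at $S^0$, so both composites carry $Y$ to $\bigl(Y(S^0)\times_A B \to B\bigr)$, and this comparison is manifestly natural in $Y$. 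Passing to left adjoints converts it into the desired equivalence $(\Sigma^\infty_+)_A \circ f_! \simeq f_! \circ (\Sigma^\infty_+)_B$ of functors $\cC_{/B} \to \Sp(\cC_{/A})$. Evaluating both sides at $\id_B$, and using $f_!(\id_B) = (B \too{f} A) = f$, the left-hand side becomes $(\Sigma^\infty_+)_A(f)$ and the right-hand side becomes $f_!(\mb L^{\mb E_n}_B) = \loday{S}{A}\otimes_{\loday{S}{B}} \mb L^{\mb E_n}_B$, as required.

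I expect the Beck--Chevalley step to carry the real content: one must promote the levelwise pullback comparison to a genuine natural equivalence of right-adjoint composites so that passage to left adjoints is legitimate, and one must match the abstractly-defined tangent pushforward $f_!$ (the excisive approximation of $K \mapsto X(K) \amalg_B A$) with algebraic extension of scalars under the module identification. Both are standard compatibilities of the cotangent/Loday formalism of \cite[7.3.4, 7.3.5]{lurie-higheralgebra}, but they are where the work lies; once they are in place, the evaluation at $\id_B$ is purely formal.
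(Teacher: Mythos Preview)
Your proposal is correct and is essentially the same argument as the paper's: both identify the target as $f_!(\mb L^{\mb E_n}_B)$ and reduce to the commutation $f^* \circ \Omega^\infty_A \simeq \Omega^\infty_B \circ f^*$, then pass to left adjoints. The paper simply writes this out as an explicit Yoneda-style chain of mapping-space equivalences (citing \cite[6.2.2.14(3), 7.3.1.5]{lurie-higheralgebra} for the commutation), whereas you package the same step as a Beck--Chevalley square; the content is identical.
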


\begin{proof}
 We check that $\loday{S}{A}\otimes_{\loday{S}{-}}\mathbb{L}^{\mb E_n}_{(-)}$ is left adjoint to $\Omega^\infty_A$. Let $f^\ast\vdash f_\ast$ be the push-pull adjunction between $Alg^{\mb E_n}_{/A}$ and $Alg^{\mb E_n}_{/B}$ and $F^\ast\vdash F_\ast$ its associated stabilization (cf.~\cite[Proposition 5.4.1]{lurie--goodwillieI}). We check that $F_\ast\mathbb{L}_B^{\mb E_n}\simeq\loday{S}{A}\otimes_{\loday{S}{B}}\mathbb{L}^{\mb E_n}_B$ and $\Sigma^\infty_{A,+}B$ corepresent the same functor in $\Sp(Alg_{A}^{\mb E_n})$. For any $M\in \Sp(Alg_{/A}^{\mb E_n})$ we have the following string of (natural in $M$) equivalences of mapping spaces:
 \begin{align*}
 Alg_{/A}^{\mb E_n}(f_\ast B,\Omega^\infty_A M)&\simeq Alg_{/B}^{\mb E_n}(B,f^\ast \Omega^\infty_A M)\\
&\simeq Alg^{\mb E_n}_{/B}(B,\Omega^\infty_B F^\ast M)\\
&\simeq \Sp(Alg_{/B})(\mathbb{L}_{B}^{\mb E_n},F^\ast M)\\
&\simeq \Sp(Alg_{/B})(F_\ast\mathbb{L}_{B}^{\mb E_n},M).
 \end{align*}
 
 The first equivalence is by definition, because $f_\ast$ simply postcomposes with $f$ and $f^\ast$ is the pullback functor. The second equivalence follows from \cite[6.2.2.14 (3)]{lurie-higheralgebra} combined with \cite[7.3.1.5]{lurie-higheralgebra}. The third equivalence is the usual adjunction between a category and its stabilization (as well recalling the equivalence $\Sigma^\infty_{B,+}B\simeq \mathbb{L}_B^{\mb E_n})$). And the final equivalence is simply given by the extension/restriction of scalars adjunction between $F_\ast$ and $F^\ast$. Finally, we apply the fact that $Alg_{/A}^{\mb E_n}(f_\ast B,\Omega^\infty_AM)$ is naturally equivalent to $\Sp(Alg_{/A}^{\mb E_n})(\Sigma^\infty_{A,+}B,M)$.
\end{proof}

\begin{rmk}
	In Lemma \ref{lem:basechangedcotangent} we take for granted that the equivalences $\Mod_A^{\mb E_n}\simeq \LMod_{\loday{S}{A}}$ and $\Mod_B^{\mb E_n}\simeq \LMod_{\loday{S}{B}}$ are compatible with the relevant base change functors. However this follows from the description of those equivalences given, for instance, in \cite[Proposition 2.23]{francis-tangentcomplex}. 
\end{rmk}

\begin{thm}\label{thm:skeletaalgebras}
  Suppose that $S$ is a connective commutative ring spectrum such that $\pi_0(S)$ is a principal ideal domain, and that $A$ is a connective $\mb E_n$ $S$-algebra such that $S \to A$ is $1$-connected. Then given a cellular construction of a (minimal) $(N+n)$-skeleton
  \[
    Z \to H\pi_0(A) \tens_{\loday{S}{A}} A
  \]
  as $H\pi_0(A)$-modules, there is a corresponding cellular construction of a (minimal) $N$-skeleton $B \to A$ of $\mb E_n$-algebras. In the case that $Z\to H\pi_0(A)\tens_{\loday{S}{A}}A$ is minimal, there is an equivalence
  \[
    Z \simeq H\pi_0(A) \tens_{\loday{S}{B}} B
  \]
  of $H\pi_0(A)$-modules over $H\pi_0(A) \tens_{\loday{S}{A}} A$.
\end{thm}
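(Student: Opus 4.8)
The plan is to deduce the theorem from the excavation machinery of Theorem~\ref{thm:excavateskeleta}, applied to a composite subductive adjunction that incarnates the detection result of Theorem~\ref{thm:Enskeletality}. Concretely, I would work in $\cC = \Alg_{\mb E_n}(S)_{/A}$ and take the left adjoint $L$ to be the composite
\[
  \Alg_{\mb E_n}(S)_{/A}
  \xrightarrow{\ \Sigma^\infty_{A,+}\ }
  \LMod_{\loday{S}{A}}
  \xrightarrow{\ H\pi_0(A)\,\otimes_{\loday{S}{A}}(-)\ }
  \LMod_{H\pi_0(A)}.
\]
Since $S\to A$ is $1$-connected we have $\pi_0(A)\cong\pi_0(S)$, the principal ideal domain in question, and Lemma~\ref{lem:lodaypi0} gives $\pi_0(\loday{S}{A})\cong\pi_0(A)$; thus the Postnikov truncation $\loday{S}{A}\to H\pi_0(A)$ is $1$-connected and $\LMod_{H\pi_0(A)}$ is the correct target. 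By Lemma~\ref{lem:basechangedcotangent}, $\Sigma^\infty_{A,+}$ sends $f\colon B\to A$ to $\loday{S}{A}\otimes_{\loday{S}{B}}\mb L^{\mb E_n}_B$; computing on the free algebra cells, $L$ carries $\mb T^S_{\mb E_n}(\Sigma^{k-1}S)\to S$ to the free module cell $\Sigma^{k-1}H\pi_0(A)\to\ast$ and the $0$-cell $S\to\mb T^S_{\mb E_n}(S)$ to $\ast\to H\pi_0(A)$, with no shift in cellular dimension, while $L(\id_A)\simeq H\pi_0(A)\otimes_{\loday{S}{A}}\mb L^{\mb E_n}_A$.

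Next I would establish subductivity of this composite in the relevant range. The stabilization $\Sigma^\infty_{A,+}\dashv\Omega^\infty_A$ is (strongly) subductive on objects that are $1$-connected over $A$: this combines Corollary~\ref{cor:subductivealgebras} (using that $\Alg_{\mb E_n}(S)$ satisfies algebraic excision, Example~\ref{exam:excisionnnuke}) with Proposition~\ref{prop:subductivestabilization}. The base-change adjunction along $\loday{S}{A}\to H\pi_0(A)$ is strongly subductive because that map is $1$-connected (Example~\ref{exam:0connectedringmapgivessubductive}). Proposition~\ref{prop:compositeexcisive} then assembles these into subductivity of $L\dashv R$, the hypothesis $S\to A$ being $1$-connected ensuring that every intermediate object stays connected enough for the estimates to hold. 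Because both adjunctions are strongly subductive here and $L,R$ preserve connectivity, Corollary~\ref{cor:strongsubductivereflects} shows that $L$ reflects $N$- and $(N+1)$-connectivity, which is exactly the reflection property Theorem~\ref{thm:excavateskeleta} requires to conclude that a lifted construction is an (minimal) $N$-skeleton.

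The remaining step reconciles the target. Theorem~\ref{thm:excavateskeleta} lifts a cellular construction of a skeleton of $L(A)=H\pi_0(A)\otimes_{\loday{S}{A}}\mb L^{\mb E_n}_A$, whereas we are handed a construction of an $(N+n)$-skeleton of $H\pi_0(A)\otimes_{\loday{S}{A}}A$. Here I would invoke the cofiber sequence $\loday{S}{A}\to A\to\Sigma^n\mb L^{\mb E_n}_A$ of Theorem~\ref{thm:cotangentcofiber}; base-changing gives $H\pi_0(A)\to H\pi_0(A)\otimes_{\loday{S}{A}}A\to\Sigma^n L(A)$, whose first term is the degree-$0$ part, free because $\pi_0(A)$ is a principal ideal domain. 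Splitting off this $0$-cell and desuspending by $n$ converts the given $(N+n)$-skeleton of $H\pi_0(A)\otimes A$ into an $N$-skeleton $Z'\to L(A)$ built from the free cells $\Sigma^{k-1}H\pi_0(A)\to\ast$; the principal-ideal-domain hypothesis guarantees, via Proposition~\ref{prop:EMskeletal} and Corollary~\ref{cor:PIDskeleton}, that such a free-cell construction exists and is what minimality refers to. Excavating $Z'$ with Theorem~\ref{thm:excavateskeleta} then produces the desired cellular (minimal) $N$-skeleton $B\to A$ of $\mb E_n$-algebras.

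For the final equivalence in the minimal case I would run the cofiber sequence backwards: Lemma~\ref{lem:basechangedcotangent} identifies $L(B)\simeq H\pi_0(A)\otimes_{\loday{S}{B}}\mb L^{\mb E_n}_B$, while the excavation guarantees $L(B)\simeq Z'$. Reassembling with the unit along $H\pi_0(A)\to H\pi_0(A)\otimes_{\loday{S}{B}}B\to\Sigma^n L(B)$ and applying Proposition~\ref{prop:pushoutskeletal} shows $H\pi_0(A)\otimes_{\loday{S}{B}}B$ is an $(N+n)$-skeleton of $H\pi_0(A)\otimes_{\loday{S}{A}}A$, minimal when $Z$ is; uniqueness of minimal skeleta (Proposition~\ref{prop:skeletaluniqueness}) then yields $Z\simeq H\pi_0(A)\otimes_{\loday{S}{B}}B$ over $H\pi_0(A)\otimes_{\loday{S}{A}}A$. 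I expect the main obstacle to be precisely this reconciliation: keeping the subduction zone, the $0$-boundedness of the cells, and the $\Sigma^n$ shift between the cotangent complex and $H\pi_0(A)\otimes A$ mutually consistent, and verifying that the intermediate objects remain connected enough that the strong-subductivity hypotheses genuinely hold at every stage of the induction.
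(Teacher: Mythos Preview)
Your proposal is correct and follows essentially the same approach as the paper: set up the composite adjunction
\[
  \Alg^{\mb E_n}_{/A}\ \rightleftarrows\ \LMod_{\loday{S}{A}}\ \rightleftarrows\ \LMod_{H\pi_0(A)},
\]
convert the given $(N+n)$-skeleton of $H\pi_0(A)\otimes_{\loday{S}{A}}A$ into an $N$-skeleton of $H\pi_0(A)\otimes_{\loday{S}{A}}\mb L^{\mb E_n}_A$ via the cofiber sequence of Theorem~\ref{thm:cotangentcofiber}, use the PID hypothesis (Corollary~\ref{cor:PIDskeleton}) to obtain a free-cell presentation, and then excavate using the subductivity estimates coming from algebraic excision and the $1$-connectedness of $\loday{S}{A}\to H\pi_0(A)$. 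The final identification via Lemma~\ref{lem:basechangedcotangent} and uniqueness of minimal skeleta is also the paper's argument.

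The only organizational difference is that you invoke Proposition~\ref{prop:compositeexcisive} to treat the two adjunctions at once, whereas the paper excavates in two separate passes (first from $\LMod_{H\pi_0(A)}$ to $\LMod_{\loday{S}{A}}$, then from there to $\Alg^{\mb E_n}_{/A}$); correspondingly, the paper spells out more of the commutative diagrams needed to see that the resulting map $H\pi_0(A)\otimes_{\loday{S}{B}}B\to H\pi_0(A)\otimes_{\loday{S}{A}}A$ really is compatible with the original skeleton $Z$ before invoking uniqueness. Your closing caveat correctly identifies where the remaining care is needed: checking that the unit $A\to\Omega^\infty_A\Sigma^\infty_{A,+}A$ is $0$-connected (so the $0$-bounded cells are excavable) and that each intermediate $W^{(k)}\to A$ stays $1$-connected so that strong subductivity applies throughout.
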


\begin{proof}
  There is a composite adjunction
  \[
    \Alg^{\mb E_n}_{/A} \rightleftarrows \Sp\left(\Alg_{/A}^{\mb E_n}\right) \simeq \LMod_{\loday{S}{A}} \rightleftarrows \LMod_{H\pi_0(A)}
  \]
  between $\mb E_n$-algebras over $A$ and $H\pi_0(A)$-modules. The leftmost adjunction is the $\Omega^\infty_A\vdash \Sigma^\infty_A$ adjunction and the rightmost is the base-change adjunction between left $\loday{S}{A}$-modules and left $H\pi_0(\loday{S}{A})\simeq H\pi_0(A)$-modules (using Lemma \ref{lem:lodaypi0}). We will use this composite adjunction to pass a skeleton of $H\pi_0(A)\tens_{\loday{S}{A}} A$ back to a skeleton of $A$. 
  
   By using Theorem \ref{thm:cotangentcofiber} to write $\mb L_A^{\mb E_n}$ as $\Sigma^{-n}\cofib(\loday{S}{A}\to A)$ we see that the image of $A$ under the composite left adjoint above is
  \[
    H\pi_0(A) \tens_{\loday{S}{A}} \mb L^{\mb E_n}_{A} \simeq \Sigma^{-n} \cofib\left(H\pi_0 (A) \to H\pi_0(A) \tens_{\loday{S}{A}} A\right).
  \]
  Given a (minimal) $(N+n)$-skeleton $Z\to H\pi_0(A) \tens_{\loday{S}{A}} A$, Corollary \ref{cor:uniquelifts} implies that the map $H\pi_0(A)\to H\pi_0(A) \tens_{\loday{S}{A}} A$ lifts to $Z$ because $H\pi_0(A)$ is $0$-skeletal. By Proposition \ref{prop:pushoutconnectivity}, the induced map
  \[
    \cofib(H\pi_0(A)\to Z)\to \cofib\left(H\pi_0(A)\to H\pi_0(A)\tens_{\loday{S}{A}}A\right)
  \]
  is $(N+n)$-connected (or $(N+n+1)$ in the case of a minimal skeleton). By Proposition \ref{prop:pushoutskeletal}, $\cofib(H\pi_0(A)\to Z)$ is $(N+n)$-skeletal. Therefore, by stability, the resulting $n$-times desuspended map $\Sigma^{-n} \cofib(H\pi_0 A \to Z) \to H\pi_0(A) \tens_{\loday{S}{A}} \mb L^{\mb E_n}_{A}$ is a (minimal) $N$-skeleton.
  
  By Corollary~\ref{cor:PIDskeleton}, there exists a cellular construction of this (minimal) $N$-skeleton as an $H\pi_0(A)$-module using the cells $(\Sigma^{k-1} H\pi_0(A) \to \ast)_{1 \leq k \leq N}$. These cells are the images of the $0$-bounded cells $S \tens \mb E_n(S^{k-1}) \to S \tens \mb E_n(\ast)$ in the category of $\mb E_n$ $S$-algebras.
  
  We now excavate this cellular skeleton by first using the adjunction between left $H\pi_0(A)$-modules and left $\loday{S}{A}$-modules. Because $\pi_0(A)\cong\pi_0(\loday{S}{A})$ we have that the adjunction is subductive by Example \ref{exam:0connectedringmapgivessubductive}. Because the map $\loday{S}{A} \to H\pi_0(A)$ is a $1$-connected map of connective ring spectra, the left adjoint $LMod_{\loday{S}{A}}\to LMod_{H\pi_0(A)}$ reflects connectivity in the standard $t$-structure inherited from $\Sp$. Finally, because $0$-truncation preserves tensor products for connective spectra, we have that $\mb L_A^{\mb E_n}\to H\pi_0(A)\otimes_{\loday{S}{A}}\mb L_A^{\mb E_n}$ is $0$-connected.  So Theorem \ref{thm:excavateskeleta} allows us to excavate the (minimal) cellular skeleta of $H\pi_0(A)\otimes_{\loday{S}{A}}\mb L_A^{\mb E_n}$ to (minimal) cellular skeleta for $\mb L_{A}^{\mb E_n}$.
  
  We now excavate \emph{this} skeleton from $LMod_{\loday{S}{A}}\simeq \Sp(\Alg^{\mb E_n}_{/A})$ to $\Alg^{\mb E_n}_{/A}$. We will again use Theorem~\ref{thm:excavateskeleta}, but first need to establish that the hypotheses thereof hold in this case. Because the unit map $S\to A$ is $1$-connected, and the connectivity structure on $\Alg^{\mb E_n}_{/A}$ is lifted from that of  $\Alg^ {\mb E_n}$, we may assume that we have restricted our adjunction to $1$-connected objects of $\Alg^{\mb E_n}_{/A}$. It follows from Corollary \ref{cor:subductivealgebras} that the $\Sigma\dashv\Omega$-adjunction thereon is strongly subductive and thus by Proposition \ref{prop:subductivestabilization}, so is $\Sigma^\infty\dashv\Omega^\infty$. Now using Corollary \ref{cor:strongsubductivereflects} (along with Remark \ref{rmk:loopsinfinitypreservesconnectivity} and Theorem \ref{thm:stabilizationconnectivity}), we have that stabilization reflects connectivity. As a result, Theorem \ref{thm:excavateskeleta} allows us to excavate (minimal) cellular skeleta from $\Sp(\Alg^{\mb E_n}_{/A})$ to $\Alg^{\mb E_n}_{/A}$. 
  
Putting the pieces together, we have that any cellular construction of a (minimal) $(N+n)$-skeleton $Z\to H\pi_0(A)\otimes_{\loday{S}{A}}A$ can be excavated to a cellular construction of a (minimal) $N$-skeleton $B\to A$. Now notice that the composite left adjoint $\Alg^{\mb E_n}_{/A}\to \LMod_{H\pi_0(A)}$ preserves connectivity and skeletal objects (the latter by Proposition \ref{prop:adjointskeletal}). Therefore it also preserves minimal skeleta. Thus, if $Z$ is minimal, our description of $Z$ will follow from a computation of the image of $B\to A$ under the composite adjunction given at the beginning of the proof. From Lemma \ref{lem:basechangedcotangent} we deduce that \[B\mapsto H\pi_0(A)\tens_{\loday{S}{B}}\mathbb{L}_B^{\mb E_n}\simeq \Sigma^{-n}\cofib\left(H\pi_0(A)\to H\pi_0(A)\tens_{\loday{S}{B}}B\right)\]

 Therefore homotopy uniqueness of minimal skeleta (followed by $n$-fold suspension) gives an equivalence
  \[
     \cofib\left(H\pi_0(A) \to Z\right) \simeq \cofib\left(H\pi_0(A) \to H\pi_0(A) \tens_{\loday{S}{B}} B\right).
  \]
  
  First notice that by functoriality of the fiber sequence given in Theorem \ref{thm:cotangentcofiber} (cf.~for instance \cite[7.3.5.5]{lurie-higheralgebra}) we have a comutative diagram
  
    \[
  \xymatrix{
  	\loday{S}{B} \ar[r] \ar[d] & \loday{S}{A} \ar[d] \\
  	B \ar[r] &
  	A
  }
  \]
  of $\loday{S}{B}$-modules. From this we obtain another commutative diagram
  
      \[
  \xymatrix{
  	H\pi_0(A)\ar[rr]\ar[d]^\simeq & & H\pi_0(A)\ar[d]^\simeq\\
  	H\pi_0(A)\tens_{\loday{S}{B}}\loday{S}{B} \ar[r]  \ar[d] & H\pi_0(A)\tens_{\loday{S}{B}}\loday{S}{A} \ar[r]\ar[d]& H\pi_0(A) \tens_{\loday{S}{A}}\loday{S}{A} \ar[d] \\
  	H\pi_0(A)\tens_{\loday{S}{B}} B \ar[r] & H\pi_0(A)\tens_{\loday{S}{B}} A \ar[r]& H\pi_0(A)\tens_{\loday{S}{A}}A
  }
  \]
  in which the top (and middle) horizontal map is equivalent to the identity. Moreover, the left and right vertical composites are the ones obtained by tensoring $H\pi_0(A)$ over $\loday{S}{B}$ and $\loday{S}{A}$ respectively with the maps $\loday{S}{B}\to B$ and $\loday{S}{A}\to A$ arising in Theorem \ref{thm:cotangentcofiber}. It follows from the proof of Lemma \ref{lem:basechangedcotangent} that the bottom horizontal map is the map whose cofiber gives the ($n$-fold suspension of) the $N$-skeleton $\Sigma^{-n}\cofib\left(H\pi_0\to H\pi_0(A)\tens_{\loday{S}{B}}B\right)\to \Sigma^{-n}\cofib\left(H\pi_0(A)\to H\pi_0(A)\tens_{\loday{S}{A}}A\right)$ described above.
  
  Thus we have a commutative diagram 
      \[
  \xymatrix{
  	H\pi_0(A)\ar[r]\ar[d]^\simeq & H\pi_0(A)\tens_{\loday{S}{B}} B\ar[d]\ar[r]& \cofib\left(H\pi_0(A)\to H\pi_0(A)\tens_{\loday{S}{B}} B\right)\ar[d]\\ H\pi_0(A)\ar[r] &
  	H\pi_0(A)\tens_{\loday{S}{A}}A\ar[r] & \cofib\left(H\pi_0(A)\to H\pi_0(A)\tens_{\loday{S}{A}} A\right)
  }
  \]
  whose right vertical map is the $n$-fold suspension of our skeleton. Notice also that by continuing the exact sequences to the right, obtaining horizontal maps to $\Sigma H\pi_0(A)$, we may write the map $H\pi_0(A)\tens_{\loday{S}{B}}B\to H\pi_0(A)\tens_{\loday{S}{A}}A$ as a fiber, and thus, by Proposition \ref{prop:pullbackconnectivity}, is $(N+n)$-connected. Therefore, we may lift the skeleton $Z\to H\pi_0(A)\tens_{\loday{S}{A}}A$ along it to obtain a commutative diagram 
  
      \[
  \xymatrix{ &Z \ar[dr]\ar[d]&\\
  	H\pi_0(A)\ar[r]\ar[ur] & H\pi_0(A)\tens_{\loday{S}{A}}A
	& H\pi_0(A)\tens_{\loday{S}{B}}B\ar[l] }
  \]
  in which the left hand triangle is the one obtained by lifting $H\pi_0(A)$ along the $(N+n)$-connected map $Z\to H\pi_0(A)\tens_{\loday{S}{A}}A$, and the right hand triangle is the one obtained by lifting the $(N+n)$-skeleton $Z$ along the $(N+n)$-connected map $H\pi_0(A)\tens_{\loday{S}{B}}B$. 
  
  Fitting the above triangle into the diagram preceding it, and taking the relevant cofiber, we now have a commuting diagram in which all rows are exact:
  
    \[
  \xymatrix{
  	H\pi_0(A)\ar[d]^\simeq \ar[r]&Z\ar[d]\ar[r]&\cofib\left(H\pi_0(A)\to Z\right)\ar[d]\\
  	H\pi_0(A)\ar[r]\ar[d]^\simeq & H\pi_0(A)\tens_{\loday{S}{B}} B\ar[d]\ar[r]& \cofib\left(H\pi_0(A)\to H\pi_0(A)\tens_{\loday{S}{B}} B\right)\ar[d]\\ H\pi_0(A)\ar[r] &
  	H\pi_0(A)\tens_{\loday{S}{A}}A\ar[r] & \cofib\left(H\pi_0(A)\to H\pi_0(A)\tens_{\loday{S}{A}} A\right)
  }
  \]
  By Propositions \ref{prop:minimalretract} and \ref{prop:skeletaluniqueness}, the upper right vertical map in the above diagram must be the unique-up-to-homotopy equivalence between $\cofib\left(H\pi_0(A)\to Z\right)$ and $\cofib\left(H\pi_0(A)\to H\pi_0(A)\tens_{\loday{S}{B}}B\right)$. It follows that the middle vertical map, $Z\to H\pi_0(A)\tens_{\loday{S}{B}}B$ is also an equivalence. 
\end{proof}

\begin{rmk}
Note that, in the case that we are considering an $\mb E_1$ $S$-algebra $A$, there is an equivalence $THH^S(A,\pi_0(A))\simeq H\pi_0(A)\otimes_{\loday{S}{A}} A$. So Theorem \ref{thm:skeletaalgebras} allows us to lift minimal skeleta of the often simpler topological Hochschild homology of $A$ to minimal skeleta of $A$ itself, which we will use in the next sections.
\end{rmk}

\subsection{Remark on Thom spectra}

Recall the universal property of a Thom spectrum as a ring \cite{antolin-camarena-barthel-thom}: given a map of grouplike $\mb E_n$-spaces $f\co G \to GL_1(\mb S)$, maps $Mf \to R$ of associative rings are equivalent to nullhomotopies of the composite
\[
  B^nG \to B^nGL_1(\mb S) \to B^nGL_1(R).
\]
We will show that if $B^nG$ is $(k+n-1)$-skeletal, then $Mf$ is $k$-skeletal as an $\mb E_n$-algebra.

Suppose that $R \to S$ is a $k$-connected map of ring spectra for $k > 0$. The map $B^nGL_1(R) \to B^nGL_1(S)$ is $(k+n)$-connected by directly considering homotopy groups. If $B^nG$ is $(k+n-1)$-skeletal, then the map $B^nG \to \ast$ is a relatively $(k+n)$-skeletal map, and so any nullhomotopy of the map $B^nG \to B^nGL_1(S)$ lifts to a nullhomotopy of the map $B^nG \to B^nGL_1(R)$. By the universal property, this asserts that any map $Mf \to S$ lifts to a map $Mf \to R$, as desired.

\subsection{$T(n)$ as associative skeleta}

In this section we fix a prime $p\in\mb{Z}$. In \cite{ravenel-greenbook}, Ravenel introduces a sequence of homotopy commutative ring spectra called $T(n)$ and morphisms of homotopy commutative ring spectra $T(n)\to T(n+1)$ such that $T(0)\simeq \mb{S}_{(p)}$ and $\hocolim(T(n))\simeq \BP$, the Brown-Peterson spectrum. Ravenel computes that $\BP_\ast(T(n))\cong \BP_\ast[t_1,\ldots,t_n]\subset \BP_\ast \BP$ and the natural map $T(n)\to \BP$ is a homology isomorphism in degrees less than $\vert t_{n+1}\vert=2(p^{n+1}-1)$; hence it is $(2p^{n+1}-3)$-connected. In what follows, we show that this is in fact the inclusion of a minimal skeleton of $\BP$ in the category of $\mb E_1$-ring spectra.

We begin with some calculations.

\begin{prop}
  \label{prop:THHTn}
  We have
  \[
    \pi_* \THH(\BP; \mb Z_{(p)}) \cong \Lambda[\sigma t_1, \sigma t_2,
    \dots].
  \]
  If $T \to \BP$ is any map of connective $p$-local ring spectra inducing the inclusion $\mb Z_{(p)}[t_1\dots,t_n] = H_* T(n) \to H_* \BP$ on homology, then the map
  \[
    \pi_* \THH(T;\mb Z_{(p)}) \to \pi_* \THH(\BP;\mb Z_{(p)})
  \]
  is isomorphic to the inclusion of the subalgebra
  \[
    \Lambda[\sigma t_1, \dots, \sigma t_n],
  \]
  which is precisely those elements in degrees less than $2p^{n+1}-1$.
\end{prop}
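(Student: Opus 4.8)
The plan is to compute $\pi_*\THH(\BP;\mb Z_{(p)})$ with a spectral sequence coming from the cyclic bar construction, and then to deduce the statement for $T$ by naturality. First I would identify $\THH(\BP;\mb Z_{(p)})$ with $H\mb Z_{(p)}\otimes_{\BP\otimes\BP}\BP$, as in Proposition~\ref{prop:THHskeletality}; this is the geometric realization of the simplicial spectrum $[q]\mapsto H\mb Z_{(p)}\otimes \BP^{\otimes q}$, where the $\BP$-bimodule structure on $H\mb Z_{(p)}$ is via the truncation $\BP\to H\mb Z_{(p)}$. Each level is an $H\mb Z_{(p)}$-module, so the skeletal filtration yields a (B\"okstedt-type) spectral sequence with $E^1_{q,*}=\pi_*(H\mb Z_{(p)}\otimes \BP^{\otimes q})$ converging to $\pi_*\THH(\BP;\mb Z_{(p)})$.

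Next I would compute the $E^2$-page. Because $H_*(\BP;\mb Z_{(p)})=\mb Z_{(p)}[t_1,t_2,\dots]$ is a free $\mb Z_{(p)}$-module, the K\"unneth theorem gives $E^1_{q,*}\cong (H_*\BP)^{\otimes q}$, and under this identification the alternating sum of face maps is exactly the Hochschild differential of $R:=\mb Z_{(p)}[t_1,t_2,\dots]$ with coefficients in the augmentation bimodule $\mb Z_{(p)}$ (the augmentation $R\to\mb Z_{(p)}$ being $H_*(\BP\to H\mb Z_{(p)})$). Hence $E^2_{*,*}=\HH^{\mb Z_{(p)}}_*(R;\mb Z_{(p)})$. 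Since $R$ is polynomial on even-degree generators, its module of K\"ahler differentials is free and the Hochschild--Kostant--Rosenberg computation gives
\[
  \HH^{\mb Z_{(p)}}_*(R;\mb Z_{(p)})\cong\Lambda_{\mb Z_{(p)}}[\sigma t_1,\sigma t_2,\dots],
\]
with $\sigma t_i$ in filtration degree $1$ and internal degree $|t_i|$, hence total degree $2p^i-1$.

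Then I would show the spectral sequence collapses at $E^2$ without extension problems. The $E^2$-algebra is free graded-commutative on generators $\sigma t_i$ lying in filtration $1$; since $d_r$ lowers filtration by $r\geq 2$, each generator is a permanent cycle (its target lies in negative filtration), and as these generate the algebra the Leibniz rule forces all differentials to vanish. The associated graded $\Lambda_{\mb Z_{(p)}}[\sigma t_i]$ is $\mb Z_{(p)}$-free, so there are no additive extensions; the ring is exterior once one checks $\sigma t_i^2=0$, which for $p$ odd is automatic from the odd degree and in general follows because $\sigma t_i$ is a suspension class. This gives $\pi_*\THH(\BP;\mb Z_{(p)})\cong\Lambda[\sigma t_1,\sigma t_2,\dots]$. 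The cofiber sequence of Theorem~\ref{thm:cotangentcofiber}, base-changed to $H\mb Z_{(p)}$, gives an independent check relating this object to the cotangent complex.

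For the second statement the whole construction is natural in the $\mb E_1$-ring, so $T\to\BP$ induces a map of cyclic bar constructions and of spectral sequences. On $E^2$ this is the map on Hochschild homology induced by the algebra inclusion $\mb Z_{(p)}[t_1,\dots,t_n]=H_*T\hookrightarrow H_*\BP$, namely the inclusion $\Lambda[\sigma t_1,\dots,\sigma t_n]\hookrightarrow\Lambda[\sigma t_1,\sigma t_2,\dots]$; both sequences collapse, so the map on $\pi_*$ is precisely this inclusion. A degree count then identifies the image: any monomial involving some $\sigma t_m$ with $m>n$ has degree at least $|\sigma t_{n+1}|=2p^{n+1}-1$, whereas every monomial in $\sigma t_1,\dots,\sigma t_n$ has degree at most $\sum_{i=1}^n(2p^i-1)<2p^{n+1}-1$, so $\Lambda[\sigma t_1,\dots,\sigma t_n]$ is exactly the part in degrees below $2p^{n+1}-1$. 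The hard part will be the two technical points of the first computation---checking that the $E^1$-differential is the algebraic Hochschild differential, and ruling out the multiplicative extension $\sigma t_i^2\neq 0$ at $p=2$; the comparison for $T$ is then essentially formal.
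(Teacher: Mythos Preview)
Your approach is essentially the paper's: both set up a bar/K\"unneth spectral sequence with $E_2\cong\Tor^{H_*\BP}_{**}(\mb Z_{(p)},\mb Z_{(p)})\cong\Lambda[\sigma t_1,\sigma t_2,\dots]$, observe that the filtration-$1$ generators are permanent cycles for trivial reasons, propagate this to all of $E_2$ via the Leibniz rule, and then handle $T$ by injectivity of the map of spectral sequences. (The paper first rewrites $\THH(\BP;\mb Z_{(p)})\simeq H\mb Z_{(p)}\otimes_{H\mb Z_{(p)}\otimes\BP^{\op}}H\mb Z_{(p)}$ and runs the associated K\"unneth spectral sequence, but this is the same spectral sequence as yours.)

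The one genuine gap is your invocation of the Leibniz rule. For a bare $\mb E_1$-ring $A$ and bimodule $k$, the object $\THH(A;k)$ carries no ring structure and the bar spectral sequence is not multiplicative, so knowing that each $\sigma t_i$ survives does not by itself force their products to survive. The paper supplies the missing ingredient by citing the Basterra--Mandell $\mb E_4$-structure on $\BP$: this makes $H\mb Z_{(p)}\otimes\BP^{\op}$ an $\mb E_3$-ring with $H\mb Z_{(p)}$ a commutative algebra over it, so the K\"unneth spectral sequence is genuinely multiplicative. With that in hand your collapse argument is correct, and the same input dispatches your flagged $p=2$ extension problem: any hidden multiplicative relation would have to be detected in filtration $\leq 1$, where the only classes are $1$ in degree $0$ and the odd-degree $\sigma t_j$, leaving no room. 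Your proposed fix via ``$\sigma t_i$ is a suspension class'' is not a substitute, since $\THH(A;k)$ has no circle action for general bimodule coefficients and the argument again presupposes the ring structure you have not yet justified.
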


\begin{proof}
  From \cite[Lemma 2.2]{thhell} we have an equivalence
  \[
    \THH(\BP;\mb{Z}_{(p)})\simeq
    H\mb{Z}_{(p)}\sma_{\BP\sma \BP^{op}}\BP\simeq
    H\mb{Z}_{(p)}\sma_{H\mb{Z}_(p)\sma \BP^{op}}H\mb{Z}_{(p)}. 
  \]
  It follows, from either \cite[IV, 4.1]{ekmm} or \cite[Corollary 2.3]{thhell} that we have a K\"unneth spectral sequence of signature
  \[
    \Tor_{\ast\ast}^{H_\ast(\BP^{op};\mb{Z}_{(p)})}(\mb{Z}_{(p)},\mb{Z}_{(p)})
    \Rightarrow\THH_\ast(\BP;\mb{Z}_{(p)}).
  \]
  The $E_2$-term reduces to
  \[
    \Tor_{\ast\ast}^{\mb Z_{(p)}[t_1,t_2,\dots]} (\mb Z_{(p)}, \mb
    Z_{(p)})
  \]
  which is an exterior algebra $\Lambda[\sigma t_1, \sigma t_2,\dots]$ with $\sigma t_i$ in total degree $2p^i-1$ and filtration $1$. The elements $\sigma t_i$ are permanent cycles for degree reasons, and the spectral sequence has multiplicative structure because $\BP$ admits an $\mb E_4$-ring structure \cite{basterra-mandell-BP}. The product structure then implies that all elements in the spectral sequence are permanent cycles, and there is no room for any hidden multiplicative extensions.

  Given such a map $T \to \BP$, the same spectral sequence for $\pi_* \THH(T;\mb Z_{(p)})$ maps injectively to the spectral sequence for $\THH_*(\BP; \mb Z_{(p)})$, and hence has no differentials or hidden extensions either. The resulting map is the inclusion
  \[
    \Lambda[\sigma t_1, \dots, \sigma t_n] \subset \Lambda[\sigma t_1,
    \sigma t_2, \dots].
  \]
  The highest degree of a nontrivial element on the left is
  \[
    |\sigma t_1 \sigma t_2 \dots \sigma t_n| = \sum_{i=1}^n (2p^i-1) =
    2p\tfrac{p^n-1}{p-1} - n < |\sigma t_{n+1}|
  \]
  and therefore the subalgebra consists precisely of those elements in degrees less than $2p^{n+1}-1 = |\sigma t_{n+1}|$.
\end{proof}

\begin{thm}\label{thm:T(n)areSkeleta}
  There exists an associative algebra stucture on $T(n)$ making it into a minimal $(2p^{n+1}-4)$-skeleton of $\BP$ as a $p$-local associative algebra.
\end{thm}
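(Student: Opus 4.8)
The plan is to obtain the associative structure and the skeleton simultaneously by applying Theorem~\ref{thm:skeletaalgebras}, and only afterwards to identify the resulting underlying spectrum with $T(n)$. I take the commutative base to be $S = \mb S_{(p)}$, the operad to be $\mb E_1$ (so that the index ``$n$'' appearing in Theorem~\ref{thm:skeletaalgebras} is $1$), and $A = \BP$. The hypotheses hold: $\mb S_{(p)}$ is a connective commutative ring spectrum with $\pi_0 = \mb Z_{(p)}$ a principal ideal domain; $\BP$ is a connective $\mb E_1$ $\mb S_{(p)}$-algebra, being $\mb E_4$ by \cite{basterra-mandell-BP}; and the unit $\mb S_{(p)} \to \BP$ is $1$-connected, since $\pi_0$ is an isomorphism and $\pi_1 \BP = 0$. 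The module-level object that Theorem~\ref{thm:skeletaalgebras} asks us to skeletalize is $H\pi_0(\BP) \otimes_{\loday{\mb S_{(p)}}{\BP}} \BP \simeq \THH(\BP; H\mb Z_{(p)})$, whose homotopy is the exterior algebra $\Lambda[\sigma t_1, \sigma t_2, \dots]$ with $|\sigma t_i| = 2p^i - 1$ by Proposition~\ref{prop:THHTn}.

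Next I would do the degree bookkeeping. The top class of the finite exterior algebra $\Lambda[\sigma t_1, \dots, \sigma t_n]$ lies in degree $\sum_{i=1}^n (2p^i - 1)$, which one checks is strictly below $2p^{n+1} - 3$ for every prime $p$; since the next generator $\sigma t_{n+1}$ only appears in degree $2p^{n+1} - 1$, the groups $\pi_{2p^{n+1}-3}$ and $\pi_{2p^{n+1}-2}$ of $\THH(\BP; H\mb Z_{(p)})$ both vanish. By the principal-ideal-domain case of Proposition~\ref{prop:EMskeletal}, $\THH(\BP; H\mb Z_{(p)})$ admits a minimal $(2p^{n+1}-3)$-skeleton $Z$, necessarily with $\pi_* Z = \Lambda[\sigma t_1, \dots, \sigma t_n]$, and Corollary~\ref{cor:PIDskeleton} supplies a cellular construction of $Z$ as an $H\mb Z_{(p)}$-module. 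Feeding this into Theorem~\ref{thm:skeletaalgebras}, whose operadic degree shift is $1$ for $\mb E_1$, converts the minimal $(2p^{n+1}-3)$-skeleton $Z$ into a minimal $(2p^{n+1}-4)$-skeleton $B \to \BP$ of $\mb E_1$-algebras, together with the equivalence $\THH(B; H\mb Z_{(p)}) \simeq Z$, so that $\THH_*(B; \mb Z_{(p)}) = \Lambda[\sigma t_1, \dots, \sigma t_n]$.

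It then remains to recognize the underlying spectrum of $B$ as $T(n)$. Because $B \to \BP$ is $(2p^{n+1}-3)$-connected, the ring map $H_*B \to H_*\BP = \mb Z_{(p)}[t_1, t_2, \dots]$ is an isomorphism in degrees below $2p^{n+1}-3$, so $H_*B$ is a torsion-free connected graded algebra containing $t_1, \dots, t_n$ and agreeing with $\mb Z_{(p)}[t_1, \dots, t_n]$ there. I would combine this with the computed value $\THH_*(B; \mb Z_{(p)}) = \Lambda[\sigma t_1, \dots, \sigma t_n]$ via the bar spectral sequence $\Tor^{H_*B}_{*,*}(\mb Z_{(p)}, \mb Z_{(p)}) \Rightarrow \THH_*(B; \mb Z_{(p)})$: after establishing that this sequence degenerates, its $E_2$-page is a finite exterior algebra on $n$ generators in degrees $2p^i-1$, and the standard rigidity that a connected graded flat $\mb Z_{(p)}$-algebra with such $\Tor$ is polynomial on the corresponding even generators forces $H_*B = \mb Z_{(p)}[t_1, \dots, t_n]$. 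This identifies the underlying spectrum (over $\BP$) with $T(n)$ and transports the $\mb E_1$-structure; the minimality clause of Theorem~\ref{thm:skeletaalgebras} simultaneously records the compatible equivalence $\THH(T(n); H\mb Z_{(p)}) \simeq Z$.

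I expect the main obstacle to be precisely this last identification. Establishing degeneration of the bar spectral sequence for $B$ — most naturally by comparison with the degenerate sequence for $\BP$ of Proposition~\ref{prop:THHTn}, using that $\THH(B) \to \THH(\BP)$ is injective on homotopy — and then extracting polynomiality of $H_*B$ from its $\Tor$, is the delicate point; one must also confirm that matching the underlying spectrum's homology with that of Ravenel's $T(n)$, compatibly with the map to $\BP$, genuinely pins it down rather than merely producing some spectrum that happens to share its homology.
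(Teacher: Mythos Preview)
Your proposal tracks the paper's proof very closely through the construction of the minimal skeleton $B \to \BP$ via Theorem~\ref{thm:skeletaalgebras}, the identification $\THH_*(B;\mb Z_{(p)}) \cong \Lambda[\sigma t_1,\dots,\sigma t_n]$, and the bar spectral sequence argument that $H_*B \cong \mb Z_{(p)}[t_1,\dots,t_n]$. Up to that point you are doing exactly what the paper does.

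The genuine gap is the one you yourself flag at the end: knowing that $H_*B \cong H_*T(n)$ as graded rings, even compatibly with the maps to $H_*\BP$, does \emph{not} produce a map $B \to T(n)$ of spectra, and without such a map there is no mechanism to conclude an equivalence. Your ``standard rigidity'' remark only addresses $H_*B$; it says nothing about why two $p$-local spectra with this homology and a map to $\BP$ must coincide. The paper resolves this not by any abstract recognition principle but by manufacturing an honest map. One uses the diagram of associative ring spectra
\[
  \xymatrix{
    & X(p^n)_{(p)} \ar[r] \ar[d] & T(n) \ar[d] \\
    B \ar[r] & \MU_{(p)} \ar[r] & \BP
  }
\]
where the right-hand column is Ravenel's splitting. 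The composite $B \to \BP \to \MU_{(p)}$ is a map of associative algebras, and the map $X(p^n)_{(p)} \to \MU_{(p)}$ is a highly connected map of associative algebras; since $B$ is skeletal in $\Alg_{\mb E_1}$, the map lifts to $B \to X(p^n)_{(p)}$, and composing with the retraction $X(p^n)_{(p)} \to T(n)$ yields a map $B \to T(n)$ of $p$-local spectra over $\BP$. On homology this map is the identity of $\mb Z_{(p)}[t_1,\dots,t_n]$, hence an equivalence. This lifting-through-$X(p^n)$ step is the missing idea in your outline; the rest of your argument is essentially the paper's.
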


\begin{proof}
  By Proposition~\ref{prop:EMskeletal}, the $\mb Z_{(p)}$-module $\THH(\BP;\mb Z_{(p)})$ has a minimal $2p^{n+1}-3$ skeleton.

  Therefore, by Theorem~\ref{thm:skeletaalgebras}, this minimal $(2p^{n+1}-3)$-skeleton can be excavated: there is a minimal $(2p^{n+1}-4)$-skeleton $T \to \BP$ as an associative algebra, whose $\THH$ coincides through degree $(2p^{n+1}-3)$. By Proposition~\ref{prop:THHTn}, this forces
  \[
    \pi_* \THH(T; \mb Z_{(p)}) \cong
    \Lambda[\sigma t_1, \dots, \sigma t_n].
  \]

  The map $T \to \BP$ is then $(2p^{n+1}-3)$-connected. This means $t_1,\dots,t_n \in H_* \BP$ are in the image, so $H_* T \to \mb Z_{(p)}[t_1,\dots,t_n]$ is surjective and an isomorphism through degrees $(2p^{n+1}-4)$. Moreover, because we know the homology of the tangent $T$ lifts the tangent complex, the $\THH$ spectral sequence
  \[
    \Tor^{H_* T} (\mb Z_{(p)}, \mb Z_{(p)}) \Rightarrow \Lambda
  \]
  for the homology of the tangent complex must be isomorphic to the corresponding spectral sequence for the subalgebra $\mb Z_{(p)}[t_1,\dots,t_n]$; this can only be true if $\mb Z_{(p)}[t_1,\dots,t_n] \to H_* T$ is an isomorphism.

  Consider the commutative diagram of inclusions and retractions of $p$-local ring spectra \cite[p. 217]{ravenel-greenbook}:
  \[
    \xymatrix{
      &  &
      X(p^n)_{(p)} \ar[r] \ar[d] &
      T(n) \ar[d] \\
      T \ar@{.>}[urr] \ar[r] &
      \BP \ar[r] &
      \MU_{(p)} \ar[r] &
      \BP
    }
  \]
  The dotted lift exists because the map $X(p^n) \to \MU$ is a $(2p^n-1)$-connected map of associative algebras, and hence we get a composite map $T \to T(n)$ of $p$-local spectra. On homology this becomes the commutative diagram
  \[
    \xymatrix{
      &  &
      \mb Z_{(p)}[x_1,\dots,x_{2p^n-2}] \ar[r] \ar[d] &
      \mb Z_{(p)}[t_1,\dots,t_n] \ar[d] \\
      \mb Z_{(p)}[t_1,\dots,t_n] \ar@{.>}[urr] \ar[r] &
      \mb Z_{(p)}[t_1,t_2,\dots] \ar[r] &
      \mb Z_{(p)}[x_1,x_2,\dots] \ar[r] &
      \mb Z_{(p)}[t_1,t_2,\dots]
    }
  \]
  This makes the map $T \to T(n)$ into a homology isomorphism, and thus an equivalence, of spectra over $\BP$.
\end{proof}

\begin{rmk}
Theorem \ref{thm:T(n)areSkeleta} bears some similarity to \cite[Corollary 13]{beardsley-relativeThomSpectra} and \cite[Theorem 2]{beardsley-Xncells} in which it is shown that the spectra $X(n)$ and their $p$-localizations $X(n)_{(p)}$, of which the spectra $T(n)$ are wedge summands, can be constructed from $X(n-1)$ and $X(n-1)_{(p)}$ via attachments of $\mb E_1$-$X(n-1)$-algebra cells. Those results however use the construction of $X(n)$ as a Thom spectrum in a crucial way, and $T(n)$ is not a Thom spectrum. Those results also do not immediately imply that $X(n)$ is a skeleton of $MU$ as $\mb E_1$-ring spectra (in contrast to the results for $T(n)$ given above).
\end{rmk}

\subsection{$Y(n)$ as associative skeleta}

Similarly, we recall that at $p=2$ there is a spectrum $Y(n)$ with a map $Y(n) \to H\mb F_2$ whose homology maps isomorphically to a subalgebra of the dual Steenrod algebra:
\[
  H_* (Y(n); \mb F_2) \cong \mb F_2[\xx_1, \dots, \xx_n].
\]
By contrast with the previous case, there is a known associative multiplication on $Y(n)$ \cite{mahowald-thomcomplexes}. Specifically, the space $\Omega S^3$ is equivalent to a CW-complex with one cell in each even degree, via the James construction; the spectrum $Y(n)$ is the Thom spectrum of the composite map of loop spaces
\[
  \Omega (\Omega S^3)^{(2^{n+1}-2)} \to \Omega^2 S^3 \to BO.
\]

We will now show that the spectrum $Y(n)$ is a skeleton of $H\mb F_2$ as an associative ring spectrum.

\begin{prop}
  We have
  \[
    \pi_* \THH(H\mb F_2) \cong \mb F_2[u].
  \]
  If $T \to H\mb F_2$ is any map of connective ring spectra inducing the inclusion $\mb F_2[\xi_1\dots,\xi_n] = H_* Y(n) \to H_* H\mb F_2$ on homology, then the map
  \[
    \pi_* \THH(T;\mb F_2) \to \pi_* \THH(H\mb F_2)
  \]
  is isomorphic to the inclusion of those elements in degrees less than or equal to $2^{n+1}-2$.
\end{prop}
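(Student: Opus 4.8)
The plan is to follow the proof of Proposition~\ref{prop:THHTn} almost verbatim, replacing $\BP$ by $H\mb F_2$, the coefficient $\mb Z_{(p)}$ by $\mb F_2$, and the polynomial algebra $\mb Z_{(p)}[t_1,t_2,\dots]$ by the dual Steenrod algebra. First I would rewrite, as in \cite[Lemma 2.2]{thhell},
\[
  \THH(H\mb F_2) \simeq H\mb F_2 \otimes_{H\mb F_2 \otimes H\mb F_2^{op}} H\mb F_2,
\]
noting that here the coefficient spectrum is already $H\pi_0(H\mb F_2) = H\mb F_2$, so no further base change is required. By \cite[IV, 4.1]{ekmm} or \cite[Corollary 2.3]{thhell} this produces a Künneth spectral sequence
\[
  \Tor_{\ast\ast}^{H_\ast(H\mb F_2;\mb F_2)}(\mb F_2,\mb F_2) \Rightarrow \THH_\ast(H\mb F_2),
\]
whose input ring is the dual Steenrod algebra $H_\ast(H\mb F_2;\mb F_2) = \mb F_2[\xi_1,\xi_2,\dots]$ with $|\xi_i| = 2^i-1$. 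Since this ring is polynomial, the $E_2$-term is the exterior algebra $\Lambda[\sigma\xi_1,\sigma\xi_2,\dots]$, with $\sigma\xi_i$ in homological filtration $1$ and total degree $2^i$.

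Next I would establish collapse and read off the abutment. Every monomial in the $\sigma\xi_i$ has total degree equal to a sum of distinct powers of $2$ that are all $\geq 2$, hence an even integer, so the $E_2$-page is concentrated in even total degrees; as the Künneth differential $d_r$ lowers total degree by $1$ it must vanish, and the sequence collapses at $E_2$. By uniqueness of binary expansions these monomials realize each even non-negative integer exactly once, so $\THH_\ast(H\mb F_2)$ is one-dimensional over $\mb F_2$ in every even degree and zero in odd degrees. The one genuinely new feature compared to the $\BP$ computation is multiplicative: there the generators $\sigma t_i$ lie in odd degree and the answer is the exterior algebra, coinciding with $E_\infty$, whereas here $H\mb F_2$ is $\mb E_\infty$, the spectral sequence is multiplicative, and the class $u := \sigma\xi_1$ in degree $2$ must square nontrivially. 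Resolving the hidden extensions $u^{2^{i-1}} = \sigma\xi_i$ upgrades the additive answer to the polynomial algebra $\mb F_2[u]$ with $|u|=2$, recovering Bökstedt's classical computation of $\THH_\ast(\mb F_2)$, which I would invoke for the ring structure.

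For the second assertion I would run the same argument for $T$. The equivalence $\THH(T;\mb F_2) \simeq H\mb F_2 \otimes_{H\mb F_2 \otimes T^{op}} H\mb F_2$ gives a Künneth spectral sequence with input ring $H_\ast(T;\mb F_2) = \mb F_2[\xi_1,\dots,\xi_n]$ and hence $E_2 = \Lambda[\sigma\xi_1,\dots,\sigma\xi_n]$, again concentrated in even total degrees and so collapsing at $E_2$ by the same parity argument. The morphism $T \to H\mb F_2$ is natural for these spectral sequences, and on $E_2$ it is $\Tor$ applied to the inclusion of homology rings $\mb F_2[\xi_1,\dots,\xi_n] \hookrightarrow \mb F_2[\xi_1,\xi_2,\dots]$, that is, the inclusion $\Lambda[\sigma\xi_1,\dots,\sigma\xi_n] \hookrightarrow \Lambda[\sigma\xi_1,\sigma\xi_2,\dots]$. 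Since both sequences collapse and this map is injective on $E_\infty = E_2$, it is injective on abutments with image the evident associated-graded subspace.

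Finally I would identify that image. The degrees realized by monomials in $\sigma\xi_1,\dots,\sigma\xi_n$ are exactly the sums of distinct elements of $\{2,4,\dots,2^n\}$, i.e. the even integers in $[0,2^{n+1}-2]$, each occurring once, while the lowest-degree class of $\Lambda[\sigma\xi_1,\sigma\xi_2,\dots]$ not in the image is $\sigma\xi_{n+1}$ in degree $2^{n+1}$. Hence the map $\THH_\ast(T;\mb F_2) \to \THH_\ast(H\mb F_2) = \mb F_2[u]$ is precisely the inclusion of those elements in degrees $\leq 2^{n+1}-2$, as claimed. The main obstacle is not any single hard step but the bookkeeping around multiplicativity: one must check that the $\BP$-style reasoning giving permanent cycles and no hidden \emph{additive} extensions survives the change of coefficients, while separately tracking the hidden \emph{multiplicative} extensions that make the target polynomial rather than exterior; since the second statement uses only the additive comparison of the two collapsing spectral sequences, this multiplicative subtlety is confined to the first assertion.
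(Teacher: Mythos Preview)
Your proposal is correct and follows essentially the same approach as the paper: both arguments run the K\"unneth spectral sequence, observe that it degenerates, and compare the two $E_2$-pages. The paper's proof is extremely terse---it merely asserts degeneration and that the map on $E_2$ is the inclusion of classes in total degree $\leq 2^{n+1}-2$---while you supply the supporting details: the parity argument for collapse, the binary-expansion count identifying the image, and the citation of B\"okstedt for the polynomial ring structure on $\THH_\ast(H\mb F_2)$.
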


\begin{proof}
  This follows because the map on $E_2$-pages of K\"unneth spectral sequences is the inclusion of all classes in total degree less than or equal to $2^{n+1}-2$, and the K\"unneth spectral sequence for $\THH(H\mb F_2)$ degenerates.
\end{proof}

\begin{cor}
  The spectrum $Y(n)$ is $(2^{n+1} - 3)$-skeletal as an associative algebra.
\end{cor}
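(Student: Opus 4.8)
The plan is to apply the skeletality criterion of Proposition~\ref{prop:THHskeletality} to the associative algebra $A = Y(n)$ over the sphere $S = \mb S$. First I would check the hypotheses: $Y(n)$ is connective, since its homology $\mb F_2[\xx_1,\dots,\xx_n]$ is concentrated in nonnegative degrees with $H_0 = \mb F_2$; it is a connective $\mb E_1$ $\mb S$-algebra by the Thom-spectrum structure recalled above; and $\pi_0(Y(n)) \cong \mb F_2$ is a quotient of $\pi_0(\mb S) = \mb Z$, hence a localized quotient in the required sense (in particular a commutative ring). With these verified, Proposition~\ref{prop:THHskeletality} asserts that $Y(n)$ is $k$-skeletal for any $k \geq 1$ for which $\THH(Y(n); H\mb F_2) \simeq \THH^{\mb S}(Y(n); H\pi_0 Y(n))$ is a $(k+1)$-skeletal $H\mb F_2$-module. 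Taking $k = 2^{n+1}-3$ (which satisfies $k \geq 1$ for $n \geq 1$), the claim reduces to showing that $\THH(Y(n); H\mb F_2)$ is $(2^{n+1}-2)$-skeletal as an $H\mb F_2$-module.

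Next I would pass to the field case of our skeletality computation. Since $\mb F_2$ is a field, Corollary~\ref{cor:fieldskeletal}---equivalently the field clause of Proposition~\ref{prop:EMskeletal}---states that an $H\mb F_2$-module $M$ is $j$-skeletal precisely when $\pi_* M = 0$ for $* > j$. Thus it suffices to verify that $\pi_* \THH(Y(n); H\mb F_2)$ vanishes in degrees strictly exceeding $2^{n+1}-2$.

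This vanishing is exactly what the preceding proposition supplies. It identifies $\pi_* \THH(H\mb F_2) \cong \mb F_2[u]$ and shows that, for the map $Y(n) \to H\mb F_2$ realizing the inclusion $\mb F_2[\xx_1,\dots,\xx_n] \hookrightarrow H_* H\mb F_2$ on homology, the induced map $\pi_* \THH(Y(n); H\mb F_2) \to \pi_* \THH(H\mb F_2)$ is the inclusion of precisely those classes in degrees less than or equal to $2^{n+1}-2$. In particular $\pi_* \THH(Y(n); H\mb F_2)$ vanishes above degree $2^{n+1}-2$, which is exactly the bound demanded in the previous paragraph. Combining the three steps, $Y(n)$ is $(2^{n+1}-3)$-skeletal as an associative algebra.

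I do not expect a genuine obstacle: the argument is a direct assembly of the $\THH$ computation of the preceding proposition with the detection criterion of Proposition~\ref{prop:THHskeletality} and the field case of skeletality. The only points requiring care are bookkeeping: correctly matching the off-by-one in Proposition~\ref{prop:THHskeletality} (the $(k+1)$-skeletality of the $\THH$ module against the top nonvanishing degree $2^{n+1}-2$), and confirming that the hypotheses on $Y(n)$---connectivity and $\pi_0$ being a localized quotient of $\pi_0 \mb S$---genuinely hold, so that the criterion applies over the sphere.
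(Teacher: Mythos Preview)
Your proposal is correct and follows essentially the same approach as the paper's own proof: both invoke Proposition~\ref{prop:THHskeletality} together with the field case of Proposition~\ref{prop:EMskeletal}, reducing to the vanishing of $\pi_* \THH(Y(n);H\mb F_2)$ above degree $2^{n+1}-2$, which is read off from the preceding proposition. The paper's version is terser---it computes the top nonzero degree explicitly as $|\sigma\xx_1\cdots\sigma\xx_n| = \sum_{i=1}^n 2^i = 2^{n+1}-2$ rather than quoting the ``degrees $\leq 2^{n+1}-2$'' clause directly---but the logical content is identical, and your more explicit verification of the hypotheses of Proposition~\ref{prop:THHskeletality} is a welcome addition.
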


\begin{proof}
  The top degree where the topological Hochschild homology of $Y(n)$ is nontrivial is
  \[
    |\sigma \xx_1 \dots \sigma \xx_n| = \sum_{i=1}^n 2^i =
    2^{n+1}- 2.
  \]
  By Proposition~\ref{prop:THHskeletality} and \ref{prop:EMskeletal}, the spectrum $Y(n)$ is $(2^{n+1} - 3)$-skeletal as a ring spectrum.
\end{proof}

The map $Y(n) \to H\mb F_2$ is $(2^{n+1}-2)$-connected. Therefore, we arrive at the following conclusion.

\begin{thm}
  The spectrum $Y(n)$ is a minimal $(2^{n+1}-3)$-skeleton of $H\mb F_2$ as an associative algebra.
\end{thm}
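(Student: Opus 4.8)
The plan is to deduce the theorem directly from the definition of a minimal skeleton, since the two substantive inputs have already been assembled just above. Recall from Definition~\ref{def:skeleton} that a map $f\co A \to X$ is a \emph{minimal $k$-skeleton} exactly when $A$ is $k$-skeletal and $f$ is $(k+1)$-connected. I will apply this with $A = Y(n)$, $X = H\mb F_2$, and $k = 2^{n+1}-3$, noting that then $k+1 = 2^{n+1}-2$.

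For the skeletality hypothesis, I would invoke the preceding corollary, which asserts precisely that $Y(n)$ is $(2^{n+1}-3)$-skeletal as an associative algebra; this matches the index $k$ on the nose. For the connectivity hypothesis, I would use the connectivity estimate recorded immediately above the theorem: the structure map $Y(n) \to H\mb F_2$ is $(2^{n+1}-2)$-connected. Concretely, this is read off from the homology inclusion $\mb F_2[\xx_1,\dots,\xx_n] = H_*(Y(n);\mb F_2) \hookrightarrow H_*(H\mb F_2;\mb F_2) = \mb F_2[\xx_1,\xx_2,\dots]$, whose cokernel first appears in degree $|\xx_{n+1}| = 2^{n+1}-1$; since the connectivity structure on associative algebras is lifted from $\Sp$, Proposition~\ref{prop:stablecofiber} lets one measure the connectivity of the map by that of its cofiber, which is therefore $(2^{n+1}-2)$-connected.

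Assembling these two facts, the map $Y(n) \to H\mb F_2$ has $(2^{n+1}-3)$-skeletal source and is $(2^{n+1}-2) = (k+1)$-connected, so it is a minimal $(2^{n+1}-3)$-skeleton by Definition~\ref{def:skeleton}. I do not expect any genuine obstacle in this final step: all of the real work---the $\THH$ computation and the skeletality it forces---has been carried out in the preceding results, and the theorem is essentially an exercise in matching indices. The one point deserving care is the arithmetic bookkeeping, namely confirming that the skeletal index $2^{n+1}-3$ and the connectivity index $2^{n+1}-2$ differ by exactly one, so that the conclusion can be upgraded from a mere ``skeleton'' to a ``minimal skeleton''. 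A secondary point is to be sure the connectivity of the algebra map is computed in the lifted connectivity structure, where it is detected on underlying spectra and hence by mod-$2$ homology.
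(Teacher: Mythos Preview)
Your proposal is correct and follows exactly the paper's approach: the paper simply records the connectivity of $Y(n)\to H\mb F_2$ and combines it with the preceding corollary via Definition~\ref{def:skeleton}, with no further argument. Your additional justification for the connectivity via mod-$2$ homology is a reasonable gloss on what the paper leaves implicit, though strictly speaking one needs a Hurewicz-type step (both spectra are connective with $\pi_0\cong\mb F_2$, so the cofiber is bounded below with vanishing mod-$2$ homology in the relevant range) rather than invoking Proposition~\ref{prop:stablecofiber} alone.
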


\begin{rmk}
  We can give an alterative proof: instead of using topological Hochschild homology as with $T(n)$, we can use the description of $Y(n)$ as a Thom spectrum.
\end{rmk}

\subsection{Involutions}

\begin{prop}
  Suppose that $R$ is an $\mb E_2$-algebra and that $T$ is a minimal $k$-skeleton of $R$. Then there is an involution $\lambda\co T \to T^\op$ over $R$.
\end{prop}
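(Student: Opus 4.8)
The plan is to exploit the fact that an $\mb E_2$-structure on $R$ furnishes a canonical \emph{involutive} equivalence of associative algebras $c\co R \to R^{\op}$, and then to produce $\unstop$ by a lifting argument using only the skeletal machinery of Section~\ref{sec:skeleta}. By Dunn additivity, $R$ is an associative algebra whose multiplication is commutative up to the braiding supplied by the second $\mb E_1$-direction; using this braiding as coherence data, the identity on the underlying object upgrades to a map of $\mb E_1$-algebras $c\co R \to R^{\op}$. The operadic reflection inducing $(-)^{\op}$ is the generator of the $O(1)=\mb Z/2$ sitting inside the $O(2)$ acting on $\mb E_2$, and it squares to the identity; this provides a homotopy $c^{\op}\circ c \simeq \id_R$ of $\mb E_1$-maps. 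I would isolate this as the single external input (citing the standard account of the $O(n)$-action on $\mb E_n$), since everything else is formal.

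First I would record that the opposite-algebra functor $(-)^{\op}$ is a covariant autoequivalence of the category of associative algebras that commutes with the forgetful functor to the underlying category, hence preserves the lifted connectivity structure. Consequently $f^{\op}\co T^{\op}\to R^{\op}$ is again a minimal $k$-skeleton: $T^{\op}$ is $k$-skeletal because $(-)^{\op}$ is an equivalence, and $f^{\op}$ is $(k+1)$-connected exactly as $f\co T\to R$ is.

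Next I would build $\unstop$ by a lift. The composite $c\circ f\co T \to R^{\op}$ emanates from the $k$-skeletal object $T$, and $f^{\op}\co T^{\op}\to R^{\op}$ is $(k+1)$-connected, so Corollary~\ref{cor:uniquelifts} supplies a lift $\unstop\co T \to T^{\op}$, unique in $h\cC$, with $f^{\op}\circ \unstop \simeq c\circ f$. This identity is precisely the assertion that $\unstop$ is a map over $R$, once $c$ is used to identify the slices over $R$ and over $R^{\op}$.

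Finally I would verify the involution property $\unstop^{\op}\circ \unstop \simeq \id_T$ by the same uniqueness principle. Applying $(-)^{\op}$ to the defining relation (and using covariance, $(g\circ h)^{\op}=g^{\op}\circ h^{\op}$) gives $f\circ \unstop^{\op}\simeq c^{\op}\circ f^{\op}$, whence
\[
  f\circ(\unstop^{\op}\circ\unstop)\simeq c^{\op}\circ f^{\op}\circ\unstop \simeq c^{\op}\circ c\circ f \simeq f.
\]
Thus $\unstop^{\op}\circ\unstop$ and $\id_T$ are two lifts of $f$ along the $(k+1)$-connected map $f$ itself, and Corollary~\ref{cor:uniquelifts} forces $\unstop^{\op}\circ\unstop \simeq \id_T$ in $h\cC$. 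The main obstacle lies entirely in the first paragraph: pinning down the equivalence $c\co R \simeq R^{\op}$ together with the crucial coherence $c^{\op}\circ c\simeq \id_R$. A bare $\mb E_2$ (braided) structure controls only a single braiding, and one must check that the opposite construction contributes the \emph{inverse} braiding, so that the composite homotopy is the trivial full twist rather than a nontrivial one; once $c$ is secured, the skeletal bookkeeping above is routine.
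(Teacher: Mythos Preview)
Your argument is correct and follows essentially the same route as the paper: transport the minimal-$k$-skeleton property of $T$ along the equivalence $R\simeq R^{\op}$ to see that $T^{\op}$ is also a minimal $k$-skeleton of $R$, and then invoke uniqueness (the paper cites Proposition~\ref{prop:skeletaluniqueness} directly, while you unpack the same content via Corollary~\ref{cor:uniquelifts}). You go a step further than the paper by explicitly verifying the identity $\unstop^{\op}\circ\unstop\simeq\id_T$ and isolating the coherence $c^{\op}\circ c\simeq\id_R$ needed for it; the paper's proof simply produces the equivalence $T\to T^{\op}$ over $R$ and stops there.
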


\begin{proof}
  Any $\mb E_2$-algebra is equivalent to its own opposite algebra. The composite
  \[
    T^\op \to R^\op \too{\sim} R
  \]
  then makes $T^\op$ into a minimal $k$-skeleton of $R$. However, uniqueness of minimal skeleta from Proposition~\ref{prop:skeletaluniqueness} implies that there is a canonical equivalence $T \to T^\op$ of algebras mapping to $R$.
\end{proof}

\begin{rmk}
  The self-equivalence $R \to R^\op$ becomes, after forgetting the $\mb E_2$-structure, the identity self-map of $R$. By contrast, the involution on $T$ may not be homotopic to the identity map, and hence this does not prove homotopy commutativity of $T$.
\end{rmk}

\begin{cor}
  The spectrum $T(n)$ is equivalent to $T(n)^\op$ as an associative algebra with a map to $\BP$.
\end{cor}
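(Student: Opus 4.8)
The plan is to recognize this corollary as a direct instance of the preceding proposition, applied with $R = \BP$ and $T = T(n)$. The only substantive content is to confirm that the two hypotheses of that proposition hold in the present situation; no further computation is required.

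First I would record that $\BP$ carries an $\mb E_2$-algebra structure. Indeed, as already invoked in the proof of Proposition~\ref{prop:THHTn}, $\BP$ admits an $\mb E_4$-ring structure by Basterra--Mandell; forgetting structure down to $\mb E_2$ supplies exactly the datum needed to produce the self-equivalence $\BP \too{\sim} \BP^\op$ on which the proposition's argument runs. We work $p$-locally throughout, so that $\BP$ is regarded as a $p$-local $\mb E_2$-ring spectrum whose underlying associative structure is the one appearing in Theorem~\ref{thm:T(n)areSkeleta}. Second, I would invoke that theorem itself, which equips $T(n)$ with an associative algebra structure making $T(n) \to \BP$ a minimal $(2p^{n+1}-4)$-skeleton of $\BP$ in the category of $p$-local associative algebras.

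With both hypotheses in hand I would apply the proposition verbatim, taking $k = 2p^{n+1}-4$. This yields an involution $\lambda\co T(n) \to T(n)^\op$ over $\BP$, which is precisely an equivalence of associative algebras with a map to $\BP$, as claimed. The only point meriting any care---and it is minor---is that the proposition genuinely requires the $\mb E_2$-structure (not mere homotopy commutativity) to manufacture $\BP \simeq \BP^\op$, and that this $\mb E_2$-structure must refine the same underlying associative structure on $\BP$ relative to which $T(n)$ was constructed as a skeleton. Since the skeleton of Theorem~\ref{thm:T(n)areSkeleta} lives in $p$-local associative algebras and the $\mb E_4$-structure of Basterra--Mandell refines that associative structure, there is no genuine obstacle, and the corollary follows immediately.
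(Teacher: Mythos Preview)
Your proposal is correct and follows exactly the approach intended by the paper: the corollary is stated without proof as an immediate application of the preceding proposition, using the $\mb E_4$-structure on $\BP$ from Basterra--Mandell and the minimal skeleton structure on $T(n)$ from Theorem~\ref{thm:T(n)areSkeleta}. Your write-up simply makes explicit the verification of hypotheses that the paper leaves to the reader.
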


In other words, we have an \emph{involution} $\lambda\co T(n) \to T(n)^\op$ of the algebra $T(n)$.

\begin{cor}
  The spectrum $Y(n)$ is equivalent to $Y(n)^\op$ as an associative algebra with a map to $H\mb F_2$.
\end{cor}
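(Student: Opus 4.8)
The plan is to apply the preceding Proposition on $\mb E_2$-algebras and minimal skeleta, exactly as in the corollary for $T(n)$ just above, taking $R = H\mb F_2$ and $T = Y(n)$. That Proposition requires two inputs: that $R$ carries an $\mb E_2$-algebra structure, and that $T$ is a minimal $k$-skeleton of $R$ in the category of associative algebras. The second input is supplied verbatim by the immediately preceding Theorem, which identifies $Y(n)$ as a minimal $(2^{n+1}-3)$-skeleton of $H\mb F_2$ as an associative algebra. Recall that an involution $T \to T^\op$ over $R$ is precisely an equivalence of associative algebras over $R$ between $Y(n)$ and $Y(n)^\op$, so the conclusion of the Proposition is exactly the assertion of the corollary.

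The only hypothesis that remains to be verified is the first: that $H\mb F_2$ is an $\mb E_2$-ring. This is standard. Since $\mb F_2$ is a commutative ring, its Eilenberg--Mac Lane spectrum $H\mb F_2$ is an $\mb E_\infty$-ring spectrum, and hence in particular an $\mb E_2$-algebra. With both hypotheses in hand, the Proposition immediately furnishes the desired involution $\lambda\co Y(n) \to Y(n)^\op$ over $H\mb F_2$.

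I expect no genuine obstacle here: all of the substantive work has been carried out in the Proposition (uniqueness of minimal skeleta, together with the self-equivalence $R \simeq R^\op$ of any $\mb E_2$-algebra) and in the Theorem establishing $Y(n)$ as a minimal skeleton, and this corollary is a direct specialization of them. One could alternatively avoid the skeletal machinery and instead invoke the Thom-spectrum description of $Y(n)$ as in the earlier remark, but the route through the Proposition is the cleanest and most parallel to the $T(n)$ case.

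\begin{proof}
  The commutative ring $\mb F_2$ has Eilenberg--Mac Lane spectrum $H\mb F_2$ an $\mb E_\infty$-ring, hence an $\mb E_2$-algebra. By the preceding Theorem, $Y(n)$ is a minimal $(2^{n+1}-3)$-skeleton of $H\mb F_2$ as an associative algebra. Applying the Proposition on $\mb E_2$-algebras with $R = H\mb F_2$ and $T = Y(n)$ produces an involution $\lambda\co Y(n) \to Y(n)^\op$ over $H\mb F_2$, which is the asserted equivalence of associative algebras mapping to $H\mb F_2$.
\end{proof}
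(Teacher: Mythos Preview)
Your proposal is correct and matches the paper's intended argument exactly: the corollary is stated without proof in the paper because it is an immediate specialization of the preceding Proposition to $R = H\mb F_2$ (which is $\mb E_\infty$, hence $\mb E_2$) and $T = Y(n)$, using the Theorem that $Y(n)$ is a minimal $(2^{n+1}-3)$-skeleton of $H\mb F_2$.
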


\begin{rmk}
  The analogous result should be true at odd primes, with a more involved calculation in topological Hochschild homology.
\end{rmk}

\bibliography{masterbib}

\providecommand{\bysame}{\leavevmode\hbox to3em{\hrulefill}\thinspace}
\providecommand{\MR}{\relax\ifhmode\unskip\space\fi MR }
\providecommand{\MRhref}[2]{%
  \href{http://www.ams.org/mathscinet-getitem?mr=#1}{#2}
}
\providecommand{\href}[2]{#2}
\begin{thebibliography}{EKMM97}

\bibitem[AB14]{antolin-camarena-barthel-thom}
Omar {Antol{\'\i}n-Camarena} and Tobias {Barthel}, \emph{A simple universal
  property of {T}hom ring spectra}, arXiv e-prints (2014), arXiv:1411.7988.

\bibitem[ABFJ20]{abfj--blakersmassey}
Mathieu Anel, Georg Biedermann, Eric Finster, and Andr\'{e} Joyal, \emph{A
  generalized {B}lakers-{M}assey theorem}, J. Topol. \textbf{13} (2020), no.~4,
  1521--1553.

\bibitem[AHL10]{thhell}
Vigleik Angeltveit, Michael~A. Hill, and Tyler Lawson, \emph{Topological
  {H}ochschild homology of {$\ell$} and {$ko$}}, Amer. J. Math. \textbf{132}
  (2010), no.~2, 297--330. \MR{2654776 (2011i:19008)}

\bibitem[Bak14]{baker-closeencounters}
Andrew Baker, \emph{{$BP$}: close encounters of the {$E_\infty$} kind}, J.
  Homotopy Relat. Struct. \textbf{9} (2014), no.~2, 553--578. \MR{3258694}

\bibitem[Bas99]{basterra-andrequillen}
M.~Basterra, \emph{Andr\'e-{Q}uillen cohomology of commutative {$S$}-algebras},
  J. Pure Appl. Algebra \textbf{144} (1999), no.~2, 111--143. \MR{1732625
  (2001b:55025)}

\bibitem[Bea17]{beardsley-relativeThomSpectra}
Jonathan Beardsley, \emph{Relative {T}hom spectra via operadic {K}an
  extensions}, Algebr. Geom. Topol. \textbf{17} (2017), no.~2, 1151--1162.
  \MR{3623685}

\bibitem[Bea19]{beardsley-Xncells}
\bysame, \emph{A theorem on multiplicative cell attachments with an application
  to {R}avenel's {$X(n)$} spectra}, J. Homotopy Relat. Struct. \textbf{14}
  (2019), no.~3, 611--624. \MR{3987551}

\bibitem[BM05]{basterra-mandell-taqcohomology}
Maria Basterra and Michael~A. Mandell, \emph{Homology and cohomology of
  {$E_\infty$} ring spectra}, Math. Z. \textbf{249} (2005), no.~4, 903--944.
  \MR{2126222}

\bibitem[BM11]{basterra-mandell-iteratedthh}
\bysame, \emph{Homology of {$E_n$} ring spectra and iterated {$THH$}}, Algebr.
  Geom. Topol. \textbf{11} (2011), no.~2, 939--981. \MR{2782549}

\bibitem[BM13]{basterra-mandell-BP}
\bysame, \emph{The multiplication on {BP}}, J. Topol. \textbf{6} (2013), no.~2,
  285--310. \MR{3065177}

\bibitem[CH16]{ching-harper-excision}
Michael Ching and John~E. Harper, \emph{Higher homotopy excision and
  {B}lakers-{M}assey theorems for structured ring spectra}, Adv. Math.
  \textbf{298} (2016), 654--692. \MR{3505753}

\bibitem[EKMM97]{ekmm}
A.~D. Elmendorf, I.~Kriz, M.~A. Mandell, and J.~P. May, \emph{Rings, modules,
  and algebras in stable homotopy theory}, American Mathematical Society,
  Providence, RI, 1997, With an appendix by M. Cole. \MR{97h:55006}

\bibitem[Fra13]{francis-tangentcomplex}
John Francis, \emph{The tangent complex and {H}ochschild cohomology of
  {$E_n$}-rings}, Compos. Math. \textbf{149} (2013), no.~3, 430--480.
  \MR{3040746}

\bibitem[Goo92]{goodwillie-calcii}
Thomas~G. Goodwillie, \emph{Calculus. {II}. {A}nalytic functors}, $K$-Theory
  \textbf{5} (1991/92), no.~4, 295--332. \MR{1162445}

\bibitem[HKM01]{hu-kriz-may-cores}
P.~Hu, I.~Kriz, and J.~P. May, \emph{Cores of spaces, spectra, and {$E_\infty$}
  ring spectra}, Homology Homotopy Appl. \textbf{3} (2001), no.~2, 341--354,
  Equivariant stable homotopy theory and related areas (Stanford, CA, 2000).
  \MR{1856030}

\bibitem[Kri95]{kriz-towers}
I.~Kriz, \emph{Towers of {$E_\infty$}-ring spectra with an application to
  {$BP$}}, unpublished, 1995.

\bibitem[Lur09]{lurie-htt}
Jacob Lurie, \emph{Higher topos theory}, Annals of Mathematics Studies, vol.
  170, Princeton University Press, Princeton, NJ, 2009. \MR{2522659
  (2010j:18001)}

\bibitem[Lur17]{lurie-higheralgebra}
\bysame, \emph{Higher {A}lgebra}, Draft version available at:
  http://www.math.harvard.edu/\~{}lurie/papers/higheralgebra.pdf, 2017.

\bibitem[Lur18]{lurie--goodwillieI}
Jacob Lurie, \emph{$(\infty,2)$-categories and the {G}oodwillie calculus i},
  2018, Draft version available at: https://arxiv.org/abs/0905.0462.

\bibitem[Mah79]{mahowald-thomcomplexes}
Mark Mahowald, \emph{Ring spectra which are {T}hom complexes}, Duke Math. J.
  \textbf{46} (1979), no.~3, 549--559.

\bibitem[MP12]{mayponto-moreconcise}
J.~P. May and K.~Ponto, \emph{More concise algebraic topology}, Chicago
  Lectures in Mathematics, University of Chicago Press, Chicago, IL, 2012,
  Localization, completion, and model categories. \MR{2884233}

\bibitem[MRS01]{mahowald-ravenel-shick-telescope}
Mark Mahowald, Douglas Ravenel, and Paul Shick, \emph{The triple loop space
  approach to the telescope conjecture}, Homotopy methods in algebraic topology
  (Boulder, CO, 1999), Contemp. Math., vol. 271, Amer. Math. Soc., Providence,
  RI, 2001, pp.~217--284.

\bibitem[Pri80]{priddy-cellularBP}
Stewart Priddy, \emph{A cellular construction of {BP} and other irreducible
  spectra}, Math. Z. \textbf{173} (1980), no.~1, 29--34. \MR{584347}

\bibitem[Rav86]{ravenel-greenbook}
Douglas~C. Ravenel, \emph{Complex cobordism and stable homotopy groups of
  spheres}, Pure and Applied Mathematics, vol. 121, Academic Press Inc.,
  Orlando, FL, 1986. \MR{860042 (87j:55003)}

\bibitem[RV21]{riehlverity-elements}
Emily Riehl and Dominic Verity, \emph{Elements of $\infty$-category theory},
  2021, Accessed June, 2021.

\bibitem[SS03]{schwede-shipley-stablemodules}
Stefan Schwede and Brooke Shipley, \emph{Stable model categories are categories
  of modules}, Topology \textbf{42} (2003), no.~1, 103--153. \MR{1928647
  (2003g:55034)}

\end{thebibliography}
\end{document}